\renewcommand*{\backref}[1]{}
\renewcommand*{\backrefalt}[4]{
  \ifcase #1 
  [No citations.]
  \or [#2]
  \else [#2]
  \fi }
\let\originalleft\left
\let\originalright\right
\renewcommand{\left}{\mathopen{}\mathclose\bgroup\originalleft}
\renewcommand{\right}{\aftergroup\egroup\originalright}
\newcommand{\calD}{\mathcal{D}}
\newcommand{\calO}{\mathcal{O}}
\newcommand{\calT}{\mathcal{T}}
\newcommand{\CC}{\mathbb{C}}
\newcommand{\EE}{\mathbb{E}}
\newcommand{\HH}{\mathbb{H}}
\newcommand{\NN}{\mathbb{N}}
\newcommand{\RR}{\mathbb{R}}
\newcommand{\ZZ}{\mathbb{Z}}
\newcommand{\image}{\operatorname{image}}
\newcommand{\supp}{\operatorname{supp}} 
\newcommand{\from}{\colon} 
\newcommand{\homeo}{\mathrel{\cong}} 
\newcommand{\isom}{\cong} 
\newcommand{\cross}{\times}
\newcommand{\cover}[1]{{\widetilde{#1}}}
\newcommand{\closure}[1]{{\overline{#1}}}
\newcommand{\bdy}{\partial} 
\newcommand{\CP}{\mathbb{CP}} 
\newcommand{\PSL}{\operatorname{PSL}} 
\newcommand{\SO}{\operatorname{SO}} 
\newcommand{\fakeenv}{} 
\newenvironment{restate}[2]  
{ 
 \renewcommand{\fakeenv}{#2} 
 \theoremstyle{plain} 
 \newtheorem*{\fakeenv}{#1~\ref{#2}} 
 \begin{\fakeenv}
}
{
 \end{\fakeenv}
}
\newenvironment{restated}[2]  
{ 
 \renewcommand{\fakeenv}{#2} 
 \theoremstyle{definition} 
 \newtheorem*{\fakeenv}{#1~\ref{#2}} 
 \begin{\fakeenv}
}
{
 \end{\fakeenv}
}
\newcommand{\acw}{\rcurvearrowup}
\newcommand{\UT}[2]{\operatorname{UT}_{#1}{#2}} 
\newcommand{\PSU}{\operatorname{PSU}}
\newcommand{\PSO}{\operatorname{PSO}}
\newcommand{\sfu}{{\sf{u}}}
\newcommand{\sff}{{\sf{f}}}
\newcommand{\sfs}{{\sf{s}}}
\newcommand{\xu}{x_{\sfu}}
\newcommand{\xf}{x_{\sff}}
\newcommand{\xs}{x_{\sfs}}
\newcommand{\Haar}{\operatorname{Haar}}
\newcommand{\calV}{\mathcal{V}}
\newcommand{\sob}{\mathcal{S}}
\title[Cohomology fractals]{Cohomology fractals, Cannon--Thurston maps, and the geodesic flow}
\author[Bachman, Goerner, Schleimer, and Segerman]{David Bachman, Matthias Goerner, \\ Saul Schleimer and Henry Segerman} 
\date{\today}
\address{\hskip-\parindent
  Pitzer College,
1050 N. Mills Ave.
Claremont, CA 91711, USA}
\email{bachman@pitzer.edu}   
\address{\hskip-\parindent
        Pixar Animation Studios\\
1200 Park Avenue\\
Emeryville, CA 94608, USA}
\email{enischte@gmail.com}      
\address{\hskip-\parindent
        Mathematics Institute\\
        University of Warwick\\
        Coventry CV4 7AL, United Kingdom}
\email{s.schleimer@warwick.ac.uk}                  
\address{\hskip-\parindent
        Department of Mathematics\\
        Oklahoma State University\\
        Stillwater, OK 74078, USA}
\email{segerman@math.okstate.edu}                               
\thanks{This work is in the public domain except for Figures~\ref{Fig:Thurston} and \ref{Fig:LightningMatchUp}.}
\begin{document}

\begin{abstract}
Cohomology fractals are images naturally associated to cohomology classes in hyperbolic three-manifolds. 
We generate these images for cusped, incomplete, and closed hyperbolic three-manifolds in real-time by ray-tracing to a fixed visual radius.  
We discovered cohomology fractals while attempting to illustrate Cannon--Thurston maps without using vector graphics; we prove a correspondence between these two, when the cohomology class is dual to a fibration. 
This allows us to verify our implementations by comparing our images of cohomology fractals to existing pictures of Cannon--Thurston maps.

In a sequence of experiments, we explore the limiting behaviour of cohomology fractals as the visual radius increases. 
Motivated by these experiments, we prove that the values of the cohomology fractals are normally distributed, but with diverging standard deviations.  
In fact, the cohomology fractals do not converge to a function in the limit.
Instead, we show that the limit is a distribution on the sphere at infinity, only depending on the manifold and cohomology class.
\end{abstract}



\maketitle

\section{Introduction}

\begin{figure}[htb]
\centering
\subfloat[Cannon--Thurston map.]{
\label{Fig:CTVector}
\includegraphics[width=0.47\textwidth]{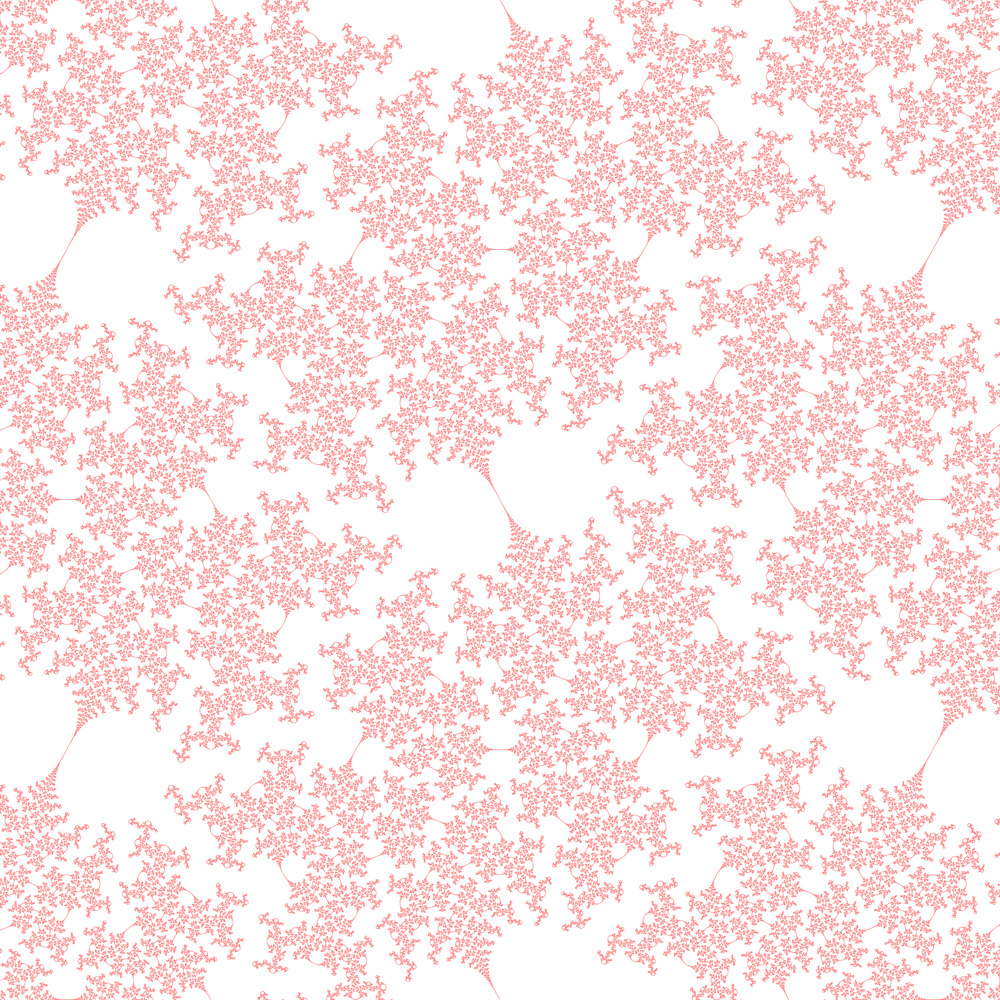}
}
\thinspace
\subfloat[Cohomology fractal.]{
\label{Fig:CTPixelColour}
\includegraphics[width=0.47\textwidth]{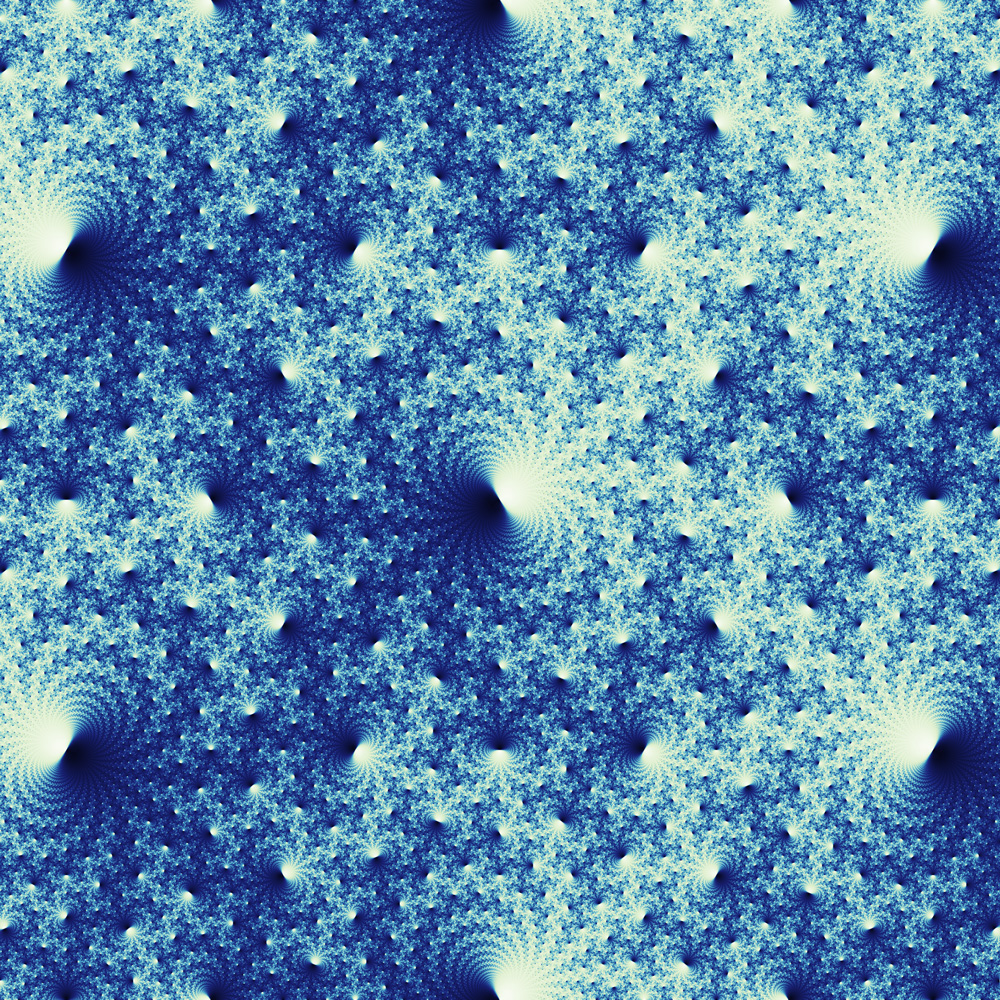}
}

\subfloat[\reffig{CTPixelColour} in black and white.]{
\label{Fig:CTPixelBW}
\includegraphics[width=0.47\textwidth]{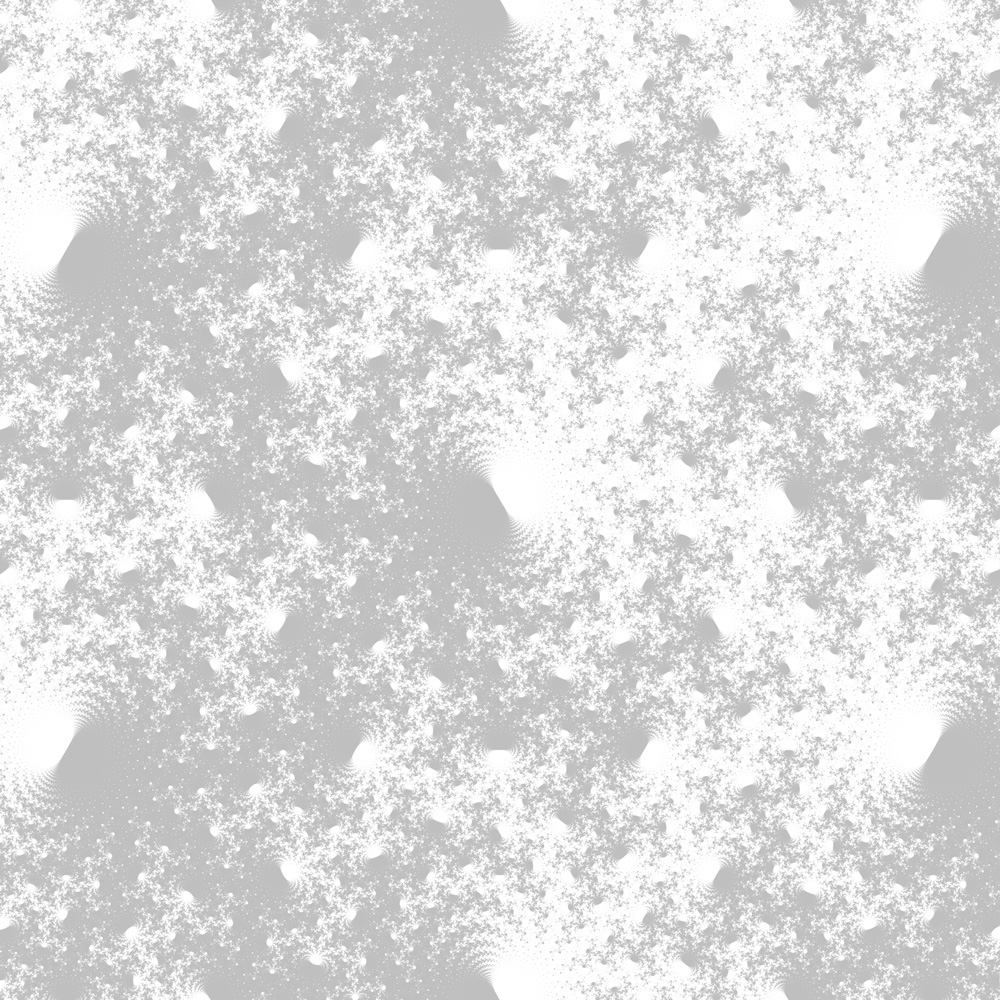}
}
\thinspace
\subfloat[\reffig{CTVector} overlaid on \reffig{CTPixelBW}.]{
\label{Fig:CTPixelBWandVector}
\includegraphics[width=0.47\textwidth]{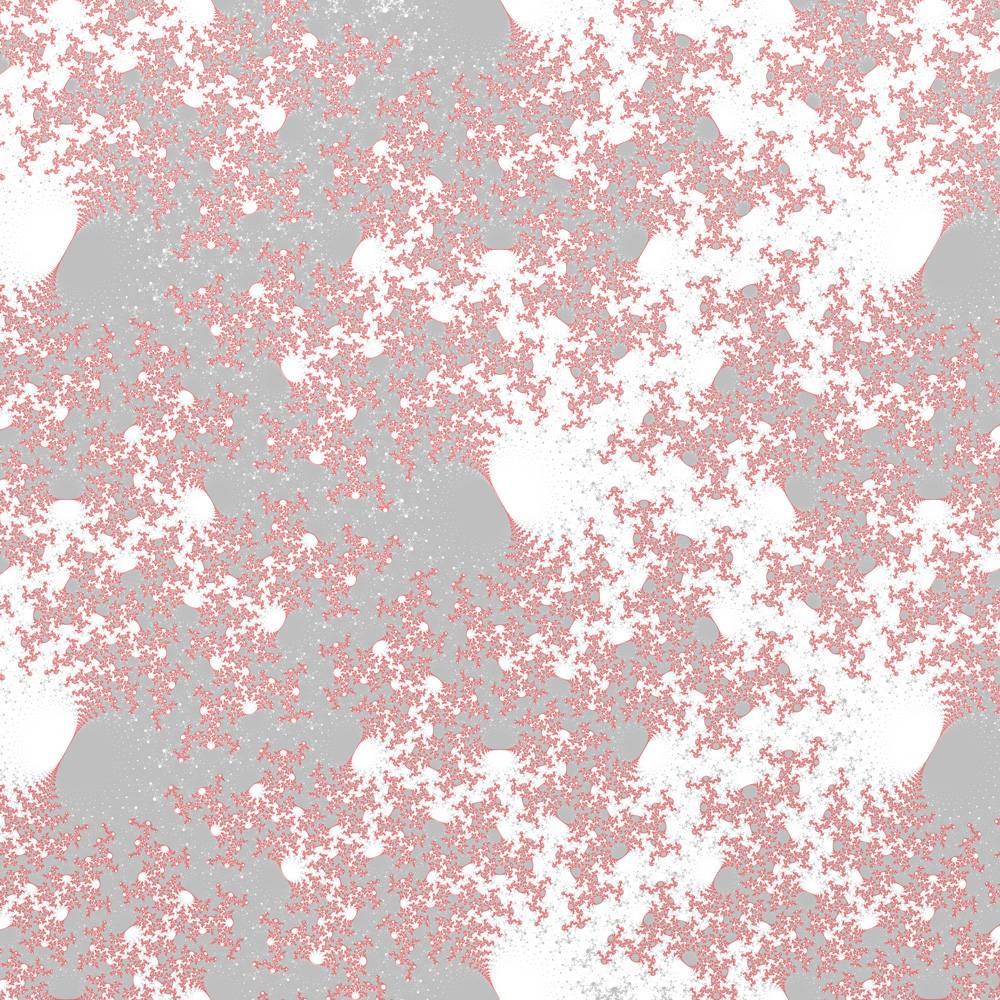}
}

\caption{Matching up Cannon--Thurston map images with the cohomology fractal for \texttt{m004}, the figure-eight knot complement. Compare our \reffig{CTPixelBW} with Figure~10.11 of \emph{Indra's Pearls}~\cite[page 335]{IndrasPearls}, which was produced by paint-filling a vector graphics image~\cite{Wright19}.}
\label{Fig:MatchUp}
\end{figure}

Cannon and Thurston discovered that Peano curves arise naturally in hyperbolic geometry~\cite{CannonThurston07}.  
They proved that for every closed hyperbolic three-manifold, equipped with a fibration over the circle, 
there is a map from the circle to the sphere that is continuous, finite to one, and surjective.  
Furthermore this \emph{Cannon--Thurston map} is equivariant with respect to the action of the fundamental group.  
We review their construction in \refsec{CannonThurston}; \reffig{CTVector} shows an approximation.

In a previous expository paper~\cite{BachmanSchleimerSegerman20}, we introduced \emph{cohomology fractals}; 
these are images arising from a hyperbolic three-manifold $M$ equipped with a cohomology class $[\omega] \in H^1(M; \RR)$.  See~\reffig{CTPixelColour}.
In that paper we gave an overview of the construction; 
we also discussed some of the features of the three-manifold and cohomology class that can be seen in its cohomology fractal.  
We have also written an open-source~\cite{github_cohomology_fractals} real-time web application for exploring these fractals. 
This is available at \url{https://henryseg.github.io/cohomology_fractals/}. 

In the present work we give rigorous definitions of cohomology fractals, we relate them to Cannon--Thurston maps (see \reffig{CTPixelBWandVector}), we give technical details of our implementation, and we discuss their limiting behaviour.  

We now outline the contents of each section of the paper. Note that we include a glossary of notation in \refapp{Notation}.
We begin by reviewing the definitions of ideal and material triangulations, and their hyperbolic geometry in \refsec{Triangulations}. In \refsec{CannonThurston} we define Cannon--Thurston maps. In \refsec{Graphics} we discuss the differences between vector and raster graphics. We also recall a vector graphics algorithm (\refalg{CTApprox}) used in previous work to illustrate Cannon--Thurston maps.

In \refsec{CohomologyFractals} we give several equivalent definitions of the cohomology fractal.
It depends on choices beyond the manifold $M$ and the cohomology class $[\omega]$: 
there is a choice of viewpoint $p \in M$ and a choice of a visual radius $R$.  
The cohomology fractal is a function $\Phi^{\omega,p}_{R} \from \UT{p}{M} \to \RR$.  
Roughly, for each vector $v \in \UT{p}{M}$ we build the geodesic arc $\gamma$ of length $R$ from $p$ in the direction of $v$ and compute $\Phi^{\omega,p}_{R}(v) = \omega(\gamma)$.  
(Note that we repeatedly generalise the definition of the cohomology fractal throughout the paper; 
the decorations alter to remind the reader of the desired context.)

In \reffig{MatchUp}, we see a cohomology fractal closely matching an approximation of a Cannon--Thurston map, as  produced by \refalg{CTApprox}. 
In \refsec{Matching} we prove the following.

\begin{restate}{Proposition}{Prop:LightDark}
Cohomology fractals are dual to approximations of the Cannon--Thurston map.
\end{restate}

Thus we have a new representation of Cannon--Thurston maps. 
We also compare cohomology fractals with the \emph{lightning curves} of Dicks and various coauthors.  
(The name is due to Wright~\cite[page~324]{IndrasPearls}.)
We experimentally observe that the lightning curve corresponds to some of the brightest points of the cohomology fractal. 
  
In \refsec{Implement} we describe the algorithms we use to produce images of cohomology fractals. 
Adding the ability to move through the manifold leads us to separate the viewpoint $p$ from a \emph{basepoint}, denoted $b$, of the cohomology fractal. 
We still trace rays starting at $p$, but then evaluate $\omega$ on any path in $\cover{M}$ from $b$ to the endpoint of $\gamma$. 

We also generalise the above \emph{material} view (with vectors $v$ in $\UT{p}{\cover{M}}$) to the \emph{ideal} and \emph{hyperideal} views (with vectors $v$ being perpendicular to a horosphere or geodesic plane, respectively). Each view is a subset $D \subset \UT{}{\cover{M}}$; our notation for the cohomology fractal becomes $\Phi^{\omega,b,D}_{R} \from D \to \RR$. 

In \refsec{Cone} we discuss cohomology fractals for incomplete and closed manifolds.  
We draw cohomology fractals in the closed case in two ways. First, we deform the cohomology fractal for a surgery parent through Thurston's \emph{Dehn surgery space}. Second, we reimplement our algorithms using material triangulations. We also discuss possible sources of numerical error in our implementations.

In \refsec{Experiments} we give a sequence of experiments exploring the dependence of cohomology fractals on the visual radius $R$. For any fixed $R$, the cohomology fractal is constant on regions with sizes roughly proportional to $\exp(-R)$. As $R$ increases, these regions subdivide, and intricate patterns come into focus. This suggests that there is a limiting object. The following shows that such a limit cannot be a function.

\begin{restate}{Theorem}{Thm:NoPicture}
Suppose that $M$ is a finite volume, oriented hyperbolic three-manifold.  Suppose that $F$ is a transversely oriented surface.  Then the limit 
\[
\lim_{R \to \infty} \Phi_R(v) 
\]
does not exist for almost all $v \in \UT{}{\cover{M}}$.
\end{restate}
Indeed, experimentally, increasing $R$ leads to noisy pictures. However, this is due to undersampling. A heuristic argument (see \refrem{Embiggen}) shows that we can avoid noise if we increase the screen resolution as we increase $R$.  We simulate this by computing \emph{supersampled} images.
These, and further experiments, indicate that in contrast with \refthm{NoPicture}, the \emph{mean} of the cohomology fractal, taken over a pixel, converges.
Its values appear to be normally distributed with standard deviation growing like $\sqrt{R}$.

Motivated by this, in \refsec{CLT}, we show that the cohomology fractal obeys a central limit theorem.

\begin{restate}{Theorem}{Thm:CLT}
Fix a connected, orientable, finite volume, complete hyperbolic three-manifold $M$ and a closed, non-exact, compactly supported one-form $\omega \in \Omega_c^1(M)$. There is $\sigma > 0$ such that for all basepoints $b$, all views $D$ with area measure $\mu_D$, for all probability measures $\nu_D\ll \mu_D$, and for all $\alpha\in\RR$, we have
\[
\lim_{T\to\infty} \nu_D\left[ v\in D : \frac{\Phi_T(v)}{\sqrt{T}} \leq \alpha \right] = \int_{-\infty}^\alpha \frac{1}{\sigma\sqrt{2\pi}} e^{-(s/\sigma)^2/2}\, ds
\]
where $\Phi_T=\Phi^{\omega,b,D}_{T}$ is the associated cohomology fractal.
\end{restate}

\noindent 
That is, if we regard the cohomology fractal across a pixel as a random variable, divide it by $\sqrt{T}$, and take the limit, the result is a normal distribution of mean zero.
The standard deviation of the normal distribution only depends on the manifold and cohomology class. 
The proof uses Sinai's central limit theorem for geodesic flows.

In \refsec{Pixel}, we prove that treating the cohomology fractals as \emph{distributions} gives a well-defined limit.
In this introduction, for simplicity, we focus on the case where $D$ is a material view. 
The \emph{pixel theorem} (\refthm{Pixel}) states that the limit
\[
\Phi^{\omega,b,D}(\eta) = \lim_{T\to\infty} \int_{D} \Phi^{\omega,b,D}_{T} \cdot \eta	
\]
is well-defined for any two-form $\eta \in \Omega^2(D)$.
 \refthm{Pixel} also states various transformation laws relating, for example, the distributions corresponding to different views. 
Thus there is a view-independent distribution related to the view-dependent distributions via the conformal isomorphism $i_D$ from $D$ to $\bdy \cover{M}$.

\begin{restate}{Corollary}{Cor:BoundaryDistribution}
Suppose that $M$ is a connected, orientable, finite volume, complete hyperbolic three-manifold.  
Fix 
a closed, compactly supported one-form $\omega \in \Omega_c^1(M)$ and 
a basepoint $b \in \cover{M}$.
Then there is a distribution $\Phi^{\omega,b}$ on $\bdy_\infty \cover{M}$ so that, 
for any material view $D$ and for any $\eta \in \Omega^2(D)$, we have
\[
\Phi^{\omega,b,D}(\eta) = 
\Phi^{\omega, b}((i_D^{-1})^*\eta)
\]

\end{restate}

The above discussion addresses smooth test functions. 
We can also prove convergence for a wider class of test functions; 
these include the indicator functions of regions with piecewise smooth boundary. 
However, we do not know whether or not the cohomology fractal converges to a measure.

We conclude with a few questions and directions for future work in \refsec{Questions}.

\subsection*{Acknowledgements}

This material is based in part upon work supported by the National Science Foundation under Grant No. DMS-1439786 and the Alfred P. Sloan Foundation award G-2019-11406 while the authors were in residence at the Institute for Computational and Experimental Research in Mathematics in Providence, RI, during the Illustrating Mathematics program.
The fourth author was supported in part by National Science Foundation grant DMS-1708239.

We thank Fran\c{c}ois Gu\'eritaud for suggesting we use ray-tracing to generate cohomology fractals.
We thank Curt McMullen for suggesting that \refthm{Pixel} should be true and also for giving us permission to reproduce \reffig{McMullen}.
We thank Mark Pollicott and Alex Kontorovich for guiding us through the literature on exponential mixing of the geodesic flow. 
We thank Ian Melbourne for enlightening conversations on central limit theorems.
We thank the anonymous referee for many helpful comments and corrections.

\section{Triangulations}
\label{Sec:Triangulations}

We briefly review the notions of material and ideal triangulations of three-manifolds.

\subsection{Combinatorics}

Suppose that $M$ is a compact, connected, oriented three-manifold. 
We will consider two cases.  Either 
\begin{itemize}
\item
the boundary $\bdy M$ is empty; here we call $M$ \emph{closed}, or  
\item
the boundary is non-empty, consisting entirely of tori; here we call $M$ \emph{cusped}. 
\end{itemize}

Suppose that $\calT$ is a \emph{triangulation}: that is, a collection of oriented model tetrahedra together with a collection of orientation-reversing face pairings.  We allow distinct faces of a tetrahedron to be glued, but we do not allow a face to be glued to itself.  The quotient space, denoted $|\calT|$, is thus a CW--complex which is an oriented three-manifold away from its zero-skeleton.  We say that $\calT$ is a \emph{material} triangulation of $M$ if there is an orientation-preserving homeomorphism from $|\calT|$ to $M$.  We say that $\calT$ is an \emph{ideal} triangulation of $M$ if there is an orientation-preserving homeomorphism from $|\calT|$, minus a small open neighbourhood of its vertices, to $M$. Equivalently, $|\calT|$ minus its vertices is homeomorphic to $M^\circ$, the interior of $M$.

\begin{example}
Suppose that $M$ is obtained from $S^3$ by removing a small open neighbourhood of the figure-eight knot.  See \reffig{FigureEightKnot}.  As discussed in~\cite[Chapter 1]{ThurstonNotes}, the knot exterior $M$ has an ideal triangulation with two tetrahedra.  See \reffig{FigureEightTriangulation}.  Here we have not truncated the model tetrahedra.  Instead we draw the vertex link; in this case it is a torus in $M$. 
\end{example}

\begin{figure}[htb]
\centering
\subfloat[{Tube around the figure-eight knot.}]{
\label{Fig:FigureEightKnot}
\includegraphics[width = 0.33\textwidth]{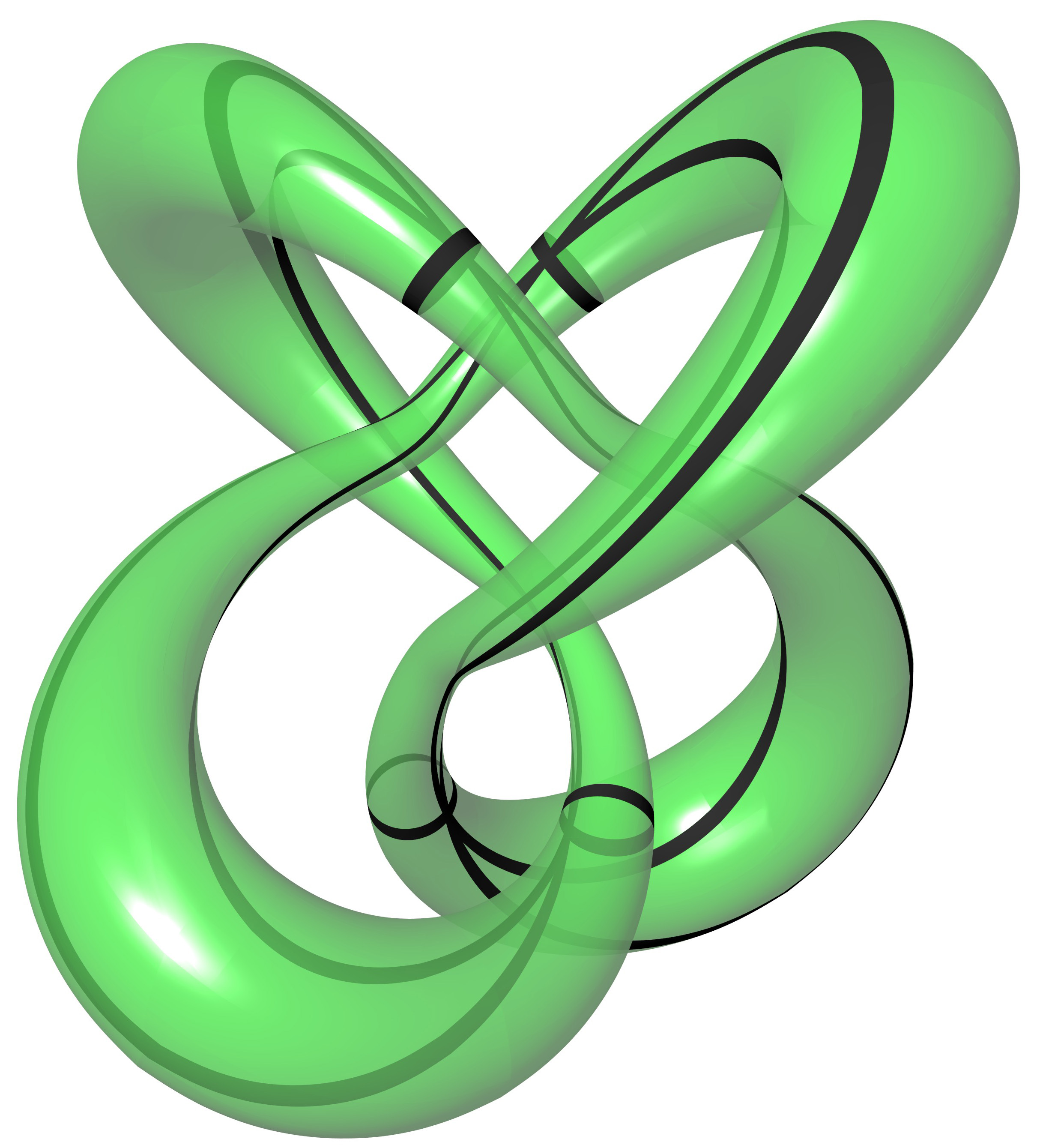}
}%
\hfill%
\subfloat[Triangulation of the figure-eight knot complement.  The colours and arrows indicate the gluings.]{
\label{Fig:FigureEightTriangulation}
\includegraphics[width = 0.61\textwidth]{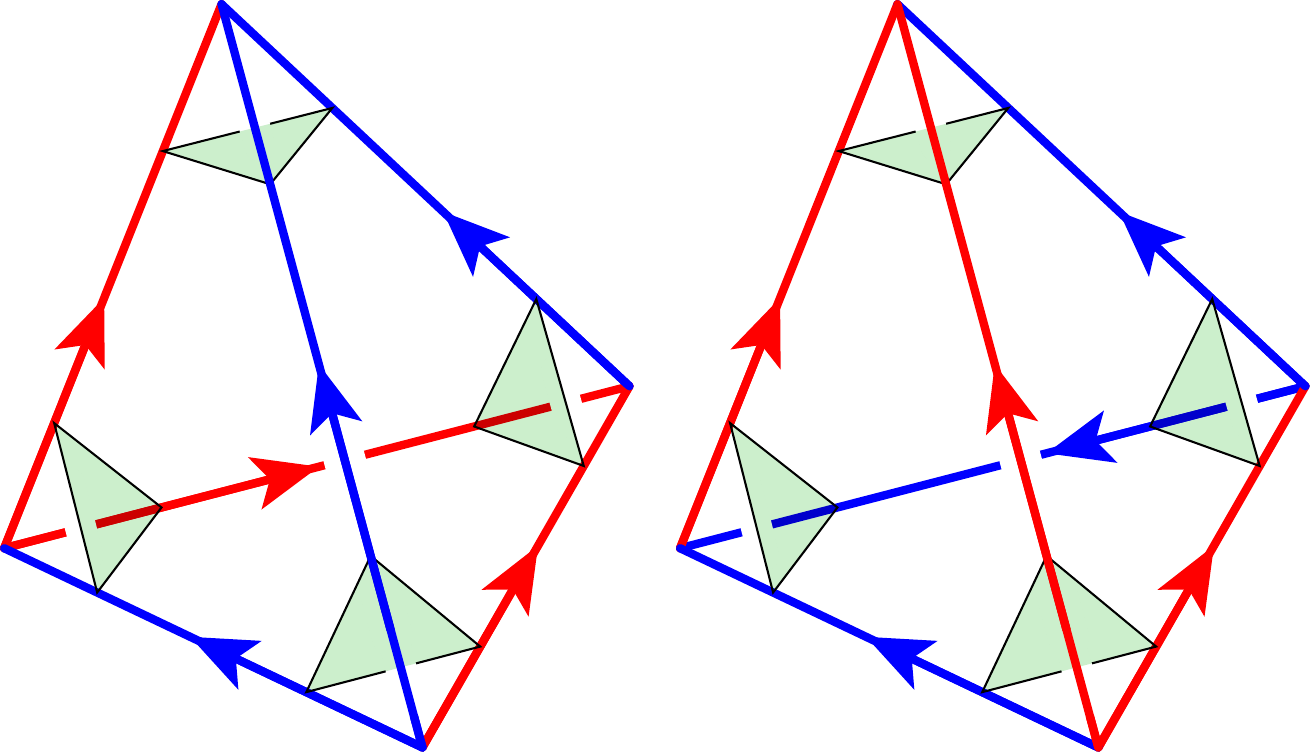}
}
\caption{The figure-eight knot complement. This manifold is known as \texttt{m004} in the SnapPy census. The black lines in \reffig{FigureEightKnot} cut the tube into triangles, corresponding to the eight green triangles in \reffig{FigureEightTriangulation}.}
\end{figure}

\subsection{Geometry}
\label{Sec:Geometry}

We deal with the geometry of the two types of triangulations separately.

\subsubsection{Ideal triangulations}

We give each model ideal tetrahedron $t$ a hyperbolic structure. 
That is, we realise $t$ as an ideal tetrahedron in $\HH^3$ with geodesic faces. 
This can be constructed as the convex hull of four points on $\bdy_\infty \HH^3$.
We require that the face pairings be orientation reversing isometries. 
For these hyperbolic tetrahedra to combine to give a complete hyperbolic structure on the manifold $M^\circ$ requires certain conditions to be satisfied. 
Very briefly: consider a loop in the dual one-skeleton of the triangulation.  
This visits the tetrahedra in some order.  The product of the corresponding sequence of isometries must give the identity if the loop is trivial in the fundamental group.  If the loop is peripheral then the product must be a parabolic element.

These conditions reduce to a finite set of algebraic constraints.  These are Thurston's \emph{gluing equations}, see~\cite[Section~4.2]{ThurstonNotes} and~\cite[Section~4.2]{PurcellKnotTheory}. Using the upper half-space model of $\HH^3$ we define the \emph{shape} of each ideal hyperbolic tetrahedron to be the cross-ratio of its four ideal points.
The gluing equations impose a finite number of polynomial conditions on these shapes. 

For our implementation, we also require that the shapes have positive imaginary part.  This ensures that the ideal hyperbolic tetrahedra glue together to give a complete, finite volume hyperbolic structure on $M^\circ$.  Furthermore, the model orientations of all of the tetrahedra agree with the orientation on $M$.  In particular, when a geodesic ray crosses a face, it has a sensible continuation.  

\subsubsection{Material triangulations}
\label{Sec:MaterialGeometry}
To find a hyperbolic structure for material triangulations, we replace Thurston's gluing equations with a construction due to Andrew Casson \cite{casson:geo} and Damian Heard \cite{Orb,heardThesis}.
To specify a hyperbolic structure on a material triangulation, it suffices to assign lengths to its edges. This is because the isometry class of an oriented material hyperbolic tetrahedron is determined by its six edge lengths.

There are two conditions that must be satisfied. First, for each model tetrahedron, there is a collection of inequalities that must be satisfied for its edges. Second, for each edge of the triangulation, the dihedral angles about it must sum to $2\pi$.

If these inequalities and equalities hold, then we obtain a hyperbolic structure on the three-manifold. See \cite[Section~2]{matthiasVerifyingFinite} for further details.

\section{Cannon--Thurston maps}
\label{Sec:CannonThurston}

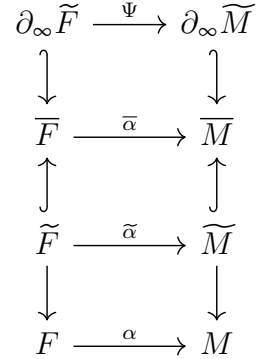
\begin{wrapfigure}[15]{r}{0.36\textwidth}
\vspace{-21pt}
\begin{minipage}{0.355\textwidth}
\[
\begin{tikzcd}
\bdy_\infty \cover{F} \arrow[r, "\Psi"] \arrow[d, hook'] & \bdy_\infty \cover{M}  \arrow[d, hook']  \\ 
\closure{F}  \arrow[r, "\closure{\alpha}"] & \closure{M} \\ 
\cover{F}  \arrow[r, "\cover{\alpha}"]  \arrow[u, hook] \arrow[d] & \cover{M} \arrow[u, hook] \arrow[d] \\ 
F \arrow[r, "\alpha"] & M
\end{tikzcd}
\]
\end{minipage}
\caption{The various spaces and maps involved in constructing the Cannon--Thurston map $\Psi$.}
\label{Fig:CannonThurston}
\end{wrapfigure}

Here we sketch Cannon and Thurston's construction; see \reffig{CannonThurston} for an overview. 
We refer to~\cite{CannonThurston07} for the details. See also~\cite{Mj18}.

Suppose that $F=F^2$ and $M=M^3$ are connected, compact, oriented two- and three-manifolds.  Suppose that $F^\circ$ and $M^\circ$ admit complete hyperbolic metrics of finite area and volume respectively. In an abuse of notation, we will conflate $F$ with $F^\circ$, and similarly $M$ with $M^\circ$.
We call a proper embedding $\alpha \from F \to M$ a \emph{fibre} if there is a map $\rho \from M \to S^1$ so that for all $t \in S^1$ the preimage $\rho^{-1}(t)$ is a surface properly isotopic to $\alpha(F)$. 

Let $\cover{F}$ and $\cover{M}$ be the universal covers of $F$ and $M$ respectively. Since $F$ and $M$ are hyperbolic, their covers are identified with hyperbolic two- and three-space respectively. Let $\bdy_\infty \cover{F} \homeo \bdy_\infty\HH^2 \homeo S^1$ and $\bdy_\infty \cover{M} \homeo \bdy_\infty\HH^3 \homeo S^2$ be their ideal boundaries. 
We set 
\[
\closure{F} = \cover{F} \cup \bdy_\infty \cover{F} \qquad \mbox{and} \qquad \closure{M} = \cover{M} \cup \bdy_\infty \cover{M}
\]
Each union is equipped with the unique topology that makes the group action continuous.  
Note that $\closure{F}$ and $\closure{M}$ are homeomorphic to a closed two- and three-ball, respectively. 

We compose the covering map from $\cover{F} \to F$ with the embedding $\alpha$ and then lift to obtain an equivariant map $\cover{\alpha} \from \cover{F} \to \cover{M}$.   
We call $\cover{\alpha}$ an \emph{elevation} of $F$.
\reffig{Elevation} shows an elevation of the fibre of the figure-eight knot complement.

Cannon and Thurston gave the first proof of the following theorem in the closed case~\cite[page 1319]{CannonThurston07}.  
The cusped case follows from work of Bowditch~\cite[Theorem 0.1]{Bowditch07}.

\begin{theorem}
\label{Thm:CannonThurston}
Suppose $M$ is a connected, oriented, finite volume hyperbolic three-manifold. 
Suppose that $\alpha \from F \to M$ is a fibre of a surface bundle structure on $M$. 
Then there is an extension of $\cover{\alpha}$ to a continuous and equivariant (with respect to the fundamental group of $M$) map $\closure{\alpha} \from \closure{F} \to \closure{M}$. 
The restriction of $\closure{\alpha}$ to $\bdy_\infty \cover{F}$ gives a sphere-filling curve. \qed
\end{theorem}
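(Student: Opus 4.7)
The plan is to exploit the pseudo-Anosov structure of the monodromy. Since $M$ admits a hyperbolic structure and fibres over $S^1$, the monodromy $\phi \from F \to F$ is (isotopic to) a pseudo-Anosov homeomorphism, carrying a pair of transverse invariant singular measured foliations $\calF^s, \calF^u$ with dilatation $\lambda > 1$. The universal cover $\cover{M}$ fibres over $\RR$, with fibres isometric to $\HH^2 \homeo \cover{F}$, and $\phi$ lifts to a shift $\sigma \from \cover{M} \to \cover{M}$ moving fibres by one step. Pulling $\calF^s$ and $\calF^u$ back to a model fibre $\cover{F}_0 \subset \cover{M}$ yields two transverse measured laminations whose generic leaves have well-defined endpoints on $\bdy_\infty \cover{F}$.

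The heart of the argument is a distortion estimate comparing intrinsic and extrinsic geometry. Using the fact that $\sigma$ contracts leaves of $\cover{\calF^s}$ by $\lambda^{-1}$ and expands leaves of $\cover{\calF^u}$ by $\lambda$, I would show that any leafwise arc of length $\ell$ in $\cover{F}_0$ can be deformed through $\cover{M}$, by flowing up along $\sigma$ until it becomes short and then capping off by a geodesic in the new fibre, to a path of length on the order of $\log \ell$. Concatenating such short deformations along a geodesic segment of $\cover{F}_0$ yields paths in $\cover{M}$ whose length is sublinear in the intrinsic $\HH^2$-distance; equivalently, the $\cover{\alpha}$-image of any leaf-transverse $\HH^2$-geodesic in $\cover{F}_0$ is a uniform quasi-geodesic in $\cover{M}$. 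Establishing this estimate is the principal obstacle: it requires careful geometric control near the singular points of the foliations and, in the cusped case, near the vertex links, together with uniformity in the choice of starting fibre.

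Given the estimate, the extension is defined by $\closure{\alpha}(\xi) = \lim_n \cover{\alpha}(x_n)$ for any sequence $x_n \to \xi$ in $\closure{F}$. The estimate implies that two sequences converging to the same $\xi$ can, after being flowed up to a common fibre, be joined by paths of uniformly bounded length in $\cover{M}$, so the limit point is independent of the chosen sequence. Continuity on $\closure{F}$ follows because shrinking round neighbourhoods of $\xi$ in $\bdy_\infty \cover{F}$ map, via $\closure{\alpha}$, into nested subsets of $\closure{M}$ whose diameters tend to zero. Equivariance is inherited from $\cover{\alpha}$ since limits in $\closure{M}$ are preserved by the isometric action of $\pi_1(F) \subset \pi_1(M)$.

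Finally, the image $\closure{\alpha}(\bdy_\infty \cover{F})$ is a non-empty, closed, $\pi_1(F)$-invariant subset of $\bdy_\infty \cover{M} \homeo S^2$. Since $\pi_1(F)$ is a non-elementary normal subgroup of the non-elementary group $\pi_1(M)$, its limit set is all of $\bdy_\infty \cover{M}$. Hence the restriction of $\closure{\alpha}$ to $\bdy_\infty \cover{F} \homeo S^1$ is a continuous surjection from a circle onto a sphere, \ie\ a Peano curve, as required.
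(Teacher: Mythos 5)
The paper does not prove \refthm{CannonThurston}; it is cited to Cannon--Thurston~\cite{CannonThurston07} (closed case) and Bowditch~\cite{Bowditch07} (cusped case). So there is no in-paper argument to compare against, but your sketch can be assessed on its own terms. The overall shape --- use the pseudo-Anosov structure and suspension flow, then deduce sphere-filling from the normality of $\pi_1(F)$ --- is the Cannon--Thurston strategy, and your final paragraph on surjectivity is correct. The middle of the argument, however, contains both an outright error and a real gap.

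The word ``equivalently'' hides a contradiction. You correctly estimate that the endpoints of a fibre geodesic of intrinsic length $\ell$ lie at $\cover{M}$-distance on the order of $\log\ell$; this is exponential distortion of $\cover{F}$ inside $\cover{M}$, and it is the \emph{opposite} of quasi-geodesicity. Since $\cover{\alpha}$ is bi-Lipschitz onto its image (the intrinsic hyperbolic metric on $F$ and the metric induced from $M$ are comparable), a fibre geodesic of length $\ell$ maps to a path of length $\asymp\ell$ in $\cover{M}$. If that path were a $(K,C)$-quasi-geodesic, its endpoints would lie at $\cover{M}$-distance at least $\ell/K - C$; combined with your $O(\log\ell)$ upper bound this gives $\ell \lesssim \log\ell$, which is absurd for large $\ell$. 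For a second reason the claim cannot hold: if leaf-transverse fibre geodesics were uniform quasi-geodesics, $\cover{F}$ would be quasiconvex in $\cover{M}$, so the limit set of $\pi_1(F)$ would be a topological circle rather than all of $\bdy_\infty\cover{M} \homeo S^2$, contradicting the conclusion you are trying to prove.

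More seriously, even the correct log-distortion estimate does not supply what the extension step needs. Your estimate is an \emph{upper} bound --- far-apart points in $\cover{F}$ become close in $\cover{M}$ --- but well-definedness and continuity of $\closure{\alpha}$ require a quantitative \emph{lower} bound of a different kind: for every $R$ there is $N$ so that any geodesic of $\cover{F}$ lying outside the $N$-ball about a basepoint has its $\cover{M}$-geodesic representative lying outside the $R$-ball. This ``escape to infinity'' criterion (made explicit by Mj~\cite{Mj18}) is exactly what makes your asserted sentence ``shrinking round neighbourhoods of $\xi$ \ldots map into nested subsets of $\closure{M}$ whose diameters tend to zero'' true; it is the entire analytic content of the theorem, it does not follow from the distortion bound, and your sketch does not indicate how to obtain it --- in particular near the singularities of the foliations, and near the cusps, where (as the paper notes) the result is due to Bowditch rather than to Cannon--Thurston.
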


We will use the notation $\Psi \from \bdy_\infty \cover{F} \to \bdy_\infty \cover{M}$ for the restriction of $\closure{\alpha}$ to $S^1_\infty$. 
We call this a \emph{Cannon--Thurston map}.  
We now turn to the task of visualising $\Psi$.

\section{Illustrating Cannon--Thurston maps}
\label{Sec:Graphics}

The standard joke (see~\cite[page~373]{Thurston82} and~\cite[page~335]{IndrasPearls}) is that it is straightforward to draw an accurate picture of a Cannon--Thurston map; it is solid black.

\begin{figure}[htb]
\centering
\subfloat[An elevation of the fibre. The fibre is a pleated surface made from two ideal triangles. The three pleating angles are $\pi/3$, $\pi$, and $5\pi/3$.]{
\includegraphics[width = 0.47\textwidth]{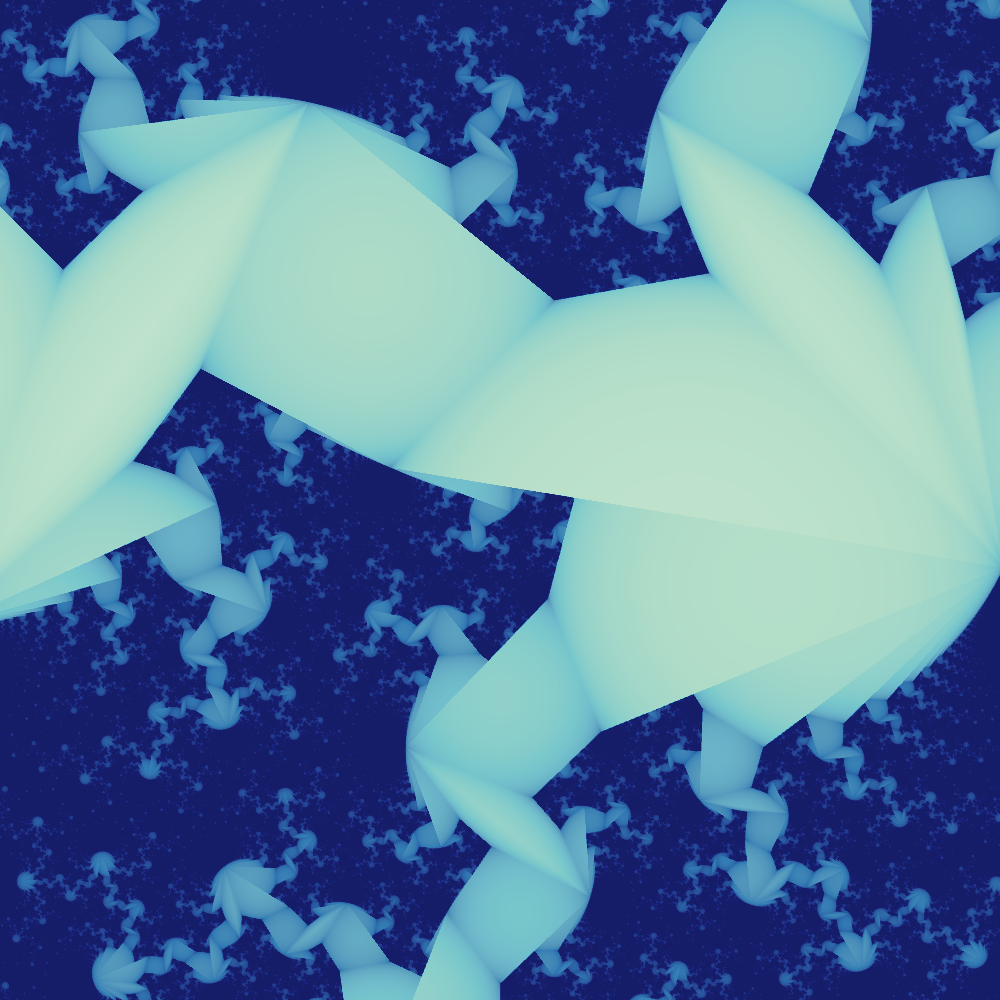}
\label{Fig:Elevation}
}
\subfloat[An approximation of the Cannon--Thurston map, reproduced from Figure~8 of~\cite{Thurston82}.]
{
\includegraphics[width = 0.47\textwidth]{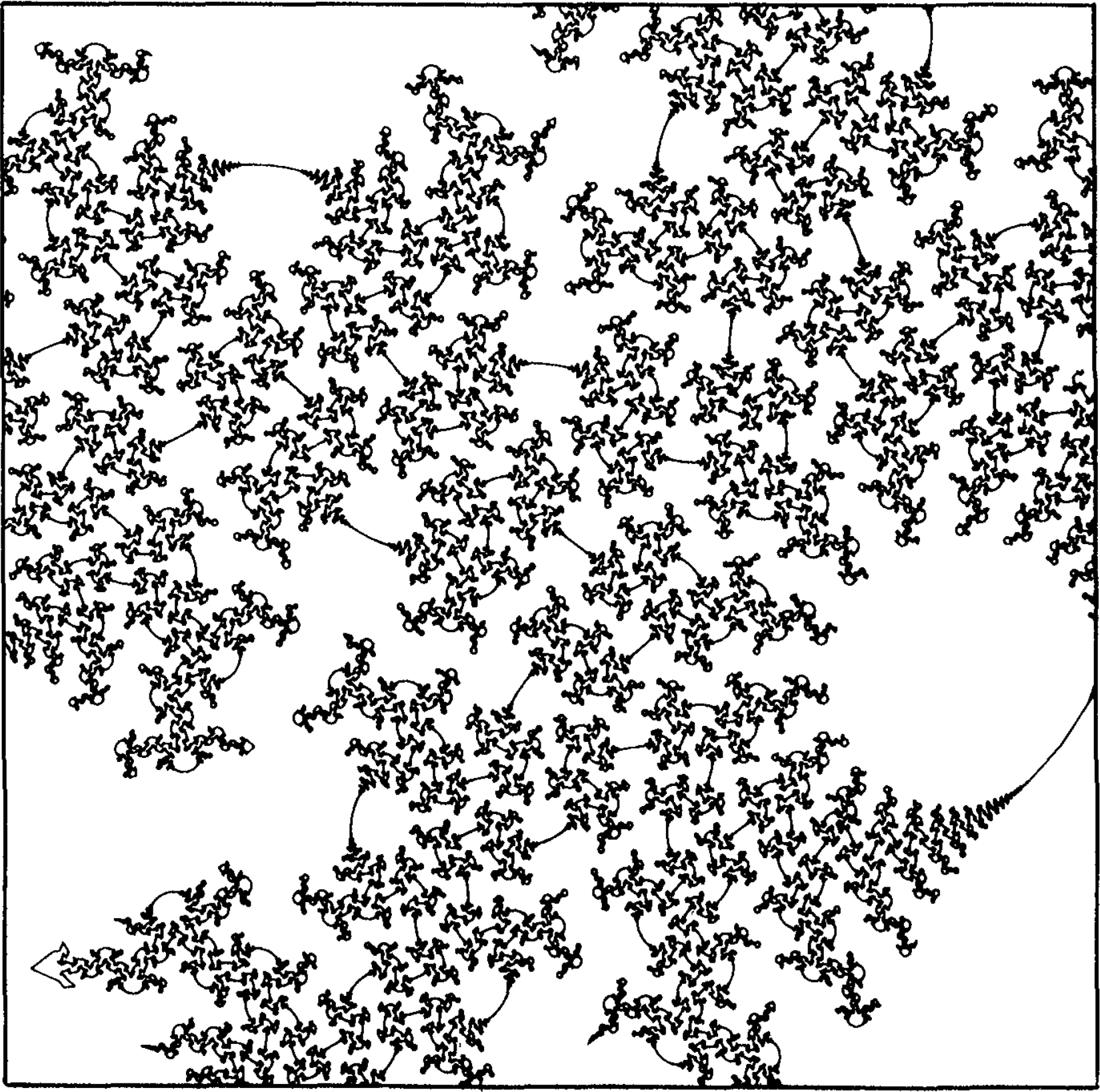}
\label{Fig:Thurston}
}
\caption{Views in the universal cover of the figure-eight knot complement.  }
\end{figure}

The first (more instructive) illustration of a Cannon--Thurston map is due to Thurston. He gives a sequence of approximations to the sphere-filling curve in~\cite[Figure~8]{Thurston82}.
We reproduce the last of these in \reffig{Thurston}. A striking version of this image by Wright also appears in \emph{Indra's Pearls}~\cite[Figure~10.11]{IndrasPearls}. 
In this example both $M$ and $F$ are non-compact; $M$ is the complement of the figure-eight knot and $F$ is a Seifert surface.  

\subsection{Vector and raster graphics}

In this section, we outline the technique used by Thurston and Wright to generate images of Cannon--Thurston maps,
in order to contrast it with our algorithm.

Our algorithm generates an image by producing a colour for each pixel on a screen. In other words, its output is a map from a grid of pixels in the \emph{image plane} into a space of possible colours. We call such a map a \emph{raster graphics image}.

In contrast, \refalg{CTApprox} (below) produces \emph{vector graphics} -- that is a description of an image as a collection of various primitive objects in the (euclidean) image plane. 
An example of a primitive is a line segment, specified by the coordinates of its end points. 
Other primitives include arcs, circles, and so on. 
Note that we generally need to convert vector graphics to raster graphics to make a physical representation of an image. 
To \emph{rasterise} a vector graphics image, we need to decide which pixels are coloured by which primitives. 
For example, a disk colours all of the pixels whose coordinates are close enough to the centre of the disk. 
Rasterisation is necessary for most output devices, such as screens or printers. 
The exceptions include plotters, laser cutters, and cathode-ray oscilloscopes. 
There are advantages to deferring rasterisation and saving the vector graphics to a file (in the PDF, PostScript, or SVG format, for example). For example, deferred rasterisation can take the resolution of the output device into account. Design programs such as Inkscape and Adobe Illustrator allow editing the geometric primitives in a vector graphics image. Rasterisation is usually carried out by a black-box general purpose algorithm, the details of which are hidden from the user.

\begin{algorithm}
\label{Alg:CTApprox} 
(Approximate a Cannon--Thurston map)
We are given a fibre $F$ of the three-manifold $M$. 
We choose an elevation $\cover{F} \subset \cover{M}$ of $F$. 
As described in \refsec{CannonThurston}, the map 
\[
\Psi \from S^1 \homeo \bdy_\infty \cover{F} \to \bdy_\infty \cover{M} \homeo S^2
\]
is sphere-filling. 

To approximate $\Psi$, we first choose a large disk $D \subset \cover{F}$.  
Typically, $M$ is described with an ideal triangulation $\calT$, with $F$ realised as a surface carried by the two-skeleton $\calT^{(2)}$.  
Therefore $\cover{F}$ is given as a surface carried by $\cover{\calT}^{(2)}$.  
The disk $D$ then consists of some finite collection of ideal hyperbolic triangles in $\cover{\calT}^{(2)}$.  
The boundary of $D$ consists of a loop of geodesics in $\HH^3 \cup \bdy_\infty \HH^3$.  
We now define $\Psi_D$ to be the loop in $\bdy_\infty \HH^3$ obtained by projecting each arc of $\bdy D$ to an arc in $\bdy_\infty \HH^3$. 
\end{algorithm} 

Note that the algorithm produces a circularly ordered collection of points in $\bdy_\infty \HH^3$ spanning geodesics in $\HH^3$. 
However, conventional vector graphics require primitives to be in the euclidean plane. 
Thus, we must make two choices of projections. 
The first projection from $\HH^3$ to $\bdy_\infty\HH^3$ takes the geodesics to arcs in $\bdy_\infty\HH^3$ and the second projection takes these arcs in $\bdy_\infty\HH^3$ to arcs in the euclidean image plane. 

We draw our pictures in the ``ideal view''.  
That is, we use the upper half space model of $\HH^3$ and project $\bdy D$ down to $\CC$ (viewed as the boundary of $\HH^3$). 
The arcs between vertices now simply become straight lines in $\CC$.  
This is also the choice made by Thurston in~\reffig{Thurston}, as well as Wada in his program \emph{OPTi}~\cite{opti}.   
Other depictions by McMullen~\cite{McMullenWeb} and Calegari~\cite[Figure~1.14]{Calegari07}
project outward from the origin to the boundary of the Poincar\'e ball model of $\HH^3$ (and then to the image plane using a perspective or orthogonal projection).  

\begin{remark}
The images in \emph{Indra's Pearls}~\cite{IndrasPearls} have been rasterised using a customised rasteriser that illustrates further features of the Cannon--Thurston map. For example, one side of the polygonal path of line segments is filled, or 
the line segments are coloured using some combinatorial condition. See Figures~10.11 and~10.13 of~\cite{IndrasPearls}. 
\end{remark}

\subsection{Motivating raster graphics}

Our work here began when we asked if we could avoid vector graphics when illustrating Cannon--Thurston maps.  
This is less natural, but would allow us to take advantage of extremely fast graphics processing unit (GPU) calculation.
We were inspired in part by work of Vladimir Bulatov~\cite{Bulatov18} and also of Roice Nelson and the fourth author~\cite{visualizing_hyperbolic_honeycombs}, using reflection orbihedra. (See also the work of Peter Stampfli~\cite{Stampfli19}.)
They all use raster graphics strategies to draw tilings of $\HH^2$ and $\HH^3$. 

A number of others have also used raster graphics to explore kleinian groups, 
outside of the setting of reflection orbihedra.  
They include Peter Liepa~\cite{Liepa11}, Jos Leys~\cite{Leys17}, and Abdelaziz Nait Merzouk~\cite{Knighty} (also see~\cite{Christensen}).

\section{Cohomology fractals}
\label{Sec:CohomologyFractals}

We give a sequence of more-or-less equivalent definitions of cohomology fractals, beginning with the conceptually simplest (for us), and moving towards versions that are most convenient for our implementation or our proofs.
Fixing notation, we take $M$ to be a riemannian manifold and $\cover{M}$ its universal cover.
We also take $X = X_M = \UT{}{M}$ and $\cover{X} = \UT{}{\cover{M}}$ to be their unit tangent bundles.
Take $\pi \from \cover{X} \to \cover{M}$ to be the bundle map.  

\begin{definition}
\label{Def:Basic}
Suppose that we are given the following data. 
\begin{itemize}
\item A connected, complete, oriented riemannian manifold $M^n$, 
\item a cocycle $\omega \in Z^1(M; \ZZ)$, 
\item a point $p \in M$, and 
\item a radius $R \in \RR_{>0}$.
\end{itemize}

From these, we define the \emph{cohomology fractal} on the unit sphere $\UT{p}{M} \homeo S^{n-1}$. 
This is a function $\Phi_R = \Phi^{\omega,p}_{R} \from \UT{p}{M} \to \ZZ$ defined as follows.
 
Suppose that $v \in \UT{p}{M}$ is a unit tangent vector.  
Let $\gamma$ be the unique geodesic segment starting at $p$ with initial direction $v$ and of length $R$.  
Let $q$ be the endpoint of $\gamma$. 
Choose any shortest path $\gamma'$ from $q$ to $p$ (on a set of full measure $\gamma'$ is unique). 
Thus $\gamma \cup \gamma'$ is a one-cycle. 
We define $\Phi_R(v) = \omega(\gamma \cup \gamma')$.
\end{definition}

Our next definition moves in the direction of concrete examples:

\begin{definition}
\label{Def:Dual}
Here we further assume that $M$ is a three-manifold.  
Let $F \subset M$ be a properly embedded, transversely oriented surface.
We choose $F$ so that $p \notin F$.
We define $\gamma$ and $q$ as above.
Now take $\gamma'$ to be the shortest path from $q$ to $p$ in the complement of $F$. 
We now define $\Phi_R(v) = \Phi^{F,p}_{R}(v)$ to be the algebraic intersection number between $F$ and $\gamma \cup \gamma'$.
\end{definition}

We modify once again to obtain a definition very close to our implementation. 

\begin{definition}
\label{Def:UsingTriangulation}
We equip $M$ with a material (or ideal) triangulation $\calT$. 
We properly homotope the surface $F$ to lie in the two-skeleton $\calT^{(2)}$. 
For each face $f$ this gives us a weight $\omega(f)$.
This is the signed number of sheets of $F$ running across $f$.
We dispense with $\gamma'$; 
we take $\Phi_R(v)$ to be the sum of the weighted intersections between $\gamma$ and the faces of the triangulation.
\end{definition}

To aid in comparing cohomology fractals to Cannon--Thurston maps (in \refsec{Matching}), we lift to the universal cover, $\cover{M}$.

\begin{definition}
\label{Def:UniversalCover}
Since cochains pull back, let $\cover{\omega}$ be the lift of $\omega$. 
Let $\cover{p}$ be a fixed lift of the point $p$.
Since $\cover{\omega}$ is a coboundary, it has a primitive, say $W$; we choose $W$ so that $W(\cover{p}) = 0$.
We form $\cover{\gamma}$ as before and let $\cover{q}$ be its endpoint. 
We define $\Phi_R(v) = W(\cover{q})$.
\end{definition}

To analyse the behaviour of the cohomology fractal as $R$ tends to infinity, we rephrase our definition in a dynamical setting. Here, the radius $R$ is replaced by a time $T$. 

\begin{definition}
\label{Def:OneForm}
Suppose that $\omega \in \Omega^1(M,\RR)$ is a closed one-form. 
Let $\varphi_t \from \UT{}{M} \to \UT{}{M}$ be the geodesic flow for time $t$. We define 
\[
\Phi_T(v) = \Phi_{T}^{\omega,p}(v) = \int_0^T \omega(\varphi_t(v))\, dt \qedhere
\]
\end{definition}

In a slight abuse of notation, we also use $\varphi_t$ to denote the geodesic flow on $\UT{}{\cover{M}}$. 

\begin{figure}[htb!]
\centering
\subfloat[$R=e^{0.5}$]{
\label{Fig:exp0.5}
\includegraphics[width=0.47\textwidth]{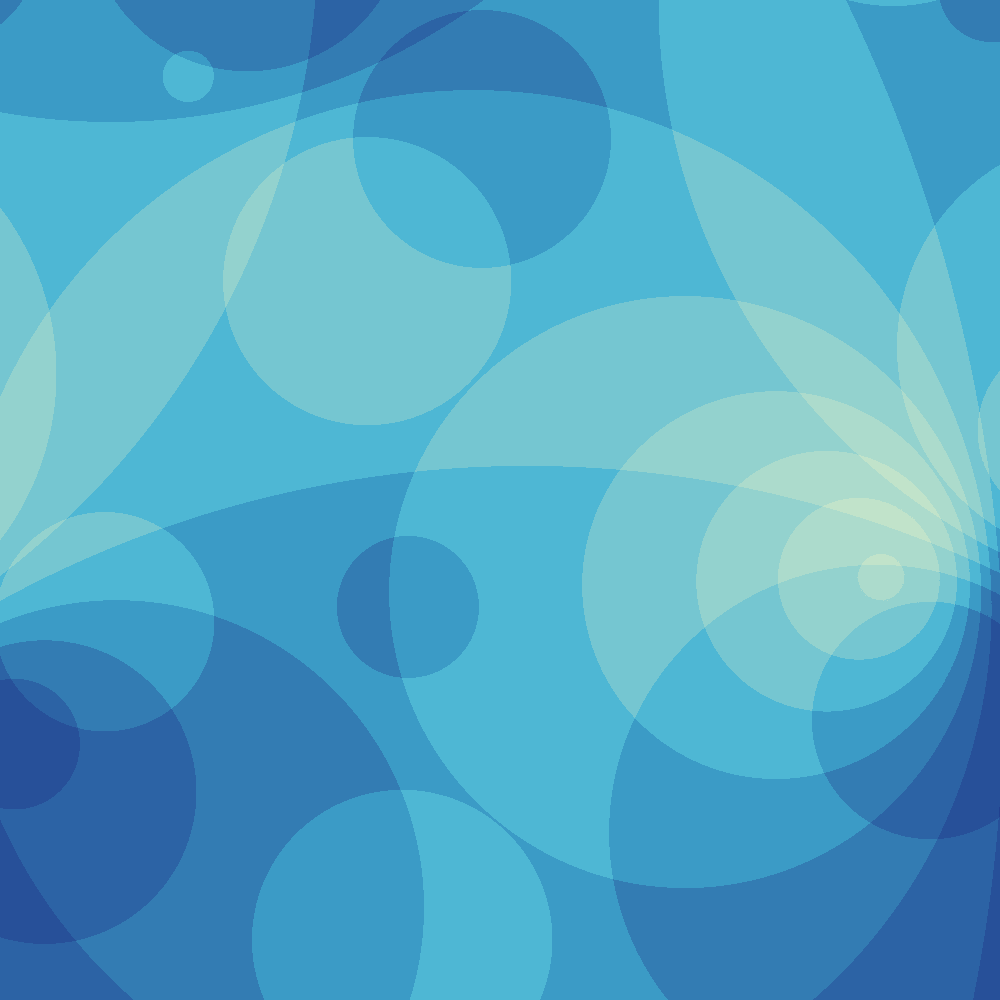}
}
\subfloat[$R=e^1$]{
\includegraphics[width=0.47\textwidth]{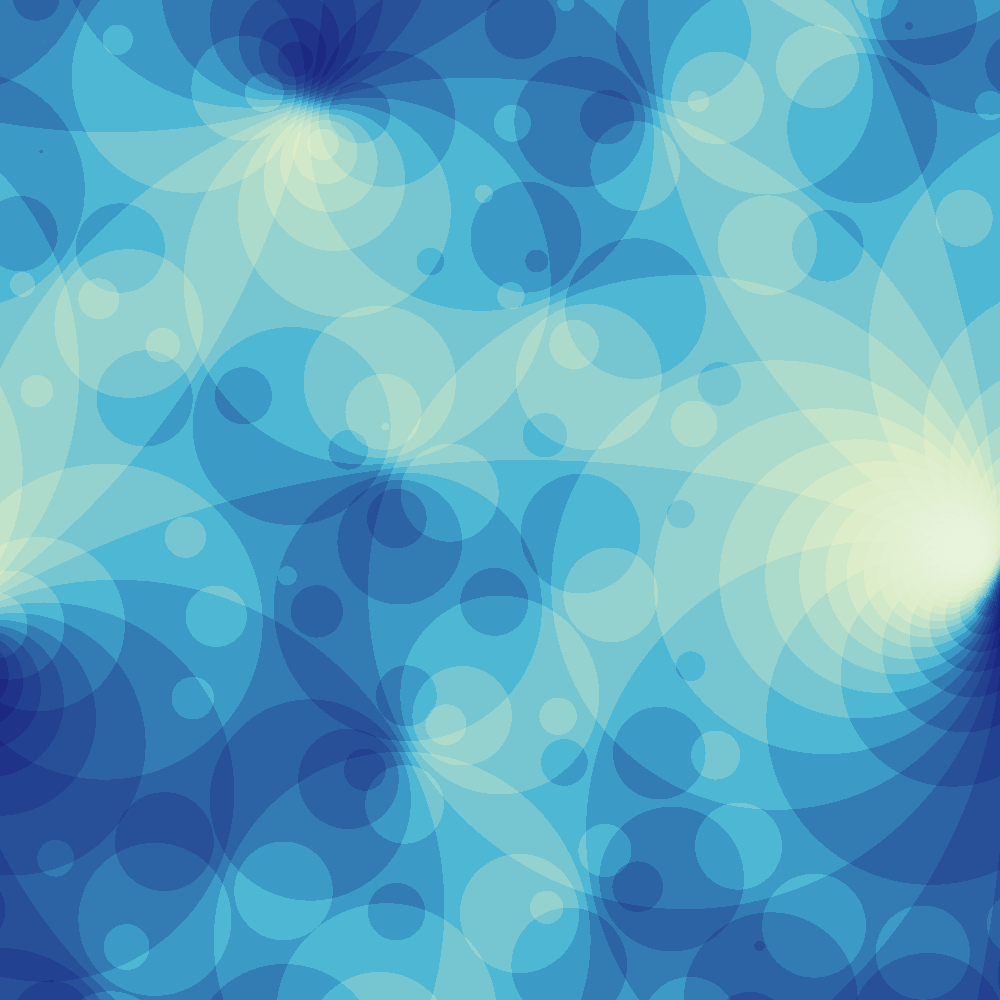}
}
\\
\subfloat[$R=e^{1.5}$]{
\includegraphics[width=0.47\textwidth]{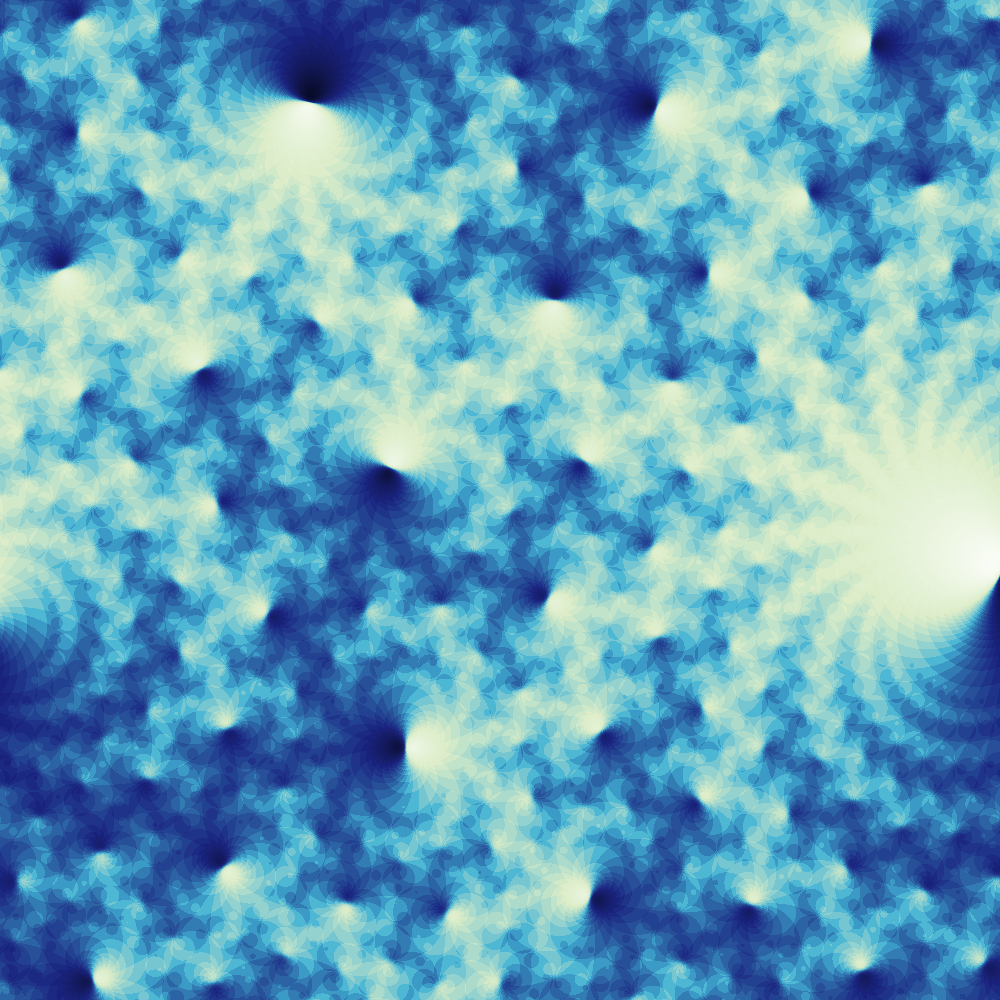}
}
\subfloat[$R=e^{2}$]{
\includegraphics[width=0.47\textwidth]{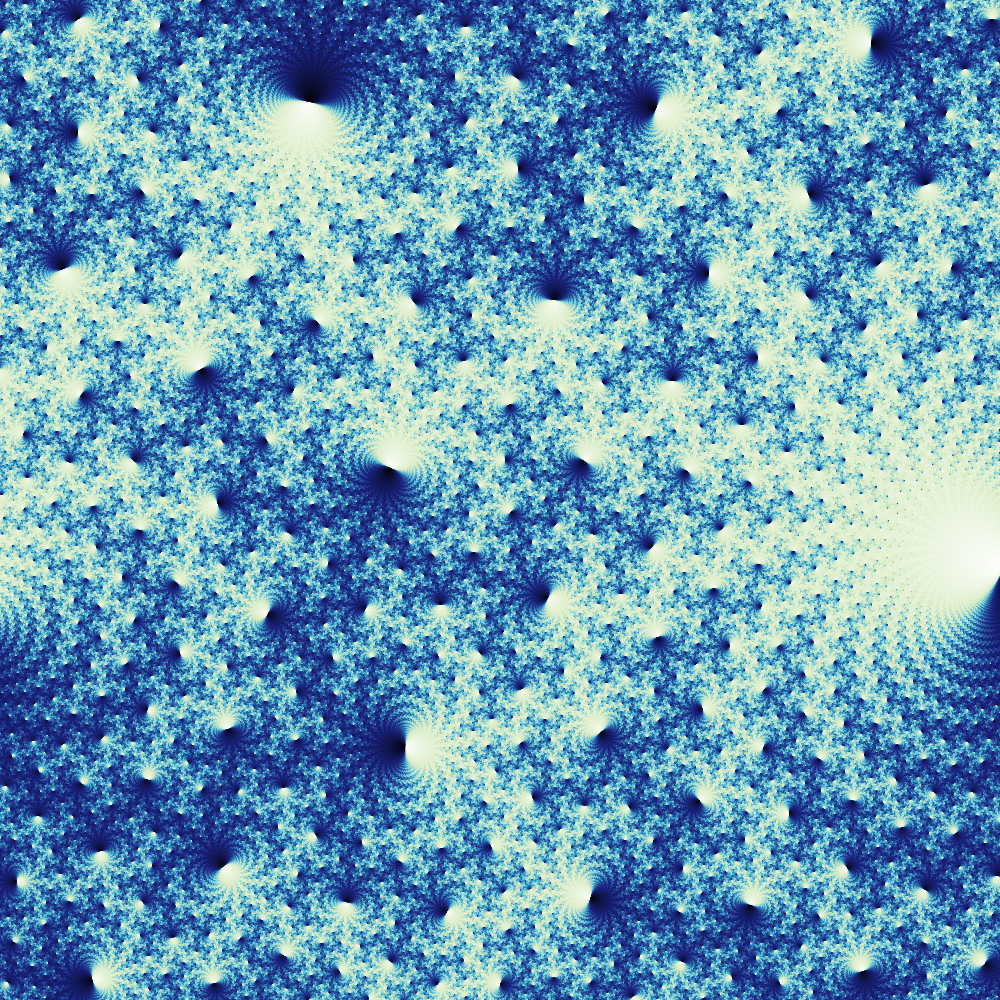}
}
\caption{Cohomology fractals for \texttt{m004}, with various values of $R$.}
\label{Fig:VisSphereRadii}
\end{figure}

In \refsec{Implement} we will discuss how to calculate $\Phi_R$ in practice. 
Before giving those details, we show the reader what $\Phi_R$ looks like for a few values of $R$.  See \reffig{VisSphereRadii}.  The map $\Phi_R$ maps into $\RR$; we indicate the value of $\Phi_R(v)$ by brightness.  For each value of $R$, we draw the value of $\Phi_R(v)$ for a small square subset of the unit tangent vectors, $\UT{p}{M}$.

Here we are using \refdef{UsingTriangulation}, our manifold $M$ is the complement of the figure-eight knot, and the surface $F$ is a fibre of $M$. 
Note that when $R$ is small, as in \reffig{exp0.5}, $\Phi_R$ is constant on large regions of the sphere. As $R$ increases, the value of $\Phi_R$ on nearby rays becomes less correlated, and we see a fractal structure come into focus.

\begin{remark}
This complicated behaviour is a consequence of the hyperbolic geometry of our manifold. 
Consider instead the example where $M$ is the three-torus $S^1 \times S^1 \times S^1$ and the surface $F$ is an essential torus embedded in $M$. 
Again, $\Phi_R$ counts the number of elevations of $F$ the ray $\gamma$ passes through. 
Since the elevations are parallel planes in $\cover{M} \homeo \RR^3$, the value of $\Phi_R$ is constant on circles in $\UT{p}{M}$ parallel to these planes. 
Here $\Phi_R$ is much simpler.
\end{remark}

\begin{remark}
Some of the geometry and topology of the manifold $M$ can be seen from the cohomology fractal $\Phi_R$. 
Our recent expository paper on cohomology fractals~\cite{BachmanSchleimerSegerman20} gives many such examples, including the appearances of cusps, totally geodesic subsurfaces, and loxodromic elements of the fundamental group.
\end{remark}

\section{Matching figures}
\label{Sec:Matching}

In this section we compare our cohomology fractals to Cannon--Thurston maps.  

\begin{example}
Suppose that $M$ is the figure-eight knot complement.  
Suppose that $F$ is a fibre of the fibration of $M$.  
Suppose that $\Psi$ is the associated Cannon--Thurston map.  
\reffig{CTVector} shows an approximation $\Psi_D \from S^1 \to S^2 \homeo \bdy_\infty \HH^3$ of $\Psi$; 
we produced this image using \refalg{CTApprox}. (Note that the vector graphics image in \reffig{CTVector} has been converted to a raster graphics image to save on file size and rendering time.)


\reffig{CTPixelColour} shows a cohomology fractal $\Phi_R$ corresponding to $F$ and looking towards the same part of $\bdy_\infty \HH^3$. 
\reffig{CTPixelBW} shows $\Phi_R$ again, but with the contrast increased and colour scheme simplified.  
Here the colour associated to a vector $v$ is either white or grey, according to whether $\Phi(v)$ is negative or not.  

\reffig{CTPixelBWandVector} shows \ref{Fig:CTVector} overlaid on \ref{Fig:CTPixelBW}. 
We see that the red curve of $\Psi_D$ is almost the common boundary of the white and grey regions of $\Phi_R$.  
There are several small areas where $\Psi_D$ does not track the boundary.  
These only appear close to fairly large cusps; 
they exist because implementations of \refalg{CTApprox} generally have trouble approaching cusps from the side.
In the cohomology fractal $\Phi_R$ we see that there are chains of ``octopus heads'' that reach almost all the way towards each cusp. 
\end{example}

This behaviour is generally true for fibrations, as follows. Suppose $\cover{p}\in\cover{M}$ and define $\rho \from \bdy_\infty \cover{M} \to \UT{\cover{p}}{\cover{M}}$ to be the inward central projection. 
Here we follow \refdef{UniversalCover}.

\begin{proposition}
\label{Prop:LightDark}
Suppose that $M$ is a connected, oriented, finite volume hyperbolic three-manifold. 
Suppose that $\alpha \from F \to M$ is a fibre of a surface bundle structure on $M$. 
Fix $p$ in $F$, and a lift $\cover{p} \in \cover{M}$. 
Fix any $R > 0$ and let $\Phi_R = \Phi^{F,p}_R$ be the resulting cohomology fractal for $F$.  
Let $Z = \Phi_R^{-1}(\RR_{\geq 0})$.

Then there is a disk $D \subset \cover{F}$, containing $\cover{p}$, so that (the image of) the Cannon--Thurston map approximation $\rho \circ \Psi_D$ is a component of $\bdy Z$ (with error at most $\exp(-R)$). 
\end{proposition}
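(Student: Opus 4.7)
The plan is to use \refdef{UniversalCover}: lift to the universal cover $\cover{M}$, identify the distinguished elevation $\cover{F}$ through $\cover{p}$, and after perturbing $\cover{p}$ an arbitrarily small amount off $\cover{F}$ (to its positive side), set $W$ to be the integer primitive of $\cover{\omega}$ normalised so that $W(\cover{p}) = 0$.  The heart of the argument is that, because $F$ is a fibre, the cocycle $\omega$ is dual to a height function.  Indeed, the fibration $M \to S^1$ lifts to a proper map $h \from \cover{M} \to \RR$, and the elevations of $F$ are exactly the integer level sets of $h$, which form a disjoint, linearly ordered family of properly embedded planes indexed by $\ZZ$.  A short check of conventions gives $W(q) = n$ for all $q$ with $h(q) \in (n, n+1)$, so $\{W \geq 0\}$ is exactly the closed positive half-space of $\cover{M}$ bounded by $\cover{F}$.

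Once monotonicity is in hand, identifying $\bdy Z$ is immediate.  By \refdef{UniversalCover}, $\Phi_R(v) = W(\exp_{\cover{p}}(Rv))$, so $v \in Z$ precisely when the endpoint $\exp_{\cover{p}}(Rv)$ lies in the positive half-space of $\cover{F}$.  Under the radial diffeomorphism $\UT{\cover{p}}{\cover{M}} \homeo \bdy B_R(\cover{p})$, the set $\bdy Z$ corresponds to $\cover{F} \cap \bdy B_R(\cover{p})$.  Let $D^*$ be the component of $\cover{F} \cap B_R(\cover{p})$ containing $\cover{p}$; its boundary $\bdy D^*$ is the component of $\bdy Z$ of interest.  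For the disk $D$ required by the proposition, take the maximal union of ideal triangles of the pleated plane $\cover{F}$ whose interiors lie in $D^*$; then $\bdy D$ is a cycle of complete geodesic edges, each with two ideal endpoints in $\bdy_\infty \cover{M}$, so $\Psi_D$ makes sense via \refalg{CTApprox}.

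It remains to compare $\Psi_D$ with $\bdy D^*$ in $\bdy_\infty \cover{M}$, using the identification $\UT{\cover{p}}{\cover{M}} \homeo \bdy_\infty \cover{M}$ by geodesic endpoints.  Two discrepancies arise.  First, triangles of $\cover{F}$ straddling $\bdy B_R(\cover{p})$ contribute slivers to the symmetric difference of $D^*$ and $D$, but each such triangle lies essentially at distance $R$ from $\cover{p}$, so its visual extent from $\cover{p}$ is $O(\exp(-R))$.  Second, any point at distance $R$ from $\cover{p}$ is within visual distance $O(\exp(-R))$ of its geodesic endpoint at infinity.  Both estimates follow from the exponential divergence of geodesics in $\HH^3$, and together they bound the Hausdorff distance between $\Psi_D$ and the chosen component of $\bdy Z$ by $\exp(-R)$ (up to an absolute constant absorbed into the statement).

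The principal obstacle is the monotonicity step.  For an arbitrary transversely oriented surface the elevations of $F$ can intersect a single ball in a non-nested pattern; then $\{W \geq 0\}$ need not be bounded by a single elevation, and no single pleated disk in $\cover{F}$ approximates a component of $\bdy Z$.  The fibration hypothesis is used precisely to produce the height function $h$.  The remaining technicalities --- that $D^*$ is a genuine disk (using that $\cover{F}$ is a topological plane and $B_R$ a ball) and that $\bdy D^*$ is a single embedded circle after a generic perturbation of $R$ to avoid tangencies --- are routine.
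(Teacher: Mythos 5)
Your proposal takes essentially the same route as the paper's proof sketch: pass to the universal cover, use the primitive $W$ of $\cover{\omega}$, observe that $\{W \geq 0\}$ is a half-space bounded by the distinguished elevation $\cover{F}$, identify $\bdy Z$ with $\cover{F} \cap S^2_R$ via the exponential map, pick out a union $D$ of ideal triangles of the pleated plane $\cover{F}$ approximating $\cover{F} \cap B^3_R$, and bound the discrepancy via the $O(\exp(-R))$ visual-diameter estimates for geodesics near or outside $S^2_R$. The two main cosmetic differences: (i) you spell out the monotonicity/nesting of elevations via the lifted height function $h \from \cover{M} \to \RR$, which the paper compresses into the single assertion that the common boundary of $\{W < 0\}$ and $\{W \geq 0\}$ is exactly one elevation; (ii) your $D$ is the union of triangles of $\cover{F}$ whose interiors lie in $D^* = $ (the component of $\cover{F} \cap B_R$ through $\cover{p}$), whereas the paper takes the connected component through $\cover{p}$ of the triangles having an edge meeting $B^3_R$ --- these differ by a boundary layer of triangles whose visual extent is already absorbed into the $\exp(-R)$ error, so the choices are interchangeable.
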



\begin{proof}[Proof sketch]
We assume we are in the setting of \refdef{UsingTriangulation}.
Let $\omega$ be the one-cocycle dual to $F$.
Let $W$ be a primitive for $\cover{\omega}$.  
Consider the two regions of $\HH^3 \isom \cover{M}$ where $W$ is negative or, respectively, non-negative.  
The common boundary of these is exactly an elevation $\cover{F}$ of the fibre $F$. 

Let $B^3_R$ be the ball in $\HH^3$ of radius $R$ with centre $\cover{p}$. 
Let $S^2_R$ be the boundary of $B^3_R$. 
Thus the intersection $\cover{F} \cap S^2_R$ is a collection of curves; these separate the points $\cover{q} \in S^2_R$ where $W$ is negative from those where it is non-negative.  
Finally, note that $\pi \circ \varphi_R \from \UT{\cover{p}}{\HH^3} \to S^2_R$ is the exponential map.  
Thus $\Phi_R = W \circ \pi \circ \varphi_R$.

Recall that $\cover{F}$ is a union of triangles. 
Assume that one of these contains $\cover{p}$.  
Let $E$ be the collection of triangles of $\cover{F}$ having at least one edge meeting the ball $B^3_R$.  
Let $D$ be the connected component of (the union over) $E$ that contains $\cover{p}$.  

In a slight abuse of notation, let $\rho \from \HH^3 - \{\cover{p}\} \to \UT{\cover{p}}{\HH^3}$ be the inward central projection.  
For any set $K \subset \HH^3 - \{\cover{p}\}$ we call the diameter of $\rho(K)$ the \emph{visual diameter} of $K$.  
This is measured with respect to the fixed metric on the unit two-sphere $\UT{\cover{p}}{\HH^3}$. 

Suppose that $e$ is a bi-infinite geodesic in $\HH^3$. 
If $e$ lies outside of $B^3_R$ then the visual diameter of $e$ is small; 
in fact, for large $R$ the visual diameter of $e$ is less than $2\exp(-R)$.  
Likewise, if $e$ meets $S^2_R$, then for either component $e'$ of $e - B^3_R$ the visual diameter of $e'$ is less than $\exp(-R)$.

Now suppose that $T$ is a triangle of $E$.  
Using the above, we deduce that the visual diameter of each component of $T - B^3_R$ is small. 
Thus the inward central projections of $\bdy D$ and $D \cap S^2_R$ have Hausdorff distance bounded by a small multiple of $\exp(-R)$. 

So let $\Psi_D$ be the curve in $\bdy_\infty \HH^3$ obtained by projecting $\bdy D$ outwards.   
By the above, each arc of the resulting polygonal curve has small visual diameter.  
Also by the above, the Hausdorff distance between the curves $\rho \circ \Psi_D$ and $\rho \circ (\cover{F} \cap S^2_R)$ is small.  
\end{proof}

\begin{remark}
Suppose that $F$ is totally geodesic or, more generally, quasi-fuchsian. 
In this case, the Cannon--Thurston map $\Psi$ is a circle or quasi-circle, respectively.
Note however that different elevations now give distinct Cannon--Thurston maps.
It is natural to take their union and obtain a circle (or quasi-circle) packing.  

Now, if $F$ is also Thurston-norm minimising then we still obtain matches. 
For example in~\cite[Figure~6]{BachmanSchleimerSegerman20}, we see how, for a totally geodesic surface in the Whitehead link complement, the cohomology fractal matches the associated circle packing.
On the other hand, if $[F]$ is trivial in $H_2(M, \bdy M)$ then the cohomology fractal $\Phi_R$ is bounded and oscillates as $R$ tends to infinity.  
\end{remark}

\subsection{Lightning curves}
\label{Sec:Lightning}

\begin{figure}[htb]
\centering
\subfloat[The lightning curve superimposed on the cohomology fractal.]{
\includegraphics[width = 0.47\textwidth]{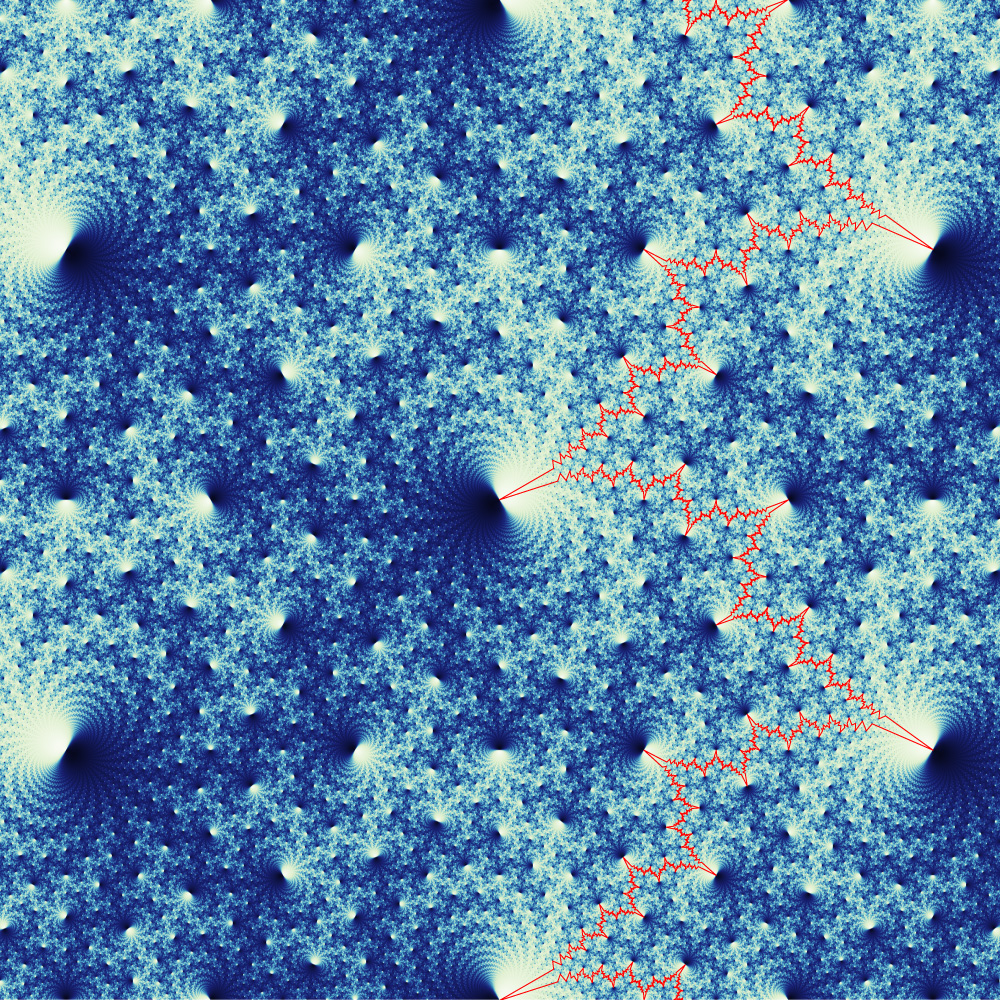}
\label{Fig:LightningMatchUp}
}
\subfloat[A black and white version of \reffig{CTPixelColour} highlighting only the brightest pixels.]{
\includegraphics[width = 0.47\textwidth]{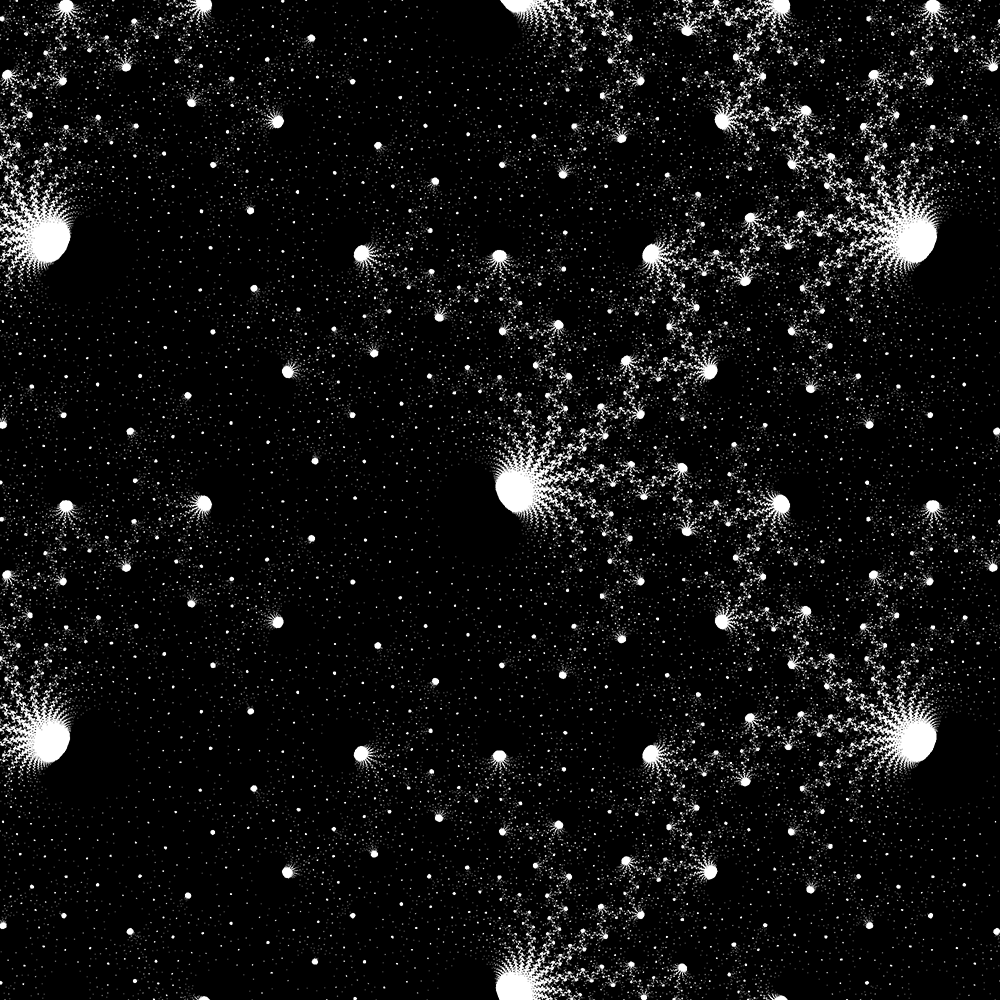}
\label{Fig:LightningThreshhold}
}
\caption{Matching up the lightning curve, reproduced from \cite[Figure~7]{DicksEtAl06}, with the cohomology fractal for \texttt{m004}.  }
\end{figure}

Suppose that $M$ is a cusped, fibred three-manifold, with fibre $F$.  
Dicks with various co-authors defines and studies the \emph{lightning curves}~\cite{AlperinEtAl99, DicksPorti02, DicksEtAl02, DicksEtAl06, DicksEtAl10, DicksEtAl12}; these are certain fractal arcs in the plane.  
In more detail; suppose that $c$ and $d$ are distinct cusps of an elevation $\cover{F}$ of $F$.  Let $[c, d]^\acw$ be the arc of $\bdy_\infty \cover{F}$ that is between $c$ and $d$ and anti-clockwise of $c$.  The Cannon--Thurston map $\Psi$ sends the arc $[c, d]^\acw$ to a union of disks in $\bdy_\infty \cover{M}$ meeting only along points.  The boundary of any one of these disks is a \emph{lightning curve}.

Since the lightning curve is defined in terms of the Cannon--Thurston map, it is not too surprising that we can also see something of the lightning curve in the cohomology fractal. 
In \reffig{LightningMatchUp} we show a segment of the lightning curve for the figure-eight knot complement generated by Cannon and Dicks~\cite[Figure~7]{DicksEtAl06} overlaid on \reffig{CTPixelColour}. 
The lightning curve seems to follow some of the brightest pixels in the cohomology fractal.  
\reffig{LightningThreshhold} is a black and white version of \reffig{CTPixelColour}, with a relatively high threshold set for a pixel to be white -- the lightning curve seems to be there, but this is nowhere near as clear as it was for the approximations to the Cannon--Thurston map. 
We do not fully understand the correspondence here.

We note that for clarity, \reffig{LightningMatchUp} shows only one segment of the lightning curve. 
There is another segment, symmetrical with the shown segment under a 180 degree rotation about the centre of the image. 
This second segment seems to follow the darkest pixels of the cohomology fractal.

\section{Implementation}
\label{Sec:Implement}

In this section we give an overview of an implementation of cohomology fractals.  
Our implementation is written in Javascript and GLSL; the code is available at \cite{github_cohomology_fractals}.
We have also made cohomology fractals available in SnapPy~\cite{snappy}. Note that SnapPy cannot find hyperbolic structures on finite triangulations.

We now follow \refdef{UsingTriangulation}. 
Suppose that $M$ is a connected, oriented, finite volume hyperbolic three-manifold. 
Let $\calT$ be a material or ideal triangulation of $M$.  
We are given a weighting $\omega \from \calT^{(2)} \to \RR$ for the faces of the two-skeleton. 

We represent the triangulation $\calT$ as a collection $\{t_i\}$ of model hyperbolic tetrahedra.  
Each tetrahedron $t_i$ has four faces $f^i_m$ lying in four geodesic planes $P^i_m$ in the hyperboloid model of $\HH^3$.  
Suppose that $t_j \in \calT$ is another model tetrahedron, with faces $f^j_n$.  
If the face $f^i_m$ is glued to $f^j_n$, then we have isometries $g^i_m$ and $g^j_n$ realising the gluings. 
Note that $g^i_m$ and $g^j_n$ are inverses. 

We are given a camera location $p$ in $M$; this is realised as a point (again called $p$) in some tetrahedron $t_i$. 
\begin{remark}
The reader familiar with computer graphics will note that we also require a frame at the camera location. 
To simplify the exposition, we will mostly suppress this detail.
\end{remark}
We are also given a radius $R$ as well as a maximum allowed step count $S$.  

\subsection{Ray-tracing}

For each pixel of the screen, we generate a corresponding unit tangent vector $u$ 
in the tangent space to the current tetrahedron $t_i$.
We then \emph{ray-trace} through $\calT$.  
That is, we travel along the geodesic starting at $p$, in the direction $u$, for distance $R$, taking at most $S$ steps.  \reffig{DesiredRayPath} shows a toy example, where we replace the three-dimensional hyperbolic triangulation $\calT$ of $M$ with a two-dimensional euclidean triangulation of the two-torus. 

\begin{figure}[htb]
\centering
\subfloat[The desired ray path, starting from the pair $(p, u)$ of length $R$.]{
\label{Fig:DesiredRayPath}
\labellist
\small\hair 2pt
\pinlabel $p$ [r] at 25 14
\pinlabel $u$ [b] at 32.5 18
\endlabellist
\includegraphics[height=1.67in]{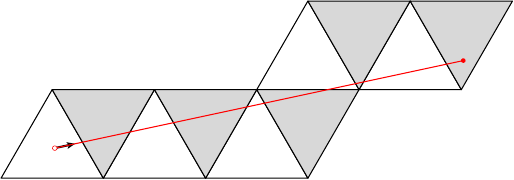}
}

\subfloat[The implementation of the ray path. The iterations of the loop are labelled with integers.]{
\label{Fig:ImplementationRayPath}
\labellist
\small\hair 2pt
\pinlabel 1 at 32 20
\pinlabel 3 at 20 27.5
\pinlabel 5 at 23.5 34
\pinlabel 9 at 13.5 10.5
\pinlabel 7 at 43 5.5

\pinlabel \rotatebox{300}{6} at 90 30
\pinlabel \rotatebox{300}{4} at 94 20
\pinlabel \rotatebox{300}{2} at 84 14
\pinlabel \rotatebox{300}{8} at 72 5
\pinlabel \rotatebox{300}{10} at 82 5
\endlabellist
\includegraphics[height=1.67in]{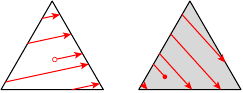}
}
\caption{A toy example of developing a ray through a tiling of a euclidean torus. 
Note that the geodesic segments passing through a tile are parallel; 
this is only because the geometry is euclidean.
In a hyperbolic tiling the segments are much less ordered. }
\end{figure}

It is perhaps most natural to think of ray-tracing as occurring in $\cover{M}$, the universal cover of the manifold, as shown in \reffig{DesiredRayPath}.  
However, the na\"ive floating-point implementation in the hyperboloid model quickly loses precision.
We instead ray-trace in the manifold, as illustrated in \reffig{ImplementationRayPath}.  
Thus, all points we calculate lie within our fixed collection of ideal hyperbolic tetrahedra $\{t_i\}$.  

For each pixel, we do the following. 

\begin{enumerate}
\item 
The following initial data are given: an index $i$ of a tetrahedron, a point $p$ in $t_i$, and a tangent vector $u$ at $p$.  
Initialise the following variables. 
\begin{itemize}
\item
The total distance travelled: $r \gets 0$.  
\item
The number of steps taken: $s \gets 0$.  
\item
The current tetrahedron index: $j \gets i$. 
\item
The current position: $q \gets p$.
\item
The current tangent vector: $v \gets u$. 
\end{itemize}
\item 
\label{Itm:LoopStart}
Let $\gamma$ be the geodesic ray starting at $p$ in the direction of $u$. 
Find the index $n$ so that $\gamma$ exits $t_j$ through the face $f^j_n$. 
Let $t_k$ be the other tetrahedron glued to face $f^j_n$.
\item 
\label{Itm:LoopHitFace}
Calculate the position $q'$ and tangent vector $v'$ where $\gamma$ intersects $f^j_n$.  
Let $r'$ be the distance from $q$ to $q'$.
Set $r \gets r + r'$ and set $s \gets s + 1$.
\item 
If $r > R$ or $s > S$ then stop.  
\item 
Set $j \gets k$, set $q \gets g^j_n(q')$, and set $v \gets  Dg^j_n(v')$.
\item 
Go to step \refitm{LoopStart}. 
\end{enumerate}

This implements the ray-tracing part of the algorithm. 
In our toy example, this is shown in \reffig{ImplementationRayPath}. 

\subsection{Integrating}
To determine the colour of the pixel, we also track the total signed weight we accumulate along the ray. 
For this, we add the following steps to the loop above.

\begin{itemize}
\item[(1b)] 
An initial weight $w_0$ is given.  Initialise the following.
\begin{itemize}
\item[$\bullet$]
The current weight: $w_c \gets w_0$.
\end{itemize}
\item[(5b)] 
Let $f$ be the face between $t$ and $t'$, co-oriented towards $t'$. 
Set $w_c \gets w_c + \omega(f)$.
\end{itemize}

At the end of the loop, the value of $w_c$ gives the brightness of the current pixel. 
(In fact, we apply a function very similar to the arctangent function to remap the possible values of $w_c$ to a bounded interval. 
We then apply a gradient that passes through a number of different colours. 
This helps the eye see finer differences between values than a direct map to brightness.)

\subsection{Moving the camera}

In our applications, we enable the user to fly through the manifold $M$.  
Depending on the keys pressed by the user at each time step, we apply an isometry $g$ to $p$. 
We also track an orthonormal frame for the user; this determines how tangent vectors correspond to pixels of the screen.  
We also apply the isometry $g$ to this frame.  
When the user flies out of a face $f$ of the tetrahedron they are in, we apply the corresponding isometry $g^i_k$ to the position $p$ and the user's frame.
We also add $w(f)$ to the initial weight $w_0$.
Without this last step, the overall brightness of the image would change abruptly as the user flies through a face with non-zero weight.

\begin{remark}
\label{Rem:Basepoint}
With this last modification, the cohomology fractal depends on a choice of basepoint $b \in \cover{M}$. The point $p \in M$ must now also be replaced by $p \in \cover{M}$ (abusing notation, we use the same symbol for both points).  We add $b$ to the notation, and now write the cohomology fractal as 
\[
\Phi_R^{\omega, b, p} \from \UT{p}{\cover{M}} \to \RR  \qedhere
\]
\end{remark}

\begin{remark}
\label{Rem:DependenceOnb}
The dependence of the cohomology fractal on $b$ is minor: If we change $b$ to $b'$, then the value of $\Phi_R^{\omega, b, p}(v)$ changes by the weight we pick up along any path from $b'$ to $b$.
\end{remark}

\subsection{Material, ideal, and hyperideal views}
\label{Sec:Views}

The above discussion describes the \emph{material view}; the geodesic rays emanate radially from $p$. 
To render an image, we place a rectangle in the tangent space at $p$. 
For each pixel of the screen, we take the tangent vector $u$ to be the corresponding point of the rectangle.  
See \reffig{ViewsMaterial}.

\begin{definition}
The \emph{field of view} of a material image is the angle between the tangent vectors pointing at the midpoints of the left and right sides of the image. 
\end{definition}

\begin{remark}
The material view suffers from perspective distortion. This is most noticeable towards the edges of the image, and is worse when the field of view is large.
\end{remark}

To generalise the material view to the ideal and hyperideal, we introduce the following terminology.  
We say that a subset $D \subset \UT{}{\cover{M}}$ is a \emph{view} if it is one of the following.

\begin{enumerate}
\item In the \emph{material view}, $D$ is a fibre $\UT{p}{\cover{M}}\homeo S^2$. 

\item In the \emph{ideal view}, we take $D$ to be the collection of outward normals to a horosphere $H$. 
That is, the vectors point away from $\bdy_\infty H$. 
To render an image we place a rectangle in $D$. 
For each pixel of the screen we set the initial vector $v$ to be the corresponding point of the rectangle. 
The starting point is then $\pi(v) \in \cover{M}$, the basepoint of $v$.  
Finally, we set $w_0$ to be the total weight accumulated, along the arc from $b$ to $\pi(v)$, as we pass through faces of the triangulation.
See \reffig{ViewsIdeal}.

\item In the \emph{hyperideal view}, we take $D$ to be the collection of normals to a transversely oriented geodesic plane $P$. 
We draw $P$ on the euclidean rectangle of the screen using the Klein model. 
The algorithm is otherwise identical to the ideal view case. 
See \reffig{ViewsHyperideal}.
\end{enumerate}

\begin{remark}
The ideal view in hyperbolic geometry is the analogue of an orthogonal view in euclidean geometry.  
In both cases this is the limit of backing the camera away from the subject while simultaneously zooming in.  
\end{remark}

\begin{remark}
The hyperideal view suffers from an ``inverse'' form of perspective distortion. Towards the edges of the image, round circles look like ellipses, with the minor axis along the radial direction.
\end{remark}

\begin{definition}
\label{Def:View}
Let $D \subset \UT{}{\cover{M}}$ be a view, as discussed above.
In the notation for the cohomology fractal, we replace $p$ by $D$:
\[
\Phi_R^{\omega, b, D} \from D \to \RR \qedhere
\]
\end{definition}

\begin{figure}[htb]
\centering
\subfloat[Material.]{%
\includegraphics[width = 0.32\textwidth]{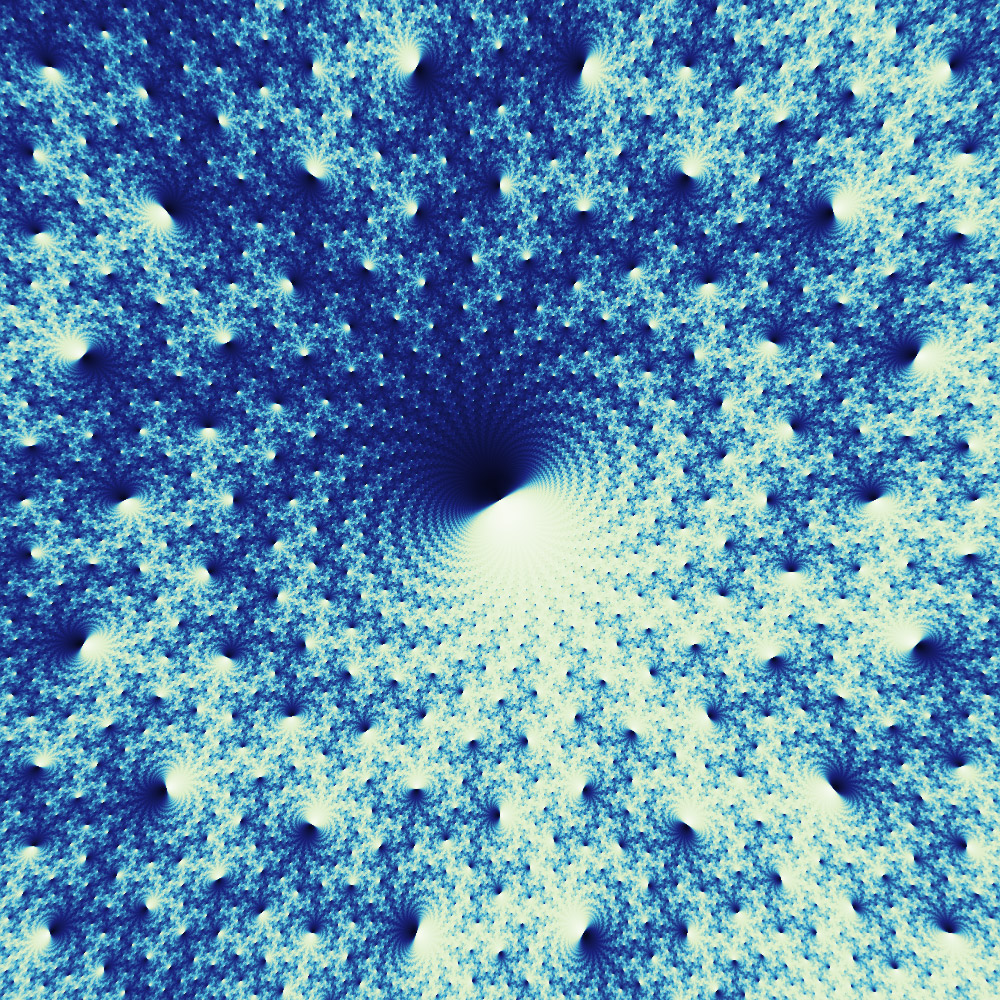}%
\label{Fig:ViewsMaterial}
}
\subfloat[Ideal.]{%
\includegraphics[width = 0.32\textwidth]{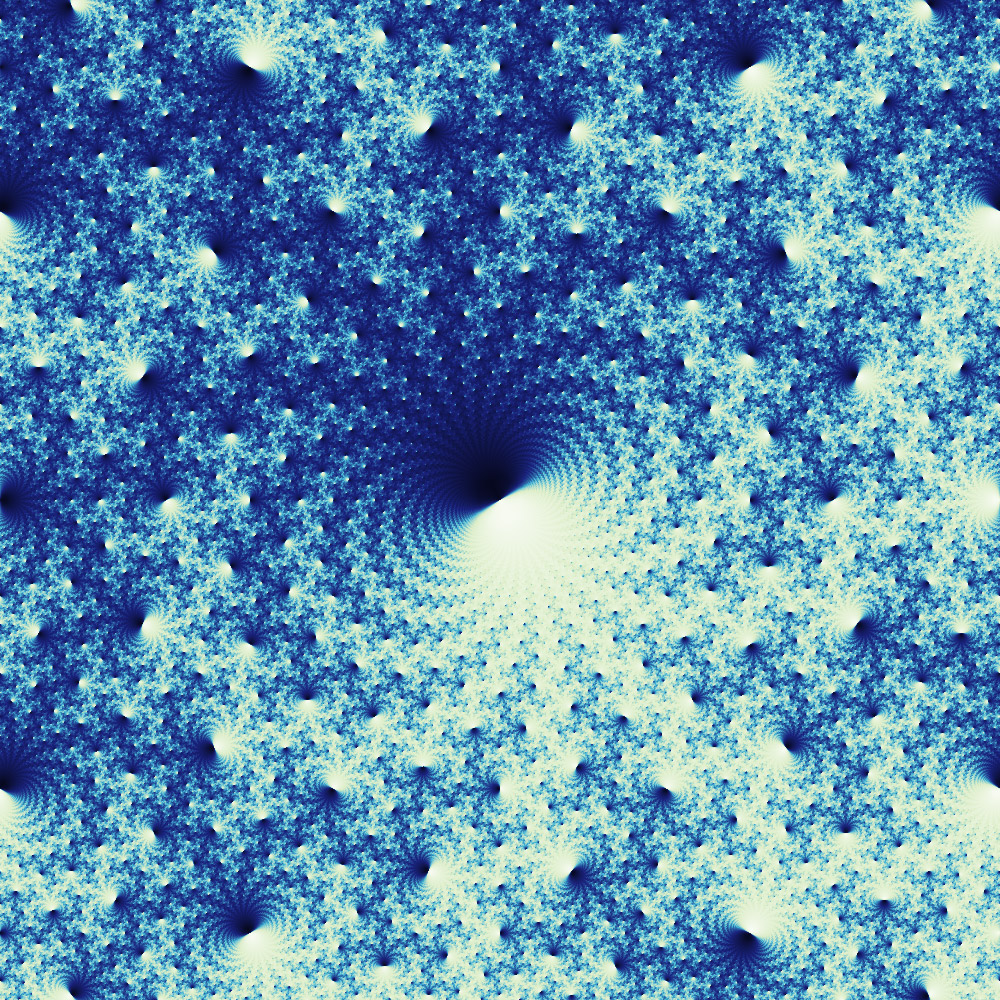}%
\label{Fig:ViewsIdeal}
}
\subfloat[Hyperideal.]{%
\includegraphics[width = 0.32\textwidth]{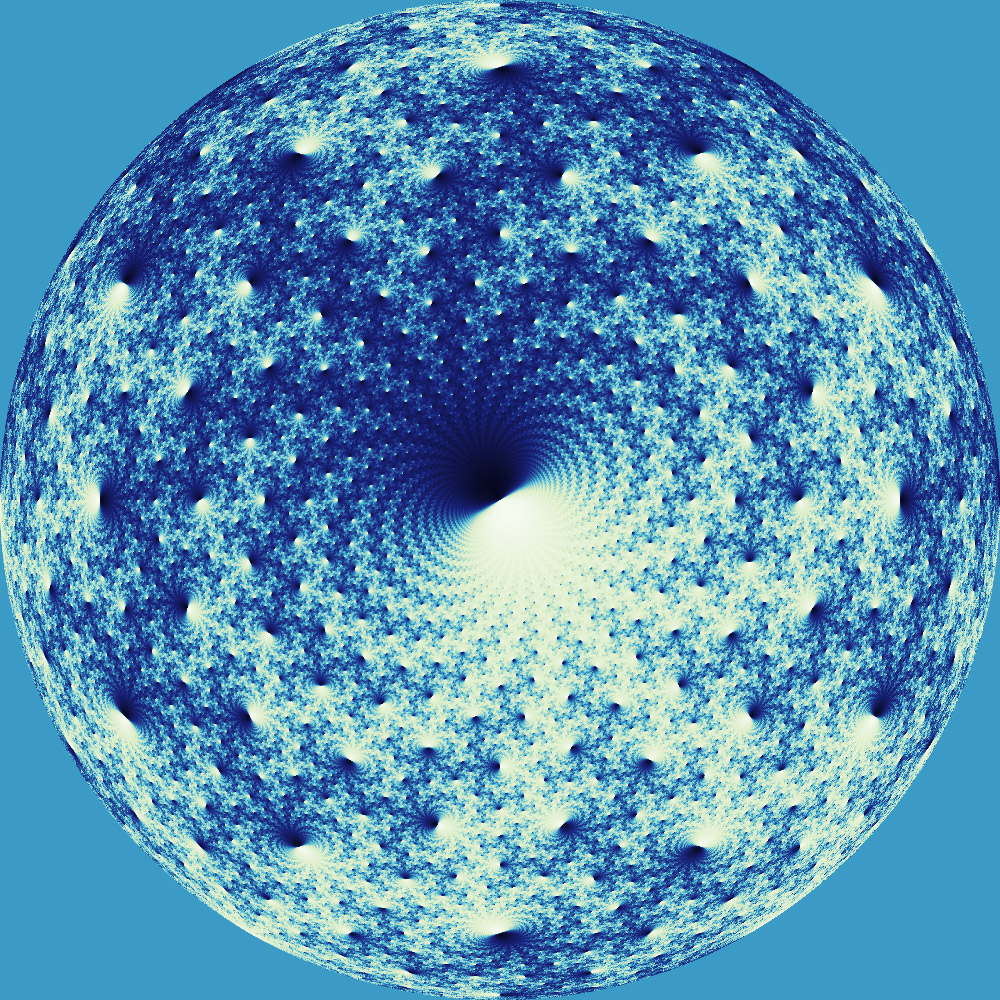}%
\label{Fig:ViewsHyperideal}%
}
\caption{Comparison between different views of the cohomology fractal for \texttt{m004}. }
\end{figure}

\subsection{Edges} 

We give the user the option to see the edges of the triangulation. The user selects an edge thickness $\varepsilon > 0$. The web application implements this in a lightweight fashion: 
In step \refitm{LoopHitFace}, if the distance from the point $q'$ to one of the three edges of the face we have intersected is less than $\varepsilon$, then we exit the loop early.
Depending on user choice, the pixel is either coloured by the weight $w_c$ or by the distance $d$. 
See \reffig{Edges}. 
In SnapPy, we compute the intersection of the ray with a cylinder about the edge in addition to the intersection with the faces.

\begin{figure}[htb]  
\centering
\subfloat[Coloured by cohomology fractal.]{
\includegraphics[width = 0.47\textwidth]{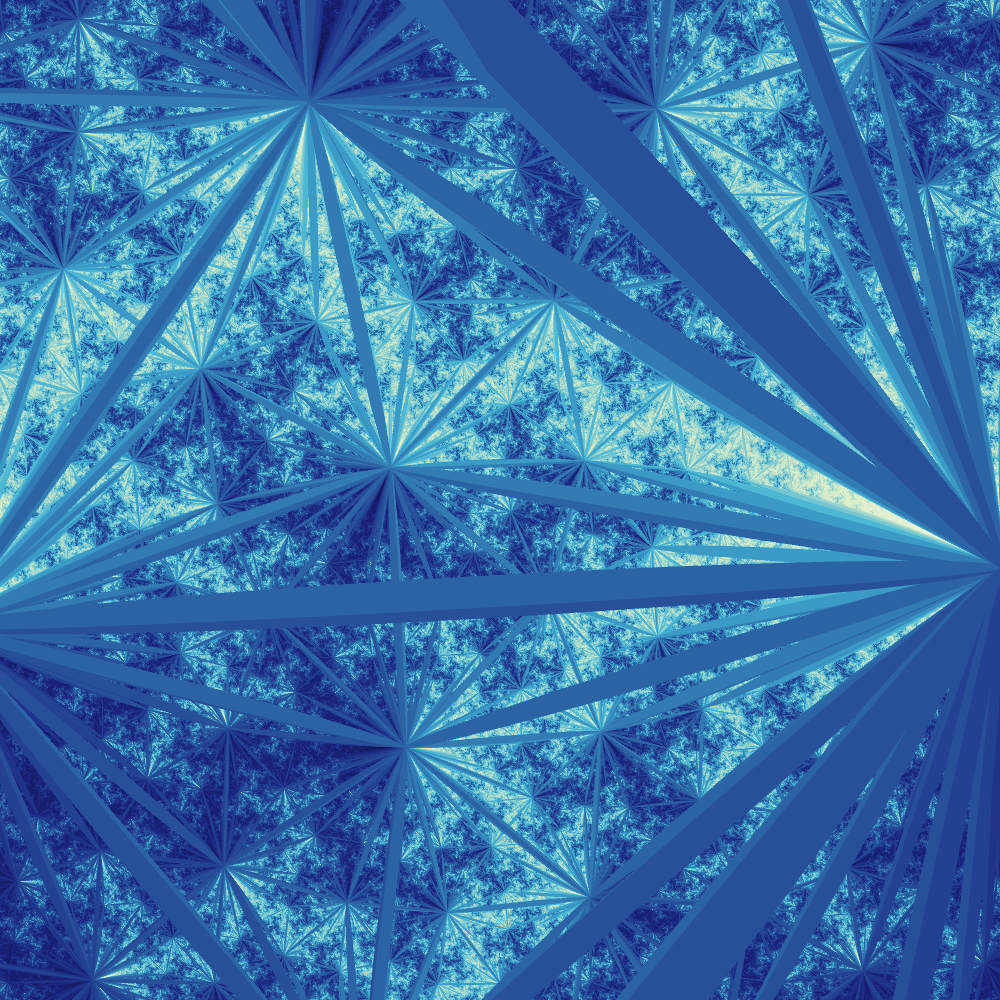}
}
\subfloat[Coloured by distance.]{
\includegraphics[width = 0.47\textwidth]{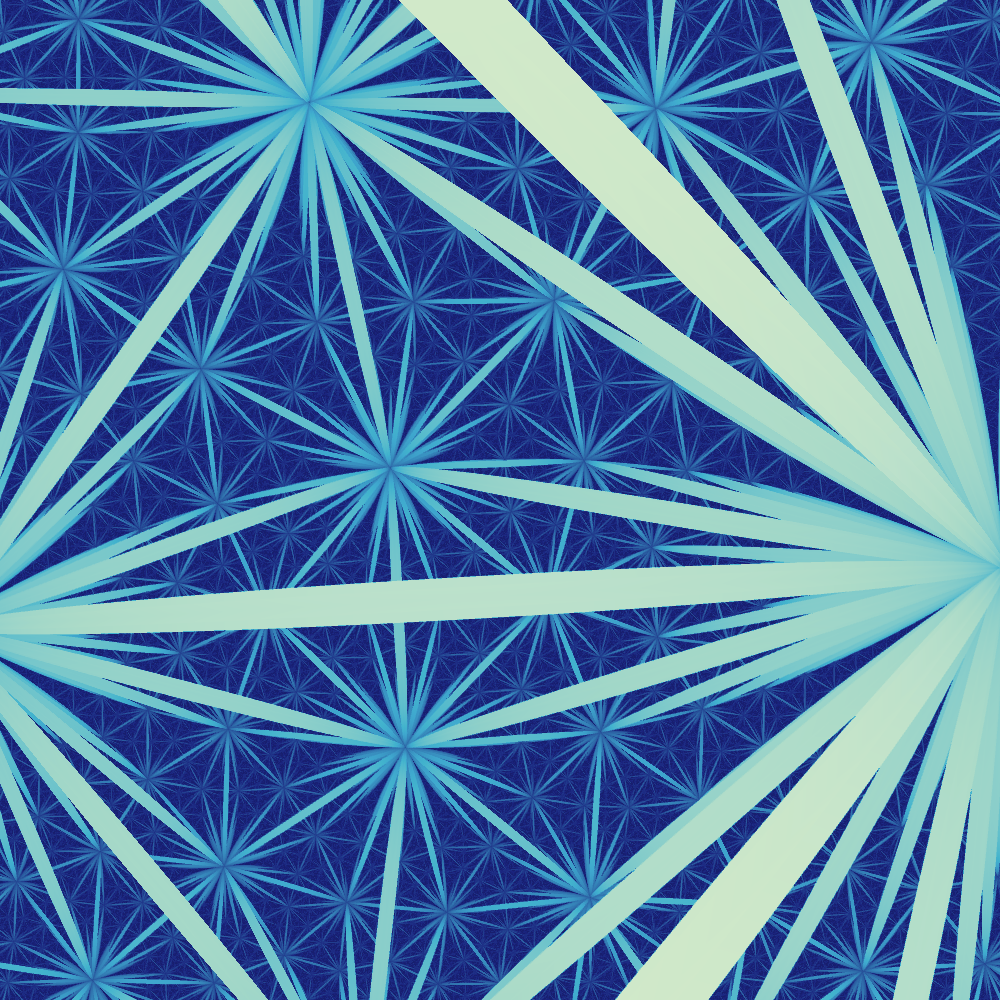}
}
\caption{Edges of the ideal triangulation of \texttt{m004}, as seen in the material view.}
\label{Fig:Edges}
\end{figure}

\subsection{Elevations}

We also give the user the option to see several elevations of the surface $F$. The user selects a weight $w_{\max} > 0$. 
In step (5b), if $w_0 < 0$ but $w_c>0$, then we have crossed the elevation at weight zero. In this case we exit the loop, and colour the pixel by the distance $d$. Similarly, if $w_0 > w_{\max}$ but $w_c < w_{\max}$, then we have crossed the elevation at weight $w_{\max}$, and again we stop and colour by distance. Finally, if $0<w_0<w_{\max}$, then we stop if $w_c$ has changed from $w_0$. \reffig{Elevation} shows a single elevation.


\subsection{Triangulations, geometry, and cocycles}

We obtain our triangulations and their hyperbolic shapes from the SnapPy census. We put some effort into choosing good representative cocycles; the choice here makes very little difference to the appearance of the cohomology fractal, but it makes a large difference to the appearance of the elevations. That is, a poor choice of cocycle gives a ``noisy'' elevation. For example, adding the boundary of a tetrahedron to the Poincar\'e dual surface may perform a one-three move to its triangulation. This adds unnecessary ``spikes'' to the elevations. 

When our manifold has Betti number one, there is only one cohomology class of interest. Here we searched for taut ideal structures dual to this class~\cite{Lackenby00}. When the SnapPy triangulation did not admit such a taut structure, we randomly searched for one that did. A taut structure gives a Poincar\'e dual surface with the minimum possible Euler characteristic. 

When the Betti number is larger than one, we used tnorm~\cite{tnorm20} to find initial simplicial representatives of vertices of the Thurston norm ball~\cite{Thurston86} in $H_2(M,\bdy M)$. We then greedily performed Pachner moves to reduce the complexity of the cocycles. We often, but not always, realised the minimum possible Euler characteristic.


\subsection{Discussion}

Any visualisation of a hyperbolic tiling suffers from the mismatch between the hyperbolic metric of the tiling and the euclidean metric of the image. 
The tools for generating more of the tiling involve applying hyperbolic isometries. 
The tiles thus shrink exponentially in size while growing exponentially in number.  
This makes it difficult for the tiles to cleanly approach $\bdy_\infty \HH^2$ or $\bdy_\infty \HH^3$.  
Approaching a ``parabolic'' point at infinity is even more difficult.

In the vector graphics approach, one must be careful to avoid wasting time generating huge numbers of invisible objects: tiles may be too small or their aspect ratios too large.

The ray-tracing approach (and any similar raster graphics approach) deals with this mismatch directly.  
Here we start with the pixel that is to be coloured and then generate only the hyperbolic geometry needed to determine its colour. 

A disadvantage of the ray-tracing approach is that we generate the hyperbolic geometry necessary for each pixel independently, meaning that much work is duplicated. However, the massive parallelism in modern graphics processing units mitigates, and is in fact designed to deal with, this kind of issue. It often turns out to be faster to duplicate work in many parallel processes rather than compute once then transmit the result to all processes requiring it.

\section{Incomplete structures and closed manifolds}
\label{Sec:Cone}

Suppose that $M$ is a cusped hyperbolic manifold.  Recall that we generate cohomology fractals for $M$ by using an ideal triangulation $\calT$.  Associated to $\calT$ there is the \emph{shape variety}; that is we impose the gluing equations outlined in \refsec{Geometry}, omitting the peripheral ones.  This gives us a space of deformations of the complete hyperbolic structure to incomplete hyperbolic structures; see~\cite[Section~4.4]{ThurstonNotes} and~\cite[Section~6.2]{PurcellKnotTheory}.   
If we deform correctly, we reach an incomplete structure whose completion has the structure of a hyperbolic manifold. The result is a \emph{hyperbolic Dehn filling} of the original cusped manifold. 

\subsection{Incomplete structures}

Suppose that $(M, \calT)$ is an ideally triangulated manifold. Let $Z^s$ be a path in the shape variety, where $Z^\infty$ is the complete structure and the completion of $Z^1$ is a closed hyperbolic three-manifold obtained by Dehn filling $M$. 
Between the two endpoints, we have \emph{incomplete structures} $M_s$ on the manifold $M$. 

In an incomplete geometry, there are geodesic segments that cannot be extended indefinitely. 
Suppose that, as in our algorithm, we only consider geodesic segments emanating from $p$ of length at most $R$. 
The endpoints of the rays that do not extend to distance $R$ form the \emph{incompleteness locus} $\Sigma_s$ in the ball $B^3_R \subset \HH^3$. 
It follows from work of Thurston that $\Sigma_s$ is a discrete collection of geodesic segments, for generic values of $s$~\cite{Thurston82}.

Suppose that $\omega \from \calT^{(2)} \to \RR$ is the given weight function dual to a properly embedded surface $F$ in $M$. 
We assume that the boundary of $F$ (if any) gives loops in the filled manifold that, there, bound disks. 
Thus $F$ also gives a cohomology fractal in the filled manifold. 

\begin{remark}
Note that there is no canonical way of transferring a base point $b$ or view $D$ between two different geometric structures $M_s$ and $M_{s'}$. However, we can choose $b$ and $D$ for each $M_s$ in a way that gives us continuously varying pictures.
We do not dwell on the details here.
\end{remark}

\begin{figure}[htb!]
\centering
\subfloat[$s=10$]{
\includegraphics[width=0.31\textwidth]{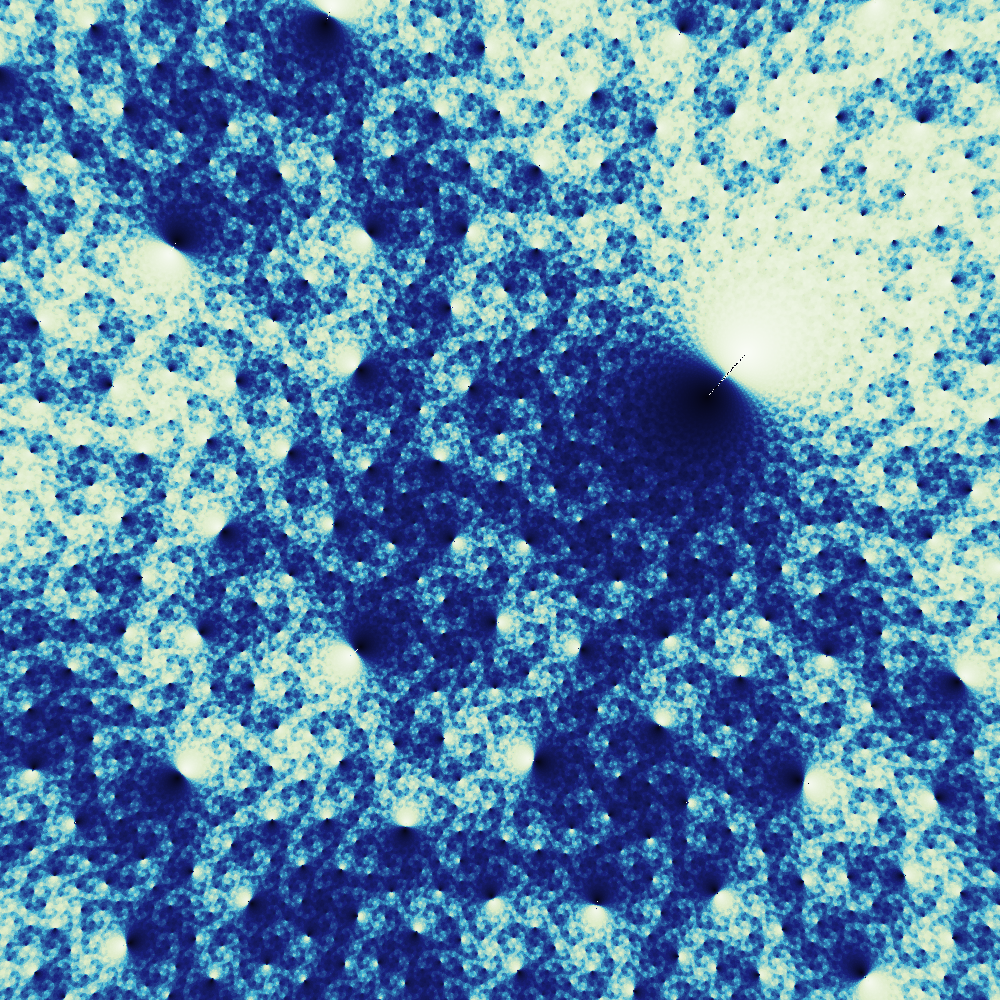}
}
\subfloat[$s=5$]{
\includegraphics[width=0.31\textwidth]{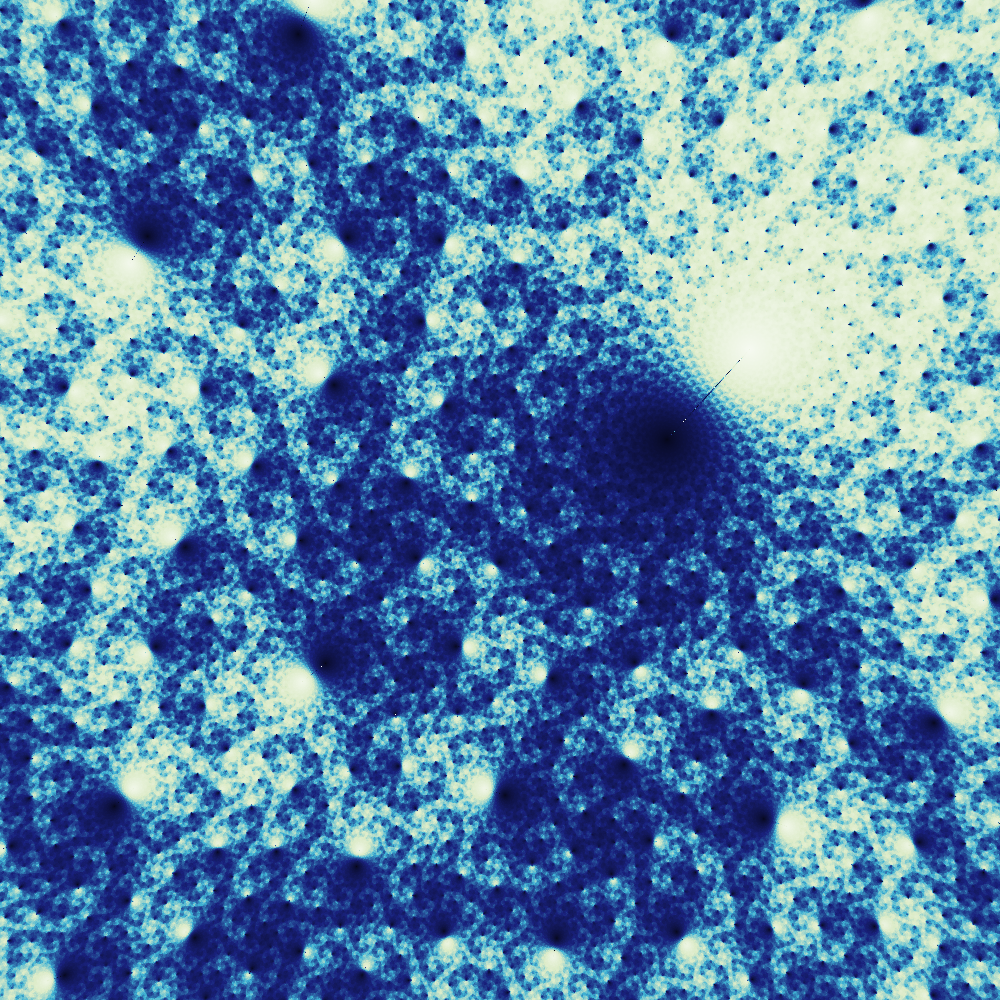}
}
\subfloat[$s=4$]{
\includegraphics[width=0.31\textwidth]{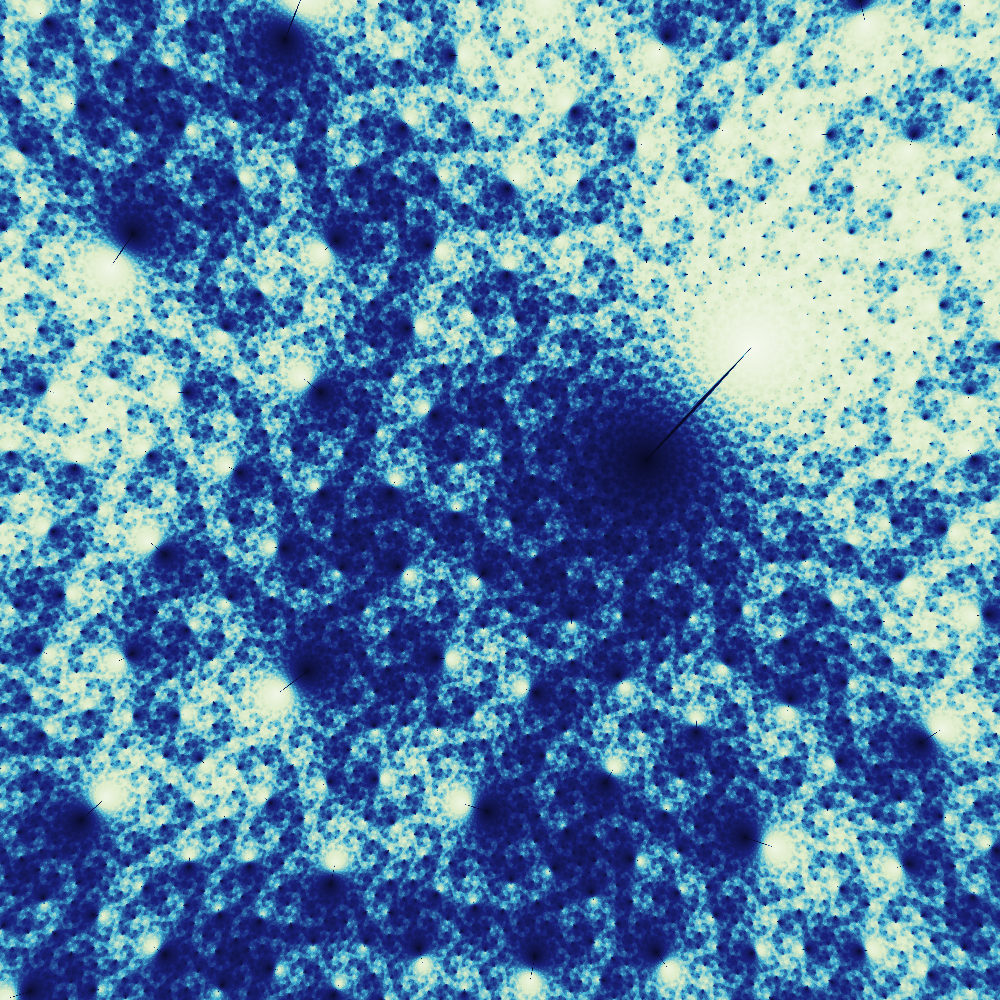}
}
\\
\subfloat[$s=3$]{
\includegraphics[width=0.31\textwidth]{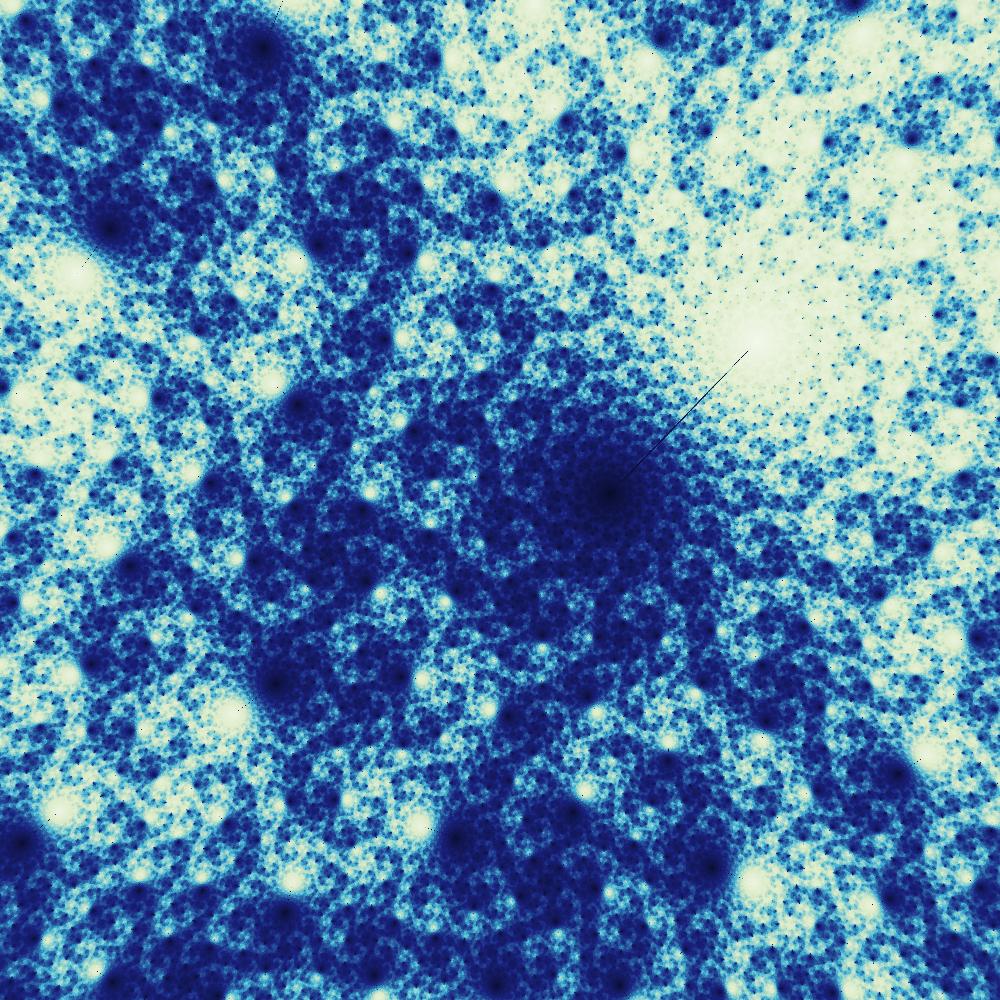}
}
\subfloat[$s=2$]{
\includegraphics[width=0.31\textwidth]{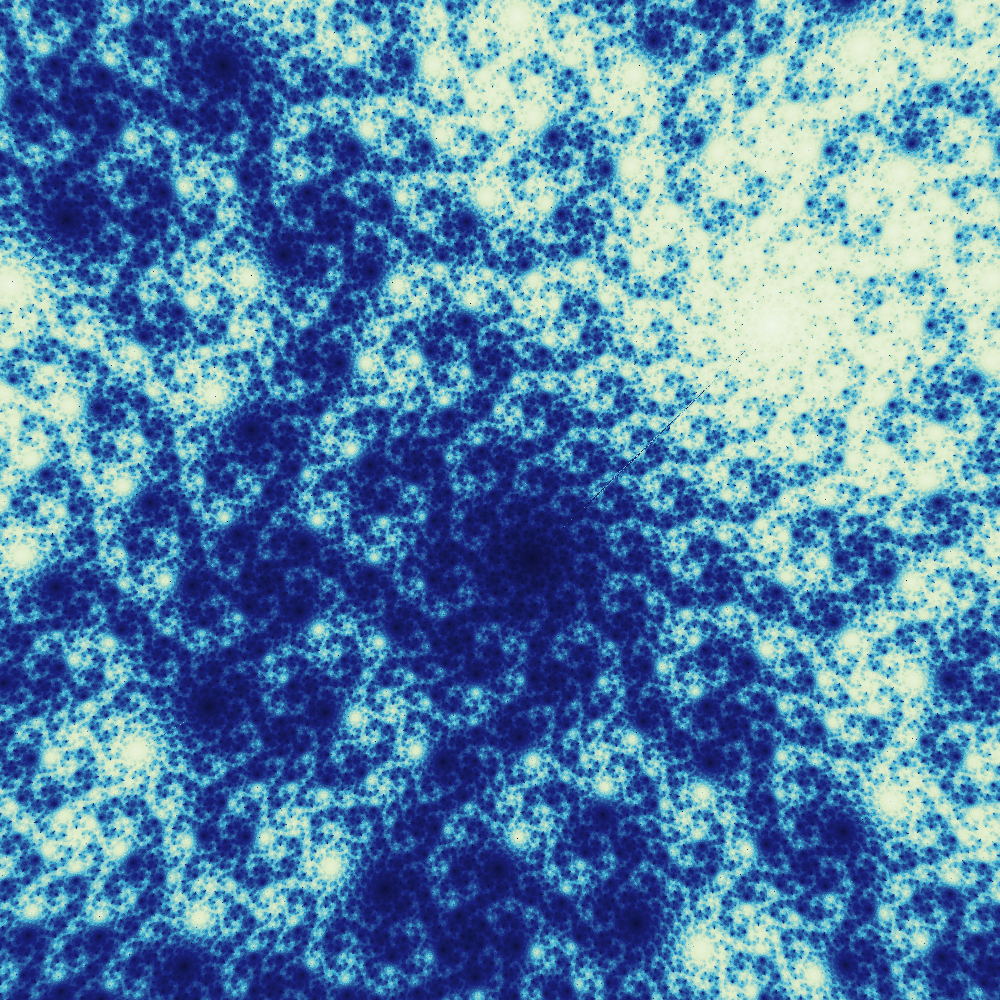}
}
\subfloat[$s=1.8$]{
\includegraphics[width=0.31\textwidth]{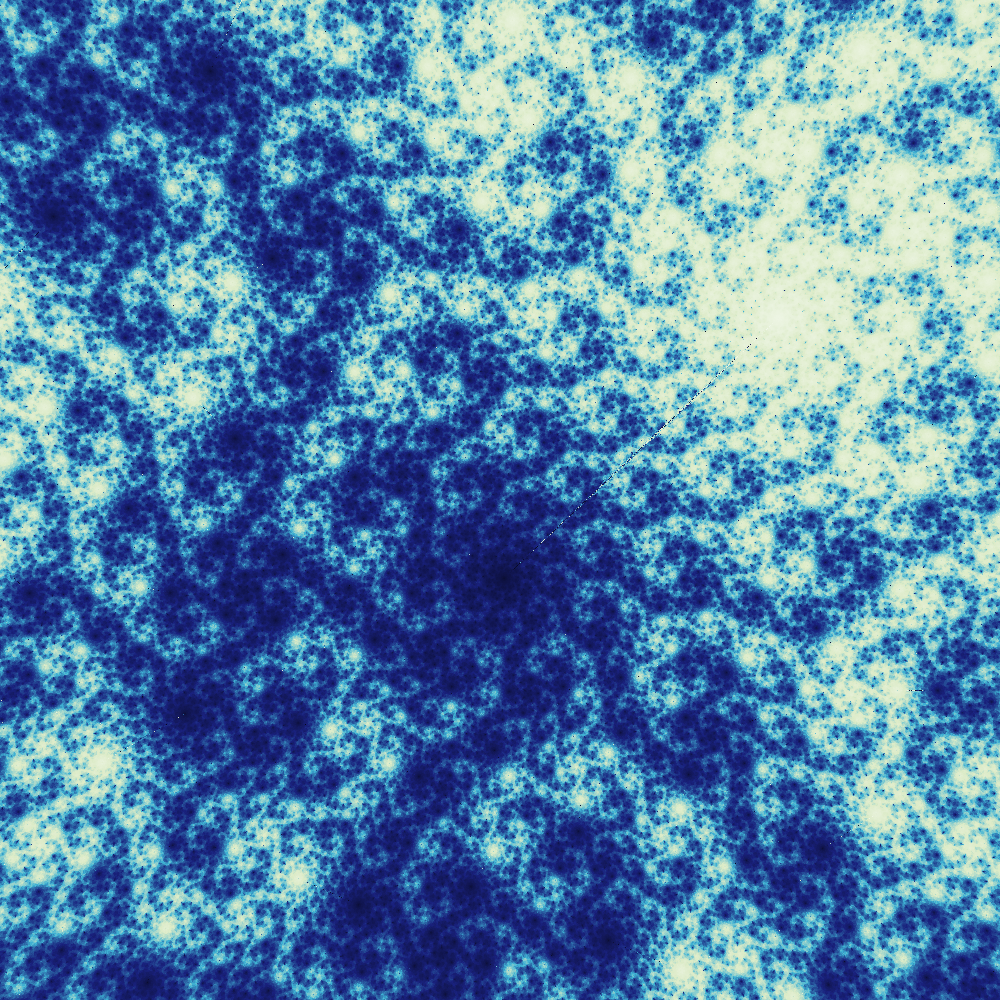}
}
\\
\subfloat[$s=1.6$]{
\includegraphics[width=0.31\textwidth]{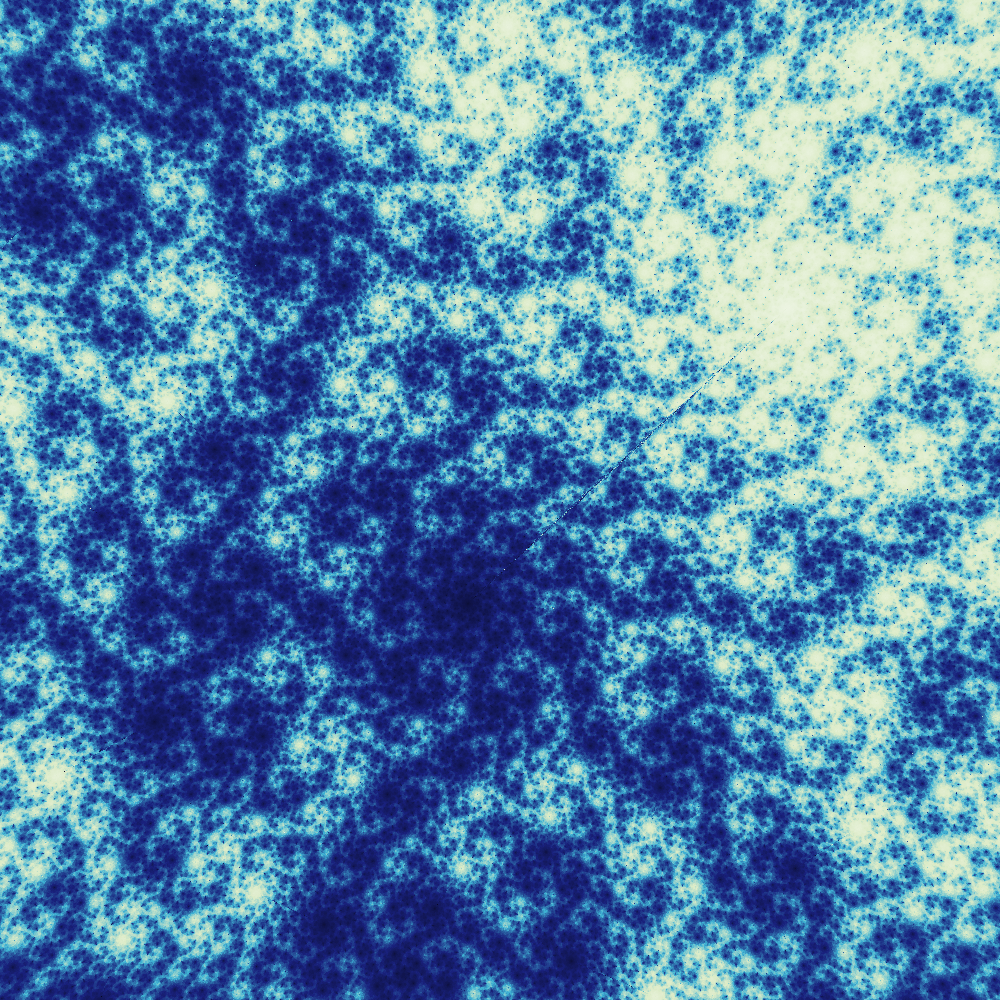}
}
\subfloat[$s=1.4$]{
\includegraphics[width=0.31\textwidth]{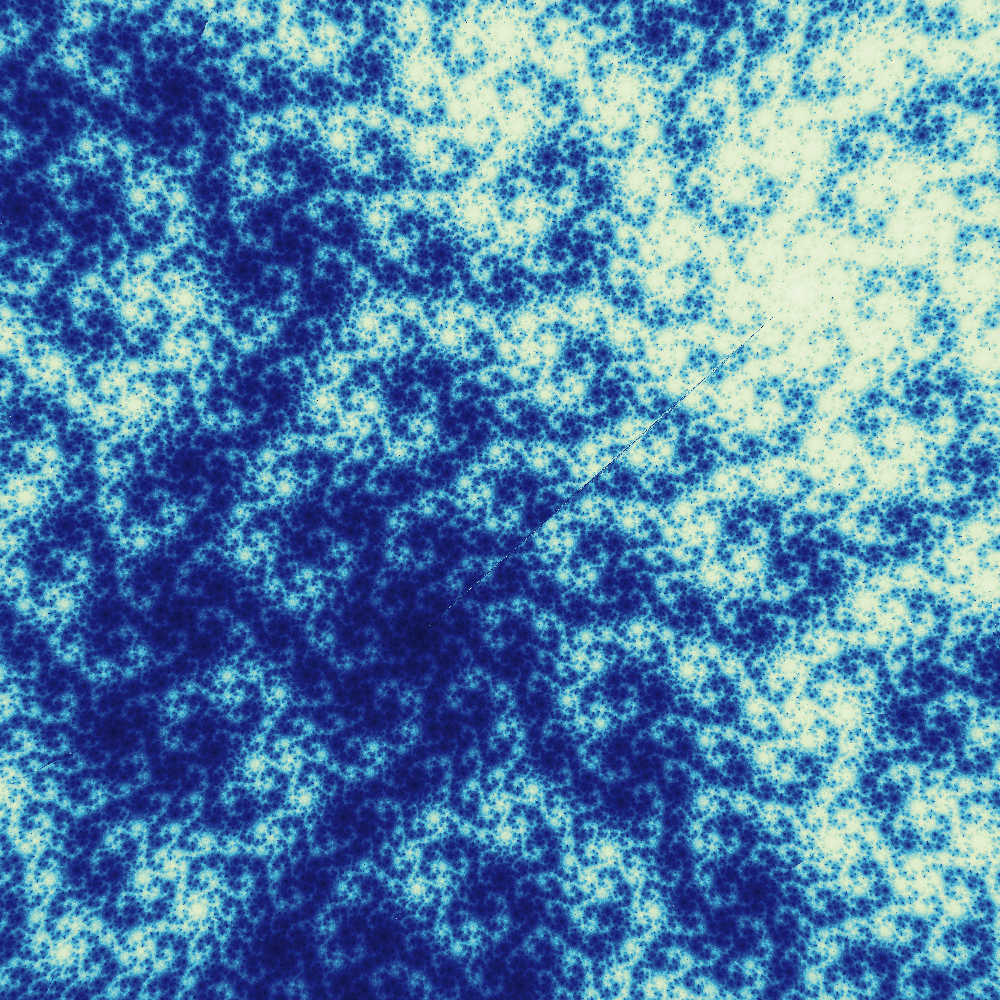}
}
\subfloat[$s=1.2$]{
\includegraphics[width=0.31\textwidth]{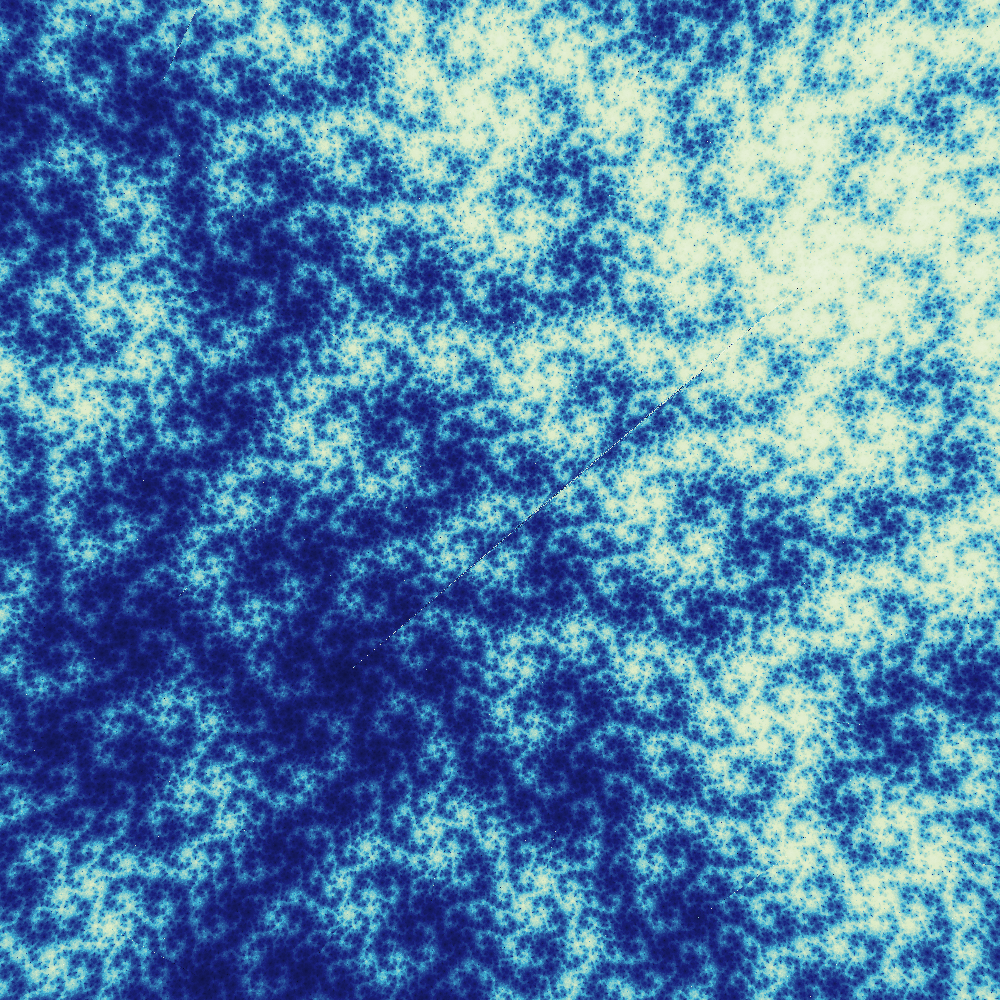}
}
\caption{Cohomology fractals for \texttt{m122($4s, -s$)} as $s$ varies.}
\label{Fig:Bending}
\end{figure}

\reffig{Bending} shows cohomology fractals for various $M_s$.
We see a kind of branch cut in the background to either side of the incompleteness locus $\Sigma_s$. 
As we vary $s$, the background appears to bend along the geodesic. Other paths in the shape variety will give shearing as well as (or instead of) bending.

When we reach a Dehn filling, the two sides again match, and we see the structure of the closed filled manifold. 
See \reffig{m122_4_-1}. 
(The two sides can also match before we reach the Dehn filling due to symmetries of the cusped manifold lining up with the cone structure.)

\begin{figure}[htb]
\centering
\includegraphics[width = 0.65\textwidth]{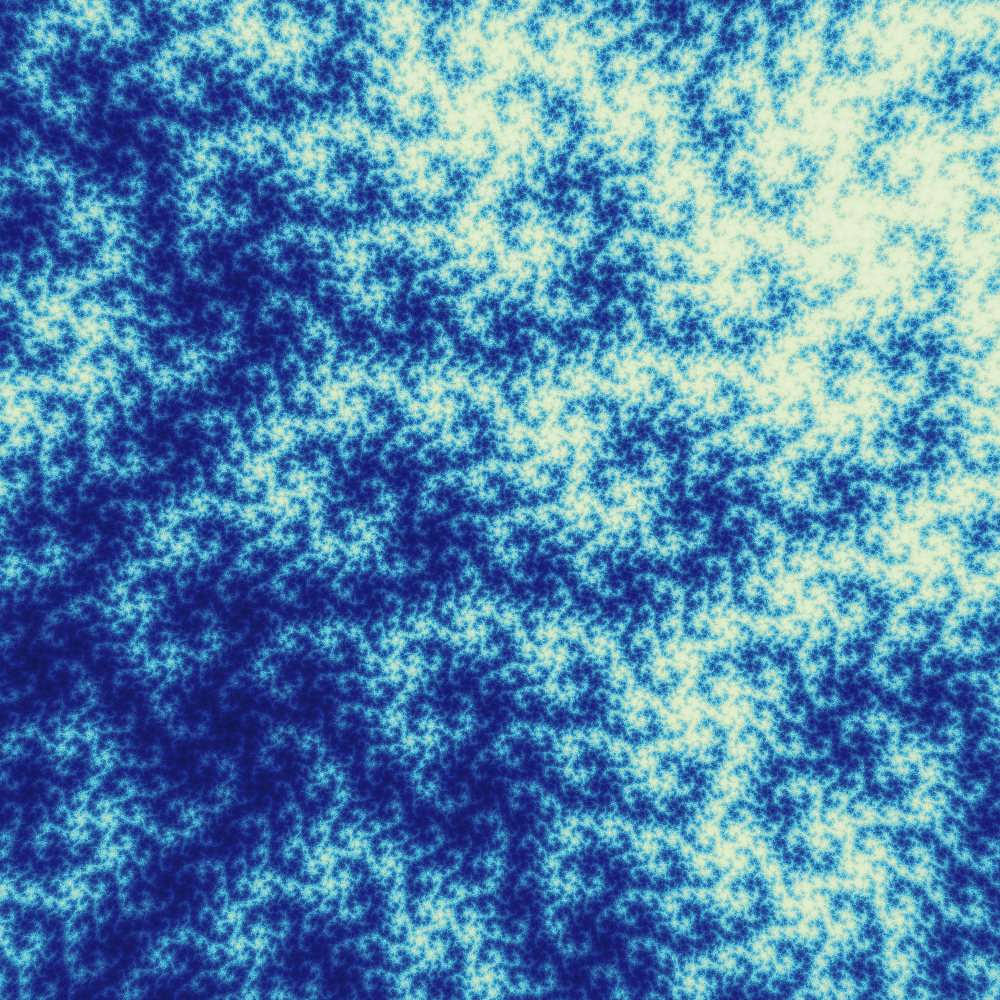}
\caption{Cohomology fractal for the Dehn filling \texttt{m122(4,-1)}. 
This gives a final image for \reffig{Bending}, with $s = 1$.}
\label{Fig:m122_4_-1}
\end{figure} 

\subsection{Numerical instability near the incompleteness locus}
\label{Sec:NumericalInstability}

Our algorithm, given in \refsec{Implement}, does not require completeness.  However, a ray from $p$ to $\Sigma_s$ necessarily meets infinitely many tetrahedra. 
This is because near $\Sigma_s$ we are far from the thick part of any tetrahedron, and the thin parts of the tetrahedra are almost ``parallel'' to $\Sigma_s$.  
Thus the innermost loop of the algorithm will always halt by reaching the maximum step count; it follows that we cannot ``see through'' a neighbourhood of $\Sigma_s$.  \reffig{m122_4_-1_with_evil} shows the cohomology fractal drawn with a small maximum step count, making such a neighbourhood visible.

\begin{figure}[htb]
\centering
\includegraphics[width = 0.65\textwidth]{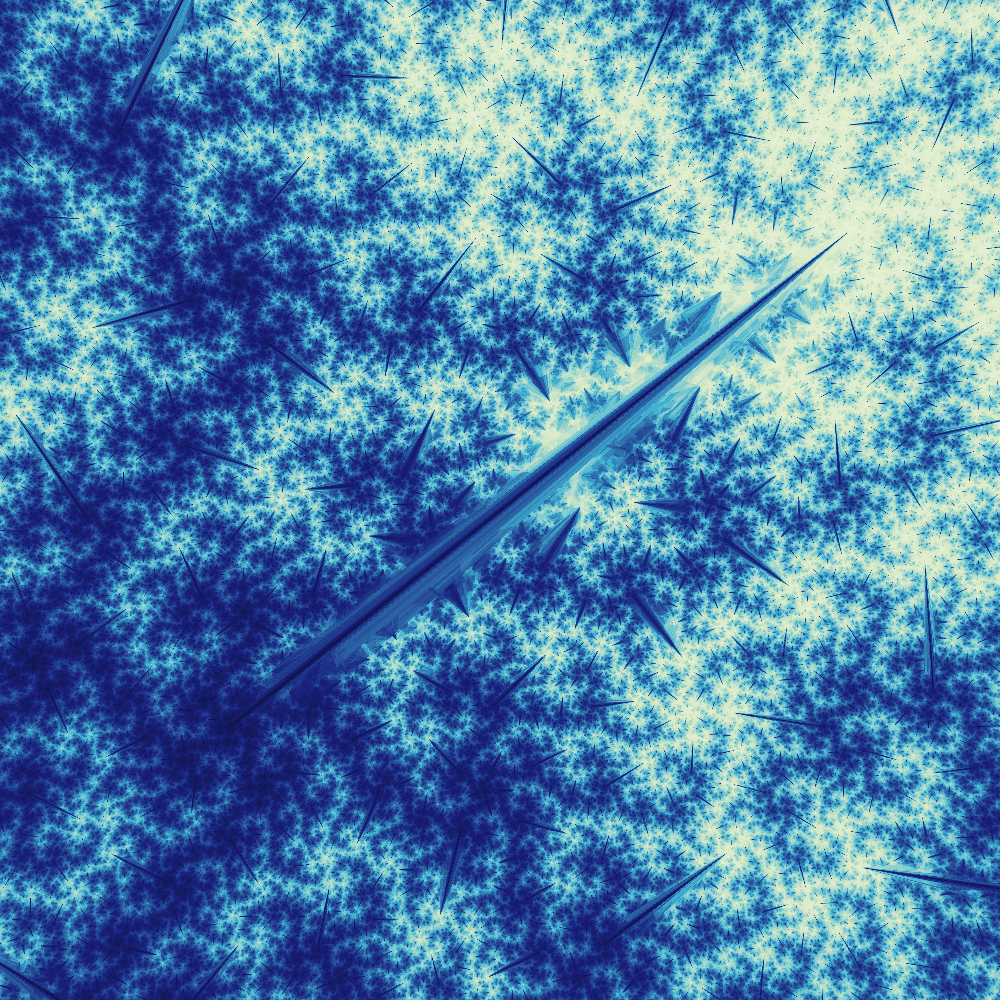}
\caption{Cohomology fractal for the Dehn filling \texttt{m122(4,-1)} drawn with an incomplete structure on an ideal triangulation. 
Here the maximum number of steps $S$ is 55. Compare with \reffig{m122_4_-1}.}
\label{Fig:m122_4_-1_with_evil}
\end{figure} 

Increasing the maximum step count shrinks the opaque neighbourhood of $\Sigma_s$. However, as a ray approaches $\Sigma_s$, its segments within the model tetrahedra tend to their ideal vertices. Thus the coordinates blow up; this appears to lead to numerically unstable behaviour. See \reffig{EvilCloseUp}. In the next section we describe a method to eliminate these numerical defects; we use this to produce \reffig{WithoutEvilCloseUp}.

\begin{figure}[htb]
\centering
\subfloat[Numerical instability near the incompleteness locus.]{
\includegraphics[width = 0.47\textwidth]{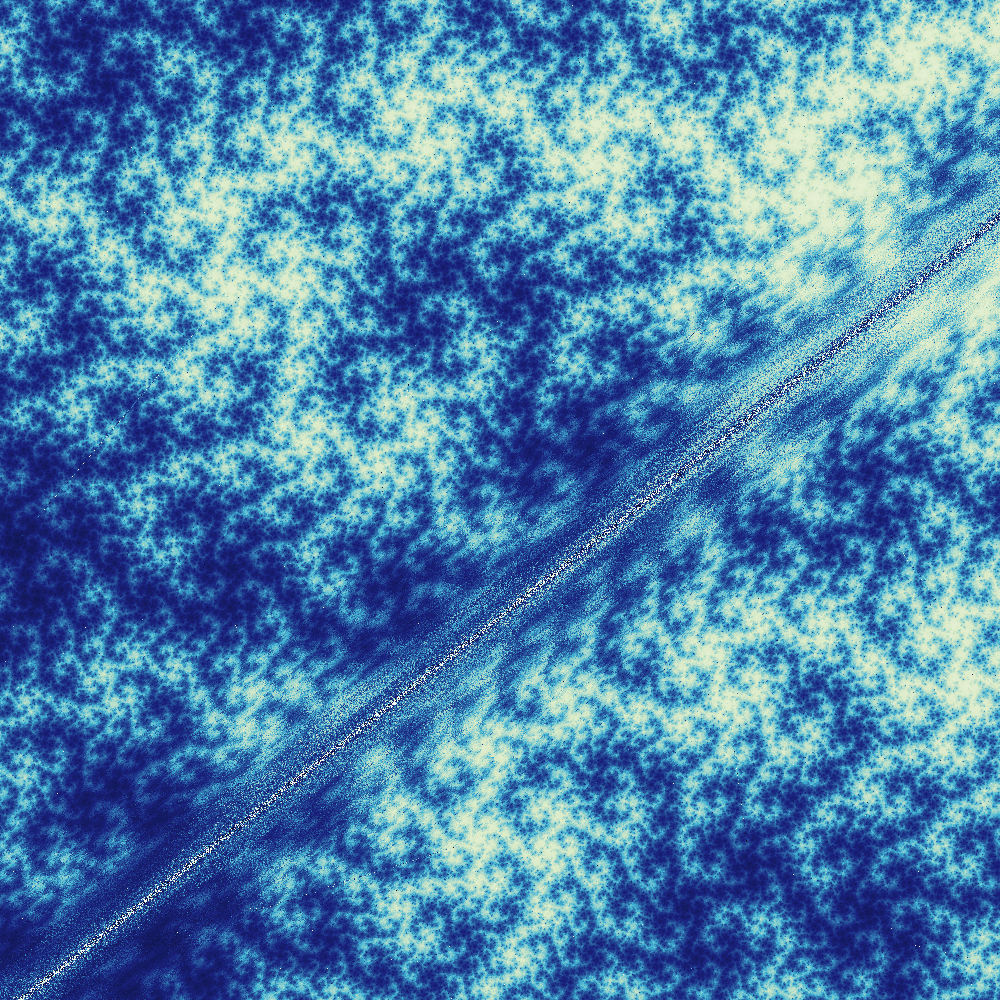}
\label{Fig:EvilCloseUp}
}
\subfloat[The same view with a material triangulation implementation.]{
\includegraphics[width = 0.47\textwidth]{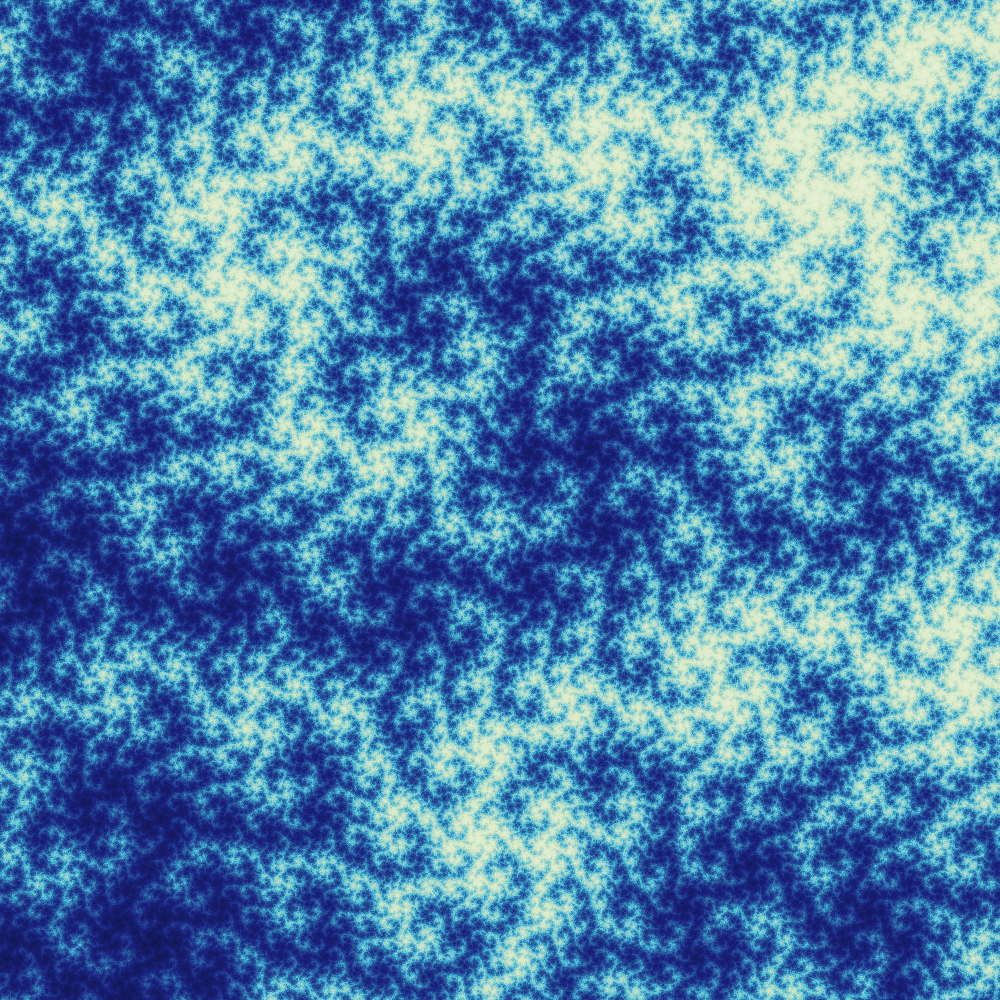}
\label{Fig:WithoutEvilCloseUp}
}
\caption{A view of the cohomology fractal for the manifold \texttt{m122(4,-1)} near the incompleteness locus.
On the left we have taken the maximum number of steps $S$ sufficiently large to ensure that all rays reach distance $R$.}
\end{figure} 

Note that numerical instability caused by rays approaching the ideal vertices also occurs for the complete structure on a cusped manifold. It is less noticeable in this case however, because these errors occur in a small part of the visual sphere for typical positions. 

\subsection{Material triangulations}
\label{Sec:MaterialTriangulations}

In order to remove instability around the incompleteness locus, we remove it. That is, we abandon (spun) ideal triangulations in favour of material triangulations. There is no change to the algorithm in \refsec{Implement}; we only alter the input data (the planes $P^i_k$ and face-pairing matrices $g^i_k$):

Given the edge lengths (see \refsec{MaterialGeometry}) for a material triangulation, \cite[Lemma~3.4]{matthiasVerifyingFinite} assigns hyperbolic isometries to the edges of a doubly truncated simplex (also known as permutahedron). 
These can be used to switch a tetrahedron between different standard positions (as defined in \cite[Definition~3.2]{matthiasVerifyingFinite}) where one of its faces is in the $\HH^2\subset\HH^3$ plane. 
We assume that every tetrahedron is in $(0,1,2,3)$--standard position. 
Given a face-pairing, we apply the respective isometries to each of the two tetrahedra such that the faces in question line up in the $\HH^2\subset\HH^3$ plane. 
The face-pairing matrix $g^i_k$ is now given by composing the inverse of the first isometry with the second isometry. 
For example, let face 3 of one tetrahedron be paired with face 2 of another tetrahedron via the permutation $(0,1,2,3)\mapsto(0,1,3,2)$. 
To line up the faces, we need to bring the second tetrahedron from the default $(0,1,2,3)$--standard position into $(0,1,3,2)$--standard position by applying $\gamma_{012}$ from \cite[Lemma~3.4]{matthiasVerifyingFinite} which will thus be the face-pairing matrix, see \cite[Figure~4]{matthiasVerifyingFinite}. 
It is left to compute the planes $P^i_k$. 
Note that $P^i_3$ (for each $i$) is the canonical copy of $\HH^2 \subset \HH^3$. 
All other $P^i_k$ can be obtained by applying the isometries from \cite[Lemma~3.4]{matthiasVerifyingFinite} again.

\subsection{Cannon--Thurston maps in the closed case}

\begin{figure}[htb]
\centering
\subfloat[McMullen's illustration~\cite{McMullen19}. See also~\cite{McMullenWeb}.]{
\includegraphics[width = 0.47\textwidth]{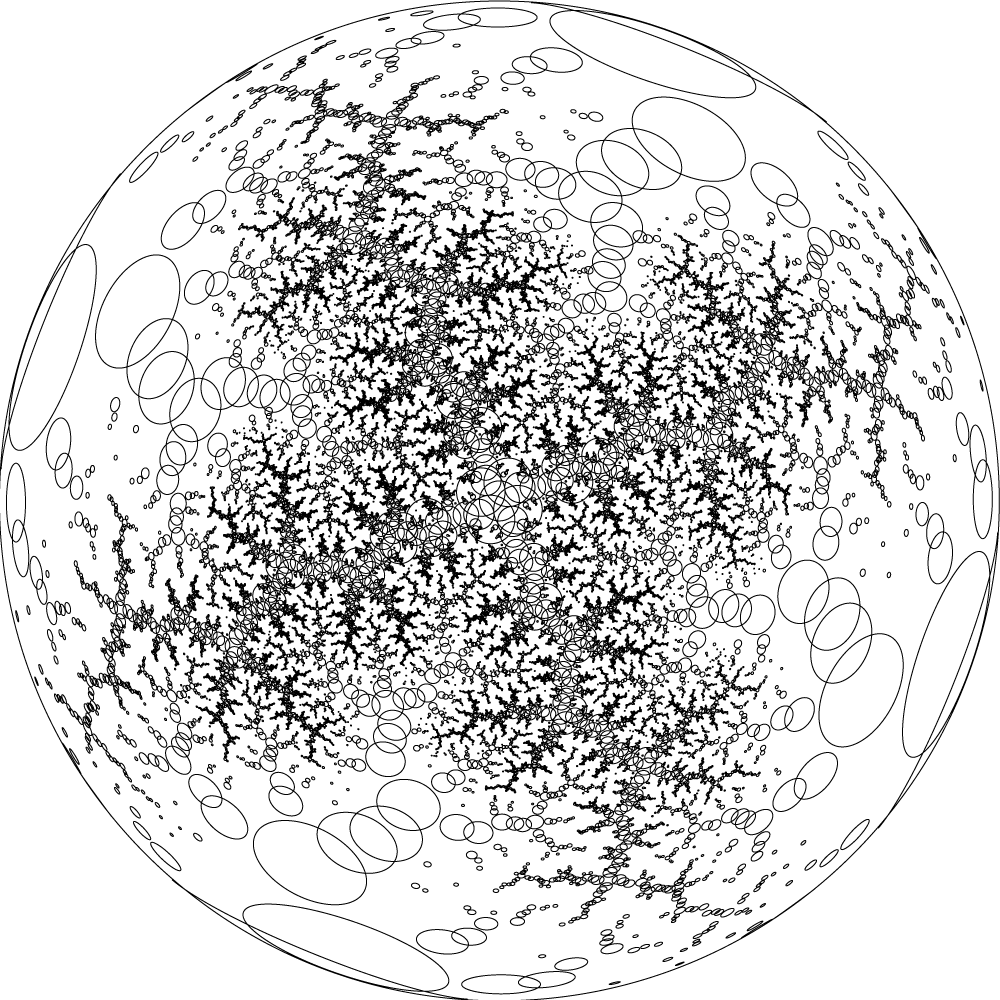}
\label{Fig:McMullen}
}
\subfloat[Cohomology fractal - hyperideal view.]{
\includegraphics[width = 0.47\textwidth]{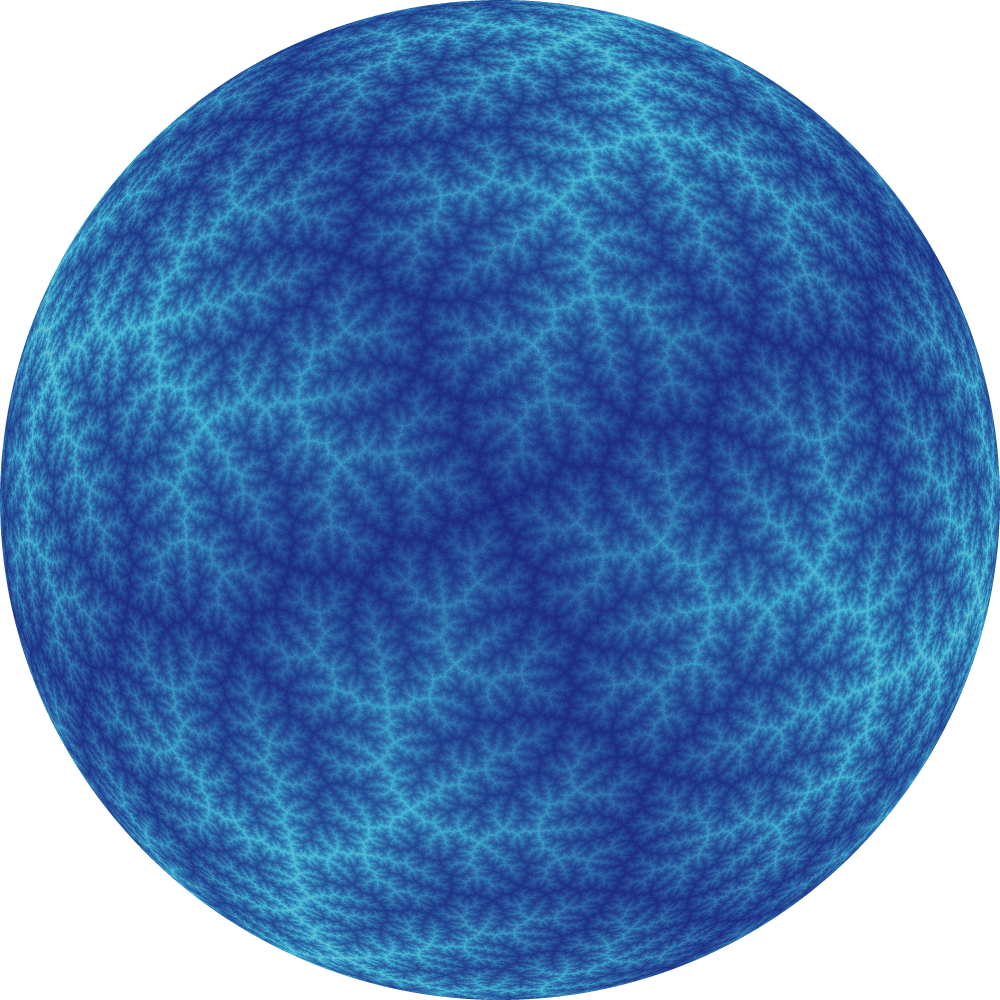}
\label{Fig:OrbifoldCohomFractal}
}
\caption{Views of \texttt{m004(0,2)}.}
\label{Fig:Orbifold}
\end{figure}

Cannon and Thurston's original proof was in the closed case.  Thurston's original images and all subsequent renderings, with one notable exception, are in the cusped case. 
With some minor modifications, \refprop{LightDark} applies in the closed case; thus the cohomology fractals again approximate Cannon--Thurston maps. 

We are aware of only one previous example in the closed case, due to McMullen~\cite{McMullenWeb}. 
In \reffig{Orbifold} we give a rasterisation of his original vector graphics image~\cite{McMullen19}, and our version of the same view.
The filling \texttt{m004(0,2)} of the figure-eight knot complement has an incomplete hyperbolic metric.
The completion is a hyperbolic orbifold $\calO$ with angle $\pi$ about the orbifold locus; the universal cover is $\HH^3$. 

Since the filling is a multiple of the longitude, the orbifold $\calO$ is again fibred. An elevation of this fibre to $\HH^3$ gives a Cannon--Thurston map. Our image, \reffig{OrbifoldCohomFractal} is the cohomology fractal for the fibre in $\calO$, in the hyperideal view. This is implemented using a material triangulation of an eight-fold cover $M$. Since $M$ with its fibre, is commensurable with $\calO$ with its fibre, we obtain the same image.

McMullen's image, reproduced in \reffig{McMullen} was generated using his program \texttt{lim}~\cite{lim}. Briefly, let $\calO^\infty$ be the infinite cyclic cover of $\calO$. McMullen produces a sequence $\calO^n$ of quasi-fuchsian orbifolds that converge in the geometric topology to $\calO^\infty$. In each of these the convex core boundary is a pleated surface. The supporting planes of this pleated surfaces give round circles in $\bdy \HH^3$. His image then is obtained by taking $n$ fairly large, passing to the universal cover of $\calO^n$, and drawing the boundaries of many supporting planes~\cite{McMullen19}.

\subsection{Accumulation of floating point errors}
\label{Sec:Accumulate}

Our implementation uses single-precision floating point numbers. As we saw in \refsec{NumericalInstability}, this can cause problems when rays approach the vertices of ideal tetrahedra. However, floating point errors can accumulate for large values of $R$ whether or not rays approach the vertices. This can therefore also affect material triangulations.

With these problems in mind, we cannot claim that our images are rigorously correct.
However, for small values of $R$ we can be confident that our images are accurate. For very small values the endpoints of our rays all sit within the same tetrahedron, and so all pixels are the same colour. As we increase $R$ (as in \reffig{VisSphereRadii}), we see regions of constant colour, separated by arcs of circles. This is provably correct: (horo-)spheres meet the totally geodesic faces of tetrahedra in circles. 

If we zoom in whilst increasing $R$, eventually floating point errors become visible.
\reffig{NumNoise} shows the results of an experiment to determine when this happens, for a material triangulation. At around $R = 11$, the circular arcs separating regions of the same colour become stippled. At around $R = 13$, the regions are no longer distinct. 

\begin{remark}
\label{Rem:Quasigeodesic}
Perhaps surprisingly, this accumulation of error does not mean that our pictures are inaccurate. Suppose that the side lengths of our pixels are on a somewhat larger scale than the precision of our floating point numbers. For each pixel, our implementation produces a piecewise geodesic, starting in the direction through the centre of the pixel, but with small angle defect at each vertex. 
Due to the nature of hyperbolic geometry, this piecewise geodesic cannot curve away from the true geodesic fast enough to leave the visual cone on the desired pixel. Thus, as long as the pixel size is not too small, each pixel is coloured according to some sample within that pixel.
\end{remark}

\begin{figure}[htb]
\centering
\subfloat[$R=10.5,$ field of view $\sim 0.015^\circ$]{
\includegraphics[width=0.45\textwidth]{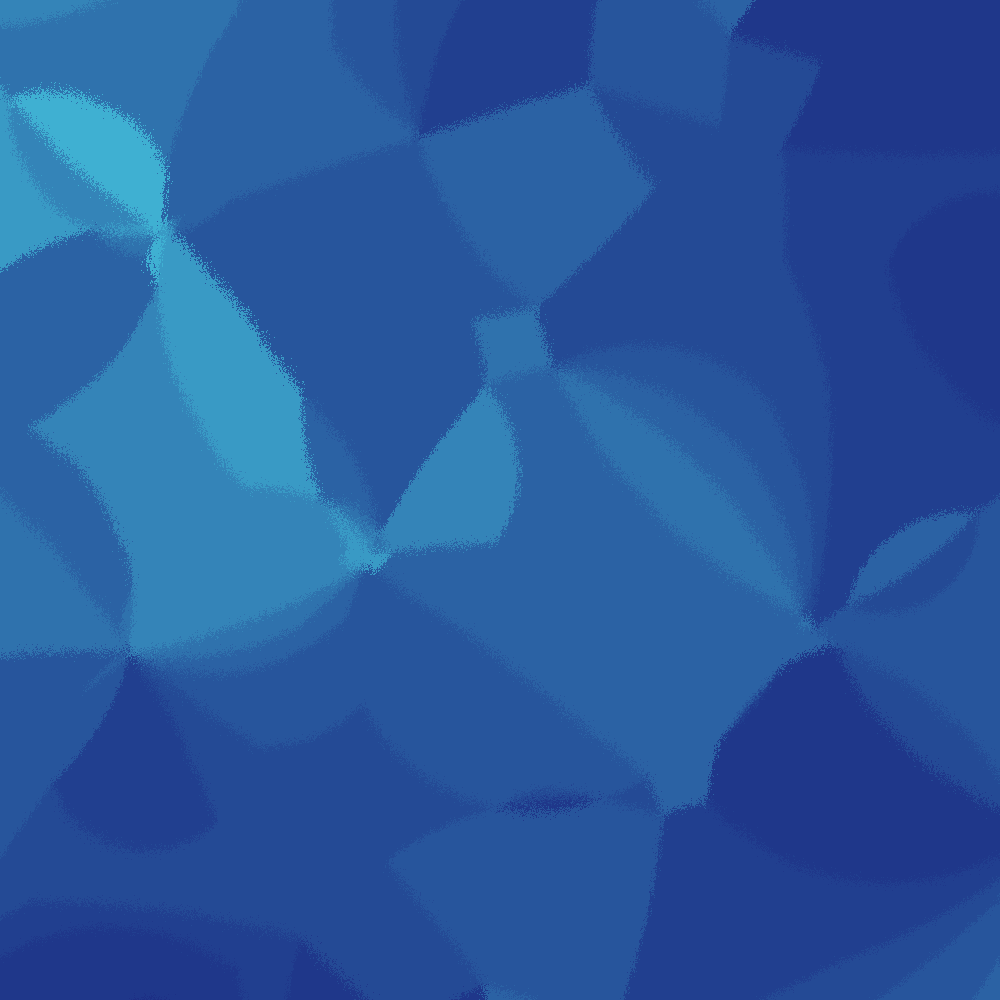}
}
\thinspace
\subfloat[$R=11.5,$ field of view $\sim 0.005^\circ$]{
\includegraphics[width=0.45\textwidth]{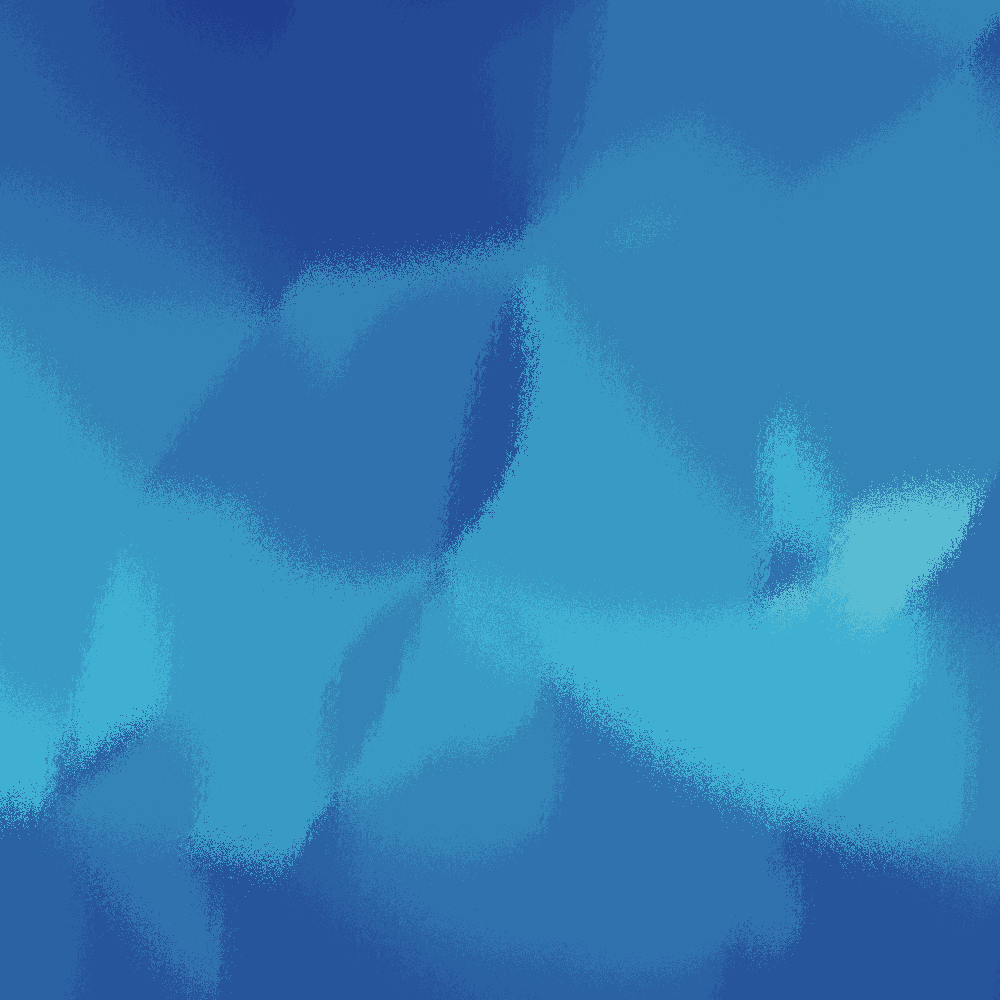}
}

\subfloat[$R=12.5,$ field of view $\sim 0.002^\circ$]{
\includegraphics[width=0.45\textwidth]{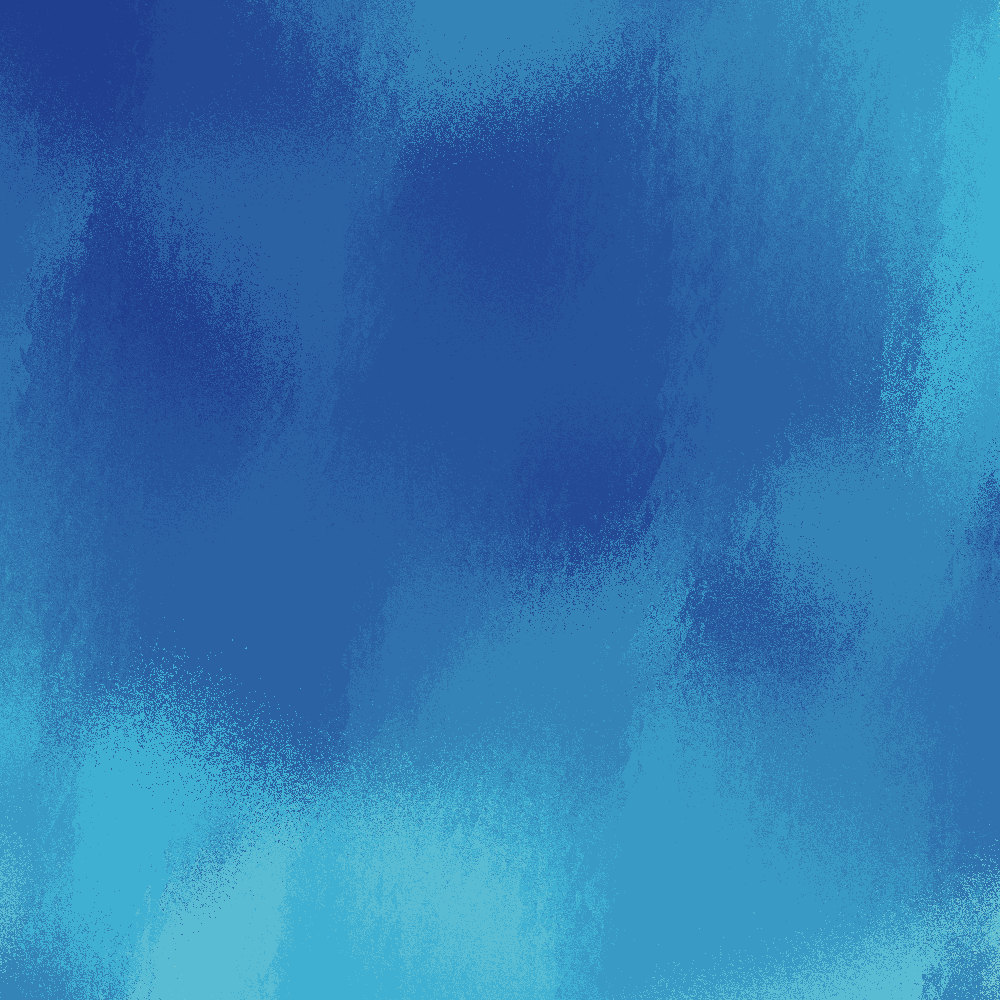}
}
\thinspace
\subfloat[$R=13.5,$ field of view $\sim 0.0007^\circ$]{
\includegraphics[width=0.45\textwidth]{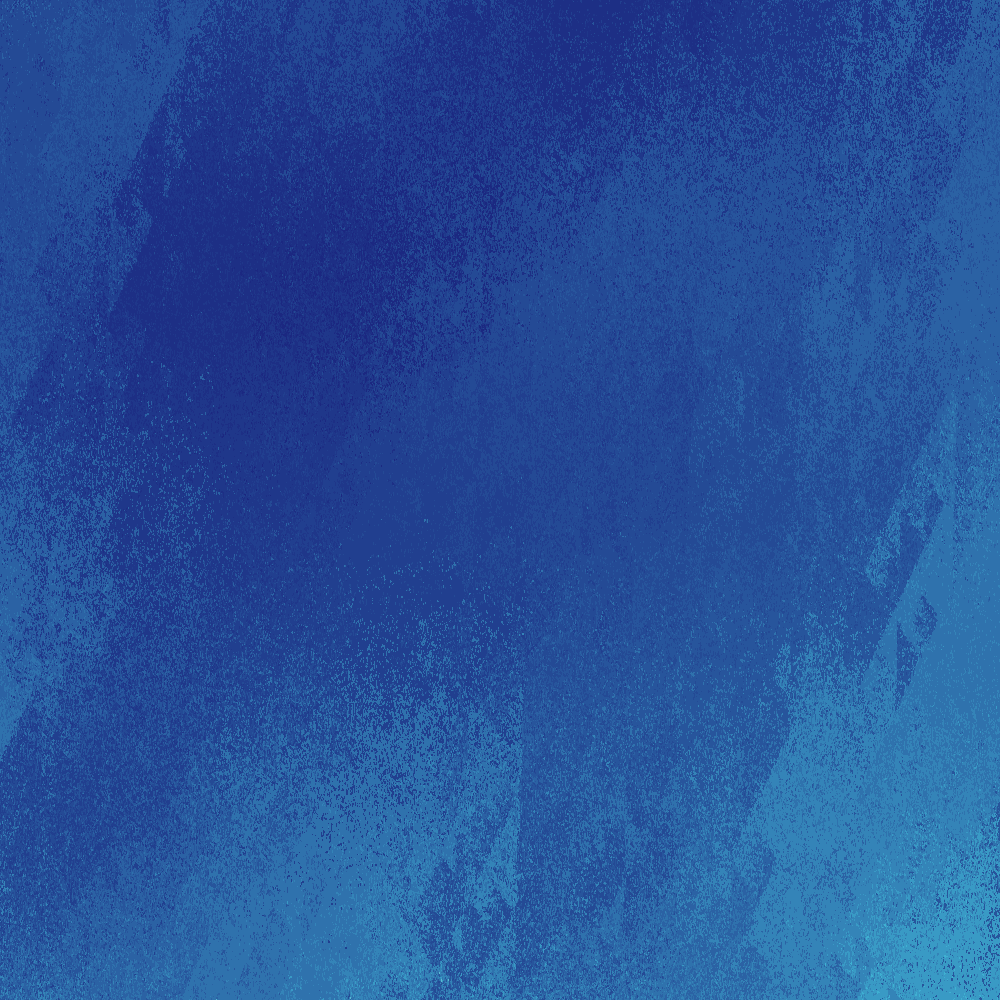}
}
\caption{We zoom into the cohomology fractal for \texttt{m122(4,-1)} while increasing $R$. 
The field of view  of the image is proportional to $e^{-R}$.
Noise due to rounding errors becomes visible at $R\sim 10.5$ and completely dominates the picture when $R\sim13.5$. }
\label{Fig:NumNoise}
\end{figure}

\section{Experiments}
\label{Sec:Experiments}

The sequence of images in \reffig{VisSphereRadii} suggests that some form of fractal object is coming into focus.  
When $R$ is small, the function $\Phi_R = \Phi^{F,b,D}_{R}$ is constant on large regions of $D$. 
As $R$ increases, these regions subdivide, producing intricate structures. 

As we have defined it so far, the cohomology fractal depends on $R$. 
A natural question is whether or not there is a limiting object that does not depend on $R$.
In this section we describe a sequence of experiments we undertook to explore this question. 
Inspired by these, in Sections~\ref{Sec:CLT} and~\ref{Sec:Pixel} we provide mathematical explanations of our observations.

\subsection{These pictures do not exist}

A na\"ive guess might be that the cohomology fractal converges to a function as $R$ tends to infinity. 
However, consider a ray following a closed geodesic $\gamma$ in $M$ that has positive algebraic intersection with the surface $F$. 
Choosing $D$ so that it contains a tangent direction $v$ along $\gamma$, we see that $\Phi^{F}_{R}(v)$ diverges to infinity as $R$ tends to infinity. 
The issue is not restricted to the measure zero set of rays along closed geodesics. 
Suppose that $v$ is a generic vector in a material view $D$. 
Recall that the geodesic flow is ergodic~\cite[Hauptsatz~7.1]{Hopf39}.  
Thus the ray starting from $v$ hits $F$ infinitely many times. 
So $\Phi^{F}_{R}(v)$ again diverges. Thus we have the following theorem.

\begin{theorem}
\label{Thm:NoPicture}
Suppose that $M$ is a finite volume, oriented hyperbolic three-manifold.  
Suppose that $p$ is any point of $M$.  
Suppose that $F$ is a compact, transversely oriented surface.
Then the limit 
\[
\lim_{R \to \infty} \Phi_R(v) 
\]
does not exist for almost all $v \in \UT{p}{M}$. \qed
\end{theorem}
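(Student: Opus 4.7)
The plan is to use ergodicity of the geodesic flow to show that the integer-valued function $\Phi_R(v)$ cannot stabilize, then promote the conclusion from Liouville-almost-everywhere in $\UT{}{M}$ to area-almost-everywhere on the fiber $\UT{p}{M}$. The key observation is that because $F$ is transversely oriented and $\omega$ can be taken as its integer dual cocycle (\refdef{UsingTriangulation}), $\Phi_R(v)$ changes value by exactly $\pm 1$ precisely when the geodesic ray from $p$ in direction $v$ crosses $F$ at time $R$. Hence $\lim_{R\to\infty}\Phi_R(v)$ exists if and only if $\Phi_R(v)$ is eventually constant, and this happens if and only if the ray crosses $F$ only finitely many times.

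It therefore suffices to prove that the set $B_p \subset \UT{p}{M}$ of vectors whose forward geodesic ray meets $F$ only finitely often has surface measure zero. I would first establish the corresponding statement upstairs in $\UT{}{M}$: let $V \subset M$ be a small tubular neighborhood of $F$, so $\pi^{-1}(V) \subset \UT{}{M}$ has positive Liouville measure. By Hopf's ergodicity theorem for the geodesic flow on finite-volume hyperbolic manifolds~\cite{Hopf39}, together with the Birkhoff ergodic theorem applied to the indicator function of $\pi^{-1}(V)$, Liouville-almost every orbit visits $\pi^{-1}(V)$ with positive frequency, and in particular infinitely often; after discarding the measure-zero set of orbits asymptotic to $F$, infinitely many visits to $V$ force infinitely many transverse crossings of $F$. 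Writing $W \subset \UT{}{M}$ for the bad set of vectors whose forward orbit crosses $F$ only finitely often, we have $W = \bigcup_{T \geq 0} \varphi_{-T}(W_0)$ where $W_0$ consists of vectors never crossing $F$ in forward time, so $W$ has Liouville measure zero.

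The main obstacle is transferring this statement from Liouville measure on $\UT{}{M}$ to surface measure on the particular fiber $\UT{p}{M}$, since an a~priori Liouville-null set can intersect a given fiber in a positive-measure set. To handle this I would appeal to equidistribution of spheres under the geodesic flow, which follows from mixing of geodesic flow on finite-volume hyperbolic three-manifolds (a tool the authors use elsewhere in the paper, e.g.\ for \refthm{CLT}). Concretely, for any continuous test function $f$ on $\UT{}{M}$, the averages $\frac{1}{\mathrm{Area}(\UT{p}{M})} \int_{\UT{p}{M}} f(\varphi_t(v))\,dv$ converge as $t \to \infty$ to the Liouville space average. Combining this with a Birkhoff-style argument along pushed fibers, one concludes that for almost every $v \in \UT{p}{M}$, the forward orbit $\{\varphi_t(v)\}$ is equidistributed in $\UT{}{M}$. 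Such an orbit necessarily visits any fixed neighborhood of $F$ infinitely often, hence (by the same exclusion of measure-zero asymptotic cases) crosses $F$ infinitely often. Together with part~(A), this yields the theorem.

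An alternative, and perhaps slicker, route would bypass the sphere-equidistribution step by using absolute continuity of the strong unstable foliation: the measure on $\UT{p}{M}$ is locally transverse to the strong unstable horospherical foliation, along whose leaves asymptotic behavior of forward orbits is essentially constant, so Liouville-null flow-saturated sets meet $\UT{p}{M}$ in area-null sets. Either route hinges on the same physical fact --- that in a hyperbolic three-manifold, generic geodesics from any starting point equidistribute --- but the packaging via mixing seems most consistent with the tools the paper develops in \refsec{CLT}.
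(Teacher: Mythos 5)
Your proposal follows the same core route as the paper's (rather informal) argument: ergodicity of the geodesic flow implies that, for a generic direction, the forward ray crosses $F$ infinitely often; since $\Phi_R(v)$ is integer-valued and jumps by a nonzero amount at each crossing, it cannot be eventually constant and hence cannot converge. Where you go further than the paper is in flagging the measure-theoretic subtlety that the paper silently elides: Hopf's theorem gives ``infinitely many crossings'' for Liouville-a.e.\ $v \in \UT{}{M}$, whereas the theorem is about a.e.\ $v$ in the \emph{fixed} fibre $\UT{p}{M}$, and a Liouville-null set can a priori meet a given fibre in a set of positive area. Identifying this as the genuine content that needs proving is the right instinct.

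Of your two proposed fixes, the second (absolute continuity of the horospherical foliations) is the standard and correct one, but you have a stable/unstable mix-up: the property ``forward orbit crosses $F$ infinitely often'' is constant along \emph{stable} leaves --- those are the leaves along which forward orbits converge --- not along unstable leaves, where forward orbits diverge. The cleanest packaging is via the forward-endpoint map $v \mapsto \xi_+(v)$, a diffeomorphism from $\UT{p}{\cover{M}}$ onto $\bdy_\infty\cover{M}$: the Hopf argument shows the set of bad endpoints $\xi_+$ is null for round measure, and the endpoint map carries Lebesgue measure on the sphere to a measure equivalent to round measure, so the bad set in $\UT{p}{M}$ is null. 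Your first route (equidistribution of spheres plus ``a Birkhoff-style argument along pushed fibers'') is too vague to assess as written: equidistribution of spheres is a statement about averages over the whole sphere at a fixed time and does not by itself give pointwise equidistribution of a.e.\ individual orbit emanating from $p$; you would need to make explicit the transversal-version of Birkhoff that you are invoking. One other small point: you should also justify ``infinitely many visits to a tube $V$ around $F$ force infinitely many transverse crossings''; a priori the orbit could keep entering and leaving $V$ from the same side, and ruling this out uses that, by Birkhoff, the orbit spends positive asymptotic frequency on both sides of $F$ within $V$.
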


\begin{remark}
To generalise \refthm{NoPicture} from finite volume to infinite volume manifolds, 
we must replace Hopf's ergodicity theorem by some other dynamical property. For example, Rees~\cite[Theorem~4.7]{Rees81} proves the ergodicity of the geodesic flow on the infinite cyclic cover of a hyperbolic surface bundle. This is generalised to the bounded geometry case by Bishop and Jones~\cite[Corollary~1.4]{BishopJones97}. Both of these works rely in a crucial fashion on Sullivan's equivalent criteria for ergodicity~\cite[page~172]{Sullivan79}. 
\end{remark}

One might hope that as $R$ tends to infinity, nearby points diverge in similar ways. If so, we might be able to rescale and have, say, $\Phi_{R}/R$ or $\Phi_{R}/\sqrt{R}$ converge. 
However, increasing $R$ in our implementation produces the sequence of images shown in \reffig{VisSphereRadii2}.  
We see that, as we increase $R$, the images become noisy as neighbouring pixels appear to decorrelate.
Eventually the fractal structure is washed away. Dividing the cohomology fractal by, say, some power of $R$ only changes the contrast. Depending on this power, the limit is either almost always zero or does not exist.

\begin{figure}[htb!]
\centering
\subfloat[$R=e^{2.5}$]{
\includegraphics[width=0.225\textwidth]{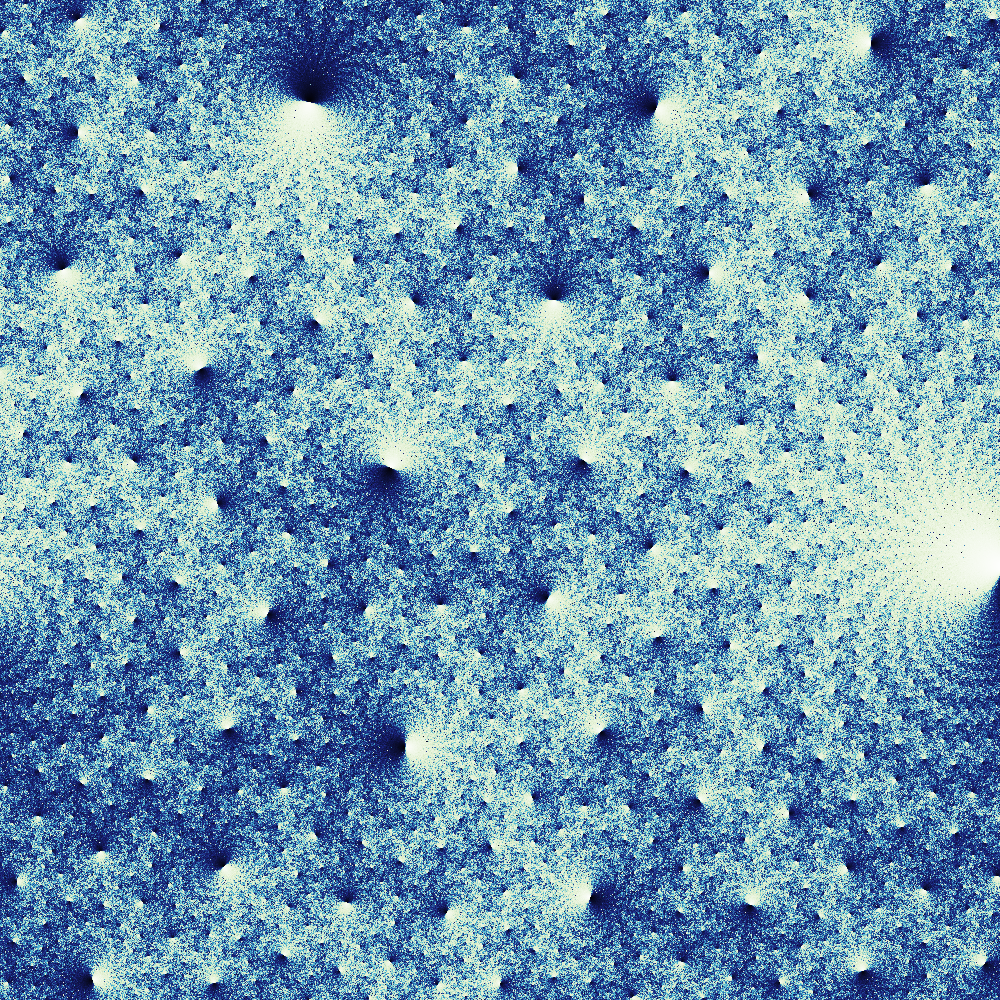}
}
\subfloat[$R=e^{3}$]{
\includegraphics[width=0.225\textwidth]{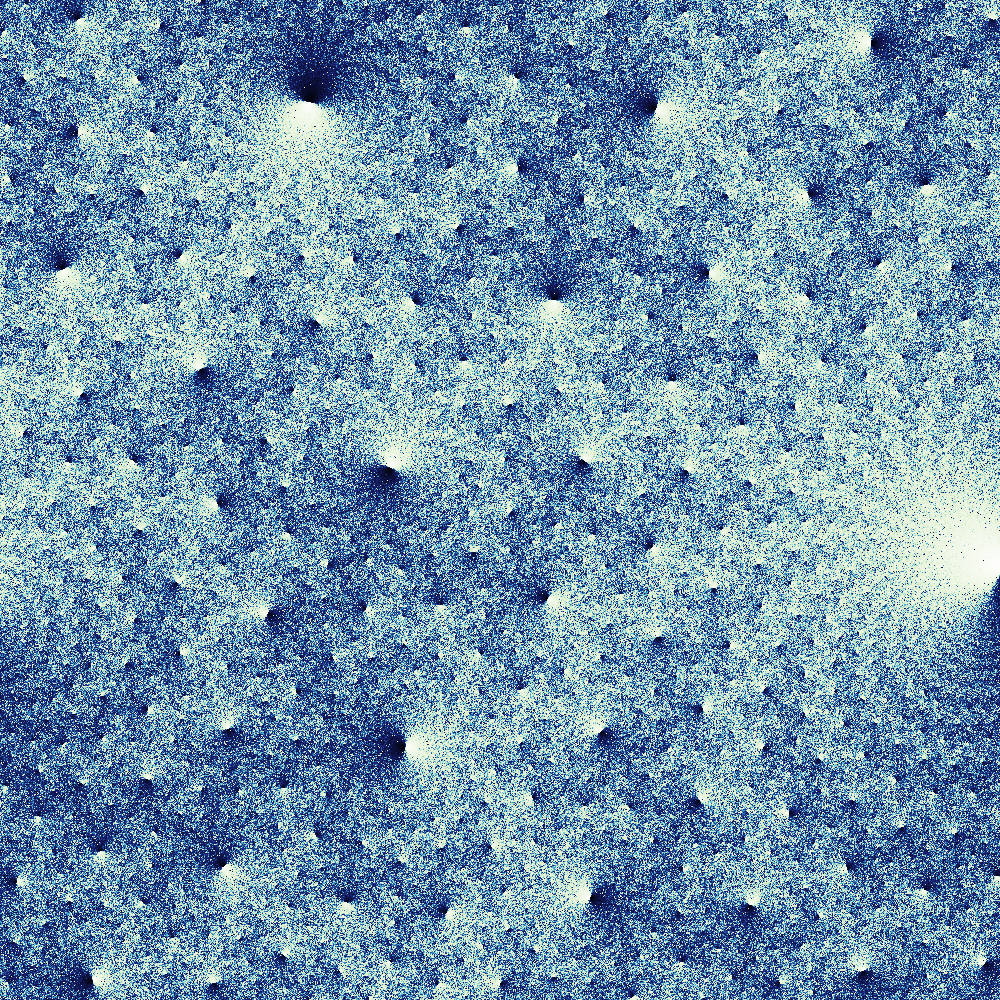}
}
\subfloat[$R=e^{4}$]{
\includegraphics[width=0.225\textwidth]{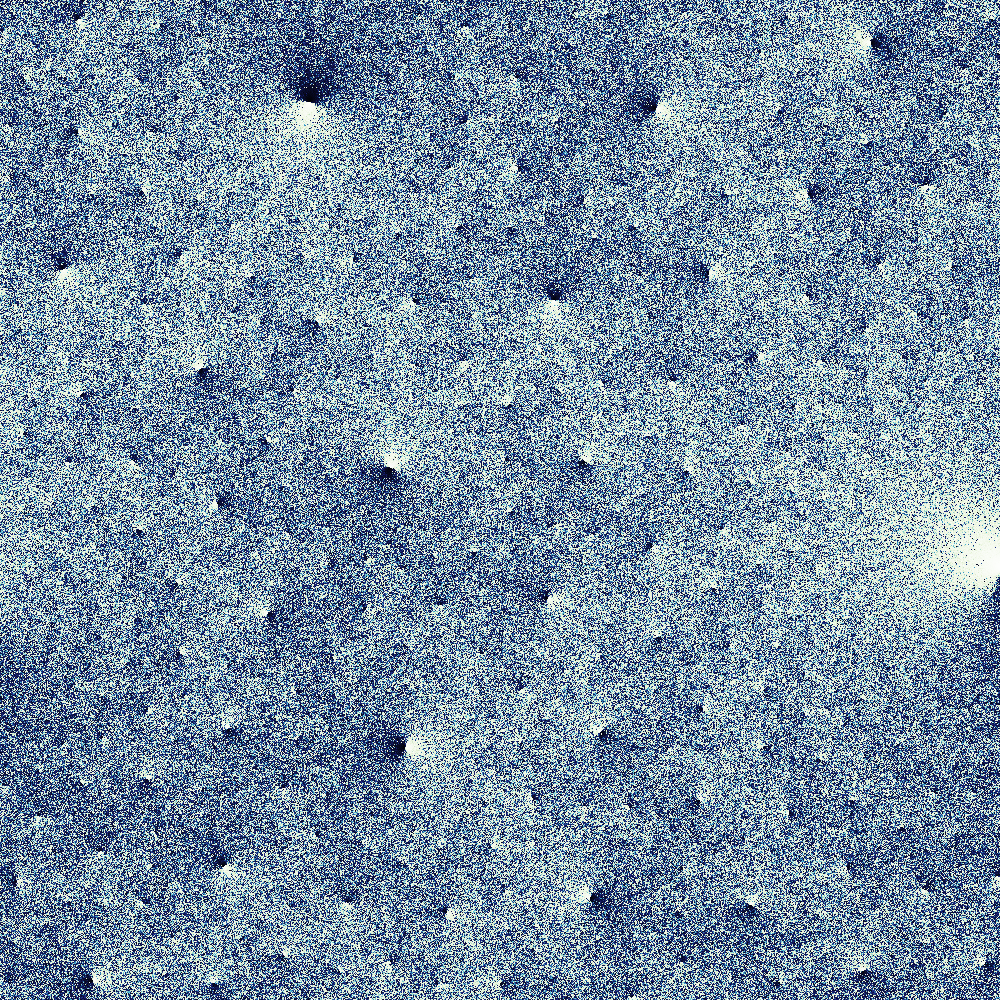}
}
\subfloat[$R=e^{5}$]{
\includegraphics[width=0.225\textwidth]{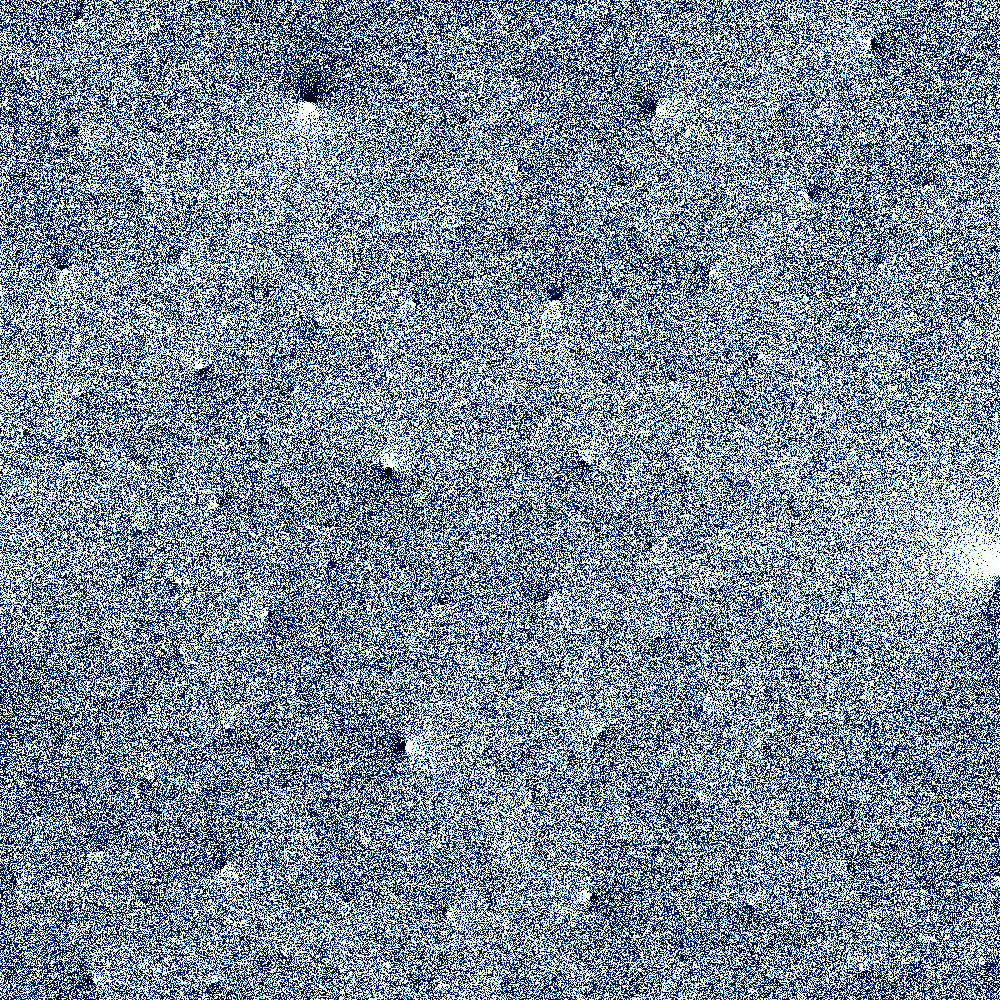}
}
\caption{Cohomology fractals for \texttt{m004}, with larger values of $R$. Each image here and in \reffig{VisSphereRadii} has $1000\times1000$ pixels. }
\label{Fig:VisSphereRadii2}
\end{figure}

\reffig{VisSphereRadii2} also demonstrates that \refrem{Quasigeodesic}, while valid, is misleading; it is true that for large $R$, every ray ends up somewhere within its pixel, but the colour one obtains is random noise. 
This noise is due to undersampling.  
In our images each pixel $U$ is coloured using a single ray passing (almost, as we saw in \refsec{Accumulate}) through its centre. 
When $R$ is small relative to the side length of $U$ the function $\Phi_{R}|U$ is generally constant; thus any sample is a good representative.  
As $R$ becomes larger 
the function $\Phi_{R}|U$ varies more and more wildly; thus a single sample does not suffice. 

\subsection{Take a step back and look from afar}

Let $D$ be an ideal view in the sense of \refsec{Views}. We identify $\pi(D)$, isometrically, with the euclidean plane $\CC$. Using this identification, we may refer to the vectors of $D$ as $z_D$ for $z \in \CC$.
Let $E$ be the ideal view obtained from $D$ by flowing outwards by a distance $d$. Thus, $\varphi_d(D) = E$. We similarly identify $\pi(E)$ with the euclidean plane, in such a way that for each $z \in \CC$, we have $\varphi_d(z_D) = (e^d z)_E$. We may now state the following.

\begin{figure}[htbp]
\labellist
\small\hair 2pt
\pinlabel {$\bdy \HH^3$} [r] at 0 0
\pinlabel {$D$} [r] at 34 218
\pinlabel {$E$} [r] at 173 74
\pinlabel {$R$} [l] at 340 50
\pinlabel {$R+d$} [l] at 493 122.5
\endlabellist
\includegraphics[width = 0.6\textwidth]{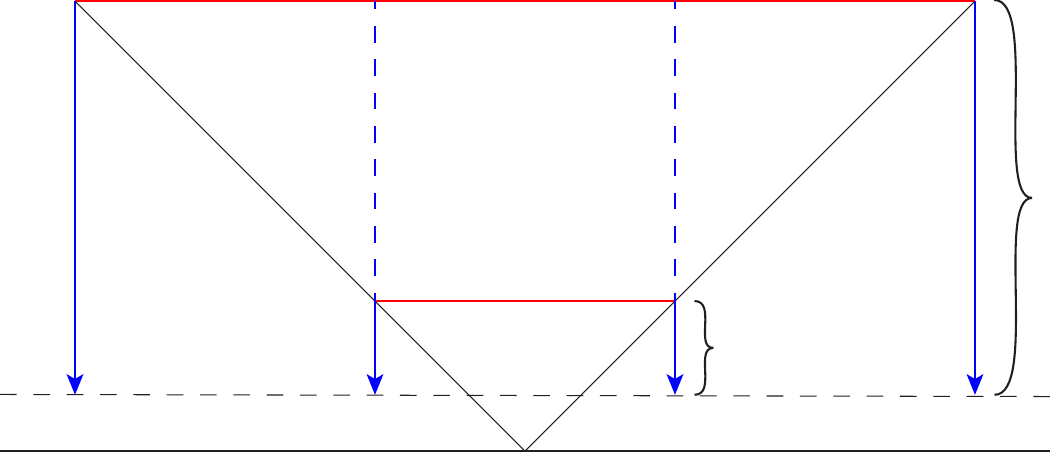}
\caption{Side view of ``screens'' (in red) for two ideal views, drawn in the upper half space model. The outward pointing normals to each horosphere point down in the figure. }
\label{Fig:TwoIdealViews}
\end{figure}

\begin{lemma}
\label{Lem:MovingIsScaling}
Suppose that $D$ is an ideal view and $E = \varphi_d(D)$.
Then the cohomology fractal based at $b$ satisfies
\[
\Phi_{R+d}^{\omega, b,D}\left(z_D\right) = \Phi_{R}^{\omega, b,E}\left((e^d z)_E\right)
\] 
\end{lemma}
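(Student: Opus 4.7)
The plan is to reduce the lemma to the semigroup property of the geodesic flow by rewriting both sides using a single primitive of $\omega$ on $\cover{M}$.

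First I would observe that once a basepoint $b$ is fixed, the cohomology fractal admits a clean closed-form description. Let $\cover{\omega}$ denote the lift of $\omega$ to $\cover{M}$ and let $W \from \cover{M} \to \RR$ be the primitive with $W(b) = 0$; this exists because $\cover{M}$ is simply connected. Unwinding the algorithm of \refsec{Implement} --- the initial-weight step (1b), the face-crossing update (5b), and the ideal-view convention that the ray starts at $p_v = \pi(v)$ with $w_0$ equal to the accumulated weight along any path from $b$ to $p_v$ --- yields, for any view $D$, any $v \in D$, and any $T \geq 0$,
\[
\Phi_T^{\omega, b, D}(v) \;=\; w_0 + \int_0^T \omega(\varphi_t(v))\, dt \;=\; W(p_v) + \bigl(W(\pi(\varphi_T(v))) - W(p_v)\bigr) \;=\; W(\pi(\varphi_T(v))).
\]
Thus $\Phi_T^{\omega, b, D}(v)$ depends on $v$ only through the endpoint $\pi(\varphi_T(v))$ of its forward geodesic ray, and it records $W$ there.

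With this formula in hand, the lemma reduces to the semigroup identity $\varphi_{R+d} = \varphi_R \circ \varphi_d$ together with the explicit identification $\varphi_d(z_D) = (e^d z)_E$ established in the paragraph preceding the statement. Applying the formula above to both sides gives
\[
\Phi_{R+d}^{\omega, b, D}(z_D) \;=\; W\bigl(\pi(\varphi_{R+d}(z_D))\bigr) \;=\; W\bigl(\pi(\varphi_R((e^d z)_E))\bigr) \;=\; \Phi_R^{\omega, b, E}\bigl((e^d z)_E\bigr),
\]
as required.

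I do not expect any serious obstacle: the real content of the lemma is the horospherical scaling $\varphi_d(z_D) = (e^d z)_E$, which is a standard upper-half-space computation (a horosphere at euclidean height $1$ is isometric to $\CC$ under $z \mapsto (z,1)$, the inward geodesic of length $d$ takes $(z,1)$ to $(z, e^{-d})$, and the rescaled isometric identification of the horosphere at height $e^{-d}$ with $\CC$ then sends this point to $e^d z$). Everything else is a bookkeeping consequence of the cocycle property of $W$, carefully accounting for the ideal-view convention in which the initial weight $w_0$ absorbs the contribution from $b$ to the basepoint of $v$.
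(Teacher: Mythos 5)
Your proof is correct and is essentially the same argument as the paper's, which consists entirely of the line ``Consider \reffig{TwoIdealViews}''; you have simply spelled out algebraically what that figure illustrates. The key ingredients you use --- the closed form $\Phi_T^{\omega,b,D}(v) = W(\pi(\varphi_T(v)))$ with $W$ the primitive normalised by $W(b)=0$, the semigroup law $\varphi_{R+d} = \varphi_R\circ\varphi_d$, and the horospherical scaling $\varphi_d(z_D) = (e^d z)_E$ --- are exactly what the paper sets up in the paragraph preceding the lemma and in \refsec{formalDefView}, so nothing new is needed and nothing is missing.
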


\begin{proof}
Consider \reffig{TwoIdealViews}.
\end{proof}

Said another way, if we fly backwards a distance $d$ and replace $R$ with $R+d$, we see the exact same image, scaled down by a factor of $e^d$. 
As a consequence, in the ideal view we have the following.

\begin{remark}
Each small part of a cohomology fractal with large $R$ is the same as the cohomology fractal for a smaller $R$ with a different view.
\end{remark}

\begin{remark}
\label{Rem:Embiggen}
Since we know that we can make non-noisy images for small enough values of $R$, 
we can therefore make a non-noisy image of a cohomology fractal for any value of $R$, as long as we are willing to use a screen with high enough resolution. 
\end{remark}

The natural question then is how the \emph{perceived} image changes as we simultaneously increase the resolution and increase $R$. 
This convergence question is different from the convergence of the cohomology fractal to a function as in \refthm{NoPicture}: when we look at a very large screen from far away, our eyes average the colours of nearby pixels. Thus, we move away from thinking of the limit as a function evaluated at points, towards thinking of it as a measure evaluated by integrating over a region. As we will see later, in fact the correct limiting object is a distribution.

\subsection{Supersampling}

\begin{figure}[htbp]
\begin{tabular}{c m{0.275\textwidth} m{0.275\textwidth} m{0.275\textwidth}}
 & \multicolumn{1}{c}{$1\times 1$} & \multicolumn{1}{c}{$2\times2$} & \multicolumn{1}{c}{$128\times 128$} \\
 4 &
\includegraphics[width=0.29\textwidth]{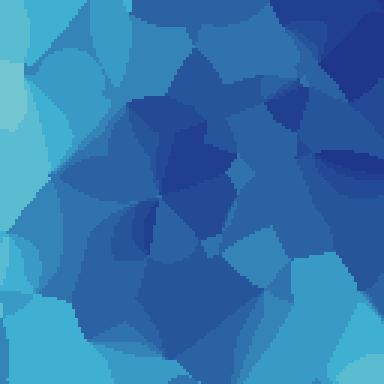} &
\includegraphics[width=0.29\textwidth]{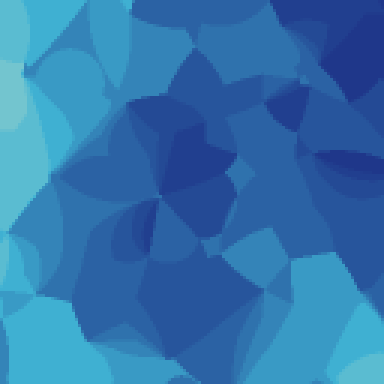} &
\includegraphics[width=0.29\textwidth]{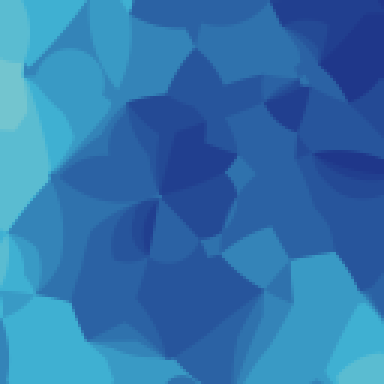} \\
6 &
\includegraphics[width=0.29\textwidth]{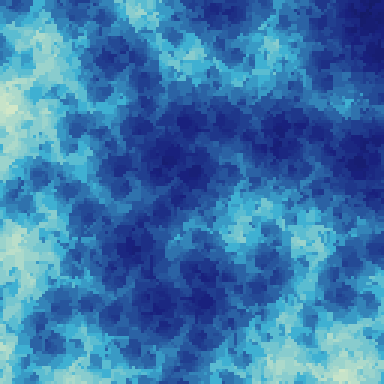} &
\includegraphics[width=0.29\textwidth]{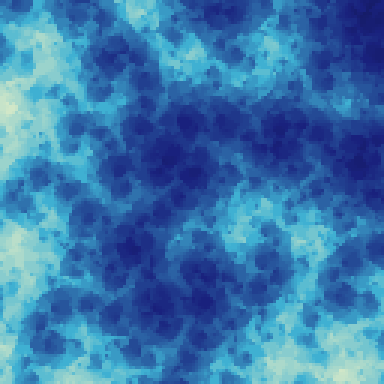} &
\includegraphics[width=0.29\textwidth]{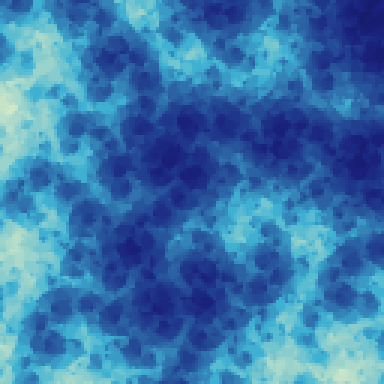} \\
 8 &
\includegraphics[width=0.29\textwidth]{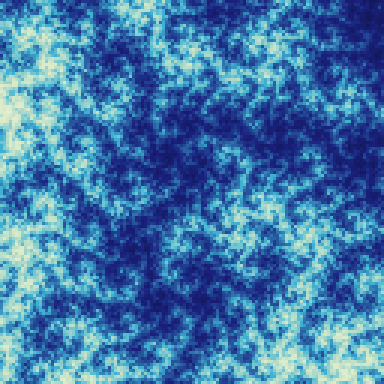} &
\includegraphics[width=0.29\textwidth]{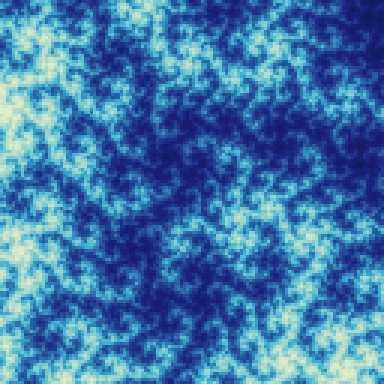} &
\includegraphics[width=0.29\textwidth]{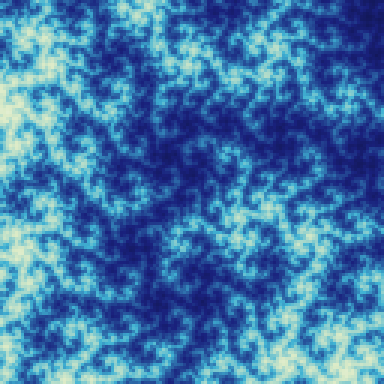} \\
10 &
\includegraphics[width=0.29\textwidth]{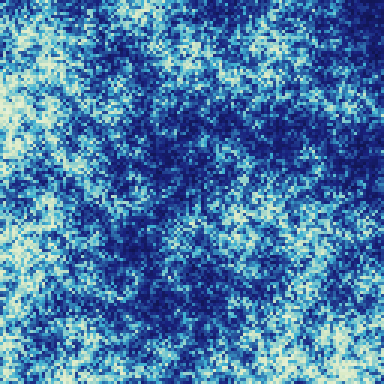} &
\includegraphics[width=0.29\textwidth]{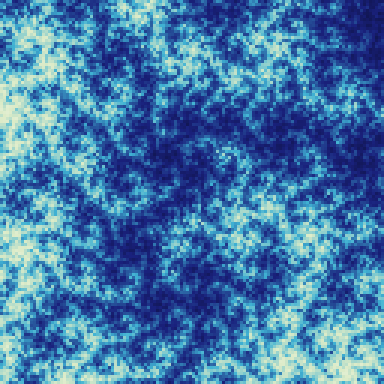} &
\includegraphics[width=0.29\textwidth]{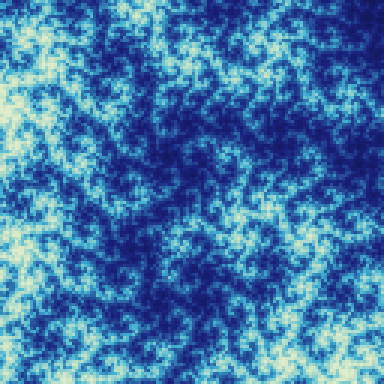} \\
12 &
\includegraphics[width=0.29\textwidth]{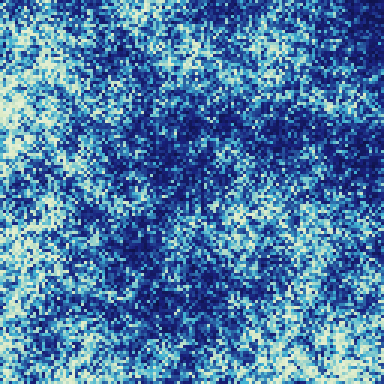} &
\includegraphics[width=0.29\textwidth]{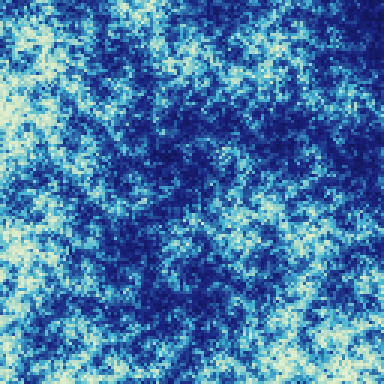} &
\includegraphics[width=0.29\textwidth]{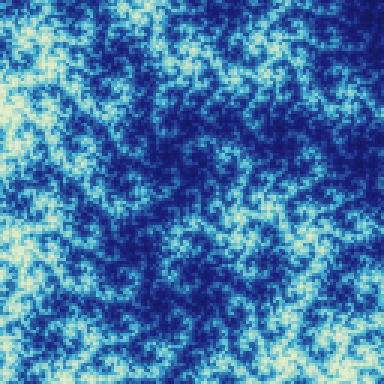} \\
\end{tabular}
\caption{\texttt{m122(4,-1)}. Field of view: $12.8^\circ$. $128\times 128$ pixels. For each image, the visual radius $R$ is given at the start of its row, while the number of samples per pixel is given at the top of its column.}
\label{Fig:RagainstNumSamples}
\end{figure}

To investigate this without requiring ever larger screens to view the results, we sample the cohomology fractal at many vectors in a grid within each pixel and average the results to give a colour for the pixel. That is, we employ supersampling. See \reffig{RagainstNumSamples}.
Here we draw cohomology fractals with $R$ ranging from $4$ to $12$, and with either $1$, $2^2$, or $128^2$ subsamples for each pixel. Each image has resolution $128 \times 128$.


\begin{remark*}
Note that some pdf readers do not show individual pixels with sharp boundaries: they automatically blur the image when zooming in. To combat this blurring and see the pixels clearly, we have scaled each image by a factor of three, so each pixel of our result is represented by nine pixels in these images.
\end{remark*}

With one sample per pixel, as we increase $R$ the fractal structure comes into focus but then is lost to noise. This matches our observations in Figures~\ref{Fig:VisSphereRadii} and~\ref{Fig:VisSphereRadii2}.
Taking subsamples and averaging makes little difference for small $R$: the only advantage is an anti-aliasing effect on the boundaries between regions of constant value. However, subsamples help greatly with reducing noise for larger $R$. With $2\times 2$ subsamples, we see much less noise at $R=10$, becoming more noticeable at $R=12$. Taking $128\times128$ samples seems to be very stable: there is almost no difference between the images with $R=10$ and $R=12$. This suggests that the perceived images converge. 

\subsection{Mean and variance within a pixel}

To better understand how subsampling interacts with increasing $R$, in \reffig{SubPixEvolution} we graph the average value within a selection of pixel-sized regions as $R$ increases.

\begin{figure}[htb]
\includegraphics[width=0.3\textwidth]{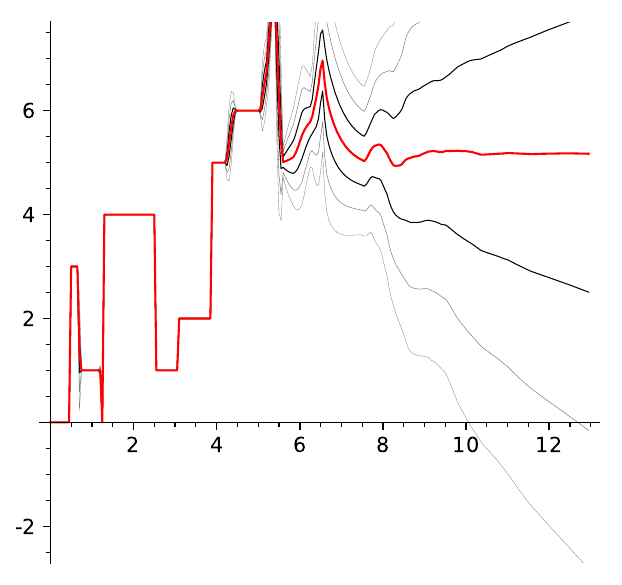}
\includegraphics[width=0.3\textwidth]{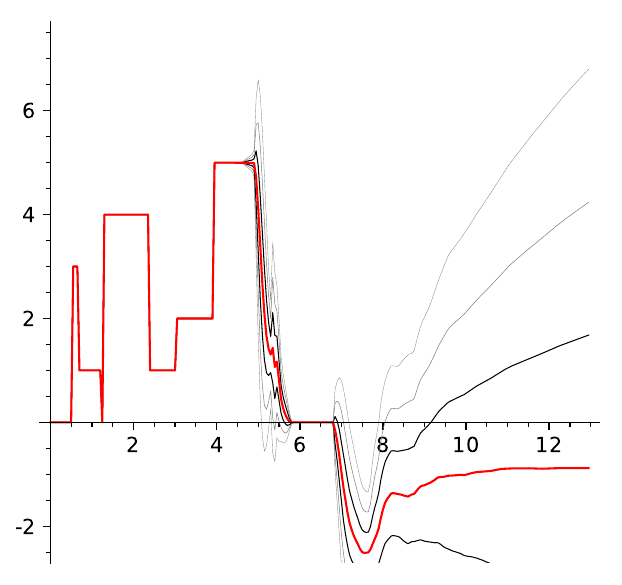}
\includegraphics[width=0.3\textwidth]{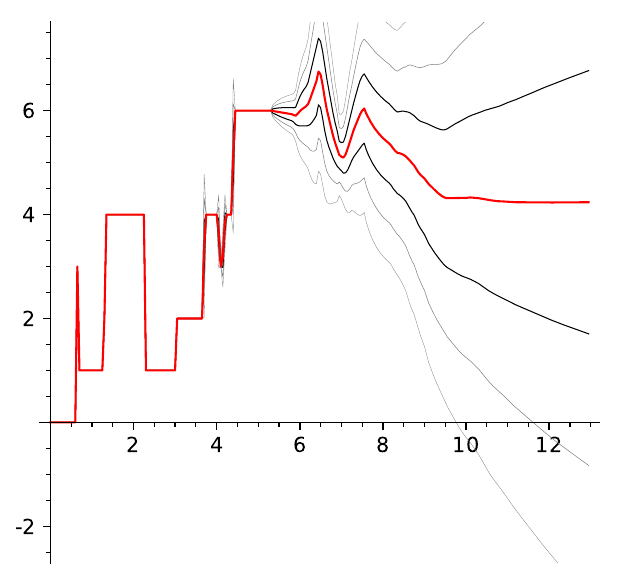}\\
\includegraphics[width=0.3\textwidth]{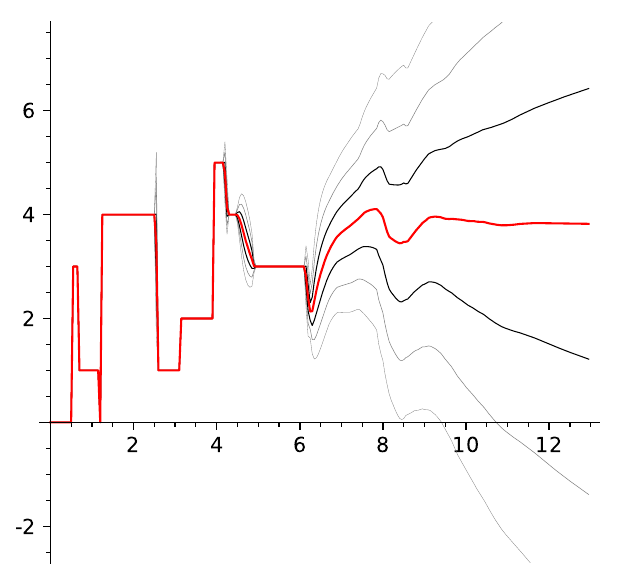}
\includegraphics[width=0.3\textwidth]{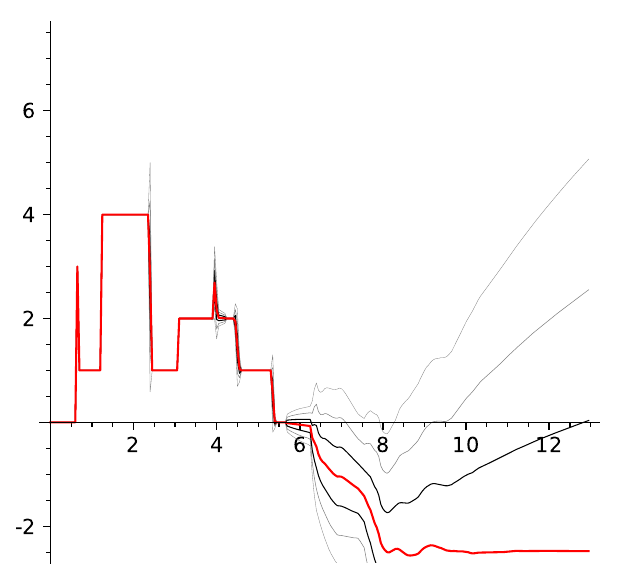}
\includegraphics[width=0.3\textwidth]{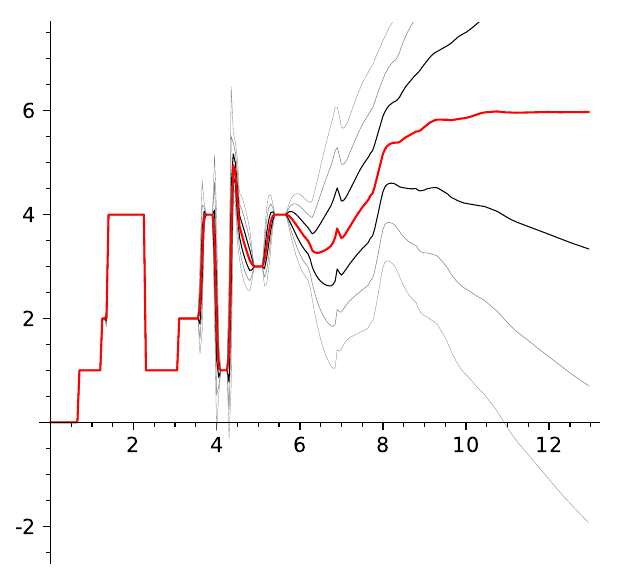}\\
\includegraphics[width=0.3\textwidth]{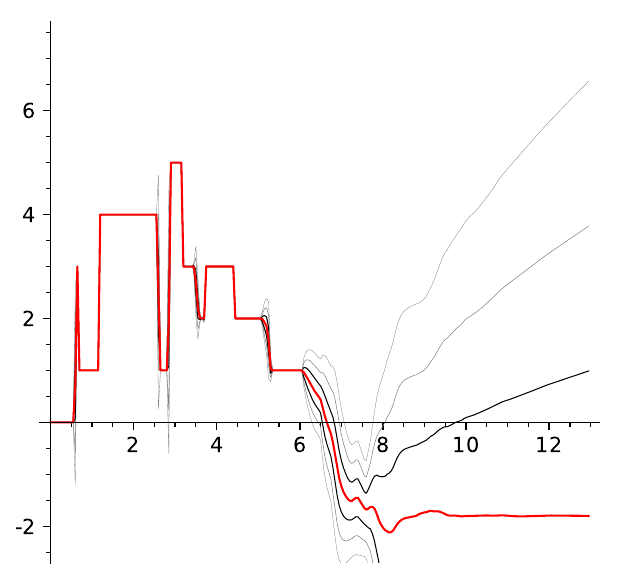}
\includegraphics[width=0.3\textwidth]{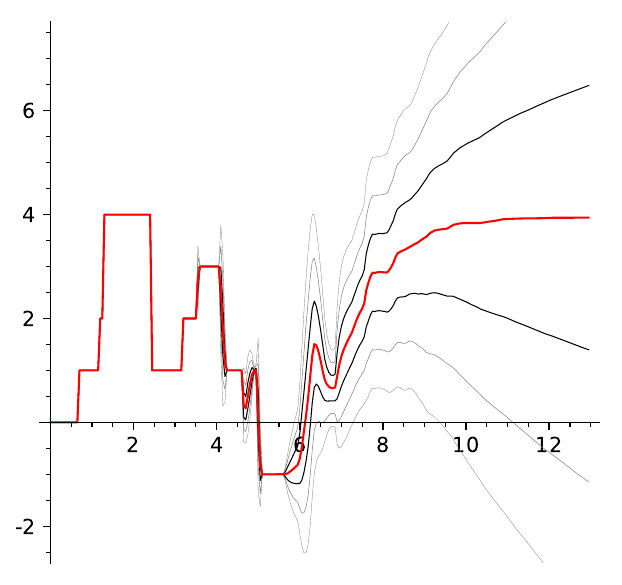}
\includegraphics[width=0.3\textwidth]{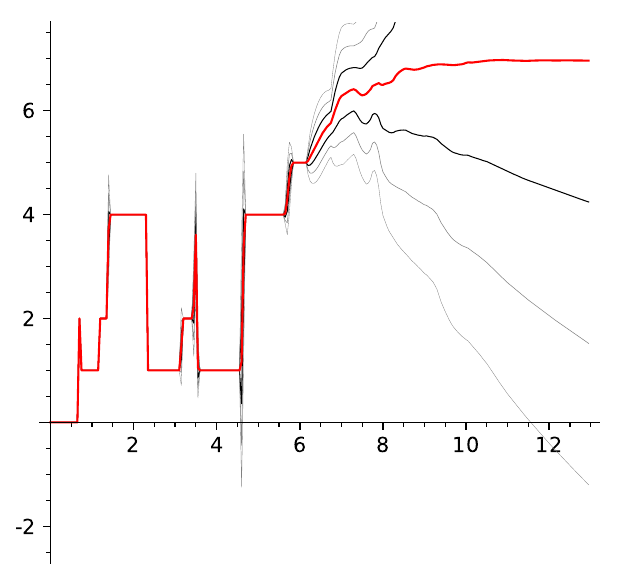}
\caption{The graph of the average value of the cohomology fractal  for \texttt{m122(4,-1)} for various square regions $U$ with field of view $0.1^\circ$. Thus, these are the same size as the pixels of \reffig{RagainstNumSamples}. These are each computed by taking $1000\times 1000$ samples. We also show the envelopes of 0.5, 1.0 and 1.5 standard deviations.
}
\label{Fig:SubPixEvolution}
\end{figure}

When $R$ is small, the graphs are more-or-less step functions, as much of the time the pixel $U$ is inside of a constant value region of the cohomology fractal. The graphs are also very similar for small $R$. This is because the pixels are close to each other, so all of their rays initially cross the same sequence of faces of the triangulation.
Around $R=6$, we reach the ``last step'' of the step function, then the regions of constant value become smaller than $U$.
For $R \geq 10$, the mean seems to settle down, while the standard deviation appears to grow like $\sqrt{R}$. 

Again this suggests that the perceived images converge. However, if the standard deviation continues to increase with $R$, then eventually any number of subsamples within each pixel will succumb to noise.

\subsection{Histograms}

We have looked at the standard deviation of a sample of values within a pixel. Next, we analyse the distribution of these values in more detail. See \reffig{m122Histogram}.

\begin{figure}[htb]
\centering
\subfloat[Histogram of weights and the normal distribution with the same mean and standard deviation.]{
\includegraphics[width = 6.5cm]{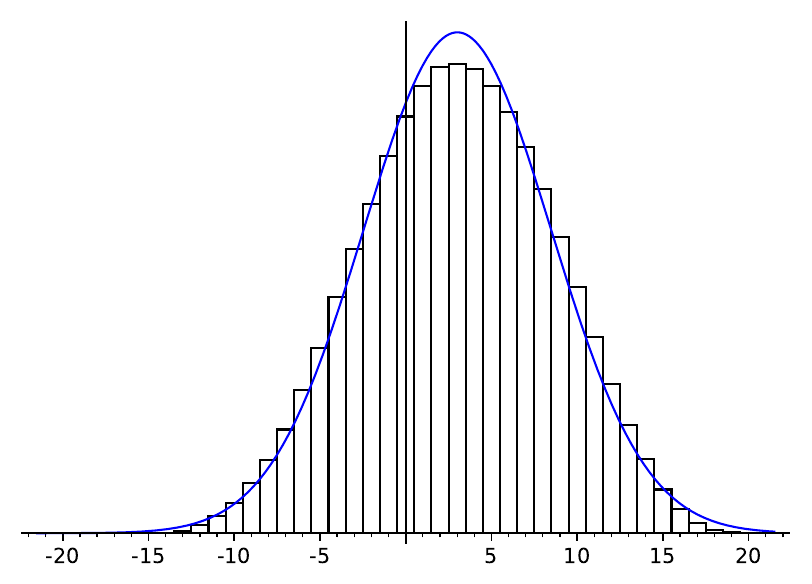}
\label{Fig:m122(4,1)Dist}
}
\subfloat[Cohomology fractal.]{
\includegraphics[height = 5cm]{Figures/FiguresMatthiasVer2/m122_4_-1/without_evil}
\label{Fig:m122(4,1)CohomFrac}
}
\caption{Statistics for a cohomology fractal of \texttt{m122(4,-1)} for a square region with field of view $20^\circ$ and $R=e^2$.}
\label{Fig:m122Histogram}
\end{figure}

We fix $R = e^2$. We sample $\Phi_R$ at each point of a $1000 \times 1000$ grid within a square of a material view with field of view $20^\circ$. We chose a relatively large field of view here so that we get an ``in focus'' image of the cohomology fractal with a relatively small value of $R$. Here we are being cautious to get good data, avoiding potential problems that our implementation has with large values of $R$ as discussed in \refsec{Accumulate}. 

We histogram the resulting data with appropriate choices of bucket widths.  In \reffig{m122(4,1)Dist} we show the histogram and the normal distribution with the same mean and standard deviation for our closed example, \texttt{m122(4,-1)}. 
In \reffig{m122(4,1)CohomFrac} we show the sample data as a 1000 by 1000 pixel image. We also draw the normal distribution with the same mean and standard deviation; the data seems to fit this well. 

\begin{figure}[htb]
\centering
\subfloat[Histogram of weights and the normal distribution with the same mean and standard deviation.]{
\includegraphics[width = 6.5cm]{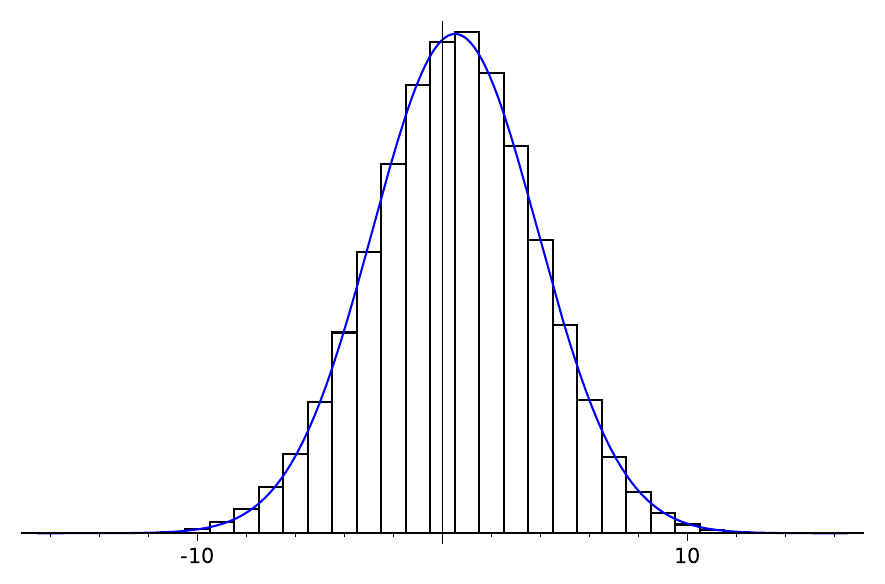}
\label{Fig:s789Dist}
}
\subfloat[Cohomology fractal.]{
\includegraphics[height = 5cm]{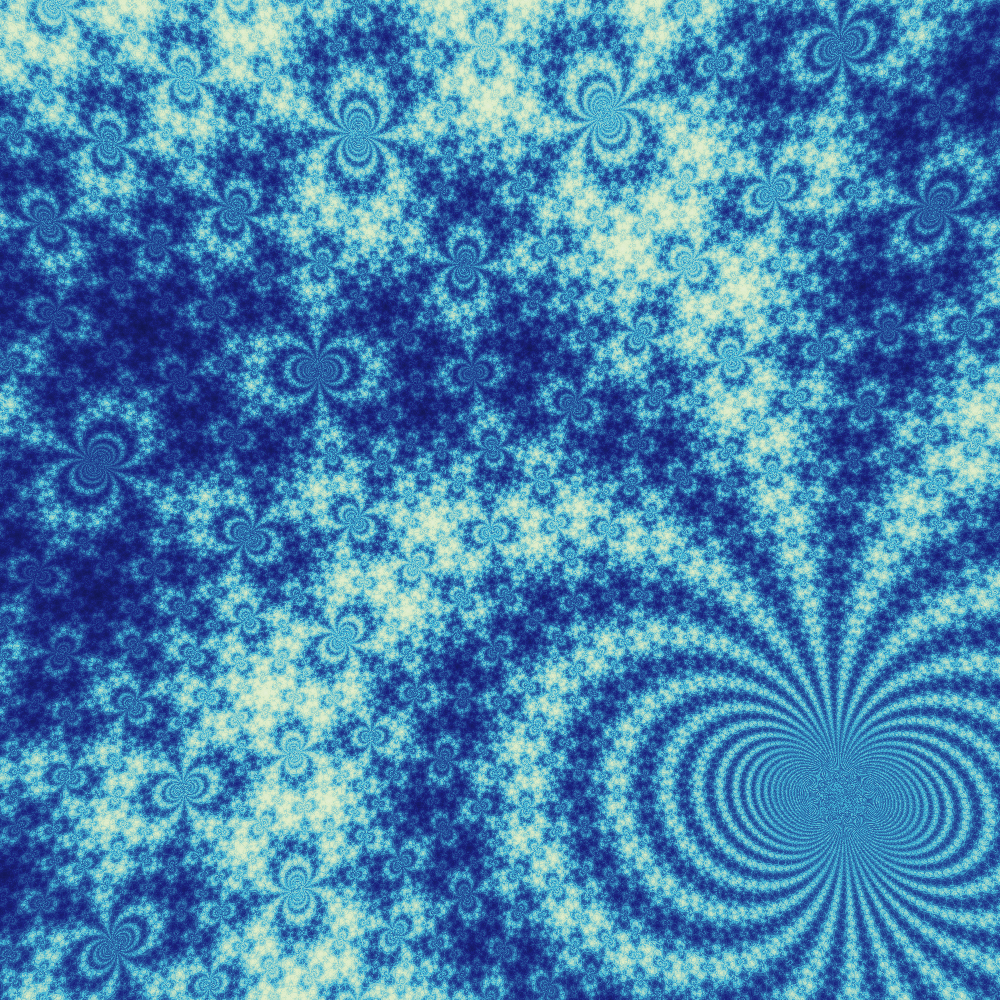}
}
\caption{Statistics for a cohomology fractal of \texttt{s789} for a class vanishing on cusp.}
\label{Fig:s789Histogram}
\end{figure}

\begin{figure}[htb]
\centering
\subfloat[Histogram of weights and the normal distribution with the same mean and standard deviation.]{
\includegraphics[width = 6.5cm]{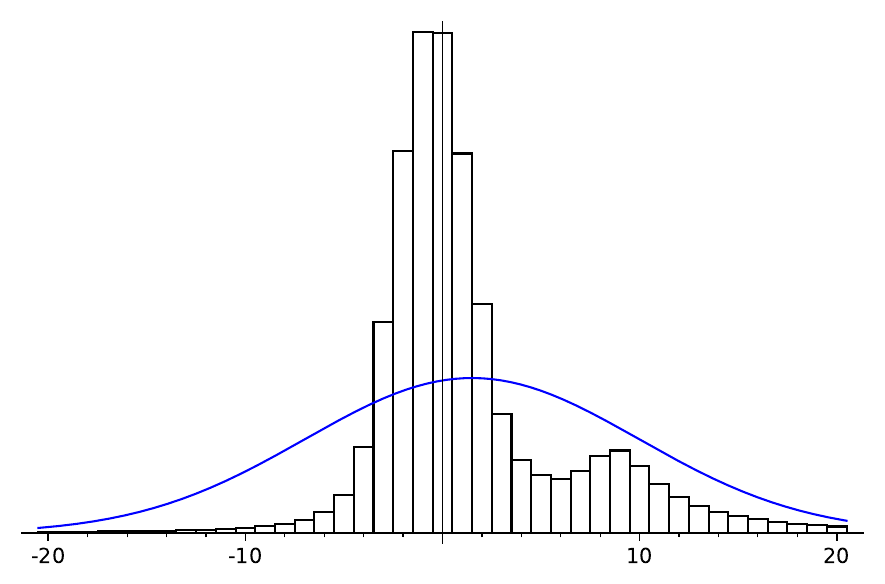}
}
\subfloat[Cohomology fractal.]{
\includegraphics[height = 5cm]{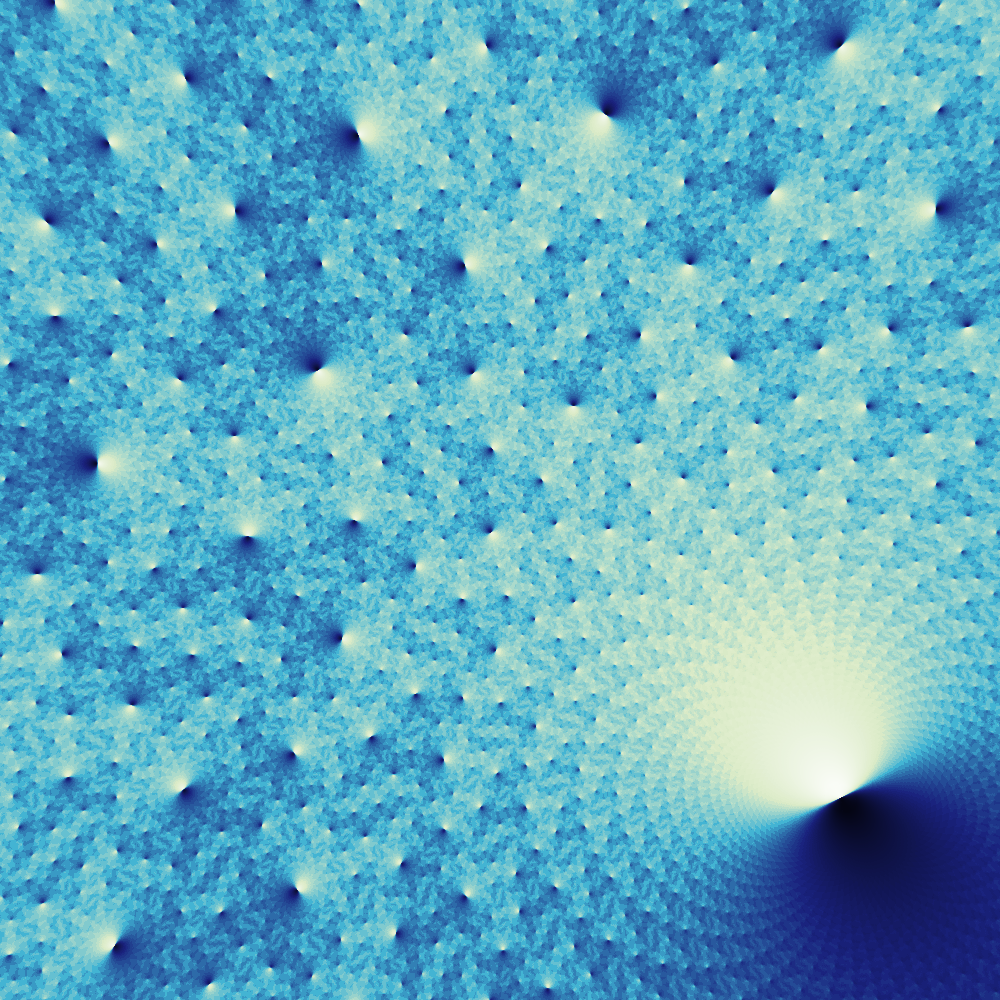}
}
\caption{Statistics for a cohomology fractal of \texttt{s789} for a class not vanishing on cusp.}
\label{Fig:s789Histogram2}
\end{figure}

We repeat this experiment back in the cusped case with \texttt{s789}. See Figures~\ref{Fig:s789Histogram} and~\ref{Fig:s789Histogram2}. Here we show the cohomology fractal for two different cohomology classes $[\omega]\in H^1(M)$. The cohomology class shown in \reffig{s789Histogram} vanishes when restricted to $\bdy M$, while in \reffig{s789Histogram2} it does not. The distribution appears to be normal when the cohomology class vanishes on $\partial M$. 
When $[\omega]$ does not vanish on $\bdy M$, something more complicated appears to be happening. One feature here is that the tails are much too long for a normal distribution. A heuristic explanation for this is that in a neighbourhood of the cusp, a geodesic ray crosses the surface repeatedly in the same direction. This allows it to gain a linear weight in logarithmic distance.

\section{The central limit theorem}
\label{Sec:CLT}

In this section, we prove a central limit theorem for the values of the cohomology fractal $\Phi_T$ across a pixel. 
That is, the distribution of the values of the scaled cohomology fractal $R_T=\Phi_T/\sqrt{T}$ 
converges to a normal distribution with mean zero. 

\subsection{Setup} 
\label{Sec:formalDefView}

We recall the framework introduced in \refsec{Views} that unifies the material, ideal, and hyperideal views.
Let $M$ be a connected, orientable, finite volume, complete hyperbolic three-manifold. 
As above we set $X=\UT{}{M}$ and $\cover{X}=\UT{}{\cover{M}}$.
We call a two-dimensional subset $D \subset \cover{X}$ a \emph{view} if it is of one of the following.%
\begin{itemize}
\item For a material view, fix a basepoint $p\in\cover{M}$ and let $D=\UT{p}{\cover{M}}$. 
Note that $D$ can be identified isometrically with $S^2$.
\item For the ideal view, fix a horosphere $H\subset\cover{M}$ and let $D$ be the set of outward normals to $H$. 
\item For the hyperideal view, fix a hyperbolic plane $H\subset\cover{M}$ and let $D$ be the set of normals to $H$ facing one of the two possible directions.
\end{itemize}

Note that $D \subset \cover{X}$ has a riemannian metric induced from the riemannian metric on $\cover{X}$. This metric also endows $D$ with an area two-form and associated measure denoted by $\zeta=\zeta_D$ and $\mu=\mu_D$. Recall that $\pi \from \cover{X} \to \cover{M}$ is the projection to the base space.

\begin{remark} 
\label{Rem:extrinsicCurvCorr}
Note that there is another riemannian metric on $D$ for the ideal and hyperideal view coming from isometrically identifying the horosphere or hyperbolic plane $H = \pi(D)$ with $\EE^2$ or with, respectively, $\HH^2$. 
Up to a constant factor, this metric is the same as the above metric.
The factor is trivial for the hyperideal view; it is $\sqrt{2}$ for the ideal view. This arises as $\sqrt{1+K_H^2}$ from the extrinsic curvature $K_H$ of $H$.
We have $K_H = 1$ so that adding $K_H$ to the ambient curvature $-1$ of $\HH^3$ gives zero, the horosphere's intrinsic curvature.
\end{remark}

In this notation, the definition of the cohomology fractal, for a given closed one-form $\omega\in\Omega^1(M)$ and basepoint $b\in\cover{M}$, becomes the following. For $v \in D$, we have
\begin{equation}
\label{eq:cohomFractView}
\Phi^{\omega, b, D}_{T}(v) = \int_0^T \omega(\varphi_t(v))\, dt + \int_{b}^{\pi(v)} \omega
\end{equation}
For the second integral, any path from $b$ to $\pi(v)$ in $\cover{M}$ can be chosen as $\omega$ is closed. This integral is constant in $v$ for the material view since $\pi(D)=p$. Choosing $W\in\Omega^0(\cover{M})$ so that $dW=\cover{\omega}$ and $W(b) = 0$, we can simply write
\[
\Phi_T(v)=\Phi^{\omega,b,D}_{T} (v) = W \circ \pi \circ \varphi_T (v)
\]

The central limit theorem will apply to probability measures $\nu_D$ that are absolutely continuous with respect to the area measure $\mu_D$ on $D$. 
We use the usual notation $\nu_D \ll \mu_D$ for absolute continuity.
By the Radon-Nikodym theorem, this is equivalent to saying that the measure $\nu_D$ is given by 
\begin{equation}
\label{Eqn:RN}
\nu_D(U) = \int_U h\cdot d\mu_D
\end{equation}
where $h\geq 0$ is measurable with $\int_D h\cdot d\mu_D = 1$.

\begin{remark}
\label{Rem:MeasuresForms}

In \refsec{Pixel}, we will switch from measures $\nu_D$ to forms $\eta_D$, for the following reason.
Here, in \refsec{CLT} we follow the well-established notation of \cite{Sinai60,zweimuellerInfMeasurePreserving2007}.
However, the transformation laws in \refsec{Pixel} are better stated in the language of forms.
In both cases we consider probability measures or two-forms that are ``products'': 
namely of a suitable function $h\from D\to\RR$ with the area measure $\mu_D$ or two-form $\zeta_D$ respectively. 
The function $h$ should be thought of as an indicator (or a kernel) function for a pixel.
\end{remark}

\subsection{The statement of the central limit theorem}
\label{Sec:StatementCLT}
The goal of this section is to prove the following.

\begin{theorem}
\label{Thm:CLT}
Fix a connected, orientable, finite volume, complete hyperbolic three-manifold $M$ and a closed, non-exact, compactly supported one-form $\omega\in\Omega_c^1(M)$. There is $\sigma > 0$ such that for all basepoints $b$, for all views $D$ with area measure $\mu_D$, for all probability measures $\nu_D\ll \mu_D$, and for all $\alpha \in \RR$, we have
\[
\lim_{T\to\infty} \nu_D\left[ v\in D : \frac{\Phi_T(v)}{\sqrt{T}} \leq \alpha \right] = \int_{-\infty}^\alpha \frac{1}{\sigma\sqrt{2\pi}} e^{-(s/\sigma)^2/2}\, ds
\]
where $\Phi_T=\Phi^{\omega,b,D}_{T}$ is the associated cohomology fractal.
\end{theorem}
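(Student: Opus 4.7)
The plan is to reduce the statement to Sinai's central limit theorem for the geodesic flow on $X=\UT{}{M}$ with respect to the Liouville (Haar) measure, and then bootstrap up to the two-dimensional initial measure $\nu_D$. I regard $\omega$ as a smooth compactly supported function $f\from X\to\RR$ by $f(v)=\omega(v)$, so that the first term in \eqref{eq:cohomFractView} becomes the Birkhoff integral $\int_0^T f\circ\varphi_t\,dt$. The boundary term $\int_b^{\pi(v)}\omega$ is independent of $T$ and is finite for each $v$, so $\tfrac{1}{\sqrt T}\int_b^{\pi(v)}\omega\to 0$ pointwise in $v$, and therefore in $\nu_D$-probability. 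Slutsky's theorem then reduces the claim to a CLT for $\tfrac{1}{\sqrt T}\int_0^T f\circ\varphi_t\,dt$ sampled against $\nu_D$.

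The first step is Sinai's theorem applied to $f$. The antipodal symmetry $f(-v)=-f(v)$ makes the Haar mean of $f$ vanish. The geodesic flow on a finite-volume hyperbolic three-manifold has exponential decay of correlations on smooth observables (after Moore, Ratner, Kleinbock--Margulis, Stoyanov and others), and Sinai's argument (via martingale approximation or cumulant estimates) then produces a CLT for the Birkhoff integral against Haar, with limiting distribution $N(0,\sigma^2)$ and
\[
\sigma^2 \;=\; 2\int_0^\infty\int_X f(v)\,f(\varphi_t v)\,d\Haar(v)\,dt.
\]
The integral converges thanks to exponential mixing, and $\sigma>0$ because $\sigma=0$ would force $f$ to be a smooth coboundary for $\varphi_t$ via Liv\v{s}ic's theorem, contradicting the hypothesis that $\omega$ is not exact. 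Replacing $\omega$ by a cohomologous form changes $f$ by a flow coboundary and does not alter the asymptotic variance, so $\sigma$ depends only on $[\omega]\in H^1(M;\RR)$, as required.

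The main obstacle is transferring from the five-dimensional Haar measure to the two-dimensional $\nu_D$. I plan to handle this by the method of moments. The $k$-th moment of the normalised Birkhoff integral under $\nu_D$ expands as
\[
\frac{1}{T^{k/2}}\int_{[0,T]^k}\int_D\prod_{i=1}^k f(\varphi_{t_i}v)\,d\nu_D(v)\,dt_1\cdots dt_k,
\]
and the goal is to match this with the analogous expression for Haar, which by Sinai's theorem converges to the Gaussian moment $(k-1)!!\,\sigma^k$ for even $k$ and vanishes for odd $k$. The key input is that the pushforwards $\varphi_t(\nu_D)$ equidistribute toward Haar as $t\to\infty$ at an exponential rate. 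For an ideal view, $D$ is a piece of an unstable horospherical leaf and this is the classical equidistribution of horospheres (Margulis, Eskin--McMullen, Kleinbock--Margulis). For material and hyperideal views, $D$ is transverse to the weak stable foliation, and a standard thickening of $\nu_D$ along a short stable direction reduces matters to the horospherical case. This yields comparison of the multiple correlations against their Haar analogues with exponentially small error; after integrating in $(t_1,\ldots,t_k)$ and dividing by $T^{k/2}$ the error contributes $o(1)$, and the Haar computation supplies the desired Gaussian moments. Since the normal distribution is determined by its moments, convergence of all moments gives convergence in distribution. For $\nu_D$ that is not compactly supported, one first proves the claim for compactly supported $\nu_D$ and approximates, using that the limit does not depend on $b$ or $D$.
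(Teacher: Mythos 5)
Your high-level plan---Sinai's CLT for the geodesic flow under Haar, a Slutsky argument for the basepoint term, then a transfer to the two-dimensional measure $\nu_D$---parallels the paper, and several pieces match: the zero Haar mean via antipodal symmetry, the Green--Kubo identification of $\sigma^2$, the nonvanishing of $\sigma$ from non-exactness of $\omega$ (the paper's \reflem{lieDerivativeNonVanishing}), and the handling of $\tfrac{1}{\sqrt{T}}\int_b^{\pi(v)}\omega$ (the paper's \reflem{almostEqualRandomVariables}, with the exhaustion $U_T$, plays the role of Slutsky here). The essential divergence is in the transfer step. The paper does \emph{not} use a method of moments or horospherical equidistribution. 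Instead it first upgrades Sinai's CLT to every $\nu_X\ll\mu_X$ on $X=\UT{}{M}$ via Zweim\"uller's strong distributional convergence (\refthm{strongErgodicConvergence}), using only that $R_T\circ\varphi_1 - R_T$ is uniformly $O(1/\sqrt{T})$ together with ergodicity of $\varphi_1$; it then thickens $\nu_D$ to a product measure $\nu_{\cover X}\ll\mu_X$ in stable/flow/unstable coordinates and uses the exponential contraction of stable leaves (\reflem{distFlowStable}) to show the Birkhoff integrals over $D$ and over the thickened set stay uniformly close, so \reflem{almostEqualRandomVariables} transports the limit back down to $\nu_D$. That route is elementary and needs no rate of mixing at all.

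Your moment-method transfer has a concrete gap. To compare the $k$th moment under $\nu_D$ with its Haar analogue you would apply equidistribution of $\varphi_{t_1}(\nu_D)$ (with $t_1=\min_i t_i$) to $g=\prod_{i\ge 2} f\circ\varphi_{t_i-t_1}$, and the error bound you invoke is of the form $Ce^{-ct_1}S_m(g)$. But unstable derivatives of $f\circ\varphi_s$ grow like $e^{ms}$, so $S_m(g)$ grows exponentially in the spread $\max_i(t_i-t_1)$, and on the portion of $[0,T]^k$ where that spread is comparable to $T$ the combined factor $e^{-ct_1}e^{m\cdot\mathrm{spread}}$ is enormous. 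Already for $k=2$, integrating the pointwise minimum of this bound with the trivial $\|f\|_\infty^2$ bound over $[0,T]^2$ gives order $T^2$, not $o(T)$, so the error does not vanish after dividing by $T^{k/2}$. Using Haar decay of correlations in the large-spread region does not patch this, since you would also need decay of the $\nu_D$-multiple-correlations there, which single-time equidistribution does not supply; you would need genuine multiple-correlation or cumulant bounds for the initial measure $\nu_D$. You would further need a form of Sinai's theorem that yields convergence of all moments, not just convergence in distribution. Separately, equidistribution of horospheres is usually stated for regular densities, so some additional approximation is needed for a merely $L^1$ Radon--Nikodym derivative $h$, and your remark about extending from compactly supported $\nu_D$ by approximation is circular as written. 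None of these obstacles arise in the paper's Zweim\"uller-plus-stable-contraction argument.
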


Let us recall some notions from probability to clarify what this means. 
Let $(P, \nu)$ be a probability space. 
For each $T \in \RR_{>0}$, let $R_T \from P \to \RR$ be a measurable function. 
For each $T$, the probability measure $\nu$ on $P$ induces a probability measure $\nu \circ R_T^{-1}$ on $\RR$ telling us how the values of the random variable $R_T$ are distributed when sampling $P$ with respect to $\nu$. 
Let $\psi$ be a probability measure on $\RR$. 

\begin{definition}
\label{Def:ConvergeInDistribution}
We say that the random variables $R_T$ \emph{converges in distribution} to $\psi$ 
if the measures $\nu \circ R_T^{-1}$ converge in measure to $\psi$. 
This is denoted by $\nu \circ R_T^{-1} \Rightarrow \psi$. 
\end{definition}

Here, by the Portmanteau theorem, we can use any of several equivalent definitions of weak convergence of measures. 
We are only interested in the case where $\psi$ is absolutely continuous with respect to Lebesgue measure on $\RR$; 
that is, we can write $\psi(V) = \int_V p(x)\, dx$ for any measurable $V \subset \RR$.  
Note that here $p \from \RR \to \RR_{\geq 0}$ is the \emph{probability density function} for $\psi$.  
Convergence in distribution $\nu \circ R_T^{-1} \Rightarrow \psi$ is then equivalent to saying that for all $\alpha$ we have 
\[
\lim_{T\to \infty} \nu\left[ x\in P: R_T(x) \leq \alpha \right] = \int_{-\infty}^\alpha p(s) \cdot ds
\]
We define 
\[
n_\sigma(s)=\frac{1}{\sigma\sqrt{2\pi}} e^{-(s/\sigma)^2/2} \quad\mbox{and}\quad \psi_\sigma(V) = \int_V n_\sigma(x)\, dx
\]
The latter is the \emph{normal distribution} with mean zero and standard deviation $\sigma$.
\begin{example}
\newcommand{\head}{\mathrm{head}}
\newcommand{\tail}{\mathrm{tail}}
The process of flipping coins can be modelled as follows. Set $P=\{\head, \tail\}^\NN$. We define a measure $\nu_P$ on $P$ as follows.
Given any prefix $v$ of length $n$, the set of all infinite words in $P$ starting with $v$ has measure $2^{-n}$.
Let $S_i\from P\to \RR$ be the random variable $S_i(w) = \pm 1$ as $w_i$ is heads or tails respectively. Define $\Sigma_N = S_0 + S_1 + \cdots + S_{N-1}$. The classical central limit theorem states that
\[
\nu_P \circ R_N^{-1} \Rightarrow \psi_1\quad\mbox{where}\quad R_N=\frac{\Sigma_N}{\sqrt{N}}\from P\to\RR
\qedhere
\]
\end{example}
We can now restate \refthm{CLT} as
\[
\nu_D \circ R_T^{-1}\Rightarrow\psi_\sigma \quad\mbox{where}\quad R_T=\frac{\Phi_T}{\sqrt{T}}\from D\to\RR
\]

\subsection{Sinai's theorem} 
\label{Sec:Sinai}

Our proof of \refthm{CLT} starts with Sinai's central limit theorem for geodesic flows~\cite{Sinai60}. 
We use the following version of Sinai's theorem which is adopted from \cite[Theorem~VIII.7.1 and subsequent Nota Bene]{FranchiLeJan}. 
This applies to functions that are not derivatives in the following sense. Recall that $X = \UT{}{M}$.

\begin{definition} 
\label{Def:lieDerivative}
Let $f\from X\to\RR$ be a smooth function. 
We say that $f$ is a \emph{derivative} if there is a smooth function $F\from X\to\RR$ such that
\[
f(v)= \left.\frac{dF(\varphi_t(v))}{d t}\right|_{t=0} \qedhere
\] 
\end{definition}

\noindent 
Let $\mu_X = \mu_{\Haar}/\mu_{\Haar}(X)$ be the normalised Haar measure.

\begin{theorem}[Sinai-Le Jan's Central Limit Theorem] 
\label{Thm:sinai}
Fix a connected, orientable, finite volume, complete hyperbolic three-manifold $M$. 
Let $f \from X = \UT{}{M} \to \RR$ be a compactly supported, smooth function with $\int_X f \cdot d\mu_X = 0$. 
Assume $f$ is not a derivative. 
Let 
\[
R_T(v) =\frac{\int_0^T f(\varphi_t(v))\,dt}{\sqrt{T}}
\]
Then there is a $\sigma>0$ such that $\mu_X \circ R_T^{-1} \Rightarrow \psi_\sigma$. \qed
\end{theorem}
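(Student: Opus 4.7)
My plan is to prove the Sinai--Le Jan theorem via exponential decay of correlations for the geodesic flow combined with a martingale-approximation CLT, following the template that goes back to Sinai in the compact case. The first step is to establish quantitative mixing. Writing $M=\Gamma\backslash\HH^3$ with $\Gamma\leq G=\PSL(2,\CC)$, the unit tangent bundle identifies with $X=\Gamma\backslash G/K$ for $K=\PSU(2)$, and the geodesic flow is right translation by the diagonal one-parameter subgroup $a_t$. By Moore's theorem, $a_t$ has no invariant vectors in $L^2_0(\Gamma\backslash G)$; a quantitative version coming from the unitary dual of $G$ (Ratner, Kleinbock--Margulis) yields, for every smooth compactly supported $f$ of mean zero, constants $C,\alpha>0$ and a Sobolev order $k$ such that
\[
\left|\int_X f\cdot (f\circ\varphi_t)\,d\mu_X\right|\leq C e^{-\alpha|t|}\|f\|_{C^k}^2.
\]
The compact support of $f$ is what makes the $C^k$-seminorm finite and confines the analysis away from the cusps.

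Next I would define the asymptotic variance
\[
\sigma^2=\int_{-\infty}^\infty\int_X f(v)\,f(\varphi_t v)\,d\mu_X(v)\,dt,
\]
which converges absolutely by the previous bound. The non-derivative hypothesis is precisely what forces $\sigma^2>0$: if $\sigma^2=0$, then a standard Leonov/Liv\v{s}ic-type dichotomy produces a primitive $F$ with $f=(d/dt)|_{t=0}\,F\circ\varphi_t$; this $F$ is upgraded from measurable to smooth using hypoellipticity of the flow generator, contradicting \refdef{lieDerivative}.

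With the variance in hand, I would discretise the flow: set $\tau=\varphi_1$, $g=\int_0^1 f\circ\varphi_s\,ds$, and $S_N=\sum_{n=0}^{N-1} g\circ\tau^n$. The sequence $(g\circ\tau^n)$ is strictly stationary and bounded with exponentially decaying correlations, and $\int_0^T f\circ\varphi_s\,ds$ differs from $S_{\lfloor T\rfloor}$ by a uniformly bounded quantity, so it suffices to prove a CLT for $S_N/\sqrt{N}$. Gordin's martingale approximation decomposes $g=u+h-h\circ\tau$ with $(u\circ\tau^n)$ a martingale difference sequence for the filtration generated by the stable horocyclic foliation, and Billingsley's martingale CLT then gives $S_N/\sqrt{N}\Rightarrow\psi_\sigma$. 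A routine passage from integer to continuous time (using boundedness of $f$ to absorb the fractional-time remainder) completes the argument.

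The main obstacle is handling the non-compact, finite-volume setting. In the cusped case the injectivity radius tends to zero along the cusps, so Sobolev constants are not uniform on $X$ and naive application of the compact-case arguments fails; this is exactly the content of Le Jan's extension of Sinai's theorem, and is typically addressed either by working with Sobolev norms adapted to the horocyclic foliations or by careful estimates controlling the contribution of long cusp excursions to the correlation integrals. A secondary subtlety is the upgrade of a measurable coboundary to a smooth one in the non-degeneracy argument for $\sigma$, but this is standard given hypoellipticity of the infinitesimal generator of $\varphi_t$.
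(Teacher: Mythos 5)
The paper does not prove \refthm{sinai} at all: it is imported from Franchi--Le Jan \cite[Theorem~VIII.7.1]{FranchiLeJan}, and the only work the paper does is the bridging remark --- translating between $\SO(2)$--invariant functions on the frame bundle and functions on $\UT{}{M}$, checking that compact support gives the bounded H\"olderian derivatives they require, and noting that the ``not a derivative'' condition may be checked among $\SO(2)$--invariant functions. Your proposal instead attempts to reprove the theorem from scratch along the classical Sinai/Ratner/Gordin lines. That route is in the same spirit as the cited source (Le Jan's argument is also a martingale argument adapted to the stable foliation), but as written it has two genuine gaps.

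First, your quantitative mixing step does not actually feed your martingale step. Gordin's decomposition $g=u+h-h\circ\tau$ requires summable conditional-expectation estimates, for instance $\sum_n \|E(g\mid \calF_n)\|_2<\infty$ together with the analogous bound for the reversed filtration, where $\calF_n$ is adapted to the stable/unstable foliations; exponential decay of the correlation $\int_X f\cdot(f\circ\varphi_t)\,d\mu_X$ neither implies these bounds nor substitutes for them. Establishing them in the finite-volume cusped setting --- where the injectivity radius degenerates and the natural partitions have unbounded geometry --- is precisely the hard content of Le Jan's extension of Sinai's theorem, so your sketch locates the main difficulty in a step waved through as routine. Second, your non-degeneracy argument is wrong as stated: the generator of the geodesic flow is a single vector field, hence not hypoelliptic, so hypoellipticity cannot upgrade a measurable ($L^2$) solution of the cohomological equation to the smooth primitive $F$ demanded by \refdef{lieDerivative}. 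The correct mechanism is Liv\v{s}ic-type regularity for Anosov flows, which again requires care in the noncompact case; alternatively one simply quotes Franchi--Le Jan's statement that the variance $\calV(f)$ vanishes if and only if $f$ is a derivative, which is exactly what the paper does. If your goal is only to justify the theorem for use in the paper, the efficient course is the paper's: cite \cite[Theorem~VIII.7.1]{FranchiLeJan} and supply the translation between conventions, rather than reproving the central limit theorem itself.
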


In fact, the constant $\sigma$ appearing in \refthm{sinai} is the square root of the \emph{variance} of $f$ which Franchi--Le Jan denote by $\calV(f)$. 
They give a formula for $\calV(f)$ in \cite[Theorem~VIII.7.1]{FranchiLeJan} and state that $\calV(f)$ vanishes if and only if $f$ is a derivative.

\begin{remark}
To relate \refthm{sinai} to~\cite[Theorem~VIII.7.1]{FranchiLeJan}, note that Franchi--Le Jan think of $f$ as a function on the frame bundle of $\cover{M}$ that is both $\Gamma$ and $\SO(2)$--invariant.
Since $f$ is smooth and compactly supported, it satisfies the hypotheses of their theorem.
Note that they also require $f$ to not be a derivative (denoted by $\mathcal{L}_0h$, see \cite[(VIII.1)]{FranchiLeJan}) of a function $h$ but allow $h$ to be a function on the frame bundle. 
However, if an $\SO(2)$--invariant $f$ is the derivative of a function $h$ on the frame bundle, it is also the derivative of an $\SO(2)$--invariant function on the frame bundle.
\end{remark}

We deduce \refthm{CLT} from Sinai's theorem in three steps.
\begin{enumerate}
\item 
\refthm{sinaiOne} generalises Sinai's theorem to arbitrary probability measures $\nu_X \ll \mu_X$ on the five-dimensional $X = \UT{}{M}$.
\item 
\refthm{sinaiTwo} restricts from $X$ to the two-dimensional view $D$ using a measure $\nu_D \ll \mu_D$.
\item 
Finally, we show that the term $\int_{b}^{\pi(v)}\omega$ from \eqref{eq:cohomFractView} can be added to obtain $\Phi_T$.
\end{enumerate}

\subsection{Generalising Sinai's Theorem}

We begin with a definition.

\begin{definition}
\label{Def:ConvInProb}
Let $(P, \mu)$ be a finite measure space. For $n \in \NN$, let $Q_n\from P\to \RR$ be a measurable function. 
We say that \emph{$Q_n$ converges to zero in probability} and write $\mu \circ Q_n^{-1}\to 0$ if for all $\varepsilon >0$ we have 
\[
\lim_{n\to\infty} (\mu\circ Q_n^{-1}) ((-\infty, -\varepsilon) \cup (\varepsilon,\infty)) \to 0 \qedhere
\]
\end{definition}

The following result is called \emph{strong distributional convergence}, see \cite[Proposition~3.4]{zweimuellerInfMeasurePreserving2007}.

\begin{theorem} 
\label{Thm:strongErgodicConvergence}
Let $(P, \mu)$ be a finite measure space and $T \from P \to P$ be an ergodic, measure-preserving transformation. 
For all $n\in\NN$, let $R_n \from P \to \RR$ be a measurable function. 
Let $Q_n = R_n\circ T-R_n$ and suppose that $\mu\circ Q_n^{-1}\to 0$.  
Let $\psi$ be a probability measure on $\RR$. 
If we have $\nu\circ R_n^{-1}\Rightarrow \psi$ for some probability measure $\nu\ll \mu$, then we have $\nu\circ R_n^{-1}\Rightarrow \psi$ for all probability measures $\nu\ll \mu$. \qed
\end{theorem}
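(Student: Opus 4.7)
The plan is as follows. Let $\mathcal{S}$ denote the set of probability measures $\nu \ll \mu$ with $\nu \circ R_n^{-1} \Rightarrow \psi$. I will show that $\mathcal{S}$ is closed under three operations---convex combinations, total-variation limits, and pushforward by $T$---and then bootstrap from the given $\nu_0 \in \mathcal{S}$ via the $L^1$ mean ergodic theorem to conclude that $\mathcal{S}$ contains every probability measure $\ll \mu$. By the Portmanteau theorem, throughout it suffices to take test functions $f\from\RR\to\RR$ to be bounded and Lipschitz.

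Closure under convex combinations and total-variation convergence is immediate, since for such $f$ one has $|\int f(R_n)\,d(\nu'-\nu)| \leq \|f\|_\infty\,\|\nu'-\nu\|_{TV}$, uniformly in $n$. For closure under $T_*$, given $\nu\in\mathcal{S}$ the identity $\int f(R_n)\,d(T_*\nu) = \int f(R_n + Q_n)\,d\nu$, combined with $Q_n\to 0$ in $\nu$-probability (which follows from $\nu\ll\mu$ and absolute continuity of the integral) and the Lipschitz bound $|f(R_n+Q_n) - f(R_n)| \leq \min(L|Q_n|, 2\|f\|_\infty)$, yields $\int f(R_n)\,d(T_*\nu) - \int f(R_n)\,d\nu \to 0$, hence $T_*\nu\in\mathcal{S}$. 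Iterating, $T_*^k\nu_0\in\mathcal{S}$ for all $k$, and by convexity the Ces\`aro averages $\nu_{0,N} := \tfrac{1}{N}\sum_{k=0}^{N-1}T_*^k\nu_0$ lie in $\mathcal{S}$. The density of $\nu_{0,N}$ with respect to $\mu$ equals $\tfrac{1}{N}\sum_k \hat T^k h_0$, where $\hat T$ is the Perron--Frobenius operator dual to $f\mapsto f\circ T$ and $h_0 = d\nu_0/d\mu$; the $L^1$ mean ergodic theorem applied to the $L^1$-isometry $\hat T$ (ergodicity of $T$ ensures constants are the only $\hat T$-invariants) shows this converges in $L^1(\mu)$ to $1/\mu(P)$. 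Hence $\nu_{0,N}\to\tilde\mu := \mu/\mu(P)$ in total variation, and by the closure property, $\tilde\mu\in\mathcal{S}$.

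To extend from $\tilde\mu$ to an arbitrary probability measure $\nu'\ll\mu$ with density $h'$, introduce $h'_N := \tfrac{1}{N}\sum_{k=0}^{N-1}\hat T^k h'$. The duality identity
\[
\int f(R_n)\,h'_N\,d\mu = \tfrac{1}{N}\sum_{k=0}^{N-1}\int f(R_n\circ T^k)\,d\nu',
\]
combined with the iterated almost-invariance estimate $R_n\circ T^k - R_n = \sum_{j=0}^{k-1} Q_n\circ T^j \to 0$ in $\nu'$-probability (applied to each summand), will give, for each fixed $N$, $\int f(R_n)\,h'_N\,d\mu - \int f(R_n)\,d\nu' \to 0$ as $n\to\infty$. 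Meanwhile, $\|h'_N - 1/\mu(P)\|_{L^1}\to 0$ provides a uniform-in-$n$ bound
\[
\bigl|\int f(R_n)\,h'_N\,d\mu - \int f(R_n)\,d\tilde\mu\bigr| \leq \|f\|_\infty\,\|h'_N - 1/\mu(P)\|_{L^1},
\]
while $\tilde\mu\in\mathcal{S}$ yields $\int f(R_n)\,d\tilde\mu \to \int f\,d\psi$. A triangle inequality---fix $\varepsilon>0$, choose $N$ large using the uniform bound, then $n$ large using the fixed-$N$ estimate---then delivers $\nu'\in\mathcal{S}$. The main obstacle is precisely this interchange of limits: the almost-invariance estimate only holds for each fixed $N$, so the $L^1$-uniform control of $h'_N - 1/\mu(P)$ is what makes the order-of-limits argument legitimate.
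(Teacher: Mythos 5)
Your argument is correct, and it is essentially a self-contained reconstruction of Zweim\"uller's own proof of his Proposition~3.4, which is all the paper does here: the paper does not reprove this statement but cites~\cite{zweimuellerInfMeasurePreserving2007} and explains the specialisation to finite measure spaces. The three-closure-properties framework (convexity, total-variation limits, stability under $\hat T$), followed by the mean ergodic theorem to reach the normalised invariant measure, and then the duality/almost-invariance comparison to pass to a general $\nu'\ll\mu$, is exactly the structure of Zweim\"uller's argument. So there is no discrepancy with the paper's approach; you have filled in the black box that the paper leaves as a citation.

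One small inaccuracy worth fixing: you call $\hat T$ an ``$L^1$-isometry,'' which is only true when $T$ is invertible (so that $\hat T h = h\circ T^{-1}$). In the stated generality $\hat T$ is merely a positive $L^1$-contraction (indeed a Dunford--Schwartz operator, since $\hat T$ also contracts $L^\infty$ and $\hat T 1 = 1$). That is enough: the mean ergodic theorem holds for Dunford--Schwartz operators on $L^1$ of a finite measure space, and ergodicity of $T$ still forces the $\hat T$-fixed space to be the constants (reduce to $h\geq 0$ via $h^\pm$, note that $h\,d\mu$ is then a $T$-invariant probability measure $\ll\mu$, and apply Birkhoff to conclude $h$ is constant). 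So the conclusion $h'_N\to 1/\mu(P)$ in $L^1$ survives, but the justification should be the Markov-operator / Dunford--Schwartz form of the mean ergodic theorem rather than the isometry form. You also implicitly use that $T_*\nu\ll\mu$ whenever $\nu\ll\mu$ (so that $\mathcal{S}$ really is closed under $T_*$ and the iterates make sense); this is immediate from $T_*\nu(A)=\nu(T^{-1}A)$ and $\mu(T^{-1}A)=\mu(A)$, but it is worth saying. With those two points tidied, the argument is complete.
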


\begin{remark}
We have specialised Zweim\"uller's Proposition~3.4 in \cite{zweimuellerInfMeasurePreserving2007} to finite measure spaces. 
To obtain Zweim\"uller's result for $\sigma$--finite measure spaces $(P,\mu)$, we need to replace the requirement $\mu\circ Q_n^{-1}\to 0$ by the following weaker requirement denoted by $Q_n\xrightarrow{\mu} 0$ in \cite[Footnote~3]{zweimuellerInfMeasurePreserving2007}: for all probability measures $\nu\ll \mu$ we have $\nu\circ Q_n^{-1}\to 0$. 
To see that $Q_n\xrightarrow{\mu} 0$ is weaker, we can use the following standard result: 
for any $\nu \ll \mu$ we have 
\[
\sup \{ \nu(A) : \mbox{$A$ measurable with $\mu(A)\leq \varepsilon$} \}
                \to 0\quad\mbox{as}\quad\varepsilon\to 0
\]
The requirements $\mu\circ Q_n^{-1}\to 0$ and $Q_n\xrightarrow{\mu} 0$ are equivalent if $\mu$ is finite.
\end{remark}

Using this, we now give our first variant of Sinai's theorem.

\begin{theorem} 
\label{Thm:sinaiOne}
With the same hypotheses as in \refthm{sinai}, we have the following. There is a $\sigma>0$ such that for any probability measure $\nu_X \ll \mu_X$ we have
$\nu_X \circ R_T^{-1} \Rightarrow \psi_\sigma$.
\end{theorem}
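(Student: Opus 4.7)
The plan is to apply Zweim\"uller's strong distributional convergence result (\refthm{strongErgodicConvergence}) to upgrade Sinai's theorem (\refthm{sinai}) from $\mu_X$ to every $\nu_X \ll \mu_X$. For the ergodic transformation required by \refthm{strongErgodicConvergence} I would take the time-one map $\varphi_1 \from X \to X$ of the geodesic flow. This map preserves $\mu_X$, and is ergodic (in fact mixing) because the geodesic flow on any finite-volume hyperbolic three-manifold is exponentially mixing.

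First I would fix any sequence $T_n \to \infty$ and set $R_n = R_{T_n}$. A change of variables $s = t+1$ in the integral defining $R_n \circ \varphi_1$ yields
\[
R_n \circ \varphi_1(v) - R_n(v) = \frac{1}{\sqrt{T_n}} \left[ \int_{T_n}^{T_n+1} f(\varphi_s v)\, ds - \int_0^1 f(\varphi_t v)\, dt \right]
\]
Since $f$ is smooth and compactly supported it is bounded, so the coboundary $Q_n = R_n \circ \varphi_1 - R_n$ satisfies $\|Q_n\|_\infty \leq 2\|f\|_\infty/\sqrt{T_n}$. Hence $Q_n \to 0$ uniformly, and in particular $\mu_X \circ Q_n^{-1} \to 0$ in the sense of \refdef{ConvInProb}.

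Next, since $\mu_X$ is itself a probability measure with $\mu_X \ll \mu_X$, \refthm{sinai} applied to the sequence $R_n$ provides a constant $\sigma > 0$ with $\mu_X \circ R_n^{-1} \Rightarrow \psi_\sigma$. All hypotheses of \refthm{strongErgodicConvergence} are now in place, and that theorem upgrades the convergence to $\nu_X \circ R_n^{-1} \Rightarrow \psi_\sigma$ for every probability measure $\nu_X \ll \mu_X$. Since the sequence $T_n \to \infty$ was arbitrary, the continuous-time statement $\nu_X \circ R_T^{-1} \Rightarrow \psi_\sigma$ follows.

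The hard part, insofar as there is one, is recognising that the right discrete-time transformation is $\varphi_1$ and that the corresponding coboundary $R_T \circ \varphi_1 - R_T$ reduces to a boundary term of size $O(1/\sqrt{T})$. Once that is noted, the theorem falls out as a direct invocation of the two results already recorded in this section, with no need to reprove any mixing or variance estimate.
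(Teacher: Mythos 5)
Your proposal is correct and follows essentially the same route as the paper: reduce to an arbitrary sequence $T_n \to \infty$, take $\varphi_1$ as the ergodic transformation, bound $\|R_n \circ \varphi_1 - R_n\|_\infty$ by $2\|f\|_\infty/\sqrt{T_n}$, and apply \refthm{strongErgodicConvergence}. The only cosmetic difference is the justification of ergodicity of $\varphi_1$ (you appeal to mixing of the geodesic flow, while the paper cites Pugh--Shub's result on Anosov elements), both of which are valid.
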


\begin{proof}
By \refthm{sinai}, there is a $\sigma$ such that $\mu_X \circ R_T^{-1} \Rightarrow \psi_\sigma$ as $T\to\infty$. 
Note that the random variables in \refthm{strongErgodicConvergence} are indexed by $n\in\NN$ instead of $T\in\RR$ but it is easy to see that sequential convergence and convergence in distribution are equivalent. 
In other words, it is suffices to show that for any sequence $(T_n)$ with $T_n\to\infty$ the random variables  $S_n=R_{T_n}$ satisfy $\nu_X \circ S_n^{-1}\Rightarrow \psi_\sigma$. 
In order to apply \refthm{strongErgodicConvergence}, we need the following two claims.

\begin{claim}
The time-one map $\varphi_1$ for the geodesic flow is ergodic.
\end{claim}

\begin{proof}
By~\cite[Theorem~V.3.1]{FranchiLeJan} the geodesic flow is mixing.
It follows that the time-one map $\varphi_1$ is also mixing, and thus ergodic.
\end{proof}

\begin{claim}
Let $Q_n = S_n \circ \varphi_1-S_n$. 
Then, $\nu_X \circ Q_n^{-1} \to 0$.
\end{claim}

\begin{proof}
We will prove a stronger statement: $\| Q_n \|_\infty \to 0$.

\begin{align*}
\left| Q_n(v)\right| 
& =  \left| \frac{\int_1^{T_n+1}     f(\varphi_t(v))\, dt}{\sqrt{T_n}} - \frac{\int_0^{T_n} f(\varphi_t(v))\, dt}{ \sqrt{T_n}}\right| \\
& =  \left| \frac{\int_{T_n}^{T_n+1} f(\varphi_t(v))\, dt}{\sqrt{T_n}} - \frac{\int_0^1     f(\varphi_t(v))\, dt}{ \sqrt{T_n}}\right| \\
& \leq   \frac{2 \| f\|_\infty}{\sqrt{T_n}} \qedhere
\end{align*}
\end{proof}

We can now finish the proof of \refthm{sinaiOne}. 
We apply \refthm{strongErgodicConvergence} with $R_n$ replaced by $S_n$ and $T$ replaced by $\varphi_1$.
\end{proof}

\subsection{Coordinates}
\label{Sec:coordinatesView}

Given a view $D$, we introduce coordinates for a neighbourhood of $D$ in $\cover{X}=\UT{}{\cover{M}}\homeo\UT{}{\HH^3}$ as follows; 
it may be helpful to consult \reffig{localCoordinates}. 
Fix $v \in \cover{X}$.  
If $v$ is close enough to $D$ in $\cover{X}$, then there is an $\xu \in D$ such that the rays emanating from $\xu$ and $v$ converge to the same ideal point in $\bdy_\infty\cover{M}$. 
Consider the set $H = H^\sfs(\xu) \subset \cover{X}$ such that
\begin{itemize}
\item $\xu \in H$,
\item $\pi(H)$ is a horosphere, and
\item $H$ are the ``inward pointing'' normals to $\pi(H)$.
\end{itemize}
Let $\xs$ be the intersection of $H$ with the line through $v$.
Let $\xf$ be the signed distance from $\xs$ to $v$ along this line. 
The triple 
\[
(\xu, \xf, \xs) 
\qquad
\mbox{with $\xu \in D$, $\xf \in \RR$, and $\xs \in H^\sfs(\xu)$}
\]
determines the vector $v \in \cover{X}$ uniquely. 
In an abuse of notation, we will simply write $v=(\xu, \xf, \xs)$.

\begin{figure}[htbp]
\centering
\subfloat[Coordinates.]{
\labellist
\small\hair 2pt
\pinlabel {$H$} at 173 210
\pinlabel {$\xu$} at 157 174
\pinlabel {$\xs$} at 114 162
\pinlabel {$\xf$} at 76 175
\pinlabel {$v$} at 90 204
\pinlabel {$v_s$} at 113 128
\pinlabel {$D$} at 223 135
\endlabellist
\includegraphics[width = 0.47\textwidth]{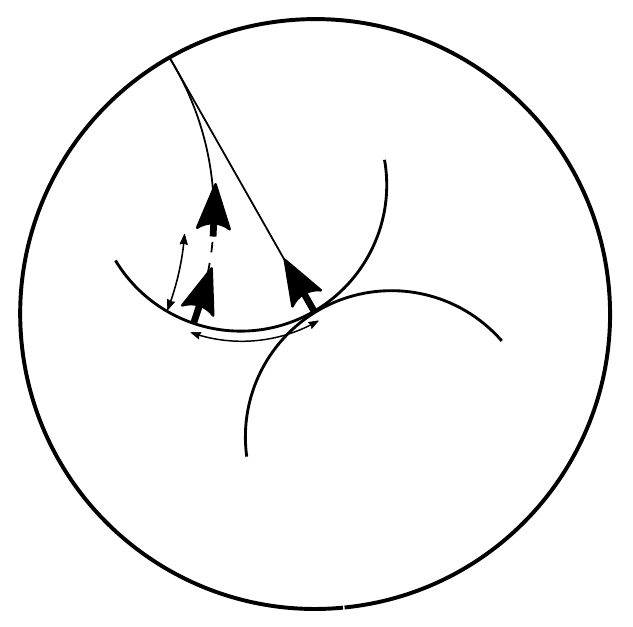} 
\label{Fig:localCoordinates}}
\hspace{-0.035\textwidth}
\subfloat[Flow.]{
\labellist
\small\hair 2pt
\pinlabel {$H$} at 207 190
\pinlabel {$\xu$} at 136 142
\endlabellist
\includegraphics[width = 0.47\textwidth]{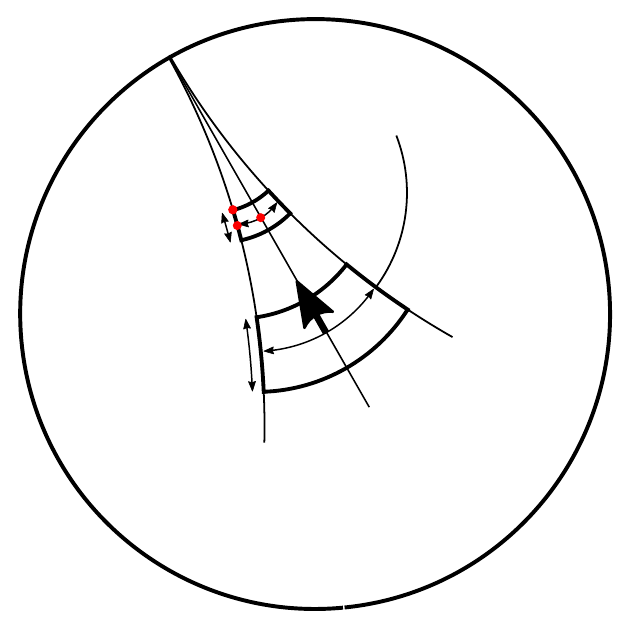}
\label{Fig:StableDirection}}
\caption{Coordinates for $\cover{X}=\UT{}{\cover{M}}=\UT{}{\HH^3}$ and flow of a box $(\xu, (-\varepsilon,\varepsilon), B^\sfs_\varepsilon(\pi(\xu)))$.}
\end{figure}

Suppose $N$ is a submanifold. 
Let $d_N(p,q)$ denote the length of the shortest curve in $N$ connecting $p$ and $q$. Given $\xu\in D$, let 
\[
B^\sfs_\varepsilon(\xu) = \{ \xs\in H : d_H(\xu, \xs) \leq \varepsilon\} \quad\mbox{where}\quad H = H^\sfs(\xu)
\]
Let
\[
D_\varepsilon = \{ (\xu, \xf, \xs) : \xu\in D, \xf \in (-\varepsilon, \varepsilon), \xs \in B^\sfs_\varepsilon(\xu) \} \subset \cover{X}
\]

\begin{remark} 
\label{Rem:product}
Note that the subscripts appearing in the coordinates $v=(\xu, \xf, \xs)$ refer to the unstable, flow, and stable foliations.
\begin{itemize}
\item The points $(\xu, 0, \xu)$ give a copy of $D$, which is unstable. 
\item If we fix $\xu$ and $\xs$, and vary $\xf$, then we obtain a geodesic flow line.
\item Also, if we fix $\xu$ and $\xf$, and vary $\xs$, then we obtain a stable horosphere. 
\end{itemize}

Note that each $H = H^\sfs(\xu)$ is isometric to a (pointed) copy of $\CC$.  
However, the coordinates above do not live in a geometric product $D \cross \RR \cross \CC$.  
They instead form a smooth fibre bundle over $D$.  (In the material case, the view $D$ is a copy of $S^2$.  
If we factor away the flow direction from our coordinates, what remains is isomorphic to the non-trivial bundle $\mathrm{T} S^2$.)
Thus we will only locally appeal to a ``product structure'' on these coordinates.
\end{remark}

The following lemma is deduced from the exponential convergence inside of stable leaves. 
See \reffig{StableDirection}.

\begin{lemma}  
\label{Lem:distFlowStable}
For all $t\geq 0$, $1>\varepsilon>0$ and all $v=(\xu, \xf, \xs) \in D_\varepsilon$, we have
\[
d_\cover{X}(\varphi_t(\xu),\varphi_t(v)) \leq 2 \varepsilon
\]
\end{lemma}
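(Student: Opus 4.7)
By construction of the coordinates, the vector $r=(\xu,\xf,\xs)$ lies on the same oriented geodesic line as $\xs$ (both point toward the same ideal endpoint), separated by signed arclength $\xf$, so that $r=\varphi_{\xf}(\xs)$. Since $\varphi_t$ commutes with $\varphi_\xf$ on a single flow line, we obtain
\[
\varphi_t(r) \;=\; \varphi_{t+\xf}(\xs)
\]
The triangle inequality then gives
\[
d_\cover{X}(\varphi_t(\xu),\varphi_t(r))
\;\leq\;
d_\cover{X}(\varphi_t(\xu),\varphi_t(\xs)) \;+\; d_\cover{X}(\varphi_t(\xs),\varphi_{t+\xf}(\xs))
\]
and the plan is to bound each summand by $\varepsilon$.

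\textbf{Flow term.} The second summand is exactly the length of a piece of a unit-speed geodesic in $\cover{X}$, so it equals $|\xf|$, which is at most $\varepsilon$ by the definition of $D_\varepsilon$.

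\textbf{Stable term.} By definition of $H=H^\sfs(\xu)$, both $\xu$ and $\xs$ are inward unit normals to the common horosphere $\pi(H)$ and point toward a common ideal endpoint. Applying $\varphi_t$ slides both vectors along the stable leaf: the basepoints move onto the horosphere $\pi(\varphi_t H)$ (still a horosphere about the same ideal point) and the vectors remain inward unit normals to it. The key geometric fact, which I would invoke as a standard property of the stable foliation of the geodesic flow on $\UT{}{\HH^3}$, is that horospherical distance contracts by exactly $e^{-t}$ under forward flow. Since $\xs\in B^\sfs_\varepsilon(\xu)$ means $d_H(\xu,\xs)\leq\varepsilon$, the contracted horospherical distance at time $t$ is at most $\varepsilon e^{-t}\leq\varepsilon$. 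I would then note that the $\cover{X}$-distance between two inward-unit-normals to the same horosphere is bounded by the horospherical distance between their basepoints: this is witnessed by the curve in $\cover{X}$ obtained by taking the horospherical geodesic between the basepoints and lifting it to inward unit normals (the extrinsic curvature correction of \refrem{extrinsicCurvCorr} is $\sqrt{2}$, but this is absorbed because the time-$t$ horosphere has its own intrinsic flat metric and the natural lift has the same arclength as the horospherical curve). Hence $d_\cover{X}(\varphi_t(\xu),\varphi_t(\xs))\leq\varepsilon$.

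\textbf{Conclusion and main obstacle.} Summing the two bounds gives $d_\cover{X}(\varphi_t(\xu),\varphi_t(r))\leq 2\varepsilon$, which is exactly the claim. The only nontrivial step is the stable-term estimate; making it rigorous requires picking a specific metric on $\cover{X}$ (the Sasaki-type metric implicit in the paper's setup) and verifying that the lift of a horospherical geodesic to inward unit normals is a curve whose length is bounded by the horospherical arclength. This is standard but deserves a sentence of justification, so that is the place where care is required. Everything else is just the triangle inequality and the definition of the coordinates.
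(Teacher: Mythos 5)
Your proof is correct and follows essentially the same route as the paper's: both split $d_\cover{X}(\varphi_t(\xu),\varphi_t(\xu,\xf,\xs))$ at the intermediate point $\varphi_t(\xs)=(\xu,t,\xs)$, bound the flow leg by $|\xf|\leq\varepsilon$, and bound the stable leg by exponential contraction along the stable horosphere. One minor imprecision in the parenthetical: the lift of a horospherical geodesic to inward unit normals does \emph{not} have the same arclength in $\cover{X}$ as the base curve --- by \refrem{extrinsicCurvCorr} the induced (Sasaki) length is $\sqrt{2}$ times the flat horospherical length --- but this does not affect the conclusion, because $B^\sfs_\varepsilon(\xu)$ is defined via the induced distance $d_H$ (which already carries the $\sqrt{2}$) and the flow contracts that induced metric on stable leaves by exactly $e^{-t}$; the paper's chain $d_\cover{X}\leq d_{\varphi_t(H)}=e^{-t}d_H\leq e^{-t}\varepsilon$ sidesteps the issue by never leaving the induced metric.
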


\begin{proof}
In our coordinates we have $\varphi_t(\xu) = (\xu, t, \xu)$ and $\varphi_t(v) = (\xu, \xf + t, \xs)$.  
We take $H=H^\sfs(\xu)$.  Then we have
\begin{align*}
d_\cover{X}((\xu,t,\xu), (\xu,t,\xs)) & \leq d_{\varphi_t(H)}((\xu,t,\xu), (\xu,t,\xs))  \\
 & = e^{-t} d_H((\xu,0,\xu), (\xu,0,\xs)) \\
 & \leq e^{-t} \varepsilon \leq \varepsilon \\
\intertext{and}
d_\cover{X}((\xu,t,\xs), (\xu,t+\xf,\xs)) &=  |\xf| \leq \varepsilon
\end{align*}
Applying the triangle inequality gives the result. 
\end{proof}

\subsection{Proof of the central limit theorem} \label{Sec:proofCLT}

We now use these coordinates to continue with the proof of \refthm{CLT}.

\begin{lemma} 
\label{Lem:almostEqualRandomVariables}
Let $(P,\nu)$ be a probability space. 
Let $R_T, S_T\from P\to \RR$ be a pair of one-parameter families of measurable functions. 
Assume that $\nu\circ R_T^{-1}\Rightarrow \psi$ where $\psi$ is a probability measure on $\RR$ with bounded probability density function $p\from\RR\to\RR_{\geq 0}$. 
Assume that there is a monotonically growing family $U_T\subset P$ of measurable sets such that $\left\| (R_T - S_T)|U_T\right\|_\infty\to 0$ and $\nu(P-U_T)\to 0$. 
Then $\nu\circ S_T^{-1}\Rightarrow \psi$.
\end{lemma}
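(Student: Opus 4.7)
The plan is to prove this by a standard ``Slutsky-type'' converging together argument, comparing the sublevel sets of $S_T$ and $R_T$ on the good set $U_T$, and showing the exceptional set $P-U_T$ and the small perturbation window both contribute negligibly in the limit.

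First, I will work with the definition of convergence in distribution given at the end of \refsec{StatementCLT}: it suffices to show that for every $\alpha\in\RR$,
\[
\lim_{T\to\infty} \nu[S_T\leq \alpha] = \int_{-\infty}^\alpha p(s)\,ds = \psi((-\infty,\alpha])
\]
Fix $\alpha$ and $\varepsilon>0$. By the hypothesis $\|(R_T-S_T)|U_T\|_\infty\to 0$, for all sufficiently large $T$ we have $|R_T(x)-S_T(x)|<\varepsilon$ for every $x\in U_T$. This gives the two-sided sandwich
\[
\{R_T\leq \alpha-\varepsilon\}\cap U_T \ \subseteq\ \{S_T\leq \alpha\}\cap U_T \ \subseteq\ \{R_T\leq \alpha+\varepsilon\}\cap U_T
\]
Since $\nu(P-U_T)\to 0$, taking $\nu$-measures yields, in the limit,
\[
\psi((-\infty,\alpha-\varepsilon]) \ \leq\ \liminf_{T\to\infty}\nu[S_T\leq\alpha] \ \leq\ \limsup_{T\to\infty}\nu[S_T\leq\alpha] \ \leq\ \psi((-\infty,\alpha+\varepsilon])
\]
using that $\nu\circ R_T^{-1}\Rightarrow\psi$ and that the endpoints $\alpha\pm\varepsilon$ are continuity points for the (absolutely continuous) limit distribution.

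Second, I will let $\varepsilon\to 0$. Here the hypothesis that $p$ is bounded enters: we have
\[
\psi((-\infty,\alpha+\varepsilon]) - \psi((-\infty,\alpha-\varepsilon]) \ =\ \int_{\alpha-\varepsilon}^{\alpha+\varepsilon} p(s)\,ds \ \leq\ 2\varepsilon\|p\|_\infty
\]
which goes to zero. Thus the liminf and limsup above are squeezed to the common value $\psi((-\infty,\alpha])$, establishing the desired convergence.

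The argument has no real obstacle; the only subtle point is the monotone-growing assumption on $U_T$, which is used implicitly so that ``for all sufficiently large $T$'' makes sense for the uniform bound $\|(R_T-S_T)|U_T\|_\infty<\varepsilon$, and so that these good sets form a compatible family rather than oscillating. The bounded-density assumption on $p$ is precisely what ensures the limit distribution assigns negligible mass to a small window, which is exactly the slack we need to close the estimate.
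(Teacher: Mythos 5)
Your proof is correct and takes essentially the same approach as the paper's: both bound $\nu[S_T\leq\alpha]$ by $\nu[R_T\leq\alpha\pm\delta]$ plus $\nu(P-U_T)$ and then use the boundedness of $p$ to close the small window. The only cosmetic differences are that you run the two-sided sandwich and then let $\varepsilon\to 0$ via a liminf/limsup squeeze, whereas the paper fixes $\delta=\varepsilon/(3\|p\|_\infty)$ up front and derives the lower bound from $P_T+Q_T=1$; neither is a substantive departure. (One minor remark: the monotonicity of $U_T$ is in fact not needed in either argument, since $\|(R_T-S_T)|U_T\|_\infty\to 0$ and $\nu(P-U_T)\to 0$ alone supply everything used; your closing speculation about its role is harmless but not quite right.)
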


\begin{proof}
Fix $\alpha$  and let 
\[
P_T = \nu \left[x\in P : S_T(x) \leq \alpha \right]\quad\mbox{and}\quad Q_T = \nu \left[x\in P : S_T(x) > \alpha \right]
\]

We need to show that for every $\varepsilon > 0$, there is a $T_0$ such that for all $T\geq T_0$ we have 
\[
\int_{-\infty}^{\alpha} p(s) \cdot ds - \varepsilon \leq P_T \leq   \int_{-\infty}^{\alpha} p(s) \cdot ds + \varepsilon
\]
We only deal with the second inequality since the first inequality can be derived in an analogous way using $P_T + Q_T = 1$. 
We have the following estimate.
\[
P_T \leq \nu\left[ x\in P:R_T(x) \leq \alpha + \left\| (R_T - S_T) |U_T\right\|_\infty \right] + \nu(P-U_T)
\]
Fix $\varepsilon > 0$. 
Let $\delta=\varepsilon / (3 \|p\|_\infty)$. 
By hypothesis, we have for all large enough $T$
\[
P_T \leq \nu\left[ x\in P:R_T(x) \leq \alpha + \delta \right] + \frac{\varepsilon}{3}
\]

Because $\nu \circ R_T^{-1} \Rightarrow \psi$, we furthermore have for all large enough $T$
\begin{align*}
 P_T &\leq \frac{\varepsilon}{3} + \int_{-\infty}^{\alpha + \delta} p(s) \cdot ds + \frac{\varepsilon}{3}\\
  & \leq  \frac{\varepsilon}{3} + \int_{-\infty}^{\alpha} p(s) \cdot ds + \|p \|_\infty \delta + \frac{\varepsilon}{3} = \int_{-\infty}^{\alpha} p(s) \cdot ds  + \varepsilon \qedhere
\end{align*}

\end{proof}

We return to the case of interest where $f$ is given by a one-form $\omega$.

\begin{lemma} 
\label{Lem:lieDerivativeNonVanishing}
Let $\omega \in \Omega^1(M)$ be closed but not exact. 
Then $\omega\from X\to\RR$ is not a derivative in the sense of \refdef{lieDerivative}.
\end{lemma}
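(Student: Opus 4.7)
The plan is to argue by contradiction: if $\omega$ is a derivative in the sense of \refdef{lieDerivative}, then the period of $\omega$ vanishes on every closed geodesic of $M$, and we will see that for a closed, non-exact one-form this must fail.

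Suppose $\omega(v) = \left.\tfrac{d}{dt}F(\varphi_t(v))\right|_{t=0}$ for some smooth $F\from X\to\RR$.  By the flow invariance of this identity and the fundamental theorem of calculus, for every $v\in X$ and $T>0$ we have
\[
\int_0^T \omega(\varphi_t(v))\, dt \;=\; F(\varphi_T(v)) - F(v).
\]
If $v$ is a periodic point with $\varphi_T(v)=v$, the right-hand side vanishes.  But the left-hand side is exactly the period integral $\int_\gamma \omega$ along the corresponding closed geodesic $\gamma\subset M$ (using that $\omega(\varphi_t(v))$ is $\omega$ evaluated on the unit-speed geodesic $\pi\circ\varphi_t(v)$).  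Thus, if $\omega$ is a derivative, its period along every closed geodesic is zero.  It therefore suffices to exhibit a single closed geodesic $\gamma$ in $M$ with $\int_\gamma\omega\ne 0$.

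Lift $\omega$ to $\cover M=\HH^3$ and write $\cover\omega=dW$.  The $\Gamma$-invariance of $\cover\omega$ (where $\Gamma=\pi_1(M)$) forces $W(gp)-W(p)$ to be independent of $p\in\HH^3$, defining a homomorphism $\rho\from\Gamma\to\RR$; and $\omega$ is exact if and only if $\rho$ is trivial.  For a loxodromic $g\in\Gamma$, taking $p$ on the axis shows that $\rho(g)$ equals the period of $\omega$ along the projected closed geodesic.  So we must produce a loxodromic $g$ with $\rho(g)\ne 0$.

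Since $\omega$ is not exact, pick any $g_0\in\Gamma$ with $\rho(g_0)\ne 0$.  If $g_0$ is loxodromic we are done.  Otherwise $g_0$ is parabolic (the only other possibility in a finite-volume Kleinian group aside from the identity).  Choose any loxodromic $g\in\Gamma$ whose fixed points on $\bdy_\infty\HH^3$ are disjoint from the parabolic fixed point of $g_0$; such $g$ exist in abundance because $\Gamma$ is non-elementary.  A standard ping-pong / trace computation in $\PSL(2,\CC)$ shows that the element $g^n g_0$ is loxodromic for all sufficiently large $n$.  Now
\[
\rho(g^n g_0) \;=\; n\,\rho(g) + \rho(g_0).
\]
If $\rho(g)\ne 0$ for our chosen loxodromic $g$ we are done.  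Otherwise $\rho(g^n g_0)=\rho(g_0)\ne 0$ for every $n$, and for $n$ large the element $g^n g_0$ is loxodromic, so it yields the desired closed geodesic.  This contradicts the conclusion of the first paragraph, completing the proof.

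The one place requiring care is the Kleinian-group assertion that $g^n g_0$ is loxodromic for large $n$; everything else is a direct calculation.  We expect this to be the main (minor) obstacle, but it is a classical fact about discrete subgroups of $\PSL(2,\CC)$ and can be verified by noting that the trace of $g^n g_0$ grows like the trace of $g^n$, which escapes every compact neighbourhood of $\pm 2$.
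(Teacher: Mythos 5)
Your proof is correct and shares the same core computation as the paper's: if $\omega$ is a derivative with potential $F$, then $\int_0^T \omega(\varphi_t(v))\,dt = F(\varphi_T(v)) - F(v)$, so the period of $\omega$ along any closed geodesic vanishes. From there the two arguments diverge. The paper proves the contrapositive by defining a candidate primitive $W(q) = \int_\gamma \omega$ and showing it is well-defined: any two paths differ by a cycle $z$, and the paper replaces $z$ by its geodesic representative $z^*$ and invokes $\omega(z^*)=0$. You instead package the period data as the homomorphism $\rho\from\Gamma\to\RR$, $\rho(g)=W(gp)-W(p)$, and directly exhibit a loxodromic element on which $\rho$ is nonzero. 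The chief advantage of your route is that it handles the case the paper glosses over: when $z$ is parabolic (a cusp loop) there is no closed geodesic representative, so the paper's phrase ``the geodesic representative of $z$'' is not literally available. You circumvent this with the standard Kleinian-group fact that $g^n g_0$ is eventually loxodromic for a loxodromic $g$ with fixed points disjoint from those of a parabolic $g_0$ (which, as you note, is classical — and in a torsion-free discrete $\Gamma$ one can even sidestep trace estimates: for large $n$ the trace escapes $\{\pm 2\}$, and since $\Gamma$ has no elliptics, $g^n g_0$ must be loxodromic). In the paper's actual application, $\omega$ is compactly supported (Theorem~\ref{Thm:CLT} and Theorem~\ref{Thm:sinaiTwo}), so cusp periods vanish automatically and the parabolic case is vacuous; but as the lemma is stated for general $\omega\in\Omega^1(M)$, your argument is the more complete one.
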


\begin{proof}
We prove the contrapositive: 
that is, if $\omega$ is a derivative in the sense of \refdef{lieDerivative} then $\omega = d W$ for a function $W \from M \to \RR$.  
Fix a basepoint $p \in M$.  
We define $W(q) = \int_\gamma \omega$.  Here $\gamma$ is a path from $p$ to $q$.  
All that is left is to show that $W$ is well-defined. 

So, suppose that $\gamma'$ is another path from $p$ to $q$. 
Thus $z = \gamma - \gamma'$ is a cycle.  
Let $z^*$ be the geodesic representative of $z$.  
Since $\omega$ is closed we have $\int_z \omega = \int_{z^*} \omega$. 
Since $\omega$ is a derivative we have $\int_{z^*} \omega = 0$ and we are done.
\end{proof}

\begin{theorem} 
\label{Thm:sinaiTwo}
With the same hypotheses as in \refthm{CLT}, we have the following. 
There is $\sigma > 0$ such that for all views $D$ with area measure $\mu_D$, for all probability measures $\nu_D\ll \mu_D$, and for all $\alpha\in\RR$, we have
\[
\lim_{T\to\infty} \nu_D\left[ v\in D : R_T(v) \leq \alpha \right] = \int_{-\infty}^\alpha \frac{1}{\sigma\sqrt{2\pi}} e^{-(s/\sigma)^2/2}\, ds
\]
where $R_T(v) = \int_0^T \omega(\varphi_t(v)) dt / \sqrt{T}$.
\end{theorem}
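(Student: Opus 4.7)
The plan is to reduce \refthm{sinaiTwo} to \refthm{sinaiOne} by thickening the two-dimensional view $D$ to an open set $D_\varepsilon \subset \cover{X}$ via the coordinates of \refsec{coordinatesView}, equipping it with a product-type probability measure whose $\xu$-marginal is $\nu_D$, and then exploiting the exponential contraction of the flow along stable leaves (\reflem{distFlowStable}) to show that $R_T$ varies negligibly in the flow and stable directions as $T\to\infty$.

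Concretely, fix $\varepsilon\in(0,1)$ and let $h = d\nu_D/d\mu_D$ be the Radon--Nikodym density. On $D_\varepsilon$ define the probability measure
\[
d\tilde\nu_\varepsilon(\xu,\xf,\xs) \;=\; \frac{1}{Z_\varepsilon}\, h(\xu)\, d\mu_D(\xu)\, d\xf\, d\mu_{H^\sfs(\xu)}(\xs),
\]
where $Z_\varepsilon>0$ is a normalizing constant independent of $h$. Push $\tilde\nu_\varepsilon$ forward by the covering map $\cover X\to X$ to obtain a probability measure $\tilde\nu$ on $X$. Because the product coordinates have smooth bounded Jacobian with respect to $\mu_{\cover X}$ and $\tilde\nu_\varepsilon$ has finite total mass, the push-forward satisfies $\tilde\nu\ll \mu_X$ (even when $D$ is non-compact and $D_\varepsilon$ projects with multiplicity). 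Next, $\omega\from X\to \RR$, given by $\omega(v) = \omega_{\pi(v)}(v)$, is smooth and compactly supported; its integral against $\mu_X$ vanishes by the antipodal symmetry $v\mapsto -v$, an isometry of $(X,\mu_X)$ under which $\omega$ is odd; and by \reflem{lieDerivativeNonVanishing} (applied to the non-exact form $\omega$) it is not a derivative. Hence \refthm{sinaiOne} yields $\sigma>0$ with $\tilde\nu\circ R_T^{-1}\Rightarrow \psi_\sigma$.

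Now introduce the auxiliary random variable $S_T\from D_\varepsilon\to\RR$ by $S_T(\xu,\xf,\xs) = R_T(\xu)$. Since $S_T$ depends only on $\xu$ and the $\xu$-marginal of $\tilde\nu_\varepsilon$ is $\nu_D$, we obtain the crucial identity $\tilde\nu_\varepsilon\circ S_T^{-1} = \nu_D\circ (R_T|_D)^{-1}$. It remains to establish $\|R_T-S_T\|_\infty \to 0$ on $D_\varepsilon$. Let $W\from\cover M\to\RR$ be the primitive of $\cover\omega$; since $|dW| = |\cover\omega|\leq \|\omega\|_\infty$ and the submersion $\pi\from\cover X\to\cover M$ is $1$-Lipschitz, $W\circ\pi$ is $\|\omega\|_\infty$-Lipschitz on $\cover X$. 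The telescoping identity
\[
\sqrt{T}\bigl(R_T(\xu,\xf,\xs)-S_T(\xu,\xf,\xs)\bigr) = \bigl[W\circ\pi(\varphi_T(\xu,\xf,\xs)) - W\circ\pi(\varphi_T(\xu))\bigr] - \bigl[W\circ\pi(\xu,\xf,\xs) - W\circ\pi(\xu)\bigr]
\]
together with \reflem{distFlowStable} applied at $t=T$ and $t=0$ bounds each bracket by $2\varepsilon\|\omega\|_\infty$, so $|R_T - S_T|\leq 4\varepsilon\|\omega\|_\infty/\sqrt{T}$. Since $\psi_\sigma$ has bounded density, \reflem{almostEqualRandomVariables} (with $U_T=D_\varepsilon$) yields $\tilde\nu_\varepsilon\circ S_T^{-1}\Rightarrow \psi_\sigma$, which is the desired conclusion. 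The main delicacy is producing a five-dimensional measure on $X$ whose one-dimensional pushforward under $R_T$ matches $\nu_D\circ R_T^{-1}$ up to an error vanishing uniformly; this is exactly what the thickening construction, combined with stable contraction, achieves.
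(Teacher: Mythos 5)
Your proposal is correct and follows essentially the same strategy as the paper's proof: thicken $D$ to $D_\varepsilon$ using the $(\xu,\xf,\xs)$ coordinates, equip the thickening with a product probability measure whose $\xu$--marginal is $\nu_D$, push forward to a measure $\ll\mu_X$ on $X$ and invoke \refthm{sinaiOne}, then use the stable/flow contraction of \reflem{distFlowStable} together with the Lipschitz bound on the primitive $W$ and \reflem{almostEqualRandomVariables} to transfer the conclusion from the thickened coordinates back to $D$. (The paper fixes $\varepsilon=1$; your general $\varepsilon$ makes no difference since the bound already vanishes as $T\to\infty$.) One small improvement in your write-up: you explicitly verify the zero-mean hypothesis $\int_X \omega\, d\mu_X=0$ via the antipodal symmetry $v\mapsto -v$, a point the paper leaves implicit.
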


\begin{proof}
The one-form $\omega$ is not a derivative by \reflem{lieDerivativeNonVanishing}. 
Taking $f=\omega$, let $\sigma$ be as in \refthm{sinaiOne}.

Fix a probability measure $\nu_D\ll\mu_D$. We define a measure $\nu_\cover{X}$ on $\cover{X}$ using the coordinates $v=(\xu, \xf, \xs)$ by taking the product of, in order,
\begin{itemize}
\item $\nu_D$
\item the Lebesgue measure on $\RR$ restricted to $[-1,1]$
\item the Lebesgue measure on $\CC\cong H^\sfs(\xu)$ restricted to the unit disk $B^{\sfs}_1(\xu)$
\end{itemize}
We scale $\nu_\cover{X}$ to be a probability measure. 
Note that the Lebesgue measure on $H^\sfs(\xu)$ does not depend on the isometric identification of $\CC$ with $H^\sfs(\xu)$. 
Thus $\nu_\cover{X}$ is well-defined.

By summing over fundamental domains, the probability measure $\nu_\cover{X}$ descends to a probability measure $\nu_X\ll \mu_X$ on $X$. Given that $R_T \from \cover{X} \to \RR$ is $\pi_1(M)$--invariant, \refthm{sinaiOne} yields $\nu_\cover{X} \circ R_T^{-1}\Rightarrow \psi_\sigma$.

Note that $\nu_\cover{X}$ is supported in the closure of $D_1$ (as defined before \refrem{product}). We have a projection $p\from D_1 \to D$ where $p(\xu,\xf,\xs)=(\xu,0,\xu)$. By construction, we have $\nu_\cover{X}(p^{-1}(U)) = \nu_D(U)$ for any measurable set $U\subset D$.

\begin{claim} 
\label{Clm:projConv}
We have
\[
\nu_\cover{X}\circ S_T^{-1}\Rightarrow \psi_\sigma\quad\mbox{where}\quad S_T = R_T\circ p
\]
\end{claim}

\begin{proof}
We take $P=D$ and we take $U_T=D$ for all $T$. Applying \reflem{almostEqualRandomVariables}, it is left to show that $\| R_T - S_T \|_\infty \to 0$. Let $W \from \cover{M} \to \RR$ be a primitive of $\cover{\omega}$. That is $dW = \cover{\omega}$. 
In an abuse of notation, we abbreviate $W \circ \pi \from \cover{X} \to \RR$ as $W$. 
Recall that $v = (\xu, \xf, \xs)$. 
We can now write
\[
R_T(v) = \frac{W(\varphi_T(v  )) - W(v  )}{\sqrt{T}}
\quad \mbox{and} \quad
S_T(v) = \frac{W(\varphi_T(\xu)) - W(\xu)}{\sqrt{T}} 
\]
Since $1/\sqrt{T} \to 0$, it is sufficient to show that both of 
\[
W(\varphi_T(v)) - W(\varphi_T(\xu)) \quad \mbox{and} \quad W(v) - W(\xu)
\]
are bounded by twice the Lipschitz constant of $W$. This follows from \reflem{distFlowStable} when replacing $\varepsilon$ by $1$ and setting $t$ to either $T$ or $0$.
\end{proof}
Fix $\alpha$. 
\refthm{sinaiTwo} follows from
\begin{align*}
\nu_D[v \in D: R_T(v) \leq \alpha] & = \nu_D[v \in D: S_T(v) \leq \alpha] \\
                                   &= \nu_\cover{X}(p^{-1}(\{v \in D: S_T(v) \leq \alpha\}))\\
                                   &= \nu_\cover{X}[v \in D_1:S_T(v)\leq \alpha] 
\end{align*}
converging to $\int^\alpha_{-\infty} n_\sigma(s)\, ds$ by \refclm{projConv}.
\end{proof}

\begin{proof}[Proof of \refthm{CLT}]
Note that \refthm{sinaiTwo} shows convergence in distribution for 
\[
R_T(v) = \frac{\int_0^T \omega(\varphi_t(v))\,dt}{\sqrt{T}}
\]
However we need to show convergence in distribution for $Q_T(v)= \Phi_T(v)/\sqrt{T}$, the difference being
\[
\Delta(v) = R_T(v) - Q_T(v) 
          = \frac{\int_b^{\pi(v)} \omega}{\sqrt{T}}=\frac{\int_b^{\pi(u)} \omega + \int_{\pi(u)}^{\pi(v)} \omega}{\sqrt{T}}
\]
where $u \in D$ is a fixed basepoint.
Thus, we need to show that \reflem{almostEqualRandomVariables} applies when taking $P = D$. 
Denote the constant $|\int_b^{\pi(u)} \omega|$ by $C$. 
Let $C'$ be a bound on the absolute value of $\omega\from X\to\RR$. 
It is convenient to let 
\[
U_T = \{ v \in D : d_D(u, v) \leq \sqrt[4]{T} \}
\]
Then, $\| \Delta | U_T \|_\infty \leq (C + C' \sqrt[4]{T}) /\sqrt{T} \to 0$ for $v\in U_T$. 
Since $U_T$ exhausts $D$ and $\nu$ is a finite measure, $\nu(D-U_T)\to 0$.
\end{proof}

\section{The pixel theorem}
\label{Sec:Pixel}

In this section, we prove that the cohomology fractal gives rise to a distribution at infinity. That is, integrating against the cohomology fractal then taking the limit as $R$ tends to infinity, gives a continuous linear functional on smooth, compactly supported functions.

\subsection{Motivation}

Throughout the paper, we have drawn many images of cohomology fractals, always depending on a visual radius $R$. 
The obvious question is whether there is a limiting image as $R$ tends to infinity. 

It turns out that the answer critically hinges on the question of what a pixel is. 
As we showed in \refthm{NoPicture}, thinking of a pixel as a sampled point does not work.  
After realising this, our next thought was that the cohomology fractal might converge to a signed measure $\mu$.
We managed to prove this for squares (as well as for regions with piecewise smooth boundary).  
However our proof does not generalise to arbitrary measurable sets.  
See \refsec{RemarkMeasureHard} for a discussion. 

We finally arrived at the notion of thinking of a pixel as a smooth test function; see~\cite{Smith95}.
The cohomology fractal now assigns to a pixel its weighted ``average value''; 
in other words, we obtain a well-defined \emph{distribution}. 
This distribution satisfies various transformation laws; 
these describe how it changes as we alter the chosen cocycle, basepoint, or view.  
To prove these we rely heavily on the \emph{exponential mixing} of the geodesic flow. 

\subsection{Background and statement}
\label{Sec:StatementPixel}

Before stating the theorem we establish our notation.
We define $\omega$, $b$, $D$, and $T$ as in \refsec{formalDefView}. 
However, as mentioned in \refrem{MeasuresForms}, 
we switch from using the area measure $\mu_D$ to the area two-form $\zeta_D$ and 
from a probability measure $\nu_D$ to a compactly supported two-form $\eta_D$.  
To obtain $\eta = \eta_D\in\Omega^2_c(D)$, we set $\eta_D = h\cdot \zeta_D$; 
here $h\in\Omega^0_c(D)$ is compactly supported and smooth. 
That is, $h$ is Hodge dual to $\eta$.

The function $h$ should be thought of as the kernel function for a pixel.  
The discussion below could be phrased completely in terms of $h$.
However, using $\eta$ allows us to neatly express the transformation laws between different views.

\begin{definition}
\label{Def:Distribution}
For a compactly supported two-form $\eta \in \Omega^2_c(D)$, we define
\[
\Phi^{\omega,b,D}(\eta) = \lim_{T\to\infty} \int_D \Phi^{\omega,b,D}_{T} \cdot \eta \qedhere
\]
\end{definition}

As we shall see, $\Phi^{\omega,b,D}$ is a \emph{distribution}: 
a continuous linear functional on $\Omega^2_c(D)$. We recall the topology on $\Omega^2_c(D)$ in the proof of \refthm{Pixel}.
We will use $\int_{D}$ to denote the \emph{canonical distribution} $\eta \mapsto \int_{D} \eta$.

To give a transformation law between views $D$ and $E$, we will need a way to relate one to the other.
Recall that $\cover{M}$ is isometric to $\HH^3$; thus we have $\bdy_\infty \cover{M} \homeo \CP^1$. 
As $t$ tends to infinity, the flow $\varphi_t$ takes a unit tangent vector $v \in D$ to some point $\varphi_\infty(v) \in \bdy_\infty\cover{M}$.
This induces a conformal embedding $i_D$ of $D$ into $\bdy_\infty \cover{M}$. 
We define $i_E$ similarly. 
We take $i_{E,D} = i_D^{-1} \circ i_E$ where it is defined.  
This is a \emph{conformal isomorphism} from (a subset of) $E$ to (a subset of) $D$.
We can now state the main result of this section. 

\begin{theorem}[Pixel theorem] 
\label{Thm:Pixel}
Suppose that $M$ is a connected, orientable, finite volume, complete hyperbolic three-manifold.  
Fix 
a closed, compactly supported one-form $\omega \in \Omega_c^1(M)$, 
a basepoint $b \in \cover{M}$, and
a view $D$.
\begin{enumerate}
\item 
\label{Itm:wellDefDist}
Then $\Phi^{\omega, b, D}$ is well-defined and is a distribution.
\item 
\label{Itm:cocycleIndep}
Given $\omega' \in \Omega_c^1(M)$ with $[\omega]=[\omega']$, there is a constant $C$ so that we have
\[
\Phi^{\omega', b, D} - \Phi^{\omega, b, D}= C \cdot \int_D
\]
\item 
\label{Itm:basePointPixelThm}
Given another basepoint $b' \in \cover{M}$, we have
\[
\Phi^{\omega, b', D} - \Phi^{\omega, b, D}= \left[ \int_b^{b'} \omega \right] \cdot \int_D
\]
\item 
\label{Itm:viewTransform}
Given another view $E$ and a two-form $\eta \in \Omega^2_c(\image(i_{E,D}))$, we have
\[
\Phi^{\omega,b,D}(\eta) = \Phi^{\omega, b, E}(i_{E,D}^* \, \eta)
\]
\end{enumerate}
\end{theorem}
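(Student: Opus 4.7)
The plan is to reduce all four claims to exponential mixing of the geodesic flow on $X=\UT{}{M}$. Using the primitive $W\in\Omega^0(\cover{M})$ with $dW=\cover{\omega}$ and $W(b)=0$, recall that $\Phi_T(v)=W(\pi\varphi_T(v))$, so
\[
\Phi_T^{\omega,b,D}(\eta)=\int_D W(\pi\varphi_T(v))\,\eta(v).
\]
Differentiating in $T$ gives
\[
\tfrac{d}{dT}\Phi_T^{\omega,b,D}(\eta)=\int_D \omega(\varphi_T(v))\,\eta(v),
\]
where $\omega$ is now interpreted as the smooth, compactly supported function $v\mapsto\omega_{\pi v}(v)$ on $X$. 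The crucial observation is that this function is odd along each unit tangent fibre, so $\int_X\omega\,d\mu_X=0$.

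For \refitm{wellDefDist}, the strategy is to show that $I(T)=\int_D(\omega\circ\varphi_T)\eta$ lies in $L^1([0,\infty))$, whence $\Phi_T^{\omega,b,D}(\eta)=K+\int_0^T I(s)\,ds$ converges (with $K=\int_D(\int_b^{\pi v}\omega)\eta(v)$ independent of $T$). Each view $D$ is a two-dimensional submanifold to which a quantitative equidistribution theorem applies: in the ideal view $D$ is an unstable horospherical leaf; in the material view $D$ is a $\pi$-fibre whose $\varphi_t$-translates are expanding spheres; in the hyperideal view $D$ is normal to a totally geodesic plane. In each case, exponential mixing on $\SO(3,1)/\Gamma$ (equivalently, decay of matrix coefficients) combined with $\int_X\omega=0$ should yield $|I(T)|\le C(\eta)\,e^{-\alpha T}$, with $C(\eta)$ bounded by a fixed Sobolev seminorm of $\eta$. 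This delivers both the existence of the limit and continuity of $\eta\mapsto\Phi^{\omega,b,D}(\eta)$ in the standard test-function topology on $\Omega^2_c(D)$.

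Parts \refitm{cocycleIndep}--\refitm{viewTransform} then reduce to pointwise manipulations combined with \refitm{wellDefDist}. For \refitm{basePointPixelThm}, the difference $\Phi_T^{\omega,b',D}(v)-\Phi_T^{\omega,b,D}(v)$ simplifies to the constant $\int_{b'}^b\omega$, independent of $v$ and $T$, so the claim follows immediately upon integrating against $\eta$. For \refitm{cocycleIndep}, write $\omega'-\omega=d\phi$ with $\phi\in\Omega^0(M)$ bounded (since $d\phi$ is compactly supported and each end of $M$ has finite volume); the pointwise difference becomes $\phi(\pi\varphi_T(v))-\phi(b)$, and mixing converts $\int_D\phi(\pi\varphi_T(v))\eta(v)$ to $\vol(M)^{-1}\int_M\phi\,dV\cdot\int_D\eta$ in the limit, supplying the constant $C$. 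For \refitm{viewTransform}, the conformal isomorphism $i_{E,D}\colon E\to D$ sends each $w$ to the unique vector of $D$ forward-asymptotic to $w$; the two geodesic rays through $w$ and $i_{E,D}(w)$ differ only by a smooth Busemann shift $\beta(w)$. After changing variables in $\int_D W(\pi\varphi_T(v))\eta(v)$, the discrepancy with $\int_E W(\pi\varphi_T(w))\,i_{E,D}^*\eta(w)$ is an integral of the form $\int_0^{\beta(w)}\omega(\varphi_{T+s}(w))\,ds$ tested against $i_{E,D}^*\eta$, which vanishes in the limit by the decay estimate from \refitm{wellDefDist}.

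The main obstacle is the quantitative equidistribution needed in \refitm{wellDefDist}. The ideal view is cleanest, as the required estimate for unstable horospheres is classical. The material and hyperideal cases are less immediate, since neither view is a single unstable leaf: either a local product decomposition in the $(\xu,\xf,\xs)$-coordinates of \refsec{coordinatesView} combined with matrix-coefficient decay, or dedicated equidistribution results for expanding spheres and for expanding translates of totally geodesic planes, should be invoked. Once $I(T)$ is controlled uniformly enough to see both integrability in $T$ and continuous dependence of $C(\eta)$ on $\eta$, the remainder of the argument is essentially the bookkeeping described above.
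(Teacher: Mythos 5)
Your plan is correct in outline and matches the paper's in its ingredients (exponential mixing of the geodesic flow, oddness of $v\mapsto\omega_{\pi v}(v)$ giving $\int_X\omega\,d\mu_X=0$, and reduction of parts~\refitm{cocycleIndep},~\refitm{basePointPixelThm} to part~\refitm{wellDefDist} plus elementary manipulations). Parts~\refitm{cocycleIndep} and~\refitm{basePointPixelThm} are essentially the paper's argument. However, you explicitly defer the crux of part~\refitm{wellDefDist}: passing from mixing on the five-dimensional $X$ to decay of an integral over the two-dimensional view $D$. The paper carries this out uniformly across all three view types (material, ideal, hyperideal) by taking the Hodge dual $h$ of $\eta$, thickening it to $h_\varepsilon$ by a bump of width $\varepsilon$ in the flow and stable directions, applying Kelmer--Oh mixing (\refthm{mixing2}) to the thickened function, bounding the thickening error by $O(\varepsilon)$ via contraction of the stable direction under $\varphi_t$ (\reflem{approxStableAndFlow}), bounding $S_m(h_\varepsilon)\leq C\varepsilon^{-(m+3)}S_m(h)$ (\reflem{sobNormBound}), and finally optimizing $\varepsilon=e^{-c_0t/2(m+3)}$. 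The $(\xu,\xf,\xs)$ coordinates are set up so that $D$ is transverse to the flow and stable foliations regardless of whether $D$ is a horosphere, a sphere, or a geodesic plane, so no ``dedicated equidistribution result'' per view is required.

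Your argument for part~\refitm{viewTransform} has a genuine gap. You use a $w$-dependent Busemann shift $\beta(w)$ and claim that the discrepancy $\int_E\bigl(\int_0^{\beta(w)}\omega(\varphi_{T+s}(w))\,ds\bigr)\,i^*_{E,D}\eta$ vanishes ``by the decay estimate from~\refitm{wellDefDist}''. It does not follow: applying Fubini here produces integrals of $\omega\circ\varphi_{T+s}$ against indicator-weighted forms $\chi_{\{\beta>s\}}\cdot i^*_{E,D}\eta$, which are not smooth, so the decay estimate of part~\refitm{wellDefDist} does not apply to them. (Bounding the integrand pointwise by $\|\omega\|_\infty$ gives only $\|\omega\|_\infty\|\beta\|_\infty\|\eta\|_1$, which does not tend to zero.) The paper avoids this by fixing a constant shift $\Delta T=\beta(p)$ on a small neighbourhood $N$ of a base point $p$, bounding the distance between $\varphi_T(i_D^{-1}(s))$ and $\varphi_{T+\Delta T}(i_E^{-1}(s))$ by $\sqrt{2}\,r_s e^{-T}+|\Delta T-\beta(s)|$ (\refclm{convergenceDistance}), making the second term small by shrinking $N$, and concluding by a partition of unity over the support of $\eta$. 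This localization, converting the variable shift into a locally constant one, is the missing step.
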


The last property gives us a distribution at infinity as follows. 

\begin{corollary}
\label{Cor:BoundaryDistribution}
Suppose that $M$ is a connected, orientable, finite volume, complete hyperbolic three-manifold.  
Fix 
a closed, compactly supported one-form $\omega \in \Omega_c^1(M)$ and 
a basepoint $b \in \cover{M}$.
Then there is a distribution $\Phi^{\omega,b}$ on $\bdy_\infty \cover{M}$ so that, 
for any view $D$ and any $\eta \in \Omega^2_c(D)$, we have
\[
\pushQED{\qed}
\Phi^{\omega,b,D}(\eta) = 
\Phi^{\omega, b}((i_D^{-1})^* \eta) \qedhere
\popQED
\]
\end{corollary}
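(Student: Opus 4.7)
The plan is to define $\Phi^{\omega,b}$ by pulling back along any material view, then invoke part~\refitm{viewTransform} of \refthm{Pixel} to verify both that the construction is independent of this choice and that it satisfies the desired identity for arbitrary views.

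First I would fix a material view $D^* = \UT{p}{\cover{M}}$ at any convenient point $p \in \cover{M}$. The visual map $i_{D^*} \colon D^* \to \bdy_\infty \cover{M}$ is a conformal diffeomorphism between two-spheres. Since both spaces are compact, pullback via $i_{D^*}$ induces a topological isomorphism between the Fréchet spaces $\Omega^2_c(\bdy_\infty \cover{M})$ and $\Omega^2_c(D^*)$. I would then set
\[
\Phi^{\omega,b}(\tilde\eta) \defeq \Phi^{\omega,b,D^*}(i_{D^*}^* \tilde\eta).
\]
Continuity and linearity are inherited from \refthm{Pixel}\refitm{wellDefDist} together with the continuity of $i_{D^*}^*$, so this is a distribution on $\bdy_\infty \cover{M}$.

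Next I would verify the transformation identity. For an arbitrary view $D$ and $\eta \in \Omega^2_c(D)$, the injective conformal map $i_D$ makes $(i_D^{-1})^* \eta$ into a compactly supported two-form on $\image(i_D) \subset \bdy_\infty \cover{M}$, which extends by zero to an element of $\Omega^2_c(\bdy_\infty \cover{M})$. Unwinding the definition and using $i_{D^*,D} = i_D^{-1} \circ i_{D^*}$, we get
\[
\Phi^{\omega,b}((i_D^{-1})^* \eta) = \Phi^{\omega,b,D^*}(i_{D^*}^*(i_D^{-1})^*\eta) = \Phi^{\omega,b,D^*}(i_{D^*,D}^* \eta).
\]
Because $D^*$ is material, $\image(i_{D^*}) = \bdy_\infty \cover{M}$, whence $\image(i_{D^*,D}) = D$ and $\eta$ automatically lies in $\Omega^2_c(\image(i_{D^*,D}))$. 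The hypotheses of \refthm{Pixel}\refitm{viewTransform} are then met and yield $\Phi^{\omega,b,D^*}(i_{D^*,D}^* \eta) = \Phi^{\omega,b,D}(\eta)$, as required. Independence of the choice of material view follows by the same argument applied to two material views $D^*_1, D^*_2$, using the chain rule $i_{D^*_2,D^*_1}^* \circ i_{D^*_1}^* = i_{D^*_2}^*$.

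The main obstacle is not contained in this corollary: all the analytic work---existence of the limit, continuity, and the view-change transformation law---has already been done in \refthm{Pixel} via exponential mixing of the geodesic flow. Here the only task is bookkeeping with pullbacks; the one subtlety to watch is that $\image(i_D)$ may be a proper open subset of $\bdy_\infty \cover{M}$ in the ideal or hyperideal case, but choosing $D^*$ material (and hence $i_{D^*}$ a diffeomorphism onto all of $\bdy_\infty \cover{M}$) sidesteps this issue completely.
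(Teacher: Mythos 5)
Your proof is correct and is exactly the argument the paper has in mind; the paper places a \qed{} directly after the statement, signalling that the corollary is immediate from \refthm{Pixel}\refitm{viewTransform}, and your write-up simply makes the natural construction explicit (define $\Phi^{\omega,b}$ by pulling back along a fixed material view, which is a conformal diffeomorphism onto all of $\bdy_\infty\cover{M}$, then check via the transformation law and $i_{D^*,D}^* = i_{D^*}^*\circ(i_D^{-1})^*$ that the choice is irrelevant and that the desired identity holds for every view). Your attention to the fact that $\image(i_{D^*,D}) = D$ when $D^*$ is material, so that the hypothesis of item~\refitm{viewTransform} is automatically met, is precisely the small bookkeeping point that makes the argument go through.
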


\subsection{Proof of the pixel theorem}
\label{Sec:PixelProof}

We now describe some background necessary to prove the theorem.
Throughout this section, we fix a connected, orientable, finite volume, complete hyperbolic three-manifold $M$.
Again we take $X = \UT{}{M}$ and $\cover{X} = \UT{}{\cover{M}}$.
Let 
\begin{equation} 
\label{Eqn:yt}
Y^{\omega,D}_t(\eta) = \int_D (\omega \circ \varphi_t) \cdot \eta
\end{equation}
Fubini's theorem implies that
\begin{equation} 
\int_D \Phi^{\omega,b,D}_{T} \cdot \eta =  \int_D \Phi^{\omega,b,D}_{0} \cdot \eta + \int_0^T Y^{\omega,D}_t(\eta)\, dt
\end{equation}
so much of the proof boils down to obtaining exponential decay of $Y_t=Y^{\omega,D}_t(\eta)$.  

In this section we will use the Haar measure $\mu_{\Haar}$ for integrals over $X = \UT{}{M}$.
We will use the shorthand $dv = d\mu_{\Haar}(v)$ throughout.

We also need to introduce the Sobolev norm $\sob_m = \sob_{m,\infty}$ for smooth functions $f$ on homogeneous spaces.
First consider functions $f \from G \to \RR$ where $G$ is a Lie group.
Fix a basis for the Lie algebra of $G$; we think of the elements in this basis as left-invariant vector fields on $G$.
The Sobolev norm $\sob_{m}(f)$ is the maximum of all $L_\infty$--norms of functions obtained by differentiating $f$ up to $m$ times using these vector fields in any order.
Suppose that $\Gamma$ and $H$ are (respectively) discrete and compact subgroups of $G$.
The Sobolev norm of $f \from \Gamma \backslash G \slash H \to \RR$ is the Sobolev norm of the lift of $f$ to $G$.

As usual, we have $M = \Gamma \backslash \HH^3 \homeo \Gamma \backslash \PSL(2,\CC) \slash \PSU(2)$.
Likewise, we have $X = \UT{}{M} \homeo \Gamma \backslash \PSL(2,\CC) \slash \PSO(2)$.
For any of the three views, material, ideal, or hyperideal, we can also express $D$ in this fashion.
For example, in the material view we have $D \homeo S^2 \homeo \PSU(2) \slash \PSO(2)$.

For $\eta \in \Omega^2_c(D)$, define $\sob_m(\eta) = \sob_m(h)$ where $h$ is the Hodge dual of $\eta$.
Note that the Sobolev norm depends on our choice of basis;
however, changing the basis changes the resulting norm by a bounded factor and thus only changes the constant $C$ in the following lemma.

\begin{lemma}
\label{Lem:expoDecay}
Let $M$ be a connected, orientable, finite volume, complete hyperbolic three-manifold.
There is a constant $m \in \NN$ such that the following is true.
Fix a view $D$ in $M$ and a smooth, compactly supported function $f \from X = \UT{}{M} \to \RR$ with $\int_X f(v)\, dv = 0$.
Fix a compact set $K \subset D$.
There are constants $C > 0$ and $c > 0$ such that for all two-forms $\eta \in \Omega_K^2(D)$ 
supported in $K$ and for all $t\geq 0$, we have
\[
\left| \int_D (f \circ \varphi_t) \cdot \eta \right| \leq Ce^{-ct} \sob_m(\eta)
\]
\end{lemma}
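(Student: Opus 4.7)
The proof rests on exponential mixing of the geodesic flow on $X = \UT{}{M}$. The key analytic input is the existence of $m \in \NN$ and $C_0, c_0 > 0$ such that whenever $F_1, F_2 \in C^\infty_c(X)$ with $\int_X F_1\, dv = 0$,
\[
\left| \int_X (F_1 \circ \varphi_t)\, F_2\, dv \right| \leq C_0\, e^{-c_0 t}\, S_m(F_1)\, S_m(F_2)
\]
This quantitative exponential mixing for the geodesic flow on a finite-volume hyperbolic three-manifold is a consequence of decay of matrix coefficients for $\PSL(2,\CC)$ (in the spirit of Moore, Ratner, and Kleinbock--Margulis, as referenced in the acknowledgements). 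The obstruction to applying it directly is that our integral is taken over the two-dimensional view $D$ rather than over the five-dimensional $X$. The standard remedy is to thicken $\eta$ into a smooth compactly supported test function $\tilde h$ on $X$, paying a Sobolev penalty that we balance against the exponential decay in $t$.

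For the thickening, I will work in the coordinates $(\xu, \xf, \xs)$ of \refsec{coordinatesView}. For $\varepsilon > 0$ small enough (depending on $K$), the portion of $D_\varepsilon$ lying above $K$ descends to an embedding into $X$. Choose non-negative smooth bumps $\rho_1 \from \RR \to \RR$ and $\rho_2 \from \CC \to \RR$ supported respectively in $(-\varepsilon, \varepsilon)$ and the disk of radius $\varepsilon$, each of integral one. Let $J$ be the smooth Jacobian relating the product measure $\zeta_D \wedge d\xf \wedge d\xs$ to $dv$; by compactness it is bounded above and below on the thickened region, with bounded derivatives. Define
\[
\tilde h(\xu, \xf, \xs) = \frac{h(\xu)\, \rho_1(\xf)\, \rho_2(\xs)}{J(\xu, \xf, \xs)}
\]
extended by zero outside. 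Then $\tilde h \in C^\infty_c(X)$ and by Fubini, for any measurable $F$,
\[
\int_X F \cdot \tilde h\, dv = \int_D h(\xu) \left(\int \rho_1(\xf)\, \rho_2(\xs)\, F(\xu,\xf,\xs)\, d\xf\, d\xs\right) \zeta_D(\xu)
\]

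Set $F = f \circ \varphi_t$. By \reflem{distFlowStable}, $d_{\cover{X}}(\varphi_t(\xu), \varphi_t(\xu,\xf,\xs)) \leq 2\varepsilon$ for all $(\xu,\xf,\xs)$ in the thickened region and all $t \geq 0$; together with the Lipschitz bound $\|\nabla f\|_\infty \leq S_1(f) \leq S_m(f)$, this gives the comparison
\[
\left| \int_X (f \circ \varphi_t)\, \tilde h\, dv - \int_D (f \circ \varphi_t)\, \eta \right| \leq C_1\, \varepsilon\, S_m(f)\, S_m(\eta)
\]
On the other hand, since $\int_X f\, dv = 0$, exponential mixing yields
\[
\left| \int_X (f \circ \varphi_t)\, \tilde h\, dv \right| \leq C_0\, e^{-c_0 t}\, S_m(f)\, S_m(\tilde h)
\]
A routine bump scaling produces $S_m(\tilde h) \leq C_2\, \varepsilon^{-(3+m)}\, S_m(\eta)$: the $L^\infty$ normalisation of $\rho_1\rho_2$ contributes $\varepsilon^{-3}$ (one factor for the one-dimensional flow direction, two for the two-dimensional stable direction), and each additional derivative in a transverse direction adds a factor of $\varepsilon^{-1}$. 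Combining,
\[
\left| \int_D (f \circ \varphi_t)\, \eta \right| \leq \left( C_1\, \varepsilon + C_0 C_2\, e^{-c_0 t}\, \varepsilon^{-(3+m)} \right) S_m(f)\, S_m(\eta)
\]
Choosing $\varepsilon = e^{-c_0 t / (4+m)}$ balances the two terms and yields the stated inequality with $c = c_0/(4+m)$.

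The main obstacle is the exponential mixing input itself; while classical, it is a substantial piece of quantitative ergodic theory. The remaining points are technical but routine: verifying injectivity of the thickening into $X$ for small $\varepsilon$ (by compactness of $K$ and local injectivity of the exponential map), handling the three view types (material, ideal, hyperideal) uniformly in the coordinate set-up, checking the alignment of the $S_m$ vector fields on $X \homeo \Gamma \backslash \PSL(2,\CC) / \PSO(2)$ with the local coordinates $(\xu, \xf, \xs)$, and controlling the Jacobian $J$. If needed, these can all be localised via a partition of unity on $K$ so that the final bound assembles additively.
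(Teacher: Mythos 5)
Your proof takes essentially the same approach as the paper's: thicken $\eta$ into a compactly supported bump function on the unit tangent bundle using the $(\xu,\xf,\xs)$ coordinates, compare $\int_D (f\circ\varphi_t)\eta$ to the thickened integral via \reflem{distFlowStable} and Lipschitz bounds, apply exponential mixing (\refthm{mixing}) to the thickened integral, bound the Sobolev norm by $\varepsilon^{-(m+3)}$, and balance by choosing $\varepsilon$ exponentially small in $t$. The only cosmetic difference is that you divide by the Jacobian $J$ in the thickening, which lets you compare the two integrals directly, whereas the paper keeps $J_D$ in the integrand and instead tracks the constant $J_0=J_D(\xu,0,\xu)$ through the estimates before dividing at the end; both are correct.
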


To prove this, we use the exponential decay of correlation coefficients for geodesic flows.   
This is a much studied area. 
We will rely on \cite{KelmerOh} because they explicitly give the dependence of the decay on the Sobolev norms of the functions involved. 
For hyperbolic, finite volume three-manifolds, their theorem can be simplified to the following.

\begin{theorem}
\label{Thm:mixing}
Let $M$ be a connected, orientable, finite volume, complete hyperbolic three-manifold. Then there exists  $m\in\mathbb{N}$, $C>0$ and $c>0$ with the following property. For any smooth functions $f, g \from X = \UT{}{M} \to \RR$ with $\int_{X} f(v)\, dv = 0$ and for all $t\geq 0$, we have
\begin{equation}
\left|\int_{X} f(\varphi_t(v)) g(v)\, dv\right| \leq C e^{-ct} \sob_{m}(f) \sob_{m}(g)
\end{equation}
\end{theorem}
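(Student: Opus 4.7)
The plan is to recognize the integral as a matrix coefficient of the regular representation of $G = \PSL(2,\CC)$ on $L^2(\Gamma \backslash G)$, where $M = \Gamma \backslash \HH^3$, and then invoke Kelmer--Oh's exponential mixing theorem with explicit Sobolev dependence. First I would identify $X = \UT{}{M}$ with the double coset space $\Gamma \backslash G / \PSO(2)$; smooth functions on $X$ then lift to smooth right-$\PSO(2)$-invariant functions on $\Gamma \backslash G$, the Haar measure on $X$ comes from the normalised Haar measure on $\Gamma \backslash G$, and the geodesic flow $\varphi_t$ corresponds to right translation by the one-parameter subgroup $a_t = \operatorname{diag}(e^{t/2}, e^{-t/2})$. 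Under these identifications the integral equals the matrix coefficient $\langle \pi(a_t) f, \, g \rangle_{L^2(\Gamma \backslash G)}$.

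Because $\varphi_t$ preserves Haar measure and $\int_X f \, dv = 0$, the matrix coefficient is unchanged if we replace $g$ by $g - \int_X g \, dv$, so I may assume that both $f$ and $g$ lie in the zero-mean subspace $L^2_0(\Gamma \backslash G)$; note that $S_m(g - \int g)$ is bounded by a fixed multiple of $S_m(g)$. I would then cite Kelmer--Oh's main theorem, which for any lattice $\Gamma$ in $\PSL(2,\CC)$ supplies constants $m$, $C$, $c$ such that
\[
\left|\langle \pi(a_t) f, \, g \rangle\right| \leq C e^{-ct} S_m(f) S_m(g)
\]
for all smooth vectors $f, g \in L^2_0(\Gamma \backslash G)$ and all $t \geq 0$. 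The decay rate $c$ reflects the uniform spectral gap of $G$ acting on $L^2_0(\Gamma \backslash G)$; this is classical via Howe--Moore in the cocompact case, while in the cusped case near-trivial complementary series contributions coming from the cusps require the more delicate cusp estimates packaged into Kelmer--Oh.

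The principal obstacle is a bookkeeping task: verifying that the Sobolev norm used by Kelmer--Oh on $L^2(\Gamma \backslash G)$ agrees, after lifting a function from $X$, with the Sobolev norm $S_m$ defined in the excerpt, up to a multiplicative constant. A left-invariant frame on the Lie algebra of $G$ restricts, via the double quotient, to frames on $X = \Gamma \backslash G / \PSO(2)$ that transform by a bounded change of basis along each $\PSO(2)$-orbit, so the two Sobolev norms differ by at most a constant depending on $m$. Enlarging $m$ if necessary absorbs any derivatives in the $\PSO(2)$-directions introduced when passing between $X$ and the $\PSO(2)$-invariant subspace of $\Gamma\backslash G$, and after this adjustment the stated inequality follows directly from the Kelmer--Oh bound.
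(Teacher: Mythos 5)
Your proposal takes essentially the same route as the paper: both reduce to Kelmer--Oh's Theorem 3.1 by identifying $X = \UT{}{M}$ with $\Gamma\backslash\PSL(2,\CC)/\PSO(2)$, lifting test functions to $\PSO(2)$-invariant functions on the frame bundle, and then reconciling the Sobolev norm conventions up to a bounded factor. The paper additionally flags two features of Kelmer--Oh's statement that you should verify when invoking it: their correlation bound is stated with respect to the Bowen--Margulis--Sullivan measure, which coincides with Haar measure precisely because $M$ has finite volume, and they require $f$ and $g$ to be supported in a unit neighbourhood of the preimage of the convex core, which imposes no constraint here since the convex core of a finite-volume manifold is the whole manifold.
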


\begin{proof}
The more general \cite[Theorem~3.1]{KelmerOh} relates to \refthm{mixing} as follows. 
They integrate over the frame bundle $\Gamma\backslash\PSL(2,\CC)$ using the Bowen-Margulis-Sullivan-measure. However, we can think of a function $X\to\RR$ as an $\PSO(2)$--invariant function on the frame bundle and the BMS-measure is simply the Haar measure in the case of a hyperbolic, finite volume three-manifold $\Gamma\backslash \HH^3$. 
Note that  \cite[Theorem~3.1]{KelmerOh} requires the functions $f$ and $g$ to be supported on a unit neighbourhood of the preimage of the convex core of $M$. 
However, for finite volume $M$, the convex core is just $M$. 
Furthermore, conventions for the Sobolev norm $\sob_m$ differ in whether to take the sum or maximum of the $L_\infty$--norms of derivatives; however the resulting norms are equivalent because they differ by a constant factor.
\end{proof}

Let $f\from X\to\RR$ be a compactly supported, smooth function and $\cover{f} = f\circ \pi\from \cover{X}\to\RR$ its lift.
To prove \reflem{expoDecay} using \refthm{mixing}, we construct test functions $h_\varepsilon \from \cover{X}\to\RR$ that tend to the given two-form $\eta\in\Omega^2_c(D)$ in the sense that 
\begin{equation}
\label{Eqn:ytmix}
Y^{f, D}_t(\eta) = \int\limits_\cover{X}(\cover{f} \circ \varphi_t)\cdot \eta
\end{equation}
can be approximated by 
\begin{equation} 
\label{Eqn:blurredIndicator}
Y^{f, D}_{t,\varepsilon}(\eta) = \int\limits_\cover{X} \cover{f}(\varphi_t(v)) h_\varepsilon(v) \, dv
\end{equation}

Note that there are several incompatibilities between $\eta$ and $h_\varepsilon$:
\begin{enumerate}
\item $\eta\in\Omega^2_c(D)$ is a two-form but $h_\varepsilon$ has to be a function.
\item $D\subset\cover{X}$ but the integral \refthm{mixing} is over $X$.
\item $D$ is two-, not five-dimensional.
\end{enumerate}
The first issue is solved by using the Hodge dual $h\in\Omega^0_c(D)$. That is, $\eta = h\cdot \zeta$ where $\zeta=\zeta_D$ is the area form on $D$.

For the second issue, we reformulate \refthm{mixing} as follows:
\begin{theorem} 
\label{Thm:mixing2}
Let $M$ be a connected, orientable, finite volume, complete hyperbolic three-manifold.
There is a constant $m\in\NN$ such that the following is true.
Fix a smooth, compactly supported function $f\from X = \UT{}{M} \to \RR$ with $\int_X f(v)\,dv = 0$ and a compact set $K\subset \cover{X}=\UT{}{\cover{M}}$.
There exists $C>0$ and $c>0$ such that for all smooth functions $g\from\cover{X}\to\RR$ supported in $K$ and all $t\geq 0$, we have
\begin{equation}
\left|\int_\cover{X} \cover{f}(\varphi_t(v)) g(v) \, dv\right| \leq C e^{-ct} \sob_{m}(g)
\end{equation}
\end{theorem}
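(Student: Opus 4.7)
The plan is to derive \refthm{mixing2} from \refthm{mixing} by an unfolding argument: we construct a smooth compactly supported function $\hat g$ on $X$ out of $g$, arrange that integrating $f(\varphi_t)\hat g$ over $X$ equals integrating $\cover f(\varphi_t) g$ over $\cover X$, and control $S_m(\hat g)$ in terms of $S_m(g)$. The mixing constant $m$ will be the same $m$ as in \refthm{mixing}.

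Let $\Gamma=\pi_1(M)$ act on $\cover X$ by deck transformations. For any lift $\tilde v$ of $v\in X$, set
\[
\hat g(v)=\sum_{\gamma\in\Gamma} g(\gamma\tilde v).
\]
Because $g$ is supported in the compact set $K$ and $\Gamma$ acts properly discontinuously on $\cover X$, the sum is locally finite, making $\hat g$ smooth and compactly supported on $X$; independence of the choice of lift is immediate from reindexing. Fixing a fundamental domain $F\subset\cover X$ and using the $\Gamma$-invariance of $\cover f$ and of the Haar measure together with the $\Gamma$-equivariance of $\varphi_t$, the substitution $\tilde w=\gamma\tilde v$ collapses the sum of integrals $\sum_\gamma\int_F \cover f(\varphi_t(\tilde v))\, g(\gamma\tilde v)\, d\tilde v$ into $\int_{\cover X}\cover f(\varphi_t(\tilde w))\, g(\tilde w)\, d\tilde w$, giving the unfolding identity
\[
\int_X f(\varphi_t(v))\hat g(v)\,dv=\int_{\cover X}\cover f(\varphi_t(\tilde w))\,g(\tilde w)\,d\tilde w.
\]

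Next, I would bound $S_m(\hat g)$ in terms of $S_m(g)$. Set $N=\#\{\gamma\in\Gamma:\gamma F\cap K\neq\emptyset\}$; this is finite and depends only on $K$. Since the Sobolev norm is defined via left-invariant vector fields on $G=\PSL(2,\CC)$ and such vector fields commute with left translation, $S_m(g\circ L_\gamma)=S_m(g)$ for every $\gamma\in\Gamma$. Differentiating $\hat g$ term by term therefore bounds each $m$-fold derivative pointwise by a sum of at most $N$ translated derivatives of $g$, each of size at most $S_m(g)$; hence $S_m(\hat g)\leq N\,S_m(g)$. Applying \refthm{mixing} to the pair $(f,\hat g)$ (the hypothesis $\int_X f\,dv=0$ is given) yields
\[
\left|\int_X f(\varphi_t(v))\hat g(v)\,dv\right|\leq C_0 e^{-ct}S_m(f)S_m(\hat g)\leq C_0 N\,S_m(f)\,e^{-ct}S_m(g),
\]
and combining with the unfolding identity gives \refthm{mixing2} with $C=C_0 N\,S_m(f)$, which depends only on $f$ and $K$.

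The only real subtlety is the Sobolev bookkeeping: one must pin down the convention for $S_m$ so that $L_\gamma$-invariance is automatic (making $S_m(g\circ L_\gamma)=S_m(g)$), and verify that the counting constant $N$ is finite and depends only on $K$ (not on $g$ or $t$). Both of these are routine once the conventions are fixed, so the proof is in essence just the combination of unfolding with \refthm{mixing}.
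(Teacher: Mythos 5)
Your proof is correct and is essentially the same as the paper's: both push $g$ forward to $g_{\Sigma}=\hat g$ on $X$ by summing over the deck group, use the unfolding identity, observe that compactness of $K$ bounds the number of terms (so $S_m(\hat g)\leq N S_m(g)$), and apply \refthm{mixing} together with $S_m(f)<\infty$. You fill in the left-invariance argument for $S_m(g\circ L_\gamma)=S_m(g)$ that the paper leaves implicit, but the approach is identical.
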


\begin{proof}
Note that
\[
\int_\cover{X} \cover{f}(\varphi_t(v)) g(v) \, dv = \int_X f(\varphi_t(v)) g_{\sum}(v) \, dv
\]
where $g_{\sum}(v)$ is the sum of all $g(\cover{v})$ where $\cover{v}\in\cover{X}$ is a preimage of $v\in X$.
Since $K$ is covered by a finite number of copies of a fundamental domain of $M$, the sum $g_{\sum}(v)$ has a finite and bounded number of terms.
Since $f$ has compact support, $\sob_m(f)$ is finite and the result follows from \refthm{mixing}.
\end{proof}

To address the third issue, we make the following definition.
\begin{definition}
\label{Def:Bump}
Define an \emph{$\varepsilon$--bump function} by
\[
b_\varepsilon(x)= \begin{cases}
\exp\left(\frac{1}{(x/\varepsilon)^2 - 1}\right) & \mbox{if}\quad |x|<\varepsilon, \\
0 & \mbox{otherwise}
\end{cases}
\]
and set $B=\left[\int_{-\infty}^{\infty} b_1(x)\,dx\right] \left[\int_0^{\infty} b_1(r) 2\pi r dr\right]$. 
\end{definition}

We again use the coordinates on $\cover{X}$ already introduced in \refsec{coordinatesView}.
Recall that $H=H^\sfs(\xu)$. We define $v_s=d_H(\xu,\xs)$. Again, see \reffig{localCoordinates}. We now define
\begin{equation} 
\label{Eqn:defHEps}
h_\varepsilon(\xu,\xf,\xs)=h(\xu) \cdot \frac{b_\varepsilon(\xf)b_\varepsilon(v_s)}{B \varepsilon^3}
\end{equation}

Using Fubini's theorem, we can now write 
\begin{align*}
Y^{f,D}_{t,\varepsilon}(\eta) & = Y^{f,D}_{t,\varepsilon}(h\cdot \zeta) \\
& = \int_D \int_\RR \int_{H^\sfs(\xu)} \cover{f}(\varphi_t(v)) h_\varepsilon(v) J_D(v)\, d\xs d\xf d\xu
\end{align*}
where $v=(\xu,\xf,\xs)$, where $d\xu$ and $d\xs$ are using the area measures on $D$ and $H^\sfs(\xu)$, respectively, and where $J_D$ is the smooth function such that 
\begin{equation}
\label{Eqn:ScaleFactor}
d\mu_X = J_D(v)\, d\xs d\xf d\xu
\end{equation}

Note that, by construction, $J_D$ is invariant under isometries fixing $D$. In particular, $J_D(\xu) = J_D(\xu,0,\xu)$ is a positive constant. We set $J_0 = J_D(\xu)$.

Defining
\begin{equation} 
\label{Eqn:SliceBlurredIndicator}
Z^{f,D}_{t,\varepsilon}(\xu) = \int_\RR \int_{H^\sfs(\xu)}  \cover{f}(\varphi_t(v)) J_D(v) \frac{b_\varepsilon(\xf)b_\varepsilon(v_s)}{B \varepsilon^3} \, d\xs d\xf,
\end{equation}
where again $v=(\xu,\xf,\xs)$, we can write
\[
Y^{f,D}_{t,\varepsilon}(\eta) = Y^{f,D}_{t,\varepsilon}(h\cdot \zeta) =
\int_D Z^{f,D}_{t,\varepsilon}(\xu) h(\xu) \, d\xu
\]

\begin{lemma} 
\label{Lem:approxStableAndFlow}
For any smooth, compactly supported function $f\from X\to\RR$ and any view $D$, there is a $C>0$ such that for all $t\geq 0$, for all $1>\varepsilon>0$, and for all $\xu\in D$, we have
\[
\left| \cover{f}(\varphi_t(\xu)) \cdot J_0 
                 - Z^{f,D}_{t,\varepsilon}(\xu) \right| \leq C\varepsilon
\]
\end{lemma}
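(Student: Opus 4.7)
The plan is to use that the bump weight $b_\varepsilon(\xf) b_\varepsilon(r_s) / (B \varepsilon^3)$ is a probability density on $\RR \times H^\sfs(\xu)$. This follows by scaling: substituting $\xf = \varepsilon u$ and using polar coordinates on the horosphere, the integral computes to $\varepsilon^3 B / (B \varepsilon^3) = 1$, with $B$ chosen precisely for this purpose. Rewriting $\cover{f}(\varphi_t(\xu)) J_0$ as the integral of that constant against this density and subtracting reduces the lemma to bounding
\[
\bigl| \cover{f}(\varphi_t(\xu)) J_0 - \cover{f}(\varphi_t(\xu,\xf,\xs)) J_D(\xu,\xf,\xs) \bigr|
\]
pointwise by a constant multiple of $\varepsilon$, uniformly for $\xu \in D$, $t \geq 0$, and $(\xf, \xs)$ in the support of the weight (so $|\xf| < \varepsilon$ and $r_s < \varepsilon$).

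The pointwise bound comes from the standard ``add and subtract'' decomposition
\[
\bigl[\cover{f}(\varphi_t(\xu)) - \cover{f}(\varphi_t(\xu,\xf,\xs))\bigr] J_D(\xu,\xf,\xs) + \cover{f}(\varphi_t(\xu)) \bigl[J_0 - J_D(\xu,\xf,\xs)\bigr].
\]
For the first term, \reflem{distFlowStable} gives $d_\cover{X}(\varphi_t(\xu), \varphi_t(\xu,\xf,\xs)) \leq 2\varepsilon$ independently of $t$; since $f$ is smooth and compactly supported it is globally Lipschitz, so the bracketed quantity is $O(\varepsilon)$. Without this $t$-independent stable control the step would fail for large $t$, so \reflem{distFlowStable} is the heart of the argument. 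For the second term, $\|\cover{f}\|_\infty = \|f\|_\infty < \infty$, and smoothness of $J_D$ together with $J_D(\xu, 0, \xu) = J_0$ gives $|J_0 - J_D(\xu,\xf,\xs)| = O(\varepsilon)$ on the bump's support.

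The main obstacle is making each of these bounds (the sup-norms of $J_D$ and $\cover{f}$, the Lipschitz constants of $J_D$ and $f$) uniform in $\xu \in D$. For $f$ this is automatic since its Lipschitz constant is global. For $J_D$, we invoke the observation noted just after \eqref{Eqn:ScaleFactor} that $J_D$ is invariant under isometries fixing $D$; in each of the material, ideal, and hyperideal cases these isometries act transitively on $D$, so the sup-norm of $J_D$ and its first derivatives on $D_1$ reduce to the corresponding norms on a single fibre over one basepoint, where smoothness on a compact neighbourhood of the origin suffices. Assembling these pieces yields the constant $C$ depending only on $f$ and the view $D$.
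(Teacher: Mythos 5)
Your proposal is correct and follows essentially the same route as the paper: both reduce to the pointwise bound on $D_\varepsilon$ using that the bump weight integrates to one, both split the product difference into a ``Lipschitz-in-$f$'' term controlled by \reflem{distFlowStable} and a ``Lipschitz-in-$J_D$'' term controlled by isometry-invariance of $J_D$ and compactness, and both assemble the constant the same way (the paper uses the slight variant $ab-a'b' = (a-a')b + a(b-b') - (a-a')(b-b')$ in place of your $(a-a')b' + a(b-b')$, which is immaterial). Your extra explicitness about why the sup and Lipschitz bounds for $J_D$ are uniform over $\xu$ --- transitivity of the $D$-preserving isometries in each of the material, ideal, hyperideal cases --- is exactly what the paper's remark ``we can assume $\xu$ is fixed'' is standing in for.
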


\begin{proof}
Note that the support of $h_\varepsilon$ is contained in the closure of $D_\varepsilon$ and that the second factor in \eqref{Eqn:defHEps} is normalised such that $$\int_\RR\int_{H^\sfs(\xu)} \frac{b_\varepsilon(\xf) b_\varepsilon(v_s)}{B\varepsilon^3} \, d\xs d\xf = 1$$ for all $\xu\in D$.
Thus, it suffices to show that there is a $C>0$ such that for all $t\geq 0$, for all  $1 > \varepsilon > 0$, and for all $v = (\xu,\xf,\xs) \in D_\varepsilon$, we have
\begin{equation} 
\label{Eq:lemApprox}
\left| \cover{f}(\varphi_t(\xu)) \cdot J_0 
               - \cover{f}(\varphi_t(v)) \cdot J_D(v) \right| \leq C \varepsilon
\end{equation}

\begin{claim} 
Let $f \from X \to \RR$ be a smooth, compactly supported function. 
Let $L=L(f)$ be a Lipschitz constant for $f$ and let $\cover{f}\from\cover{X}\to\RR$ be its lift. 
For all $t \geq 0$, for all $1 > \varepsilon > 0$, and for all $v=(\xu, \xf, \xs) \in D_\varepsilon$, we have
\[
\left| \cover{f}(\varphi_t(\xu)) - \cover{f}(\varphi_t(v))\right| \leq 2L\varepsilon \enspace \mbox{and} \enspace \left|\cover{f}(\varphi_t(\xu))\right| \leq \| f \|_\infty
\]
\end{claim}

\begin{proof}
The first inequality follows from \reflem{distFlowStable}. 
The second is by definition.
\end{proof}

\begin{claim}
The function $J_D$ has a finite Lipschitz constant $L'$ when restricted to $D_1$. Thus, for all $1>\varepsilon>0$ and for all $v=(\xu,\xf,\xs)\in D_\varepsilon$, we have
\[
\left| J_0 - J_D(v) \right| \leq 2L'\varepsilon
\]
\end{claim}

\begin{proof}
Since $J_D$ is invariant under isometries preserving $D$, we can assume that $\xu$ is fixed. Then, the domain $(\xu,(-1,1),B^{\sfs}_1(\xu))$ has compact closure and thus $J_D$ has a finite Lipschitz constant $L'$ when restricted to it. The claim now follows from \reflem{distFlowStable}.
\end{proof}

Using the above two claims, the left hand side of \eqref{Eq:lemApprox} is bounded by $2L\varepsilon \cdot J_0 +\|f\|_\infty \cdot 2L'\varepsilon  + 2L\varepsilon \cdot 2L' \varepsilon$. Thus setting $C=2L J_0 + 2L' \|f\|_\infty + 4L L'$ suffices to prove \reflem{approxStableAndFlow}.
\end{proof}

\begin{lemma} 
\label{Lem:approxStableFlow}
Let $M$ be a connected, orientable, finite volume, complete hyperbolic three-manifold and $f\from X\to\RR$ a smooth, compactly supported function. Fix a view $D$ and a compact set $K\subset D$. Let $\zeta=\zeta_D$ be the area form on $D$. There is a $C>0$ such that for all smooth $h\from D\to\RR$ supported in $K$ and for all $t\geq 0$, $1>\varepsilon>0$, we have
\[
\left| Y^{f,D}_{t}(h\cdot \zeta) J_0 - Y^{f,D}_{t,\varepsilon}(h\cdot \zeta) \right| \leq C\varepsilon \| h\|_\infty
\]
\end{lemma}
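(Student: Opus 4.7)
The plan is to observe that both $Y_t J_0$ and $Y_{t,\varepsilon}$ can be written as integrals over $D$ with $h\,d\mu_D$ as weight, reducing the inequality to the pointwise bound already supplied by \reflem{approxStableAndFlow}. Specifically, unpacking the definition $Y_t = \int_D f(\varphi_t(\xu)) h(\xu)\,d\mu_D(\xu)$ (with $f$ playing the role of $\omega$) and using that $J_0$ is a constant, together with the expression $Y_{t,\varepsilon} = \int_D Y^\sfu_{t,\varepsilon}(\xu) h(\xu)\,d\mu_D(\xu)$ displayed just before the lemma, linearity gives
\[
Y_t J_0 - Y_{t,\varepsilon} \;=\; \int_D \bigl[\, f(\varphi_t(\xu)) J_0 - Y^\sfu_{t,\varepsilon}(\xu) \,\bigr] \, h(\xu) \, d\mu_D(\xu).
\]

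Next I would apply \reflem{approxStableAndFlow}, which furnishes a constant $C_0 > 0$ depending only on $f$ and $D$ such that the bracketed quantity is bounded in absolute value by $C_0 \varepsilon$, uniformly in $\xu \in D$, $t \geq 0$, and $0 < \varepsilon < 1$. Moving absolute values inside the integral yields
\[
\bigl| Y_t J_0 - Y_{t,\varepsilon} \bigr| \;\leq\; C_0 \varepsilon \int_D |h(\xu)|\, d\mu_D(\xu).
\]
Since $h$ is supported in the compact set $K \subset D$, the remaining integral is at most $\mu_D(K)\, \|h\|_\infty$, so taking $C = C_0 \mu_D(K)$ completes the argument.

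There is no genuine obstacle here: the analytic work has already been packaged into \reflem{approxStableAndFlow}, and the compactness hypothesis on $K$ enters only at the final step, converting an $L^1$ bound on $h$ into the desired $L^\infty$ bound. The one detail worth verifying is that the constant $C_0$ from \reflem{approxStableAndFlow} is indeed uniform in $\xu$ (and not merely pointwise); this is guaranteed by the invariance of $J_D$ under the subgroup of $\Isom(\cover{M})$ preserving $D$, which acts transitively on $D$ in each of the material, ideal, and hyperideal cases, so that the Lipschitz constants of $J_D$ and $\cover{f}\circ\varphi_t$ controlling the estimate do not depend on the chosen basepoint in $D$.
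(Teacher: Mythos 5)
Your proposal is correct and matches the paper's proof almost verbatim: both write the difference as $\int_D \bigl(\cover{f}(\varphi_t(\xu)) J_0 - Y^\sfu_{t,\varepsilon}(\xu)\bigr)\, h(\xu)\, d\mu_D(\xu)$, invoke \reflem{approxStableAndFlow} for the uniform pointwise bound $C_0\varepsilon$, and then use that $K$ has finite area to convert $\|h\|_1$ into $\|h\|_\infty$. Your closing remark on uniformity in $\xu$ is already built into the statement of \reflem{approxStableAndFlow}, so it is reassurance rather than an additional step.
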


\begin{proof}
We have
\[
Y^{f,D}_{t}(h\cdot \zeta) J_0 - Y^{f,D}_{t,\varepsilon}(h\cdot \zeta) =  \int_D \left( \cover{f}(\varphi_t(\xu)) J_0 - Z^{f,D}_{t,\varepsilon}(\xu)\right) h(\xu) \, d\xu 
\]
which is bounded by
\[
C\varepsilon \int_D |h(\xu)| \, d\xu
\]
by \reflem{approxStableAndFlow}. 
The result now follows since $K$ has finite area.
\end{proof}

\begin{lemma} 
\label{Lem:sobNormBound}
Let $M$ be a connected, orientable, finite volume, complete hyperbolic three-manifold. Fix $m\in\NN$. Fix a view $D$ and a compact set $K\subset D$. There is a $C>0$ such that for all smooth $h\from D\to\RR$ supported in $K$ and all $1>\varepsilon>0$, we have
\[
\sob_m(h_\varepsilon) \leq C \varepsilon^{-(m+3)} \sob_m(h)
\]
where $h_\varepsilon$ is as defined in \eqref{Eqn:defHEps}.
\end{lemma}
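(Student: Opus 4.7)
The plan is to reduce the bound on $S_m(h_\varepsilon)$, which is defined via left-invariant vector fields on $\PSL(2,\CC)$, to a direct coordinate computation using the factored form \eqref{Eqn:defHEps} of $h_\varepsilon$, and then to exploit the scaling $b_\varepsilon(\cdot)=b_1(\cdot/\varepsilon)$.

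First I would observe that for every $0<\varepsilon<1$, the support of $h_\varepsilon$ lies in the compact set $N_K=\{(\xu,\xf,\xs)\in\closure{D_1}:\xu\in K\}$. On a compact neighbourhood of $N_K$, each left-invariant vector field on $\PSL(2,\CC)$ pulls back to a smooth combination of the coordinate fields $\partial_\xu,\partial_\xf,\partial_\xs$ with smooth coefficients. (If the local fibre bundle structure of \refrem{product} is nontrivial, cover $N_K$ by finitely many charts.) These coefficients, together with their derivatives up to order $m$, are bounded by constants depending only on $K$ and $m$. Hence an $m$-fold composition of left-invariant fields applied to $h_\varepsilon$, expanded via the Leibniz rule, yields a finite sum of terms
\[
a(\xu,\xf,\xs)\cdot\partial_\xu^{j_u}\partial_\xf^{j_f}\partial_\xs^{j_s}h_\varepsilon,\qquad j_u+j_f+j_s\leq m,
\]
with $\|a\|_\infty$ uniformly bounded in $\varepsilon$.

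Next I would compute each coordinate derivative directly from \eqref{Eqn:defHEps}:
\[
\partial_\xu^{j_u}\partial_\xf^{j_f}\partial_\xs^{j_s}h_\varepsilon=\big(\partial_\xu^{j_u}h(\xu)\big)\cdot\frac{\big(\partial_\xf^{j_f}b_\varepsilon(\xf)\big)\,\big(\partial_\xs^{j_s}b_\varepsilon(r_s)\big)}{B\varepsilon^3}.
\]
The rescaling $b_\varepsilon(\cdot)=b_1(\cdot/\varepsilon)$ gives $|\partial_\xf^{j_f}b_\varepsilon(\xf)|\leq\|b_1^{(j_f)}\|_\infty\varepsilon^{-j_f}$. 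For the stable factor, since $b_1(r)$ depends smoothly on $r^2$, the radial bump $b_\varepsilon(r_s)$ extends to a smooth function on $H^\sfs(\xu)\cong\RR^2$ whose $j_s$-th derivative is bounded by $C_{j_s}\varepsilon^{-j_s}$. For the unstable factor, on the compact $K\subset D$ the coordinate field $\partial_\xu$ is in turn a bounded smooth combination of left-invariant fields on the homogeneous-space structure of $D$, so $|\partial_\xu^{j_u}h(\xu)|\leq C' S_{j_u}(h)\leq C' S_m(h)$.

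Combining these estimates yields $|\partial_\xu^{j_u}\partial_\xf^{j_f}\partial_\xs^{j_s}h_\varepsilon|\leq C S_m(h)\,\varepsilon^{-(3+j_f+j_s)}\leq C S_m(h)\,\varepsilon^{-(m+3)}$, using $j_f+j_s\leq m$. Summing the finitely many terms produced by the Leibniz expansion gives $S_m(h_\varepsilon)\leq C\varepsilon^{-(m+3)}S_m(h)$ with $C$ depending only on $m$ and $K$, as required. The main obstacle is the bookkeeping in the first step: one must verify uniformly on $N_K$ that the coefficients relating left-invariant to coordinate fields, together with their iterated derivatives up to order $m$, stay bounded independently of $\varepsilon$. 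This is precisely where the compactness of $K$ and the cutoff $0<\varepsilon<1$ (ensuring confinement to $N_K$) are used; after this is established, everything else is a routine application of the Leibniz rule and of the scaling law for $b_\varepsilon$.
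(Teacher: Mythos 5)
Your argument is correct and takes essentially the same route as the paper: both rely on the separable form \eqref{Eqn:defHEps} of $h_\varepsilon$, the scaling law $b_\varepsilon(\cdot)=b_1(\cdot/\varepsilon)$, and the fact that Sobolev norms defined by different pointwise-spanning frames agree up to bounded factors on a fixed compact set, which lets one pass from the $\PSL(2,\CC)$ frame to the coordinate frame $(\partial_\xu,\partial_\xf,\partial_\xs)$. Your proposal merely fleshes out the Leibniz bookkeeping and the smoothness of the radial bump on $H^\sfs(\xu)\cong\RR^2$ that the paper leaves implicit.
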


\begin{proof}
We estimate $\sob_m(h_\varepsilon)$ by using that $h_\varepsilon$ is separable as defined in \eqref{Eqn:defHEps}. In suitable coordinates, the second factor can be written as 
\[
g_\varepsilon\from\RR^3\to\RR,\quad
(x, y, z)\mapsto \frac{b_\varepsilon(x) b_\varepsilon(\sqrt{y^2+z^2})}{B\varepsilon^3}
\]
We have $g_\varepsilon(u)=g_1(u/\varepsilon) / \varepsilon^{3}$ so $\sob_m(g_\varepsilon) = \varepsilon^{-(m+3)} \sob_m(g_1)$. 
Using that all $h_\varepsilon$ are supported in a common compact set, the lemma follows from the following fact about Sobolev norms.

Recall that a Sobolev norm requires a choice of vector fields that pointwise span the tangent space of the manifold (or a bundle over the manifold when using the Sobolev norm defined earlier by lifting a function $f\from \Gamma \backslash G \slash H\to \RR$ to $G$). However, any two such choices yield Sobolev norms that differ by a bounded factor when restricting to a small enough neighbourhood or compact set. In particular, up to a bounded factor, we can estimate the Sobolev norm $\sob_m(h_\varepsilon)$ by Sobolev norms using local coordinates in which $h_\varepsilon$ is separable. 
\end{proof}

\begin{proof}[Proof of \reflem{expoDecay}]
Let $m$ be as in \refthm{mixing2}. Fix a smooth, compactly supported function $f$, a view $D$, and a compact $K\subset D$.
\refthm{mixing2} states that there is a $C_0$ and $c_0$ such that for all smooth $h\from D\to\RR$ supported in $K$ and all $1 > \varepsilon > 0$, we may set $g=h_\varepsilon$ and have
\[
\left| Y^{f,D}_{t,\varepsilon}(h\cdot \zeta) \right| = \left| \int_\cover{X} \cover{f}(\varphi_t(v)) h_\varepsilon(v) \, dv \right| \leq C_0 e^{-c_0t} \sob_m(h_\varepsilon)
\]
Applying \reflem{approxStableFlow} to the left-hand side and \reflem{sobNormBound} to the right hand-side, there are $C_1$ and $C_2$ such that 
\[
\left| Y^{f,D}_{t}(h\cdot \zeta) J_0 \right| \leq C_1 \varepsilon \|h\|_\infty + C_0 e^{-c_0 t} C_2 \varepsilon^{-(m+3)} \sob_m(h)
\]
Since this holds for all $0 < \varepsilon < 1$, we can set $\varepsilon = e^{-c_0t/2(m+3)}$ and obtain 
\[
\left| Y^{f,D}_{t}(h\cdot \zeta)\right| \leq \frac{C_1 + C_0 C_2}{J_0} e^{-c_0t/2(m+3)} \sob_m(h) \qedhere
\]
\end{proof}

\begin{proof}[Proof of \refthm{Pixel}]
Recall that 
\begin{align}
\label{Eqn:Sum}
\Phi^{\omega,b,D}(\eta)=\int_0^\infty Y^{\omega,D}_t(\eta) \, dt + \int_D \left[\int_b^{\pi(v)} \omega\right] \cdot \eta
\end{align}

We now discuss the topology on $\Omega^2_c(D)$.
Many textbooks on distributions define the topology on $C^\infty_c(\RR^n)$.
This suffices in the ideal and hyperideal cases, where the view $D$ is diffeomorphic to $\RR^2$.
When $D$ is a two-sphere, we instead rely on the general theory of distributions on smooth manifolds.
We point the reader to \cite[Section~2.3]{BanCrainic} where such distributions are called ``generalised sections''. 
In particular, the definition of $\calD(M;E)$ in \cite{BanCrainic} gives the correct topology on $\Omega^0_c(D)\cong C^\infty_c(D)$ and thus on $\Omega^2_c(D)\cong\Omega^0_c(D)$.

Using \cite[Corollary~2.2.1]{BanCrainic} and the definition of the topology on $\Omega^2_c(D)$, it is not hard to see that statement~\refitm{wellDefDist} of the theorem is equivalent to the following claim.

\begin{claim}
For any compact $K\subset D$, there is a $C>0$ such that for all $\eta\in\Omega^2_K(D)$, the integrals in 
$\Phi^{\omega,b,D}(\eta)$ exist and we have 
\[
|\Phi^{\omega,b,D}(\eta)| \leq C \sob_m(\eta)
\]
\end{claim}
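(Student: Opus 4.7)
The plan is to exploit \reflem{expoDecay} to control the tail $\int_0^\infty Y_t\,dt$ and to bound the initial term $\Phi^{\omega,b,D}_0(\eta) = \int_D W(\pi(v))\,\eta$ separately, where $W \from \cover{M} \to \RR$ is a primitive of $\cover{\omega}$ with $W(b) = 0$. Let $m$ be the integer supplied by \reflem{expoDecay}.

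First, I would verify that the function $\omega \from X \to \RR$ given by $\omega(v) = \omega_{\pi(v)}(v)$ meets the hypotheses of \reflem{expoDecay}: it is smooth because $\omega$ is; it is compactly supported because $\omega \in \Omega^1_c(M)$; and $\int_X \omega\, d\mu_{\Haar} = 0$ by a fibrewise argument---for each $p \in M$, the integral $\int_{\UT{p}{M}} \omega_p(v)\,d\sigma(v)$ vanishes because the integrand is odd under the antipodal map $v \mapsto -v$. The lemma then yields constants $C_1, c_1 > 0$, depending on $K$ but not on $\eta$, such that $|Y_t| \leq C_1 e^{-c_1 t} S_m(\eta)$ for every $\eta \in \Omega^2_K(D)$ and every $t \geq 0$. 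In particular $Y_t$ is integrable on $[0,\infty)$ and
\[
\left| \int_0^\infty Y_t\,dt \right| \leq \frac{C_1}{c_1} S_m(\eta).
\]

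Next, I would bound the initial term. Writing $\eta = h\,\zeta_D$ where $h \in \Omega^0_c(D)$ is the Hodge dual, and noting that $W \circ \pi$ is continuous, hence bounded on the compact set $K$ by some constant $C_2$ depending only on $K$, $\omega$, and $b$, I get
\[
|\Phi^{\omega,b,D}_0(\eta)| \leq C_2 \int_K |h|\,d\mu_D \leq C_2\, \mu_D(K)\, \|h\|_\infty \leq C_3\, S_m(\eta),
\]
using $\|h\|_\infty \leq S_m(h) = S_m(\eta)$, which holds since $m$-th order Sobolev norms dominate the sup norm.

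Adding the two estimates gives $|\Phi^{\omega,b,D}(\eta)| \leq C\, S_m(\eta)$ with $C = C_1/c_1 + C_3$, which simultaneously establishes that the improper integral defining $\Phi^{\omega,b,D}(\eta)$ converges absolutely and that $\Phi^{\omega,b,D}$ is a continuous linear functional on $\Omega^2_c(D)$, yielding statement~\refitm{wellDefDist}. The principal technical work is already packaged in \reflem{expoDecay}, so there is no serious obstacle; the only step that could go wrong would be a failure of the zero-mean hypothesis $\int_X \omega\,d\mu_{\Haar}=0$, but the fibrewise antipodal symmetry makes this automatic regardless of the one-form $\omega$.
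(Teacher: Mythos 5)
Your proposal is correct and follows essentially the same route as the paper: decompose $\Phi^{\omega,b,D}(\eta)$ into $\Phi^{\omega,b,D}_0(\eta)=\int_D\left[\int_b^{\pi(v)}\omega\right]\eta$ plus $\int_0^\infty Y_t\,dt$, control the latter via the exponential decay from \reflem{expoDecay}, and bound the former by the area of $K$ times the sup of the primitive over $K$ times $\|h\|_\infty\leq S_m(\eta)$. The only addition is that you explicitly verify the zero-mean hypothesis $\int_X\omega\,d\mu_{\Haar}=0$ via fibrewise antipodal symmetry, a point the paper leaves implicit; that verification is correct and a reasonable thing to spell out.
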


\begin{proof}
We apply \reflem{expoDecay} to $f = \omega \from X \to \RR$.  
The lemma implies that there is a $C_0 > 0$ such that for all $\eta \in \Omega^2_K(D)$,
the integral $\int_0^\infty Y^{\omega,D}_t(\eta)\, dt$ exists and is bounded by $C_0 \sob_m(\eta)$.
Let $C_1=\int_K \zeta_D$ be the area of $K$ and $C_2$ be the maximum of $\int_b^{\pi(v)} \omega$ over $v\in K$.
Recall that $h \in \Omega^0_K(D)$ denotes the Hodge dual to $\eta=h\cdot \zeta_D$. 
Then, by \refeqn{Sum}, we have the following.
\[
\left| \Phi^{\omega,b,D}(\eta) \right| \leq C_0 \sob_m(\eta) + C_1  C_2 \| h\|_\infty \leq (C_0 + C_1 C_2) \sob_m(\eta) \qedhere
\]
\end{proof}

Let $W\from \cover{M}\to\RR$ be a primitive such that $dW=\omega$.
We again abbreviate $W \circ \pi \from \cover{X} \to \RR$ as $W$.
We have
\begin{align}
\Phi^{\omega,b,D}(\eta)&=\lim_{T\to\infty} \int_{v\in D} \left(W(\varphi_T(v)) - W(b)\right)  \cdot \eta(v) \nonumber \\
& = \left[\lim_{T\to\infty} \int_D  (W\circ \varphi_T) \cdot \eta\right] - W(b) \cdot \int_D \eta
\label{eq:splitOffCanonical}
\end{align}

Statement~\refitm{cocycleIndep} is equivalent to the following claim.

\begin{claim}
If $[\omega] = 0$, there is a $C$ such that $\Phi^{\omega,b,D} = C \cdot \int_D $
\end{claim}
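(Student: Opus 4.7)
Since $[\omega]=0$ in $H^1(M;\RR)$, we may write $\omega=d\tilde{W}$ for some smooth function $\tilde{W}\from M\to\RR$. The compact support of $\omega$ forces $\tilde{W}$ to be locally constant on $M\setminus\supp(\omega)$. Because $M$ has only finitely many ends, $\tilde{W}$ takes only finitely many distinct constant values at infinity and is therefore bounded on $M$. Its pullback $\cover{W}=\tilde{W}\circ\pi_M\from\cover{M}\to\RR$ is then a $\pi_1(M)$-invariant primitive of $\cover{\omega}$.

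Substituting this choice of $\cover{W}$ into \eqref{eq:splitOffCanonical} gives
\[
\Phi^{\omega,b,D}_T(\eta)=\int_D F(\varphi_T(v))\,\eta \;-\; \tilde{W}(\pi_M(b))\int_D\eta,
\]
where $F=\tilde{W}\circ\pi_M\circ\pi$ on $\cover{X}$ is $\pi_1(M)$-invariant and descends to a bounded smooth function $F\from X\to\RR$. So it suffices to prove that $\int_D F(\varphi_T(v))\,\eta \to \bar{F}\int_D\eta$ as $T\to\infty$, where $\bar{F}=\int_X F\,d\mu_{\Haar}/\mu_{\Haar}(X)$; the claim then follows with $C=\bar{F}-\tilde{W}(\pi_M(b))$.

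For this convergence I would repeat the blurring argument from the proof of \reflem{expoDecay}. Its supporting estimates (Lemmas~\ref{Lem:distFlowStable}, \ref{Lem:approxStableAndFlow}, and \ref{Lem:approxStableFlow}) depend only on boundedness and a Lipschitz constant for the test function on $X$, so they apply to $F$ essentially verbatim, yielding
\[
\Big|\,J_0\int_D (F\circ\varphi_T)\,\eta \;-\; \int_{\cover{X}}(F\circ\varphi_T)\,h_\varepsilon\,d\mu_{\Haar}\,\Big|\le C\varepsilon
\]
uniformly in $T$, with $h_\varepsilon$ the bump from \eqref{Eqn:defHEps}. Summing $h_\varepsilon$ over a fundamental domain as in the proof of \refthm{mixing2} recasts the right-hand integral as $\int_X (F\circ\varphi_T)(h_\varepsilon)_\Sigma\, d\mu_{\Haar}$. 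Since $F\in L^2(X)$ and $(h_\varepsilon)_\Sigma$ is smooth and compactly supported, classical mixing of the geodesic flow on the finite-volume manifold $M$ forces this integral to converge to $\bar{F}\int_{\cover{X}}h_\varepsilon\, d\mu_{\Haar}$, which in turn equals $\bar{F}\cdot\bigl(J_0\int_D\eta+O(\varepsilon)\bigr)$ by the same blurring estimate. Letting $\varepsilon\to 0$ gives the claim.

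The main obstacle is applying mixing to the merely bounded (rather than compactly supported) function $F$: the paper's \refthm{mixing2} does not cover this case. To close the gap one either cites classical Howe--Moore mixing directly, or approximates $F$ in $L^2(X)$ by compactly supported smooth functions of mean zero and combines \refthm{mixing2} with a Cauchy--Schwarz error estimate.
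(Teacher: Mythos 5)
Your argument is correct in outline and lands on the same constant $C = \bar F - \tilde W(\pi_M(b))$, but it takes a longer route than the paper's proof. The paper observes that after adding a constant one may assume $\int_M W\,dv = 0$, which makes the spatial mean $\bar F$ vanish; then \reflem{expoDecay} with $f=W$ applies directly to \eqref{eq:splitOffCanonical} and gives exponential decay of $\int_D (W\circ\varphi_T)\,\eta$ to zero, yielding $\Phi^{\omega,b,D} = -W(b)\cdot\int_D$ immediately. You instead keep $\tilde W$ unnormalised and set out to prove convergence of $\int_D(F\circ\varphi_T)\eta$ to $\bar F \int_D\eta$, which forces you to re-derive the mollification argument from Lemmas~\ref{Lem:distFlowStable}--\ref{Lem:approxStableFlow} and then substitute a qualitative Howe--Moore mixing statement where the paper uses \refthm{mixing2}. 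That works (your two-limit argument, sending $T\to\infty$ before $\varepsilon\to 0$, is sound and does not even require a mixing rate), but it re-proves a lemma that the paper wrote precisely so it could be reused here. Normalising $W$ first eliminates the extra ``convergence to the mean'' step entirely.

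On the ``main obstacle'' you flag: you are right that $W$ need not be compactly supported when $M$ has cusps, and that the hypotheses of \reflem{expoDecay} and \refthm{mixing2} are stated with compact support. But note this issue is present in the paper's own proof too --- the paper quietly applies \reflem{expoDecay} to $f=W$ without comment. The resolution is not to switch to a different mixing theorem: since $\omega$ is compactly supported and $M$ has finitely many ends, $W$ is constant on a neighbourhood of each end, so $W$ is smooth, bounded, and Lipschitz, all its covariant derivatives are compactly supported, and $S_m(W)$ is finite. Those are the only properties of $f$ actually used in the proofs of \reflem{approxStableAndFlow}, \reflem{approxStableFlow}, and \refthm{mixing}/\refthm{mixing2}, so \reflem{expoDecay} extends to such $f$ verbatim. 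Your proposed workarounds (Howe--Moore, or approximating $F$ in $L^2$ by compactly supported mean-zero functions plus a Cauchy--Schwarz error estimate) would also close the gap, but they are heavier machinery than is needed.
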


\begin{proof}
Since $\omega$ is a coboundary, its primitive descends to a map $W \from M \to \RR$.
Add a constant to $W$ to arrange $\int_M W \, dm = 0$; 
here we integrate with respect to the volume measure.
Now taking $f = W$, \reflem{expoDecay} now applies to \eqref{eq:splitOffCanonical} and we have
$\Phi^{\omega,b,D} = -W(b) \cdot \int_D$.
\end{proof}

Statement~\refitm{basePointPixelThm} follows from from \refrem{DependenceOnb}.

It is left to show Statement~\refitm{viewTransform}. 
Recall that we defined maps $i_D \from D\to\partial_\infty\cover{M}$ and $i_E\from E\to\partial_\infty\cover{M}$.  
Let $U=\image(i_D)\cap \image(i_E) \subset \partial_\infty\cover{M}$. 
Furthermore, let us define two distributions on $U$ by
\[
\Phi^{\omega,b}_{\leftarrow D}(\delta) = \Phi^{\omega,b,D}(i_D^*\delta)
\qquad\mbox{and}\qquad 
\Phi^{\omega,b}_{\leftarrow E}(\delta) = \Phi^{\omega,b,E}(i_E^*\delta)
\]

For every $\eta \in \Omega^2_c(\image(i_{E,D}))$, there is a compactly supported two-form $\delta$ on $U$ such that $\eta = i_D^* \delta$ and $i^*_{E,D} \eta = i_E^* \delta$. Thus, it is enough to show that
\begin{equation}
\label{Eqn:limViewSame}
\Phi^{\omega,b}_{\leftarrow D} = \Phi^{\omega,b}_{\leftarrow E}
\end{equation}

\begin{figure}\centering
\labellist
\small\hair 2pt
\pinlabel $s$ [B] at 89 179
\pinlabel $\varphi_T(i_D^{-1}(N))$ [Br] at 48 175
\pinlabel $\varphi_{T+\Delta T_p}(i_E^{-1}(N))$ [Bl] at 131 175
\pinlabel $i_D^{-1}(s)$ [tr] at 81 80
\pinlabel $i_E^{-1}(s)$ [tr] at 112 73
\pinlabel $\Delta T_s$ [bl] at 116 88
\pinlabel $v_s$ [b] at 104 79
\pinlabel $v$ [b] at 117 116
\pinlabel $H$ [Bl] at 128 115
\endlabellist
\includegraphics[height=8cm]{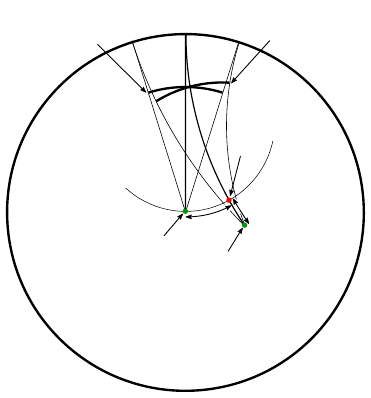}
\caption{The regions $\varphi_T(i_D^{-1}(N))$ and $\varphi_{T+\Delta T_p}(i_E^{-1}(N))$ for two material views. The distance between corresponding points in the two regions is bounded by a constant for all large enough $T$. The constant can be made arbitrarily small by making the neighbourhood $N$ about $p$ small enough.} 
\label{Fig:convergenceViews}
\end{figure}

\begin{claim} 
\label{Clm:convergenceDistance}
For every $p\in U$ and $\varepsilon>0$, there is a neighbourhood $N\subset U$ of $p$, a number $\Delta T\in\RR$ and $T_0\geq 0$ such that for all $s\in N$ and $T\geq T_0$, we have
\[|W(\varphi_T(i_D^{-1}(s))) - W(\varphi_{T+\Delta T}(i_E^{-1}(s)))| \leq \varepsilon\]
\end{claim}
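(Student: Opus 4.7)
The plan is to reduce the claim to a bound on the distance between the two flow-images, exploiting that $W$ is Lipschitz. Since $dW = \cover{\omega}$ is the lift of the compactly supported one-form $\omega$, the norm $|dW|$ is bounded by $L := \|\omega\|_\infty$, so $W \from \cover{M} \to \RR$ is $L$--Lipschitz. It thus suffices to show that
\[
d\bigl(\varphi_T(i_D^{-1}(s)),\ \varphi_{T+\Delta T}(i_E^{-1}(s))\bigr)
\]
can be made smaller than $\varepsilon / L$ by a single choice of $N$, $\Delta T$, and $T_0$.

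Write $u(s) = i_D^{-1}(s)$ and $v(s) = i_E^{-1}(s)$. Both forward orbits $\{\varphi_t(u(s))\}_{t \geq 0}$ and $\{\varphi_t(v(s))\}_{t \geq 0}$ converge to the same ideal point $s \in \bdy_\infty \cover{M}$. Standard hyperbolic geometry in $\HH^3$ provides a unique real number $\Delta T(s)$ --- the Busemann displacement between $\pi(u(s))$ and $\pi(v(s))$ with respect to $s$ --- such that the translated orbit $\{\varphi_{t + \Delta T(s)}(v(s))\}$ lies on the same strong stable leaf as $\{\varphi_t(u(s))\}$. Exponential contraction along stable leaves, in the style of \reflem{distFlowStable}, then gives
\[
d\bigl(\varphi_T(u(s)),\ \varphi_{T + \Delta T(s)}(v(s))\bigr) \leq A(s)\, e^{-T},
\]
where $A(s)$ is the initial stable-leaf separation between $u(s)$ and $\varphi_{\Delta T(s)}(v(s))$.

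Now set $\Delta T := \Delta T(p)$. Since the embeddings $i_D, i_E$ are conformal diffeomorphisms onto their images and the Busemann cocycle on $\HH^3$ is jointly continuous, the functions $s \mapsto \Delta T(s)$ and $s \mapsto A(s)$ are continuous at $p$. Pick a relatively compact neighbourhood $N \subset U$ of $p$ with $|\Delta T(s) - \Delta T| \leq \varepsilon/(2L)$ and $A(s) \leq A_0$ for every $s \in \closure{N}$. Applying the triangle inequality to the triple
\[
\varphi_T(u(s)),\quad \varphi_{T + \Delta T(s)}(v(s)),\quad \varphi_{T + \Delta T}(v(s))
\]
and using that the last two points differ only by a flow of time $|\Delta T(s) - \Delta T|$ along the same orbit yields
\[
d\bigl(\varphi_T(u(s)),\ \varphi_{T + \Delta T}(v(s))\bigr) \leq A_0\, e^{-T} + |\Delta T(s) - \Delta T|.
\]
Choose $T_0$ with $L A_0 e^{-T_0} \leq \varepsilon/2$; then $L$--Lipschitz continuity of $W$ delivers the claimed bound $\varepsilon$ for all $s \in N$ and $T \geq T_0$.

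The main obstacle will be handling the three view types uniformly: the basepoints $\pi(u(s))$ and $\pi(v(s))$ depend on $s$ in quite different ways for material, ideal, and hyperideal views, so the explicit formula for $\Delta T(s)$ varies case by case. Nevertheless, in each case $i_D^{-1}$, $i_E^{-1}$, $\pi$, and the Busemann cocycle compose into a continuous function $\Delta T \from U \to \RR$, so a single uniform-continuity argument on a compact neighbourhood of $p$ handles all views at once. A secondary subtlety is the uniform bound $A(s) \leq A_0$; this follows from continuity of $A$ and compactness of $\closure{N}$.
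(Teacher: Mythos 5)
Your proposal is correct and follows essentially the same route as the paper: reduce via the Lipschitz constant of $W$ to a distance estimate in $\cover{X}$, introduce the Busemann/horospherical time shift $\Delta T(s)$ aligning the two rays on a common stable leaf (the paper constructs this explicitly via a horosphere $H$ about $s$ and calls it $\Delta T_s$, with $r_s$ playing the role of your $A(s)$), then use continuity of $\Delta T(\cdot)$ near $p$ together with exponential contraction to split the error into $|\Delta T(s)-\Delta T|$ plus an $e^{-T}$ term. The final paragraph worrying about the three view types is unnecessary caution — the argument only sees the two geodesic rays to $s$, not the view geometry — but it does no harm.
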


\begin{remark}
Note that the left hand side can be made arbitrary small by shrinking $N$. However, for fixed $N$ increasing $T$ might not suffice. This is why we will need to use a partition of unity argument to verify \eqref{Eqn:limViewSame}.
\end{remark}

\begin{proof}[Proof of \refclm{convergenceDistance}]
It might be helpful to consult \reffig{convergenceViews}.
Let $L$ be a Lipschitz constant of $\cover{W}\from X\to\RR$.

Suppose that $s$ lies in $U \subset S^2$.
Let $\gamma_D$ and $\gamma_E$ be the oriented, pointed, geodesics through $i_D^{-1}(s)$ and $i_E^{-1}(s)$ respectively. 
Both forward converge to $s$.
Let $H$ be the horosphere centered at $s$ containing $\pi(i_D^{-1}(s))$.
Note that $\gamma_E$ intersects $H$ orthogonally, say at $v$.
Let $\Delta T_s$ be the signed distance, along $\gamma_E$, from $\pi(i_E^{-1}(s))$ to $v$. 
Let $v_s = d_H(v, \pi(i_D^{-1}(s)))$.

Recall that $p$ lies in $U \subset S^2$. 
Recall that $\varphi_T$ is the geodesic flow. 
Exponential convergence and the triangle inequality give the following estimate:
\[
d_{\cover{X}}(\varphi_T(i_D^{-1}(s)), \varphi_{T+\Delta T_p}(i_E^{-1}(s))) \leq \sqrt{2} v_s e^{-T} + |\Delta T_p - \Delta T_s|
\]
We refer to \refrem{extrinsicCurvCorr} to explain the factor of $\sqrt{2}$.

There is a neighbourhood $N$ of $p$ such that for all $s \in N$, we have $|\Delta T_p - \Delta T_s| \leq \varepsilon / 2L$.
Note that $v_s$ is bounded on $N$ so there is a $T_0$ such that $\sqrt{2} v_s e^{-T_0}\leq \varepsilon / 2L$. 
We deduce that
\[
d_\cover{X}(\varphi_T(i_D^{-1}(s)), \varphi_{T+\Delta T_p}(i_E^{-1}(s)))\leq \varepsilon / L
\]
Since $L$ is a Lipschitz constant for $\cover{W}$, the claim follows by setting $\Delta T = \Delta T_p$.
\end{proof}

We now define a norm $\| \cdot \|_1$ on $\Omega^2_c(U)$. 
Identify $\partial_\infty \cover{M}$ with $S^2$. 
Such an identification induces an area form $\zeta_\infty$ on $\partial_\infty \cover{M}$. 
Given $\delta \in\Omega^2_c(U)$, let $d$ be the Hodge dual, that is $\delta=d\cdot \zeta_\infty$. 
Let
\[
\| \delta \|_1 = \int_U |d| \cdot \zeta_\infty
\]
Note that $\|\delta\|_1$ does not depend on the identification of $\partial_\infty \cover{M}$ with $S^2$.

\begin{claim} 
\label{Clm:convergenceRelatedInts}
For every $p\in U$ and $\varepsilon > 0$, there is a neighbourhood $N \subset U$ of $p$ such that for all $\delta\in \Omega^2_c(N)$, we have
\[
\left| \Phi^{\omega,b}_{\leftarrow D}(\delta) - \Phi^{\omega,b}_{\leftarrow E}(\delta) \right| \leq \varepsilon \|\delta\|_1
\]
\end{claim}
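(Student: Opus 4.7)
The plan is to start from equation~\eqref{eq:splitOffCanonical}, applied to $\eta = i_D^*\delta$ and $\eta = i_E^*\delta$, which gives
\[
\Phi^{\omega,b}_{\leftarrow D}(\delta) = \lim_{T\to\infty}\int_D (W\circ\varphi_T)\,i_D^*\delta - W(b)\int_D i_D^*\delta,
\]
and similarly for $E$. Since $\delta$ is supported in $U\subset\image(i_D)\cap\image(i_E)$, and $i_D$, $i_E$ are orientation-preserving conformal embeddings, the change of variables formula gives $\int_D i_D^*\delta = \int_U\delta = \int_E i_E^*\delta$. The two $W(b)$ terms therefore cancel upon subtraction, reducing the estimate to controlling the difference of the two remaining limits.

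Next, I would apply \refclm{convergenceDistance} to the given $p$ and $\varepsilon$ to produce a shift $\Delta T$, a neighbourhood $N\subset U$ of $p$, and a threshold $T_0$. I may shrink $N$ freely, so I require $\delta$ to be supported in $N$. Since the limit defining $\Phi^{\omega,b}_{\leftarrow E}(\delta)$ exists by Statement~\refitm{wellDefDist} (already established at this point in the proof), it is unchanged under $T\mapsto T+\Delta T$. Pulling both integrals back to $U$ via change of variables, the difference of the two distributions becomes
\[
\lim_{T\to\infty}\int_U \left[W(\varphi_T(i_D^{-1}(s))) - W(\varphi_{T+\Delta T}(i_E^{-1}(s)))\right]\delta(s).
\]

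To finish, write $\delta = d\cdot\zeta_\infty$. For all $T\geq T_0$ and $s\in N$, \refclm{convergenceDistance} bounds the bracketed quantity above by $\varepsilon$ in absolute value. Since $\delta$ is supported in $N$, the integrand is bounded by $\varepsilon|d|$ pointwise, so the integral is at most $\varepsilon\int_N |d|\,\zeta_\infty = \varepsilon\|\delta\|_1$. Passing to the limit $T\to\infty$ yields the required inequality.

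The main delicate point I expect is the clean cancellation of the $W(b)$ constants, which relies on the conformal maps $i_D$ and $i_E$ being compatibly oriented with the ambient sphere $\bdy_\infty\cover{M}$, so that both change-of-variables computations produce the same quantity $\int_U\delta$. Once that bookkeeping is in place, the rest is a direct combination of the uniform pointwise estimate from \refclm{convergenceDistance} with the (cost-free) time reindexing in the $E$-limit.
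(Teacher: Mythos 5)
Your proposal is correct and follows essentially the same route as the paper: invoke \refclm{convergenceDistance} for $N$, $\Delta T$, $T_0$; exploit that the existing limit is unaffected by the reindexing $T \mapsto T + \Delta T$; pull both integrals back to $U$; and integrate the pointwise bound against $|d|\,\zeta_\infty$ to get $\varepsilon\|\delta\|_1$. The only cosmetic difference is how the constant term is dispatched — the paper simply chooses the primitive $W$ with $W(b)=0$ (which is the standing convention from \refsec{formalDefView}), whereas you keep $W(b)$ and argue the two terms cancel via $\int_D i_D^*\delta = \int_U \delta = \int_E i_E^*\delta$; both are fine, and your worry about orientation compatibility is precisely the reason the paper prefers the normalization.
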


\begin{proof}
Let $N$, $\Delta T$, and $T_0$ as in \refclm{convergenceDistance}. 
Add a constant to the primitive $W$ of $\omega$ such that $W(b)=0$. 
Then, the difference 
$\Phi^{\omega,b}_{\leftarrow D}(\delta) - \Phi^{\omega,b}_{\leftarrow E}(\delta)$
can be expressed using $W$ as follows.
\begin{align*}
 & \lim_{T\to\infty}\int_U (W \circ \varphi_T \circ i_D^{-1}) \cdot \delta - \lim_{T\to\infty}\int_U (W\circ \varphi_T\circ i_E^{-1}) \cdot \delta = \\
 & \lim_{T\to\infty}\int_U (W\circ \varphi_T\circ i_D^{-1}) \cdot \delta - \lim_{T\to\infty}\int_U (W\circ \varphi_{T+\Delta T}\circ i_E^{-1}) \cdot \delta = \\
 & \lim_{T\to\infty}\int_U ( W\circ \varphi_T\circ i_D^{-1} - W\circ \varphi_{T+\Delta T}\circ i_E^{-1}) \cdot \delta
\end{align*}
\refclm{convergenceDistance} implies that the integral is bounded by $\varepsilon \|\delta\|_1$, for all $T_0\geq T$. 
Thus the limit is bounded by $\varepsilon \|\delta\|_1$.
\end{proof}

We now verify \eqref{Eqn:limViewSame}.
Fix a smooth, compactly supported $\delta\in\Omega^2_c(U)$.
Also fix $\varepsilon>0$. 
For each $p\in U$, pick a neighbourhood $N$ as in \refclm{convergenceRelatedInts}.
The support $\supp(\delta)$ can be covered by finitely many of these neighbourhoods.
Consider a partition of unity with respect to this finite cover.
By multiplying $\delta$ by the partition functions, we obtain smooth two-forms $\delta_1,\dots, \delta_n$.
Let $d_i$ be the Hodge dual of $\delta_i$. 
Since $\sum |d_i| = |d|$ (pointwise) we have $\sum \|\delta_i\|_1 = \|\delta\|_1$.
Thus \refclm{convergenceRelatedInts} implies
\[
\left| \Phi^{\omega,b}_{\leftarrow D}(\delta) - \Phi^{\omega,b}_{\leftarrow E}(\delta) \right| \leq \varepsilon \|\delta\|_1
\]
Since $\varepsilon$ was arbitrary, we are done.
\end{proof}

\begin{remark}
McMullen~\cite{McMullen19} suggests another approach to \refthm{Pixel}.
Arrange matters so that $W$, the primitive of $\omega$, is harmonic with respect to the hyperbolic metric.
Prove that $W$ has suitably bounded growth as it approaches $\bdy_\infty \HH^3$, in terms of the euclidean metric in the Poincar\'e ball model. 
Now prove and then apply an appropriate version (of part (iii) implies part (i)) of~\cite[Theorem~1.1]{Straube84}.
\end{remark}


\subsection{The cohomology fractal as a measure} 
\label{Sec:RemarkMeasureHard}

We do not know whether the cohomology fractal converges to a signed measure; 
that is we do not know whether or not 
\[
\lim_{T\to\infty} \int_U \Phi^{\omega,b,D}_T \cdot d\mu_D
\]
converges for every measurable $U\subset D$. 
Instead, we have the following partial result.

\begin{theorem}[Square pixel theorem] 
\label{Thm:SquarePixel}
Suppose that $M$ is a connected, orientable, finite volume, complete hyperbolic three-manifold.  
Fix 
a closed, compactly supported one-form $\omega \in \Omega_c^1(M)$, 
a basepoint $b \in \cover{M}$, and
a view $D$. 
Suppose that $U \subset D$ is bounded. Suppose further that $\partial U$ has finite length. Then the following limit exists
\[
\lim_{T\to\infty} \int_U \Phi^{\omega,b,D}_T \cdot d\mu_D
\]
\end{theorem}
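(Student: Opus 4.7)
The plan is to reduce the question of convergence of $\int_U \Phi^{\omega,b,D}_T d\mu_D$ to a question about the integrability over $t \in [0,\infty)$ of the integrals $Y_t^U := \int_U \omega(\varphi_t(v)) d\mu_D(v)$, and then establish exponential decay of $|Y_t^U|$ by approximating $\mathbf 1_U$ by smooth functions and exploiting the smoothness of $\partial U$. Using the defining formula $\Phi_T(v) = \int_0^T \omega(\varphi_t(v)) dt + \int_b^{\pi(v)} \omega$ and Fubini's theorem, we have
\[
\int_U \Phi_T d\mu_D = \int_0^T Y_t^U \, dt + C_0,
\]
where $C_0 = \int_U \int_b^{\pi(v)} \omega \, d\mu_D$ is finite and independent of $T$. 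So it suffices to show that $\int_0^\infty |Y_t^U| dt < \infty$.

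For each small $\varepsilon > 0$, let $V_\varepsilon \subset D$ be the $\varepsilon$-neighbourhood of $\partial U$. Since $\partial U$ consists of finitely many smooth curves inside the bounded set $\overline U$, we have $\mu_D(V_\varepsilon) \leq C_1 \varepsilon$ for all small $\varepsilon$. Next, I would construct smooth approximations $\chi_\varepsilon \in \Omega^0_c(D)$ satisfying $0 \leq \chi_\varepsilon \leq 1$, $\chi_\varepsilon = \mathbf{1}_U$ outside $V_\varepsilon$, all $\chi_\varepsilon$ supported in a fixed compact set $K \subset D$, and Sobolev bound $S_m(\chi_\varepsilon \zeta_D) \leq C_2 \varepsilon^{-m}$ (obtained by convolving $\mathbf{1}_U$ with a bump of scale $\varepsilon$, each derivative contributing a factor $\varepsilon^{-1}$).

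The one-form $\omega$, viewed as a function on $X = \UT{}{M}$, has integral zero against Haar measure (by antipodal symmetry on each fibre). Thus \reflem{expoDecay}, applied to $f = \omega$ and the compactly supported two-form $\chi_\varepsilon \zeta_D$, yields constants $C_3, c > 0$ depending only on $K$ and $\omega$ such that
\[
\left| \int_D (\omega \circ \varphi_t) \chi_\varepsilon \zeta_D \right| \leq C_3 e^{-ct} S_m(\chi_\varepsilon \zeta_D) \leq C_3 C_2 e^{-ct} \varepsilon^{-m}.
\]
On the other hand, replacing $\chi_\varepsilon$ by $\mathbf 1_U$ costs at most
\[
\left| \int_D (\omega \circ \varphi_t)(\mathbf{1}_U - \chi_\varepsilon) d\mu_D \right| \leq \|\omega\|_\infty \mu_D(V_\varepsilon) \leq \|\omega\|_\infty C_1 \varepsilon.
\]
Combining gives $|Y_t^U| \leq C_3 C_2 e^{-ct} \varepsilon^{-m} + \|\omega\|_\infty C_1 \varepsilon$ for every $\varepsilon \in (0, \varepsilon_0)$. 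Setting $\varepsilon = e^{-ct/(m+1)}$ balances the two terms and produces $|Y_t^U| \leq C_4 e^{-ct/(m+1)}$, hence $Y_t^U$ is integrable on $[0,\infty)$ and the theorem follows.

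The main obstacle is obtaining the $\varepsilon^{-m}$ Sobolev bound in a uniform way: we need the approximations $\chi_\varepsilon$ to lie in a common compact support and to admit uniform constants in \reflem{expoDecay}. This is a standard mollification construction on the view $D$ (using local coordinates so that $\partial U$ looks like a union of smooth curves in the plane or the sphere), but care is required because the lemma's constants depend on the compact set $K$. Once $K$ is chosen as, say, the $\varepsilon_0$-neighbourhood of $\overline U$ for a fixed $\varepsilon_0 > 0$, the remainder of the argument is the routine optimisation above.
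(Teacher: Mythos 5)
Your argument follows the paper's approach essentially verbatim: the paper proves a general Lemma~\ref{Lem:technicalSquarePixel} for any bounded $h$ whose unstable mollification $h^\sfu_\varepsilon$ satisfies $\|h - h^\sfu_\varepsilon\|_1 \le C\varepsilon^c$, and then reduces \refthm{SquarePixel} to the case $h = \chi_U$ using the piecewise-smooth boundary to get the $O(\varepsilon)$ collar estimate, exactly as you do directly. Your proposal is correct and matches the paper's mollify-then-apply-\reflem{expoDecay}-then-optimise-in-$\varepsilon$ strategy; as a small bonus you make explicit the antipodal-symmetry reason that $\int_X \omega \, dv = 0$, which the paper leaves implicit.
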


\begin{remark}
Recall that the pictures in \reffig{RagainstNumSamples} were generated by uniformly sampling across a pixel square. 
\refthm{SquarePixel} finally proves that this technique (with enough samples) will give accurate images in some non-empty range of visual radii.
Note that the earlier \refthm{Pixel} is not sufficient; 
it required a smooth filter function.
\end{remark}

\begin{remark}
It seems difficult to generalise \refthm{SquarePixel} to measurable subsets.  
Our proof does not apply, for example, to an open set $U \subset D$ bounded by several Osgood arcs~\cite{Osgood03}. 
\end{remark}

Before sketching a proof of \refthm{SquarePixel}, we discuss \emph{mollification}.
Suppose that $h\from D \to \RR$ is a bounded measurable function with compact support. 
We define $h_\varepsilon\from D \to \RR$ by setting 
\begin{equation}
\label{Eqn:hEpsUnstable}
h_\varepsilon(u) = 
\frac{1}{B_\varepsilon} \int_D h(v) \cdot b_\varepsilon(d_D(u, v)) \cdot d\mu_D(v) 
\end{equation}
Here $B_\varepsilon$ is a normalisation factor such that 
$\int_D h \cdot d\mu_D=\int_D h_\varepsilon \cdot d\mu_D$. 
We call $h_\varepsilon$ the \emph{$\varepsilon$--mollification (in the unstable direction)} of $h$.

\begin{lemma} 
\label{Lem:technicalSquarePixel}
Let $h\from D\to\RR$ be a bounded measurable function with compact support. 
Assume that there are constants $c>0$ and $C>0$ such that for all $1 > \varepsilon >0$, we have
\[
\| h - h_\varepsilon  \|_1 \leq C\varepsilon^c
\]
Then the following limit exists
\[
\lim_{T\to\infty} \int_D \Phi^{\omega,b,D}_T \cdot h \cdot d\mu_D
\]
\end{lemma}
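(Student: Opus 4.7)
The plan is to reduce convergence of $\int_D \Phi^{\omega,b,D}_T\, h\, d\mu_D$ to the exponential decay of a single correlation $Y_t$, exactly as in the proof of \refthm{Pixel}. By Fubini and the decomposition $\Phi_T(v) = \int_0^T \omega(\varphi_t(v))\, dt + \int_b^{\pi(v)} \omega$, I write
\[
\int_D \Phi^{\omega,b,D}_T \, h \, d\mu_D \;=\; \int_0^T Y_t\, dt \;+\; \int_D \Big[ \int_b^{\pi(v)} \omega \Big] h(v) \, d\mu_D(v),
\]
where $Y_t = \int_D (\omega\circ\varphi_t)\, h\, d\mu_D$. The second summand is a finite constant independent of $T$, since $h$ is bounded with compact support. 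Thus it suffices to show that $\int_0^\infty Y_t\, dt$ converges absolutely, which I will do by proving that $|Y_t|$ decays exponentially in $t$.

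The key idea is to compare $Y_t$ with its mollified analogue and play off the $L^1$ approximation hypothesis against the exponential mixing already established. Define $Y_t^\varepsilon = \int_D (\omega\circ\varphi_t)\, h^\sfu_\varepsilon\, d\mu_D$. Since $\omega \from X \to \RR$ is bounded, the hypothesis yields
\[
|Y_t - Y_t^\varepsilon| \;\leq\; \|\omega\|_\infty \, \|h - h^\sfu_\varepsilon\|_1 \;\leq\; C_1 \varepsilon^c.
\]
For the other term, observe that $h^\sfu_\varepsilon$ is smooth and, for $\varepsilon < 1$, supported in a fixed enlargement $K$ of $\supp(h)$. The two-form $\eta_\varepsilon = h^\sfu_\varepsilon\, \zeta_D$ therefore lies in $\Omega^2_K(D)$. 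Noting that $\int_X \omega\, dv = 0$ by the $v \mapsto -v$ involution on $\UT{}{M}$, I can apply \reflem{expoDecay} with $f = \omega$ to obtain
\[
|Y_t^\varepsilon| \;\leq\; C_0\, e^{-c_0 t}\, S_m(\eta_\varepsilon).
\]
Standard mollifier scaling (each order of differentiation of the bump kernel produces a factor $\varepsilon^{-1}$) then gives $S_m(h^\sfu_\varepsilon) \leq C_2 \|h\|_\infty \varepsilon^{-m}$, and hence $S_m(\eta_\varepsilon) \leq C_3 \varepsilon^{-m}$.

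Combining the two bounds yields $|Y_t| \leq C_1 \varepsilon^c + C_0 C_3\, e^{-c_0 t}\, \varepsilon^{-m}$ for every $\varepsilon \in (0,1)$. Optimising by setting $\varepsilon = e^{-c_0 t/(c+m)}$ balances the two terms and produces
\[
|Y_t| \;\leq\; C_4\, e^{-c_\ast t}, \qquad c_\ast = \frac{c\, c_0}{c+m} > 0.
\]
In particular $Y_t$ is integrable on $[0,\infty)$, so $\lim_{T\to\infty} \int_D \Phi^{\omega,b,D}_T\, h\, d\mu_D$ exists. The main technical obstacle is making the Sobolev bound $S_m(h^\sfu_\varepsilon) \leq C \|h\|_\infty \varepsilon^{-m}$ rigorous in the paper's homogeneous-space convention: since $D$ may be curved (for instance $S^2$ in the material case), one must compare the invariant Sobolev norm with euclidean mollifier estimates on a finite collection of coordinate charts covering the fixed compact set $K$, and check that the resulting bounded distortion factors do not interfere with the $\varepsilon$-scaling.
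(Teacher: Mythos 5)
Your proof is correct and follows essentially the same strategy as the paper's: compare $Y_t$ with the mollified $Y_{t,\varepsilon}$, use the hypothesis to bound the difference by $C\varepsilon^c$, apply \reflem{expoDecay} to $Y_{t,\varepsilon}$ with a Sobolev estimate that blows up polynomially in $1/\varepsilon$, and then optimise $\varepsilon$ as a function of $t$ to get exponential decay of $|Y_t|$. The only cosmetic difference is your Sobolev exponent $\varepsilon^{-m}$ (via $\|h\|_\infty\,\|D^\alpha k_\varepsilon\|_1$) versus the paper's $\varepsilon^{-(m+2)}$ (via $\|h\|_1\,\|D^\alpha k_\varepsilon\|_\infty$); both Young-type bounds are valid and immaterial to the conclusion. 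You also explicitly check the zero-mean hypothesis $\int_X \omega\, dv = 0$ needed for \reflem{expoDecay} and handle the constant $\int_b^{\pi(v)}\omega$ term, both of which the paper leaves implicit.
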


\begin{proof}[Proof sketch]
We will show that $Y^{\omega,D}_t(h\cdot d\mu_D) = \int_D (\omega \circ \varphi_t) \cdot h\cdot d\mu_D$ decays exponentially.
Let $\eta_\varepsilon = h_\varepsilon \cdot \zeta_D$ and $Y^{\omega,D}_{t,\varepsilon}(h\cdot d\mu_D) = \int_D (\omega \circ \varphi_t) \cdot \eta_\varepsilon$.
Regarding $\omega$ as function $X\to\RR$, we have
\[
\left|Y^{\omega,D}_t(h\cdot d\mu_D) - Y^{\omega,D}_{t,\varepsilon}(h\cdot d\mu_D)\right| \leq \| h - h_\varepsilon  \|_1 \cdot \| \omega\|_\infty
                          \leq C\varepsilon^c \|\omega\|_\infty
\]
Since $h$ is bounded, the Sobolev norm of the mollification $h_\varepsilon$ of $h$ can be estimated from the Sobolev norm of the mollification kernel. 
This can be done similarly to \reflem{sobNormBound}.
Thus, there is a $C_0>0$ such that for all $1>\varepsilon > 0$, we have $\sob_m(\eta_\varepsilon) \leq C_0 \varepsilon^{-(m+2)}$.
Taking $f = \omega$ and $\eta = \eta_\varepsilon$, \reflem{expoDecay} states that there is a $C_1>0$ and $c_1>0$ such that for all $t\geq 0$ and $1>\varepsilon>0$, we have
\[
\left|Y^{\omega,D}_{t,\varepsilon}(h\cdot d\mu_D)\right| \leq C_1 e^{-c_1t} \varepsilon^{-(m+2)}
\]
Thus, we have
\[
\left|Y^{\omega,D}_{t}(h\cdot d\mu_D)\right| \leq C \varepsilon^c \| \omega\|_\infty + C_1e^{-c_1 t} \varepsilon^{-(m+2)}
\]
We obtain exponential decay when setting $\varepsilon = e^{-c_1 t / 2 (m+2)}.$
\end{proof}

\begin{proof}[Proof sketch of \refthm{SquarePixel}]
The theorem follows from \reflem{technicalSquarePixel} by taking $h$ to be the indicator function $\chi_U$. 
It is left to show that there is a $C$ such that $\| h - h_\varepsilon\|_1 \leq C \varepsilon$.

Note that $h-h_\varepsilon$ is bounded by one, thus it is enough to show that the area where $h-h_\varepsilon$ is non-trivial is bounded by $C\varepsilon$.
Tthe area of this neighbourhood is bounded above, up to multiplication by a universal constant, by the length of $\partial U$ multiplied by $\varepsilon$.
\end{proof}

\section{Questions and projects}
\label{Sec:Questions}

\begin{question}
Suppose that $F$ is a surface in a hyperbolic three-manifold $M$. 
\refthm{CLT} tells us that the standard deviation $\sigma$ is a topological invariant of the pair $(M,F)$. What are the number theoretic (or other) properties of $\sigma$?  Fixing $M$, does $\sigma$ ``see'' the shape of the Thurston norm ball?
\end{question}

\begin{question}
Suppose that $F$ is a fibre of a closed, connected hyperbolic surface bundle $M$.  In \refprop{LightDark} we showed that approximations of the Cannon--Thurston map are (components of) level sets of the cohomology fractal.  Is there some more precise sense in which the Cannon--Thurston map $\Psi$ is a ``level set'' of the distributional cohomology fractal $\Phi^{F, b}$?
\end{question}

\begin{question}
Can the cohomology class $[\omega]$ be recovered from the distributional cohomology fractal $\Phi^{\omega, b}$?
\end{question}

\begin{question}
\reffig{SubPixEvolution} suggests that in the example of \texttt{m122(4,-1)} the mean has settled down at around $R=10$ for a pixel size of $0.1^\circ$.
We also see this in \reffig{RagainstNumSamples} in that there is hardly any difference between the images at $R = 10$ and $R=12$ with $128\times 128$ samples.
\refthm{SquarePixel} tells us that given enough samples we can produce an accurate picture of the distributional cohomology fractal.
Can one calculate effective bounds that would allow us to produce a provably correct image?
\end{question}

\begin{question}
In \reffig{RagainstNumSamples},  the image with $1 \times 1$ samples and $R = 8$ is very similar to the image with $128 \times 128$ samples and $R = 12$. 
However, we do not understand how an image generated with only one sample per pixel can so closely approximate the limiting object.
The manifold \texttt{m122(4,-1)} is small; as a result, perhaps the geodesic flow mixes rapidly enough?  
Does this fail in larger manifolds?
\end{question}

\begin{question}[Mark Pollicott]
We consider lowering the dimension of $F$ and $M$ by one.  Let $F$ be a non-separating curve in a closed, connected hyperbolic surface $M$. Fix a point $p \in M$.  Let $P$ be a ``pixel'' -- that is, a closed arc in $\UT{p}{M} \homeo S^1$ with centre $c_P$ and radius $r_P$. The distributional cohomology fractal $\Phi^F$ exists and in fact gives a ``signed measure'' to each such pixel $P$ (see \refsec{RemarkMeasureHard}). We define a function $\mu^F\from S^1\cross (0,\pi]\to \RR$ by setting $\mu^F(c_P, r_P)=\Phi^F(P)$.  What does the graph of $\mu^F$ look like?  For example, what happens if we fix $c_P$ and allow the radius to vary?
How does the graph behave as $r_P$ approaches zero?
\end{question}

\begin{question}
The histogram in \reffig{m122(4,1)Dist} is low near the mean.  Increasing $R$ (within the range that we trust our experiments, see \refsec{Accumulate}) reduces, but does not remove, this gap.  Why is it there?  (This does not seem to happen in \reffig{s789Dist}, where the surface is closed but the manifold is not.)
\end{question}

\begin{question}
\label{Que:Cusped}
Consider the experiment shown in \reffig{s789Histogram2}.  
Here the support of the cocycle $\omega$ is not compact.  
We see that the distribution of the cohomology fractal, over a pixel, appears to not be normal. 
Further experiments show that it depends sensitively on the choice of pixel. 
Can one verify rigorously that it is not normal?

We suspect that some version of ``subtracting the largest excursion'' 
(see the remarks immediately before \cite[Theorem~1]{DiamondVaaler86}) 
will yield a more reasonable distribution.
\end{question}

\begin{question}
\refthm{Pixel} applies to cocycles $\omega$ with compact support.
Consider a cusped manifold and a cocycle $\omega$ such that the pullback of $[\omega]$ to the cusp torus is non-trivial.
In this case, the Sobolev norm of $\omega$ is infinite;
thus \refthm{mixing} does not apply. 
Are there modifications, perhaps as indicated in \refque{Cusped}, 
so that we again obtain a distribution at infinity for the cohomology fractal?
\end{question}

\begin{question}
In the fibred case, what is the relationship between the cohomology fractal and the lightning curve? See \refsec{Lightning}.
\end{question}

We end with some ideas for future software projects.

\begin{project}
One could use material triangulations in the closed case to draw an approximation $\Psi_D$ to the Cannon--Thurston map, following \refalg{CTApprox}.  By \refprop{LightDark}, these match the cohomology fractal.  Motivated by Figures~\ref{Fig:m122_4_-1} and~\ref{Fig:Orbifold}, we anticipate that $\Psi_D$ will look significantly different from Cannon--Thurston maps in the cusped case; the ``mating dendrites'' that approximate $\Psi$ have bounded branching at all points. 
\end{project}

\begin{project}
In \refsec{Cone}, we discussed cohomology fractals for incomplete structures along a line in Dehn surgery space. As discussed in \refsec{NumericalInstability}, all of these suffer from numerical defects along the incompleteness locus $\Sigma_s$. These defects are visible (although small) in \reffig{Bending}.
When the slope $s$ is integral, we use material triangulations (\refsec{MaterialTriangulations}) to remove these defects. For general $s$, material triangulations are not available.  Instead, one could \emph{accelerate} through tubes about $\Sigma_s$. That is, we modify the cellulation of the manifold by truncating each tetrahedron, replacing the lost volume with a solid torus ``cell'' around $\Sigma_s$. 
\end{project}

\appendix
\section{Notation}
\label{App:Notation}
For the convenience of the reader, we list some of the notation used in the paper.  

\newcolumntype{L}[1]{>{\raggedright\let\newline\\\arraybackslash\hspace{0pt}}p{#1}}
\newcolumntype{C}[1]{>{\centering\let\newline\\\arraybackslash\hspace{0pt}}p{#1}}
\newcolumntype{R}[1]{>{\raggedleft\let\newline\\\arraybackslash\hspace{0pt}}p{#1}}

\begin{longtable}{L{0.13\textwidth}L{0.81\textwidth}}
\renewcommand{\arraystretch}{1.3}
\texttt{m004} & SnapPy notation (for the figure-eight knot complement). 
We use $\texttt{m004(p,q)}$ to denote a $(p, q)$ Dehn filling. \\
$M$ & connected, oriented, finite volume hyperbolic three-manifold.  \\
$\calT$ & triangulation of $M$, see \refsec{Triangulations}.  \\
$F$ & connected, oriented, finite-area hyperbolic surface.   \\
$\cover{F}$, $\cover{M}$ & universal covers. \\
$\bdy_\infty \cover{F}$ & ideal boundary of $\cover{F}$. \\
$\bdy_\infty \cover{M}$ & ideal boundary of $\cover{M}$. \\
$p$, $q$, $b$ & points of $M$ or $\cover{M}$.\\
$\Psi$ & Cannon--Thurston map (\refthm{CannonThurston}).\\
$\Psi_D$ & approximation to the Cannon--Thurston map (\refalg{CTApprox}).\\
$\UT{}{}$ & unit tangent bundle. \\
$u$, $v$ & unit vectors in $\UT{}{M}$ or $\UT{}{\cover{M}}$. \\
$\pi$ & unit tangent bundle map (or ratio of circumference to diameter of circle in the euclidean plane).  \\
$\omega$ & one-cocycle for M. We often replace $\omega$ by a (Poincar\'e dual) surface $F$ in our notation.\\

$R$, $T$ & radius and a time respectively.  
These serve the same purpose; 
we use one or the other as we are thinking geometrically (\refdef{Basic})
or dynamically (\refdef{OneForm}). \\

$\Phi_R^{\omega, p}$ & the cohomology fractal at radius $R$ (\refdef{Basic}). We suppress superscripts which are not relevant to the discussion at hand. Several variants of this notation follow.\\
$\Phi_R^{F, p}$ & (\refdef{Dual}). \\
$\Phi_R^{\omega, b, p}$ & (\refrem{Basepoint}). \\
$\Phi_R^{\omega, b, D}$ & (\refdef{View} and Equation~\ref{eq:cohomFractView}).\\
$\Phi_R^{F, b, D}$ & (\refsec{Experiments}). \\

$\cover{\omega}$ & lift of $\omega$ to $\cover{M}$ (\refdef{UniversalCover}).   \\
$W$ & primitive for $\cover{\omega}$ (\refdef{UniversalCover} and \refsec{formalDefView}).  \\
$\varphi_t$ & geodesic flow for time $t$ (\refdef{OneForm}).\\
$D$, $E$ & views in $\UT{}{\cover{M}}$ (\refsec{Views}). \\
$U$ & subdomain of $D$: for example, a pixel. \\

$X$, $\cover{X}$ & abbreviations for $\UT{}{M}$ and $\UT{}{\cover{M}}$ in Sections~\ref{Sec:CLT} and~\ref{Sec:Pixel}. \\
$\mu_D$ & induced area measure on $D$ (\refsec{formalDefView}). \\
$\zeta_D$ & area form on $D$ (so $\zeta_D = d\mu_D$). \\
 $h$ & measurable (\refeqn{RN}) or smooth (\refsec{StatementPixel}) test function from $D$ to $\RR$.\\ 
 $h_\varepsilon$ & $\varepsilon$--mollification of $h$ in $\cover{X}$ (\refeqn{defHEps}) or in the unstable direction (\refeqn{hEpsUnstable}).\\
$\nu \ll \mu$ & $\nu$ (always a probability measure) absolutely continuous with respect to $\mu$ (\refsec{formalDefView}).  
We decorate with the measure space (as a subscript) when needed for context. \\

 $R_T, S_T$ & random variable, usually $R_T=\Phi_T/\sqrt{T}$ (Sections~\ref{Sec:StatementCLT} and~\ref{Sec:proofCLT}).\\ 
 $\sigma$ & standard deviation (square root of the variance). \\
$\psi_\sigma$ & normal distribution with standard deviation $\sigma$ (\refsec{StatementCLT}).\\
$n_\sigma$ & probability density function for $\psi_\sigma$ (\refsec{StatementCLT}). \\
$\Rightarrow$ & convergence in measure (\refdef{ConvergeInDistribution}). \\
 $\to$ & convergence in probability (\refdef{ConvInProb}).  \\
$\mu_X$ & Haar measure on $X$ scaled to be a probability measure (\refsec{Sinai}). \\
$\xu,\xf, \xs$ & coordinates for the unstable, flow, and stable manifolds in a neighbourhood of a view (\refsec{coordinatesView}).\\
$H^\sfs(\xu)$ & stable manifold through $\xu$ (\refsec{coordinatesView}).\\
$d_N$ & induced distance on a submanifold $N$ (\refsec{coordinatesView}).\\
$B^\sfs_\varepsilon(\xu)$ & $\varepsilon$--ball in $H^\sfs(\xu)$ (\refsec{coordinatesView}).\\
$D_\varepsilon$ & product $\varepsilon$--neighbourhood of the view $D$ (\refsec{coordinatesView}).\\

$\eta$ & two-form in $\Omega^2_c(D)$ (\refsec{StatementPixel}).\\  
$\Phi^{\omega, b, D}$ & the distributional cohomology fractal (\refdef{Distribution}).\\
$\Phi^{\omega, b}$ & the cohomology fractal at infinity (\refcor{BoundaryDistribution}).\\
$\int_D$ & canonical distribution (\refsec{StatementPixel}).\\
$i_D$ & conformal embedding of $D$ into $\bdy_\infty \cover{M}$ (\refsec{StatementPixel}). \\
 $i_{D,E}$ & conformal isomorphism from (a subset of) $E$ to (a subset of) $D$  (\refsec{StatementPixel}). \\
 
 $Y^{\omega,D}_t(\eta)$ & slice of the integral defining $\Phi^{\omega, b, D}(\eta)$ (\refeqn{yt}).\\
 $Y^{f,D}_t(\eta)$ & $Y^{\omega,D}_t(\eta)$ generalised to $f\from X=\UT{}{M}\to\RR$ (\refeqn{ytmix}).\\
 $Y^{f,D}_{t,\varepsilon}(\eta)$ & mollified slice  (\refeqn{blurredIndicator}). \\
 $Z^{f,D}_{t,\varepsilon}(\xu)$ & mollified sample  (\refeqn{SliceBlurredIndicator}). \\
 $\sob_m$ & Sobolev norm (\refsec{PixelProof}).\\
 $f$, $g$ &  functions in the mixing theorems (\reflem{expoDecay}, \refthm{mixing}, and \refthm{mixing2}). \\
 $b_\varepsilon, B$ & bump function and normalising factor  (\refdef{Bump}).\\
 $J_D$ & ratio between $d\mu_X$ and $d\xs d\xf d\xu$ (\refeqn{ScaleFactor}).\\
 $J_0$ & equals $J_D(\xu, 0, \xu)$ (\refeqn{ScaleFactor}).\\
\end{longtable}


\renewcommand{\UrlFont}{\tiny\ttfamily}

\let\originalpath\path
\renewcommand{\path}[1]{\tiny\ttfamily\originalpath{#1}}

\let\originalthebibliography\thebibliography
\let\originalendthebibliography\endthebibliography
\renewenvironment{thebibliography}[1]{%
    \begin{originalthebibliography}{{MSW02}}}{\end{originalthebibliography}}

\bibliographystyle{alphaNoUrl}
\bibliography{cohomology_fractals_exp_math}

\begin{thebibliography}{{Zwe}07}

\bibitem[ADP99]{AlperinEtAl99}
Roger~C. Alperin, Warren Dicks, and Joan Porti.
\newblock The boundary of the {G}ieseking tree in hyperbolic three-space.
\newblock {\em Topology Appl.}, 93(3):219--259, 1999.
\newblock \href {https://doi.org/10.1016/S0166-8641(97)00270-8}
  {\path{doi:10.1016/S0166-8641(97)00270-8}}.

\bibitem[BJ97]{BishopJones97}
Christopher~J. Bishop and Peter~W. Jones.
\newblock The law of the iterated logarithm for {K}leinian groups.
\newblock In {\em Lipa's legacy ({N}ew {Y}ork, 1995)}, volume 211 of {\em
  Contemp. Math.}, pages 17--50. Amer. Math. Soc., Providence, RI, 1997.
\newblock \href {https://doi.org/10.1090/conm/211/02813}
  {\path{doi:10.1090/conm/211/02813}}.

\bibitem[Bow07]{Bowditch07}
Brian~H. Bowditch.
\newblock The {C}annon--{T}hurston map for punctured-surface groups.
\newblock {\em Math. Z.}, 255(1):35--76, 2007.
\newblock \href {https://doi.org/10.1007/s00209-006-0012-4}
  {\path{doi:10.1007/s00209-006-0012-4}}.

\bibitem[BSS20a]{BachmanSchleimerSegerman20}
David Bachman, Saul Schleimer, and Henry Segerman.
\newblock Cohomology fractals.
\newblock In Carolyn Yackel, Robert Bosch, Eve Torrence, and Krist\'{o}f
  Fenyvesi, editors, {\em Proceedings of Bridges 2020: Mathematics, Art, Music,
  Architecture, Education, Culture}, pages 175--182, Phoenix, Arizona, 2020.
  Tessellations Publishing.
\newblock \url{http://archive.bridgesmathart.org/2020/bridges2020-175.html},
  \href {http://arxiv.org/abs/2002.00239} {\path{arXiv:2002.00239}}.

\bibitem[BSS20b]{github_cohomology_fractals}
David Bachman, Saul Schleimer, and Henry Segerman.
\newblock {Cohomology fractals}, 2020.
\newblock \url{https://github.com/henryseg/cohomology_fractals}.

\bibitem[Bul18]{Bulatov18}
Vladimir Bulatov.
\newblock 3d hyperbolic tiling and horosphere cross section, 2018.
\newblock Joint AMS/MAA meeting, San Diego, January 10, 2018.
\newblock \url{http://bulatov.org/math/180110/index.html}.

\bibitem[Cal07]{Calegari07}
Danny Calegari.
\newblock {\em Foliations and the geometry of $3$--manifolds}.
\newblock Oxford Mathematical Monographs. Oxford University Press, 2007.
\newblock
  \url{http://math.uchicago.edu/~dannyc/books/foliations/foliations.html}.

\bibitem[Cas]{casson:geo}
Andrew Casson.
\newblock geo, software for geometrizing three-manifolds.
\newblock
  \url{https://faculty.math.illinois.edu/~nmd/computop/software/casson.tar.gz}.

\bibitem[CD02]{DicksEtAl02}
James~W. Cannon and Warren Dicks.
\newblock On hyperbolic once-punctured-torus bundles.
\newblock {\em Geom. Dedicata}, 94:141--183, 2002.
\newblock \href {https://doi.org/10.1023/A:1020956906487}
  {\path{doi:10.1023/A:1020956906487}}.

\bibitem[CD06]{DicksEtAl06}
James~W. Cannon and Warren Dicks.
\newblock On hyperbolic once-punctured-torus bundles. {II}. {F}ractal
  tessellations of the plane.
\newblock {\em Geom. Dedicata}, 123:11--63, 2006.
\newblock \href {https://doi.org/10.1007/s10711-006-9070-3}
  {\path{doi:10.1007/s10711-006-9070-3}}.

\bibitem[CDGW]{snappy}
Marc Culler, Nathan~M. Dunfield, Matthias Goerner, and Jeffrey~R. Weeks.
\newblock Snap{P}y, software for studying the geometry and topology of
  three-manifolds, (version 3.0).
\newblock \url{http://snappy.computop.org}.

\bibitem[Chr]{Christensen}
Mikael~Hvidtfeldt Christensen.
\newblock Building 4d polytopes.
\newblock \url{https://syntopia.github.io/Polytopia/polytopes.html}.

\bibitem[CT07]{CannonThurston07}
James~W. Cannon and William~P. Thurston.
\newblock Group invariant {P}eano curves.
\newblock {\em Geom. Topol.}, 11:1315--1355, 2007.
\newblock \href {https://doi.org/10.2140/gt.2007.11.1315}
  {\path{doi:10.2140/gt.2007.11.1315}}.

\bibitem[DP02]{DicksPorti02}
Warren Dicks and Joan Porti.
\newblock On the {H}ausdorff dimension of the {G}ieseking fractal.
\newblock {\em Topology Appl.}, 126(1-2):169--186, 2002.
\newblock \href {https://doi.org/10.1016/S0166-8641(02)00077-9}
  {\path{doi:10.1016/S0166-8641(02)00077-9}}.

\bibitem[DS10]{DicksEtAl10}
Warren Dicks and Makoto Sakuma.
\newblock On hyperbolic once-punctured-torus bundles. {III}. {C}omparing two
  tessellations of the complex plane.
\newblock {\em Topology Appl.}, 157(12):1873--1899, 2010.
\newblock \href {http://arxiv.org/abs/0811.1678} {\path{arXiv:0811.1678}},
  \href {https://doi.org/10.1016/j.topol.2010.02.010}
  {\path{doi:10.1016/j.topol.2010.02.010}}.

\bibitem[DV86]{DiamondVaaler86}
Harold~G. Diamond and Jeffrey~D. Vaaler.
\newblock Estimates for partial sums of continued fraction partial quotients.
\newblock {\em Pacific J. Math.}, 122(1):73--82, 1986.
\newblock \href {https://doi.org/10.2140/pjm.1986.122.73}
  {\path{doi:10.2140/pjm.1986.122.73}}.

\bibitem[DW12]{DicksEtAl12}
Warren Dicks and David~J. Wright.
\newblock On hyperbolic once-punctured-torus bundles {IV}: {A}utomata for
  lightning curves.
\newblock {\em Topology Appl.}, 159(1):98--132, 2012.
\newblock \href {https://doi.org/10.1016/j.topol.2011.08.018}
  {\path{doi:10.1016/j.topol.2011.08.018}}.

\bibitem[FL12]{FranchiLeJan}
Jacques {Franchi} and Yves {Le Jan}.
\newblock {\em {Hyperbolic dynamics and Brownian motion}}.
\newblock Oxford University Press, 2012.
\newblock \href {https://doi.org/10.1093/acprof:oso/9780199654109.001.0001}
  {\path{doi:10.1093/acprof:oso/9780199654109.001.0001}}.

\bibitem[Goe21]{matthiasVerifyingFinite}
Matthias Goerner.
\newblock Verified computations for closed hyperbolic 3-manifolds.
\newblock {\em Bull. Lond. Math. Soc.}, 53(2):596--618, 2021.
\newblock \href {http://arxiv.org/abs/1904.12095} {\path{arXiv:1904.12095}},
  \href {https://doi.org/10.1112/blms.12445} {\path{doi:10.1112/blms.12445}}.

\bibitem[Hea]{Orb}
Damian Heard.
\newblock Orb, software for studying hyperbolic three-orbifolds and
  three-manifolds (2018-06-30).
\newblock \url{https://github.com/DamianHeard/orb}.

\bibitem[Hea05]{heardThesis}
Damian Heard.
\newblock {\em Computation of hyperbolic structures on $3$--dimensional
  orbifolds}.
\newblock PhD thesis, University of Melbourne, 2005.
\newblock \url{https://github.com/DamianHeard/orb-thesis/}.

\bibitem[Hop39]{Hopf39}
Eberhard Hopf.
\newblock Statistik der geod\"{a}tischen {L}inien in {M}annigfaltigkeiten
  negativer {K}r\"{u}mmung.
\newblock {\em Ber. Verh. S\"{a}chs. Akad. Wiss. Leipzig Math.-Phys. Kl.},
  91:261--304, 1939.

\bibitem[KO18]{KelmerOh}
Dubi Kelmer and Hee Oh.
\newblock Exponential mixing and shrinking targets for geodesic flow on
  geometrically finite hyperbolic manifolds.
\newblock 2018.
\newblock \href {http://arxiv.org/abs/1812.05251v3}
  {\path{arXiv:1812.05251v3}}.

\bibitem[Lac00]{Lackenby00}
Marc Lackenby.
\newblock Taut ideal triangulations of $3$--manifolds.
\newblock {\em Geom. Topol.}, 4:369--395, 2000.
\newblock \href {http://arxiv.org/abs/math/0003132}
  {\path{arXiv:math/0003132}}, \href {https://doi.org/10.2140/gt.2000.4.369}
  {\path{doi:10.2140/gt.2000.4.369}}.

\bibitem[Ley17]{Leys17}
Jos Leys.
\newblock A fast algorithm for limit sets of kleinian groups with the {M}askit
  parametrisation, 2017.
\newblock \url{http://www.josleys.com/articles/Kleinian\%20escape-time_3.pdf}.

\bibitem[Lie]{Liepa11}
Peter Liepa.
\newblock Visualizing hyperbolic symmetries using {WebGL} (aka a hyperbolic
  doodler in your browser).
\newblock
  \url{https://brainjam.home.blog/2011/02/19/visualizing-hyperbolic-symmetries-using-webgl/}.

\bibitem[McM]{lim}
Curtis~T. McMullen.
\newblock {lim}.
\newblock
  \url{http://people.math.harvard.edu/~ctm/programs/home/prog/lim/src/lim.tar}.

\bibitem[McM95]{McMullenWeb}
Curtis~T. McMullen.
\newblock {Figure-eight orbifold fiber}, 1995.
\newblock \url{http://people.math.harvard.edu/~ctm/gallery/index.html}.

\bibitem[McM20]{McMullen19}
Curtis~T. McMullen.
\newblock 2019--2020.
\newblock personal communication.

\bibitem[Mer]{Knighty}
Abdelaziz~Nait Merzouk.
\newblock Knighty collection.
\newblock
  \url{https://github.com/Syntopia/Fragmentarium/tree/master/Fragmentarium-Source/Examples/Knighty\%20Collection}.

\bibitem[Mj18]{Mj18}
Mahan Mj.
\newblock Cannon--{T}hurston maps.
\newblock In {\em Proceedings of the {I}nternational {C}ongress of
  {M}athematicians---{R}io de {J}aneiro 2018. {V}ol. {II}. {I}nvited lectures},
  pages 885--917. World Sci. Publ., Hackensack, NJ, 2018.
\newblock \href {http://arxiv.org/abs/1712.00760} {\path{arXiv:1712.00760}},
  \href {https://doi.org/10.1142/11060} {\path{doi:10.1142/11060}}.

\bibitem[MSW02]{IndrasPearls}
David Mumford, Caroline Series, and David Wright.
\newblock {\em Indra's pearls}.
\newblock Cambridge University Press, New York, 2002.
\newblock The vision of Felix Klein.
\newblock \href {https://doi.org/10.1017/CBO9781107050051.024}
  {\path{doi:10.1017/CBO9781107050051.024}}.

\bibitem[NS17]{visualizing_hyperbolic_honeycombs}
Roice Nelson and Henry Segerman.
\newblock Visualizing hyperbolic honeycombs.
\newblock {\em Journal of Mathematics and the Arts}, 11(1):4--39, 2017.
\newblock \href {http://arxiv.org/abs/1511.02851} {\path{arXiv:1511.02851}},
  \href {https://doi.org/10.1080/17513472.2016.1263789}
  {\path{doi:10.1080/17513472.2016.1263789}}.

\bibitem[Osg03]{Osgood03}
William~F. Osgood.
\newblock A {J}ordan curve of positive area.
\newblock {\em Trans. Amer. Math. Soc.}, 4(1):107--112, 1903.
\newblock \href {https://doi.org/10.2307/1986455} {\path{doi:10.2307/1986455}}.

\bibitem[Pur20]{PurcellKnotTheory}
Jessica~S. Purcell.
\newblock {\em Hyperbolic knot theory ({G}raduate {S}tudies in {M}athematics)}.
\newblock American Mathematical Society, Providence, RI, 2020.
\newblock \href {http://arxiv.org/abs/2002.12652} {\path{arXiv:2002.12652}}.

\bibitem[Ree81]{Rees81}
Mary Rees.
\newblock Checking ergodicity of some geodesic flows with infinite {G}ibbs
  measure.
\newblock {\em Ergodic Theory Dynam. Systems}, 1(1):107--133, 1981.
\newblock \href {https://doi.org/10.1017/s0143385700001206}
  {\path{doi:10.1017/s0143385700001206}}.

\bibitem[Sin60]{Sinai60}
Ja.~G. Sina\u{\i}.
\newblock The central limit theorem for geodesic flows on manifolds of constant
  negative curvature.
\newblock {\em Soviet Math. Dokl.}, 1:983--987, 1960.

\bibitem[Smi95]{Smith95}
Alvy Smith.
\newblock A pixel is not a little square, a pixel is not a little square, a
  pixel is not a little square, 1995.
\newblock \url{http://alvyray.com/Memos/CG/Microsoft/6_pixel.pdf}.

\bibitem[Sta19]{Stampfli19}
Peter Stampfli.
\newblock Fractal images from multiple inversion in circles.
\newblock In Susan Goldstine, Douglas McKenna, and Krist\'of Fenyvesi, editors,
  {\em Proceedings of Bridges 2019: Mathematics, Art, Music, Architecture,
  Education, Culture}, pages 263--270, Phoenix, Arizona, 2019. Tessellations
  Publishing.
\newblock \url{http://archive.bridgesmathart.org/2019/bridges2019-263.pdf}.

\bibitem[Str84]{Straube84}
Emil~J. Straube.
\newblock Harmonic and analytic functions admitting a distribution boundary
  value.
\newblock {\em Ann. Scuola Norm. Sup. Pisa Cl. Sci. (4)}, 11(4):559--591, 1984.
\newblock \url{http://www.numdam.org/item?id=ASNSP_1984_4_11_4_559_0}.

\bibitem[Sul79]{Sullivan79}
Dennis Sullivan.
\newblock The density at infinity of a discrete group of hyperbolic motions.
\newblock {\em Inst. Hautes \'{E}tudes Sci. Publ. Math.}, (50):171--202, 1979.
\newblock \href {https://doi.org/10.1007/BF02684773}
  {\path{doi:10.1007/BF02684773}}.

\bibitem[Thu78]{ThurstonNotes}
William~P. Thurston.
\newblock {\em Geometry and topology of $3$--manifolds}.
\newblock 1978.
\newblock \url{http://msri.org/publications/books/gt3m/}.

\bibitem[Thu82]{Thurston82}
William~P. Thurston.
\newblock Three-dimensional manifolds, kleinian groups and hyperbolic geometry.
\newblock {\em Bull. Amer. Math. Soc. (N.S.)}, 6(3):357--381, 1982.
\newblock \href {https://doi.org/10.1090/S0273-0979-1982-15003-0}
  {\path{doi:10.1090/S0273-0979-1982-15003-0}}.

\bibitem[Thu86]{Thurston86}
William~P. Thurston.
\newblock A norm for the homology of $3$--manifolds.
\newblock {\em Mem. Amer. Math. Soc.}, 59(339):i--vi and 99--130, 1986.

\bibitem[vdBC]{BanCrainic}
Erik van~den Ban and Marius Crainic.
\newblock {A}nalysis on manifolds.
\newblock
  \url{https://webspace.science.uu.nl/~crain101/AS-2013/chapters-1-2-3.pdf},
  \url{https://webspace.science.uu.nl/~ban00101/anman2009/lecture2.pdf}.

\bibitem[Wad]{opti}
Masaaki Wada.
\newblock {OPTi}, software for visualising quasi-conformal deformations of
  once-punctured torus groups (version 4.0.5).
\newblock \url{https://apps.apple.com/us/app/opti/id1111993523}.

\bibitem[Wor20]{tnorm20}
William Worden.
\newblock tnorm, software for computing the thurston norm unit ball of finite
  volume orientable hyperbolic three-manifolds, 2018--2020.
\newblock (version 0.1.14).
\newblock \url{https://pypi.python.org/pypi/tnorm}.

\bibitem[Wri19]{Wright19}
David Wright.
\newblock 2019.
\newblock personal communication.

\bibitem[{Zwe}07]{zweimuellerInfMeasurePreserving2007}
Roland {Zweim\"uller}.
\newblock {Infinite measure preserving transformations with compact first
  regeneration.}
\newblock {\em {J. Anal. Math.}}, 103:93--131, 2007.
\newblock \href {https://doi.org/10.1007/s11854-008-0003-y}
  {\path{doi:10.1007/s11854-008-0003-y}}.

\end{thebibliography}
\end{document}